\documentclass[a4paper]{article}
\usepackage[utf8]{inputenc}

\usepackage{amsthm}
\usepackage{amsmath}
\usepackage{amssymb}
\usepackage{amsfonts}
\usepackage{latexsym}
\usepackage{mathrsfs}
\usepackage{mathtools}
\usepackage{url}
\usepackage{enumerate}
\usepackage{enumitem}
\usepackage[margin=25mm]{geometry}
\usepackage{authblk}
\usepackage{stmaryrd} 
\allowdisplaybreaks

\mathtoolsset{showonlyrefs}

\usepackage[dvipsnames]{xcolor}

\usepackage{hyperref}
\hypersetup{
    colorlinks=true,
    linkcolor={red!80!black},
    citecolor={blue!90!black},
    urlcolor=magenta,
    linktocpage=true,
}

\newtheoremstyle{non}
  {\topsep} 
  {\topsep} 
  {} 
  {} 
  {\bfseries} 
  {.} 
  {.5em} 
  {} 

\theoremstyle{non} 
\newtheorem*{thm*}{Theorem 1.1}

\theoremstyle{plain}
\newtheorem{Thm}{Theorem}[section]
\newtheorem{Lem}[Thm]{Lemma}
\newtheorem{Prop}[Thm]{Proposition}
\newtheorem{Cor}[Thm]{Corollary}
\theoremstyle{definition}
\newtheorem{Def}[Thm]{Definition}

\newtheorem{Rem}[Thm]{Remark}
\theoremstyle{remark}

\theoremstyle{plain}
\newtheorem*{Thm*}{Theorem}
\newtheorem*{Lem*}{Lemma}
\newtheorem*{Prop*}{Proposition}
\newtheorem*{Cor*}{Corollary}
\theoremstyle{definition}
\newtheorem*{Def*}{Definition}
\newtheorem*{Rem*}{Remark}
\theoremstyle{remark}
\newtheorem*{Eg*}{Example}

\numberwithin{equation}{section}

\DeclareMathOperator\supp{supp}

\DeclarePairedDelimiter{\norm}{\lVert}{\rVert} 
\DeclarePairedDelimiter{\abra}{\langle}{\rangle} 

\newcommand{\E}{\mathbb{E}}
\newcommand{\R}{\mathbb{R}}
\newcommand{\C}{\mathbb{C}}
\newcommand{\F}{\mathcal{F}}
\newcommand{\ind}{\mathbf{1}}

\title{Central limit theorems for stochastic wave equations \\ in high dimensions}
\author{Masahisa Ebina} 

\affil{Department of Mathematics, Graduate School of Science, Kyoto University, Kitashirakawa-Oiwakecho, Sakyo-ku, Kyoto 606-8502, Japan}
\affil{E-mail address: \texttt{ebina.masahisa.83m@st.kyoto-u.ac.jp}}
\date{}

\begin{document}

\maketitle
\begin{abstract}
We consider stochastic wave equations in spatial dimensions $d \geq 4$.
We assume that the driving noise is given by a Gaussian noise that is white in time and has some spatial correlation.
When the spatial correlation is given by the Riesz kernel, we establish that the spatial integral of the solution with proper normalization converges to the standard normal distribution under the Wasserstein distance. 
The convergence is obtained by first constructing the approximation sequence to the solution and then applying Malliavin-Stein's method to the normalized spatial integral of the sequence.
The corresponding functional central limit theorem is presented as well.
\end{abstract}

\noindent
\textbf{Keywords:} Stochastic wave equation, Central limit theorem, Malliavin calculus, Stein's method.\\
\textbf{2020 Mathematics Subject Classification:} 60F05, 60G15, 60H07, 60H15.

\tableofcontents

\section{Introduction}
\label{section Introduction}
We consider the following stochastic wave equation 
\begin{align}
\label{SPDE}
    \begin{cases}
        \displaystyle \frac{\partial^2U }{\partial t^2}(t,x) = \Delta U(t,x) + \sigma(U(t,x))\dot{W}(t,x), \quad (t,x) \in [0,T] \times \mathbb{R}^d,\\
        \displaystyle U(0,x) = 1, \quad  \frac{\partial U}{\partial t}(0,x) = 0,
    \end{cases}
\end{align}
where $T > 0$ is fixed, $d \geq 4$, $\sigma\colon \mathbb{R} \to \mathbb{R}$ is a continuously differentiable Lipschitz function, and $\dot{W}(t,x)$ is the formal notation of centered Gaussian noise defined on a complete probability space $(\Omega, \mathscr{F}, P)$ with the covariance structure
\begin{equation}
\label{dot W covariance}
\mathbb{E}[\dot{W}(t,x)\dot{W}(s,y)] = \delta_0(t-s)\gamma(x-y).
\end{equation}
Here $\delta_0$ is the Dirac delta function, and $\gamma$ denotes the spatial correlation of the noise given by a nonnegative function on $\mathbb{R}^d$ such that $\gamma(x)dx$ is a nonnegative definite tempered measure on $\R^d$.
Throughout the paper, we always assume that this $\gamma$ satisfies 
\begin{equation}
\label{Dalang's condition gamma}
    \int_{|x| \leq 1}\frac{\gamma(x)dx}{|x|^{d-2}} < \infty,
\end{equation}
where $|x|$ denotes the usual Euclidean norm of $x$ on $\mathbb{R}^d$.
Note that \eqref{Dalang's condition gamma} is equivalent to Dalang's condition \eqref{Dalang's condition} in our case (see Section \ref{section Preliminaries} for details).

We adopt Dalang-Walsh's random field approach to study this equation, and we consider a real-valued predictable process $\{U(t,x) \mid [0,T] \times \R^d\}$ satisfying 
\begin{equation*}
    U(t,x) = 1 + \int_0^t\int_{\R^d} G(t-s,x-y)\sigma(U(s,y))W(ds,dy), \quad \text{a.s.}
\end{equation*}
as a solution to the equation. 
Here $G$ denotes the fundamental solution of the $d$-dimensional wave equation, and the stochastic integral on the right-hand side is interpreted as Dalang's integral.
Under \eqref{Dalang's condition gamma} and the condition that $\sigma$ is a Lipschitz function, it is shown in \cite{MR2399293} that a solution to \eqref{SPDE} exists, and that uniqueness holds within a proper class of predictable processes.
See Sections \ref{section Preliminaries} and \ref{section Stochastic wave equations} for the detailed definitions of terminologies and the solution.

The solution process $\{U(t,x) \mid x \in \R^d \}$ of \eqref{SPDE} is known to be strictly stationary in any dimension $d \geq 1$ (see \cite{MR2399293, MR1684157}). 
In order to understand the spatial behavior of the solution, a natural problem is determining the conditions under which the solution process is ergodic with respect to spatial shifts.
In \cite{MR4346664}, this problem is addressed for stochastic heat equations in arbitrary spatial dimensions, and several sufficient conditions are provided for a stationary solution to be ergodic.
Moreover, using the same approach as in \cite{MR4346664}, the same sufficient conditions for stochastic wave equations in the case $d \leq 3$ are given in \cite{MR4187721}.
Immediately after \cite{MR4187721}, as a more general abstract result, the same sufficient conditions for stationary processes consisting of square-integrable Gaussian functionals are provided in \cite{MR4563698}.
From this general result, it follows that under Dalang's condition, the solution process $\{U(t,x) \mid x \in \R^d \}$ of \eqref{SPDE} is ergodic if 
\begin{equation}
\label{suff ergo}
    \lim_{R \to \infty} |\mathbb{B}_R|^{-1}\int_{\mathbb{B}_R}\gamma(x)dx = 0,
\end{equation}
where $\mathbb{B}_R \coloneqq \{y \in \R^d \mid |y| \leq R\}$ and $|\mathbb{B}_R|$ denotes the Lebesgue measure of $\mathbb{B}_R$.
Note that the condition \eqref{suff ergo} is proved in \cite{MR4346664} to be equivalent to $\mu(\{0\}) = 0$, where $\mu$ denotes the spectral measure of $\gamma$ (see Section \ref{section Preliminaries} for the details on $\mu$).
A straightforward consequence of spatial ergodicity is the law of large numbers for the solution: under condition \eqref{suff ergo}, we can deduce 
\begin{equation*}
    |\mathbb{B}_R|^{-1} \int_{\mathbb{B}_R}(U(t,x) - \E[U(t,x)])dx \xrightarrow[R \to \infty]{} 0, \quad \text{a.s. and in $L^p(\Omega)$ for any $p \in [1,\infty)$}
\end{equation*}
by applying the ergodic theorem (\textit{cf.} \cite[Theorems $\mathrm{I}''$ and $\mathrm{II}''$]{MR1546100} and \cite[Theorem 25.14]{Kallenberg}).
As a result, it becomes natural and interesting to investigate the fluctuations of the spatial average of the solution next.

The goal of the present paper is to prove the central limit theorems for stochastic wave equations in spatial dimensions $d \geq 4$.
We will show in Theorems \ref{Thm m1} and \ref{Thm m2} below that when the spatial correlation is given by $\gamma(x) = |x|^{-\beta}$ for some $0 < \beta < 2$, the centered spatial integral of the solution
\begin{equation}
\label{F_R(t)}
    F_R(t) \coloneqq \int_{\mathbb{B}_R}(U(t,x) - \E[U(t,x)])dx
\end{equation}
with appropriate normalization converges in distribution to the standard normal distribution for each $t >0$ and to a Gaussian process in $C([0,T])$ as $R \to \infty$.
Here $C([0,T])$ is the space of continuous functions on $[0,T]$.

This kind of asymptotic behavior of spatial averages for stochastic partial differential equations has been studied extensively in recent years.
This line of research was initiated by Huang, Nualart, and Viitasaari in \cite{MR4167203}, where they established quantitative central limit theorem and its functional version for one-dimensional nonlinear stochastic heat equations driven by space-time white noise using Malliavin-Stein's method (\textit{cf.} \cite{nourdin_peccati_2012}). 
After this result, similar central limit theorems for stochastic partial differential equations have been established under various settings.
Following the same approach as in \cite{MR4167203}, the authors of \cite{MR4098872} shows that similar central limit theorems hold for stochastic heat equations in arbitrary dimensions when the spatial correlation of the noise is given by $|x|^{-\beta}$ with $0 < \beta < \min\{2,d\}$.  
For other similar results for stochastic heat equations, see \textit{e.g.} \cite{MR4385408, CLTforHEwithtimeindependentnoise, MR4563698, MR4242879, MR4337703, MR4479916, averagingGaussianfunctionals, MR4391725} and the reference therein.

The same approach as in \cite{MR4167203} also led to similar results for stochastic wave equations. 
The one-dimensional case where the driving noise is a fractional Gaussian noise with Hurst parameter $H \in [1/2,1)$ is treated in \cite{MR4164864}, while the two-dimensional case where the spatial correlation of the noise is $|x|^{-\beta}$ with $0<\beta<2$ is considered in \cite{MR4290504}.
Moreover, \cite{MR4439987} shows similar results for integrable spatial correlations in one- and two-dimensional cases.
Among others, we refer to \cite{BalanZhengLevyHAM, MR4491503, BalanYuanHAM-with-time-independent-noise} for other related works.
However, in contrast to the results for stochastic heat equations in all spatial dimensions presented in \cite{MR4098872}, these results for stochastic wave equations are restricted to one- and two-dimensional cases. 
The approach used in \cite{MR4167203} cannot be directly applied to stochastic wave equations in dimensions three or higher due to a technical problem.
To effectively apply Malliavin-Stein’s method, we need some control over the moments of the Malliavin derivative of the solution, but obtaining such control proves challenging when $d \geq 3$.
The key estimate used in, \textit{e.g.}, \cite{MR4290504, MR4164864, MR4439987} is the following (almost) pointwise evaluation:
\begin{equation}
\label{DU estimate}
    \lVert D_{s,y}U(t,x) \rVert_{L^p(\Omega)} \lesssim G(t-s,x-y), \quad \text{for almost all $(s,y) \in [0,t] \times \R^d$},
\end{equation}
where $D$ is the Malliavin derivative operator (see Section \ref{subsection Malliavin calculus and Malliavin-Stein method}).
In the case of stochastic heat equations, the same kind of estimate holds in all spatial dimensions and is used in \cite{MR4167203, MR4098872}. 
However, such estimates cannot be expected to hold for stochastic wave equations when $d \geq 3$, as $G$ becomes a distribution rather than a function.
The difficulty in obtaining a precise bound for the Malliavin derivative of the solution directly leads to the difficulty in proving the central limit theorems via Malliavin-Stein's method.

For three-dimensional stochastic wave equations, it is shown in \cite{ebina2023central} that this technical problem can be avoided by approximation and that the same type of central limit theorems hold though the convergence rate is not provided. 
More precisely, when $d=3$ and $\gamma(x) = |x|^{-\beta}$ with $0 < \beta < 2$, we can construct a sequence $(u_n(t,x))_{n=0}^{\infty}$ that converges to the solution $U(t,x)$ in $L^p(\Omega)$ as $n \to \infty$ and satisfies
\begin{equation}
\label{int powise est}
    \lVert D_{s,y}u_n(t,x) \rVert_{L^p(\Omega)} \leq C_{n,p,T} \ind_{\mathbb{B}_{t-s+1}}(x-y), \qquad 0 \leq s \leq t \leq T
\end{equation}
with some constant $C_{n,p,T}$ depending on $n$, $p$, and $T$.
This estimate can be used as a substitute for \eqref{DU estimate}, and in view of Malliavin-Stein's method, it can show 
\begin{equation}
\label{int lim R}
    \lim_{R \to \infty}\mathrm{dist}\left(\frac{F_{n,R}(t)}{\sigma_{n,R}(t)}, \mathcal{N}(0,1) \right) = 0
\end{equation}
for each fixed $n$. 
Here $\mathrm{dist}(\cdot, \cdot)$ is some distance between probability measures, $F_{n,R}(t)$ is defined by \eqref{F_R(t)} with $U(t,x)$ replaced by $u_n(t,x)$, $\sigma_{n,R}(t) \coloneqq \sqrt{\mathrm{Var}(F_{n,R}(t))}$, and $\mathcal{N}(0,1)$ denotes the standard normal distribution.
With a little more effort, it can also be shown that
\begin{equation}
\label{int lim n}
    \lim_{n \to \infty}\sup_{R}\left\lVert \frac{F_{R}(t)}{\sigma_R(t)} - \frac{F_{n,R}(t)}{\sigma_{n,R}(t)} \right\rVert_{L^2(\Omega)} = 0,
\end{equation}
where $\sigma_R(t) \coloneqq \sqrt{\mathrm{Var}(F_{R}(t))}$.
Thus, if we choose $\mathrm{dist}(\cdot, \cdot)$ as the Wasserstein distance $d_{\mathrm{W}}$, which can be dominated by $L^2(\Omega)$ distance, then we obtain
\begin{align*}
    \lim_{R \to \infty}d_{\mathrm{W}}\left(\frac{F_{R}(t)}{\sigma_{R}(t)}, \mathcal{N}(0,1) \right)
    &\leq \sup_{R}d_{\mathrm{W}}\left(\frac{F_{R}(t)}{\sigma_R(t)}, \frac{F_{n,R}(t)}{\sigma_{n,R}(t)} \right) + \lim_{R \to \infty}d_{\mathrm{W}}\left(\frac{F_{n,R}(t)}{\sigma_{n,R}(t)}, \mathcal{N}(0,1) \right)\\
    &\leq \varepsilon
\end{align*}
for any given $\varepsilon \ll 1$ by letting $n$ large enough, which proves the aforementioned result.
Unfortunately, this argument still cannot apply directly to the high-dimensional case $d \geq 4$.
The obstacle here is that $G(t)$ is generally not a nonnegvative distribution when $d \geq 4$ and that some evaluations to obtain \eqref{int lim R} and \eqref{int lim n} fail to work (see Remark \ref{Rem why high dimensions difficult} for more details).
The lack of nonnegativity makes the study of the stochastic wave equation in four or higher dimensions more difficult than in three or lower dimensions (see \cite{MR2399293, MR1930613}).

In this paper, we prove the central limit theorems for stochastic wave equations when $d\geq 4$ by modifying the argument above used in \cite{ebina2023central}. 
The key fact used for modification is that the Fourier transform of $G(t)$ is a function dominated by the Fourier transform of a slightly modified Bessel kernel of order one $b_1$, which is a nonnegative integrable function (see Section \ref{subsection Some technical tools} for the definition of $b_1$).
More precisely, we have
\begin{equation}
\label{int FG bound}
    |\F G(t)(\xi)| \lesssim_T (1+|\xi|^2)^{-\frac{1}{2}} = \F b_1 (\xi), \quad t \in [0,T].
\end{equation}
Thus, by first performing the Fourier transform and then applying \eqref{int FG bound}, we can replace $G(t)$ with the nonnegative integrable function $b_1$ provided that we can justify the Fourier inversion.
Once this can be done, the situation becomes the same as in the three-dimensional case, and then the arguments of \cite{ebina2023central} can be applied.
For the role of this replacement argument, see Remarks \ref{Rem why high dimensions difficult} and \ref{Rem rep2}.

In order to state the main results precisely, we first fix some notations.
We define $F_R(t)$ by \eqref{F_R(t)} and set 
\begin{equation*}
    \sigma_R(t) = \sqrt{\mathrm{Var}(F_R(t))} \quad \text{and} \quad \tau_{\beta} = \int_{\mathbb{B}_1^2}|x-y|^{-\beta}dxdy.
\end{equation*}
Note that $\tau_{\beta}$ is finite since $\beta < 2 < d$.
Let $\mathscr{H} \coloneqq \{h \colon \mathbb{R} \to \mathbb{R} \; | \; \lVert h\rVert_{\textrm{Lip}} \leq 1\}$, where 
\begin{equation*}
    \lVert h \rVert_{\mathrm{Lip}} = \sup_{\substack{x \neq y \\ x,y \in \mathbb{R}}} \frac{|h(x)-h(y)|}{|x-y|}.
\end{equation*}
The Wasserstein distance $d_{\mathrm{W}}$, which is also called 1-Wasserstein distance, between the laws of two integrable real-valued random variables $X$ and $Y$ is given by
\begin{equation}
\label{Wasserstein distance def}
    d_{\mathrm{W}}(X,Y) = \sup_{h \in \mathscr{H}}|\mathbb{E}[h(X)]-\mathbb{E}[h(Y)]|.
\end{equation}
It is known (see \textit{e.g.} \cite[Appendix C]{nourdin_peccati_2012}) that the convergence in $d_{\mathrm{W}}$ implies the convergence in distribution.
As before, the Wasserstein distance between the law of $X$ and the standard normal distribution is denoted by $d_{\mathrm{W}}(X,\mathcal{N}(0,1))$.

The following theorems provide the first results of the central limit theorem and its functional version for stochastic wave equations in the case $d \geq 4$.

\begin{Thm}
\label{Thm m1}
Assume that $\sigma(1) \neq 0$ and $\gamma(x) = |x|^{-\beta}$ for some $0 < \beta < 2$.
For any $t \in [0,T]$, it holds that
\begin{equation*}
    \lim_{R \to \infty} R^{\beta-2d}\sigma_R^2(t) = \tau_{\beta}\int_0^t(t-s)^2\E[\sigma(U(s,0))^2]ds.
\end{equation*}
If $t \in (0,T]$, then we have
\begin{equation}
\label{Thm m1 conv}
    \lim_{R \to \infty} d_{\mathrm{W}}\left(\frac{F_R(t)}{\sigma_R(t)}, \mathcal{N}(0,1)\right) = 0.
\end{equation}
\end{Thm}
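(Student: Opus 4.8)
The plan is to follow the approximation scheme outlined in the introduction, adapted to $d \geq 4$ via the Fourier-analytic replacement of $G(t)$ by the Bessel-type kernel $b_1$. First I would establish the variance asymptotics. Starting from the Wiener chaos expansion (or directly from the mild formulation and the stochastic Fubini theorem), one writes $F_R(t) = \int_0^t \int_{\R^d} \left( \int_{\mathbb{B}_R} G(t-s, x-y)\,dx \right) \sigma(U(s,y))\, W(ds,dy)$, so that by the Itô-Walsh isometry
\begin{equation*}
\sigma_R^2(t) = \int_0^t \int_{(\R^d)^2} \varphi_{R}(t-s,y)\varphi_{R}(t-s,z)\,\gamma(y-z)\,\E[\sigma(U(s,y))\sigma(U(s,z))]\,dy\,dz\,ds,
\end{equation*}
where $\varphi_R(r,y) = \int_{\mathbb{B}_R} G(r, x-y)\,dx$. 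The scaling $G(r,\cdot)$ is homogeneous of the right degree so that after the change of variables $y = Rx'$, $z = Rz'$ one extracts the factor $R^{2d-\beta}$; the kernel $\varphi_R$, rescaled, concentrates (as $R\to\infty$) on $\int_{\R^d} G(r,w)\,dw = r$ uniformly on compacts, and spatial stationarity together with $\E[\sigma(U(s,y))\sigma(U(s,z))] \to \E[\sigma(U(s,0))^2]$ as the arguments separate gives the limit $\tau_\beta \int_0^t (t-s)^2 \E[\sigma(U(s,0))^2]\,ds$. The hypothesis $\sigma(1)\neq 0$ guarantees this limit is strictly positive for $t>0$, so the normalization is legitimate.

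Next I would introduce the approximating sequence $(u_n(t,x))_n$ exactly as in \cite{ebina2023central}, but with the Picard-type iteration run using $b_1$ (more precisely a mollified/truncated version whose Fourier transform dominates $|\F G(t)|$ by \eqref{int FG bound}) in place of $G$ at the relevant step, so that the pointwise-in-$L^p(\Omega)$ bound on the Malliavin derivative \eqref{int powise est} holds — this is where nonnegativity and integrability of $b_1$ are essential and where the $d\geq 4$ obstruction is circumvented. For each fixed $n$ one then proves \eqref{int lim R}: apply the second-order Poincaré / Malliavin–Stein bound
\begin{equation*}
d_{\mathrm W}\!\left( \tfrac{F_{n,R}(t)}{\sigma_{n,R}(t)}, \mathcal N(0,1) \right) \lesssim \frac{1}{\sigma_{n,R}^2(t)} \left( \int \cdots \| D u_n \|\,\| D u_n \| \cdots \right)^{1/2},
\end{equation*}
and use \eqref{int powise est} together with the explicit decay of $\int_{\mathbb{B}_R}\int_{\mathbb{B}_R}\gamma$ to show the right-hand side is $o(1)$ as $R\to\infty$ — the computation mirrors the three-dimensional case since $\ind_{\mathbb{B}_{t-s+1}}$ plays the role of $G$ there. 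Then one proves the uniform $L^2$ approximation \eqref{int lim n}: estimate $\|F_R(t) - F_{n,R}(t)\|_{L^2}$ and $|\sigma_R(t)-\sigma_{n,R}(t)|$ using $\sup_{t,x}\|U(t,x)-u_n(t,x)\|_{L^2(\Omega)}\to 0$ and the Lipschitz property of $\sigma$, being careful that the bound is uniform in $R$ after dividing by $\sigma_R(t) \sim c\, R^{(2d-\beta)/2}$, which is where the just-proved variance asymptotics is used. Finally, combine via the triangle inequality for $d_{\mathrm W}$ exactly as displayed in the introduction.

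The main obstacle I anticipate is justifying the Fourier-inversion step that licenses replacing $G(t)$ by $b_1$ inside the stochastic-convolution estimates — i.e. making \eqref{int FG bound} usable not just formally but as a genuine domination of the relevant quadratic functionals (the ``$\F$ then bound then $\F^{-1}$'' argument), since $G(t)$ is a distribution and one must control the interchange against the noise and against the Malliavin derivative recursion. A secondary technical point is verifying \eqref{int powise est} for the modified iteration: the Malliavin derivative of $u_n$ satisfies a linear equation whose kernel is built from the mollified $b_1$, and one must propagate the indicator bound through the Gronwall-type argument while keeping the constant $C_{n,p,T}$ finite (it may blow up as $n\to\infty$, which is fine — that is precisely why the two-parameter limiting scheme is needed). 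Once these are in place, the uniformity in $R$ in \eqref{int lim n} and the variance lower bound $\sigma_R^2(t) \gtrsim R^{2d-\beta}$ for $t>0$ close the argument.
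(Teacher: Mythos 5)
Your proposal gets the high-level architecture right (approximate, prove a quantitative CLT for the approximation, control the approximation error uniformly in $R$, and combine via the triangle inequality for $d_{\mathrm W}$), but it contains a genuine error in how the approximating sequence $(u_n)$ is built. You propose running the Picard-type iteration with $b_1$ (or a mollification of it) in place of $G$, and later assert that ``the Malliavin derivative of $u_n$ satisfies a linear equation whose kernel is built from the mollified $b_1$.'' That construction would not converge to the solution $U$ of the wave equation: the iteration kernel must be a genuine approximation of the wave kernel. What the paper actually does is mollify the fundamental solution itself, setting $G_n = G * \Lambda_n$ with $\Lambda_n$ a standard approximate identity, and iterating with $G_n$ (see \eqref{Lambda G_n definition} and \eqref{u_(n+1)}); the Bessel-kernel replacement $|\F G(t)(\xi)| \lesssim_T \langle\xi\rangle^{-1} = \F b_1(\xi)$ enters \emph{only} inside the a priori estimates, where one passes to the Fourier side and swaps the non-sign-definite $G_n$ for the nonnegative, integrable $b_1$ so that Lemma \ref{Lem muZ mu} applies. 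With your modification $u_n$ would be unrelated to $U$, Proposition \ref{Prop u_n convergence to U} would fail, and the limiting scheme would not close.

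Two secondary issues appear in your variance step. First, the factor $R^{2d-\beta}$ is not extracted from a scaling homogeneity of $G(r,\cdot)$ but from a change of variables on the Fourier side acting on the cutoff: the paper works with a smooth cutoff $\psi_R$ rather than $\ind_{\mathbb{B}_R}$ precisely because $G$ is a distribution for $d\geq 4$ and $G(t-s)*\psi_R$ is a bona fide Schwartz function, and then uses $R^{-d}\F\psi_R(\xi/R) \to \F\ind_{\mathbb{B}_1}(\xi)$ together with $\F G(t-s)(\xi/R) \to (t-s)$ and a generalized dominated convergence theorem (Lemma \ref{Lem 6.2}, Proposition \ref{Prop general DCT}). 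Second, your claim that $\E[\sigma(U(s,y))\sigma(U(s,z))] \to \E[\sigma(U(s,0))^2]$ ``as the arguments separate'' is incorrect: the mixing limit is $\E[\sigma(U(s,0))]^2$, and the mechanism in the paper is to split the cross-moment into $\E[\sigma(U(s,0))]^2$ plus the covariance and show (Lemma \ref{Lem 6.3}) that the covariance contribution is of lower order $O(R^{2d-2\beta})$; correspondingly the constant in the first display should read $\E[\sigma(U(s,0))]^2$, consistent with Corollary \ref{Cor nonfrak F_R limit} and Theorem \ref{Thm m2}.
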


\begin{Thm}
\label{Thm m2}
Assume that $\sigma(1) \neq 0$ and $\gamma(x) = |x|^{-\beta}$ for some $0 < \beta < 2$.
The process $\{R^{\frac{\beta}{2} - d}F_R(t) \mid t \in [0,T]\}$ converges in distribution in $C([0,T])$, and the limit is a centered Gaussian process $\{\mathcal{G}(t) \mid t \in [0,T]\}$ with covariance function
\begin{equation*}
    \E[\mathcal{G}(t)\mathcal{G}(s)] = \tau_{\beta}\int_0^{t \land s} (t-r)(s-r)\E[\sigma(U(r,0))]^2 dr,
\end{equation*}
where $t \land s \coloneqq \min \{t,s\}$.
\end{Thm}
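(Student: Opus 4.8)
The plan is to obtain the functional convergence from the two standard ingredients: (a) convergence of the finite-dimensional distributions of $\{R^{\frac{\beta}{2}-d}F_R(t)\mid t\in[0,T]\}$ to those of $\{\mathcal{G}(t)\mid t\in[0,T]\}$, and (b) tightness of the laws of $\{R^{\frac{\beta}{2}-d}F_R(\cdot)\}_{R\ge 1}$ in $C([0,T])$. Before doing so I would record that $\mathcal{G}$ admits a continuous modification so that the statement is meaningful: the coefficient appearing under the integral defining $\E[\mathcal{G}(t)\mathcal{G}(s)]$ is continuous and bounded on $[0,T]$ (because $r\mapsto U(r,0)$ is $L^p(\Omega)$-continuous and $\sigma$ is Lipschitz), so $\E[\mathcal{G}(t)\mathcal{G}(s)]$ is Lipschitz on $[0,T]^2$, whence $\E[|\mathcal{G}(t)-\mathcal{G}(s)|^2]\lesssim|t-s|$; since $\mathcal{G}$ is Gaussian, Kolmogorov's continuity criterion applies.

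For (a), fix $0\le t_1<\cdots<t_m\le T$ and $\lambda=(\lambda_1,\dots,\lambda_m)\in\R^m$, and put $\Phi_R:=\sum_{j=1}^m\lambda_j R^{\frac{\beta}{2}-d}F_R(t_j)$. By the Cram\'er--Wold device it suffices to prove that $\Phi_R$ converges in distribution to $\mathcal{N}(0,v)$, where $v:=\sum_{i,j}\lambda_i\lambda_j\,\E[\mathcal{G}(t_i)\mathcal{G}(t_j)]$. The first step is the limiting covariance: I would compute $\lim_{R\to\infty}R^{\beta-2d}\E[F_R(t)F_R(s)]=\lim_{R\to\infty}R^{\beta-2d}\int_{\mathbb{B}_R^2}\E[(U(t,x)-\E U(t,x))(U(s,y)-\E U(s,y))]\,dx\,dy$ by using the Itô isometry for Dalang's integral to write this covariance as $\int_0^{t\wedge s}\int_{\R^{2d}}G(t-r,x-z_1)G(s-r,y-z_2)\E[\sigma(U(r,z_1))\sigma(U(r,z_2))]\gamma(z_1-z_2)\,dz_1dz_2dr$, carrying out the rescaling $\int_{\mathbb{B}_R^2}\to R^{2d}\int_{\mathbb{B}_1^2}$, exploiting the homogeneity $|Rw|^{-\beta}=R^{-\beta}|w|^{-\beta}$ of the Riesz kernel together with $\widehat{G(u)}(0)=u$ and the decorrelation $\E[\sigma(U(r,z_1))\sigma(U(r,z_2))]\to\E[\sigma(U(r,0))]^2$ at large separation (the diagonal $|z_1-z_2|\lesssim1$ being negligible at order $R^{2d-\beta}$), and passing to the limit using \eqref{int FG bound}, the Fourier inversion being justified by the $b_1$-replacement; this parallels the variance computation asserted in Theorem \ref{Thm m1} and yields the announced covariance, in particular $\E[\Phi_R^2]\to v$. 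It then remains to run, for the scalar random variable $\Phi_R$, precisely the argument of Theorem \ref{Thm m1}: with the approximating sequence $(u_n)$ (whose Malliavin derivatives are controlled as in \eqref{int powise est}) one forms $\Phi_{n,R}:=\sum_j\lambda_j R^{\frac{\beta}{2}-d}F_{n,R}(t_j)$, the proof of \eqref{int lim n} yields in the same way $\sup_{R\ge 1}\lVert\Phi_R-\Phi_{n,R}\rVert_{L^2(\Omega)}\to 0$ as $n\to\infty$, and for each fixed $n$ Malliavin-Stein's method gives $d_{\mathrm{W}}(\Phi_{n,R},\mathcal{N}(0,\E[\Phi_{n,R}^2]))\to 0$ as $R\to\infty$ together with $\E[\Phi_{n,R}^2]\to v_n$ and $v_n\to v$; the triangle-inequality argument displayed in the Introduction then gives $d_{\mathrm{W}}(\Phi_R,\mathcal{N}(0,v))\to 0$, whence the finite-dimensional convergence.

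For (b), I would verify Kolmogorov's tightness criterion: since $F_R(0)=0$ (as $U(0,\cdot)\equiv 1$), it is enough to find $p>2$, $\alpha>0$ and $C=C_{p,T}$ such that $\E[\,|R^{\frac{\beta}{2}-d}(F_R(t)-F_R(s))|^{p}\,]\le C\,|t-s|^{\alpha p}$ for all $R\ge 1$ and $s,t\in[0,T]$. By stochastic Fubini, $F_R(t)-F_R(s)$ is the Dalang integral of the deterministic kernel $\psi_R(r,y):=(\ind_{\mathbb{B}_R}*(G(t-r)-G(s-r)))(y)$ against $\sigma(U(r,y))\,W(dr,dy)$, so the $L^p(\Omega)$-moment estimate for Dalang's integral --- valid after the replacement of $G$ by the nonnegative integrable kernel $b_1$ (Section \ref{subsection Some technical tools}, cf. Remark \ref{Rem rep2}), and combined with $\sup_{r,y}\lVert\sigma(U(r,y))\rVert_{L^p(\Omega)}<\infty$ --- reduces the task to bounding a deterministic spectral quantity controlled by $\int_0^T\int_{\R^d}|\widehat{\ind_{\mathbb{B}_R}}(\xi)|^2\,|\F G(t-r)(\xi)-\F G(s-r)(\xi)|^2\,\mu(d\xi)\,dr$. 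Using $|\F G(u)(\xi)-\F G(u')(\xi)|\le\min\{|u-u'|,2|\xi|^{-1}\}$ when $r\le s\wedge t$ (and $|\F G(u)(\xi)|\le u$ on $r\in(s\wedge t,s\vee t)$, where one of the two terms vanishes), the scaling $\xi\mapsto\xi/R$, the identity $\mu(d\xi)=c_{d,\beta}|\xi|^{\beta-d}d\xi$, and the finiteness of $\int_{\R^d}|\widehat{\ind_{\mathbb{B}_1}}(\eta)|^2|\eta|^{\beta-d}d\eta$ (which holds because $0<\beta<2<d$), this quantity is bounded by $C_T\,R^{2d-\beta}\,|t-s|^{2\alpha}$ for a suitable $\alpha>0$. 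Dividing by $R^{2d-\beta}$ and choosing $p$ with $\alpha p>1$ concludes the proof.

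The main obstacle is step (b), and within it the treatment of the time increment of the fundamental solution in dimension $d\ge 4$. When $d\le 3$, $G(u)$ is a nonnegative measure and $|\psi_R(r,\cdot)|$ can be dominated pointwise by a nonnegative function, so the required moment estimate is immediate; but for $d\ge 4$ the kernels $G(u)$, and hence $\psi_R(r,\cdot)$, change sign, which is exactly why the moment estimate has to be carried out on the Fourier side, where the $b_1$-replacement makes the kernel effectively nonnegative and integrable. The delicate point is then to extract a genuine positive power of $|t-s|$, uniformly in $R$, from the oscillatory difference $\F G(t-r)(\xi)-\F G(s-r)(\xi)$ once it is integrated against the rescaled spectral measure $|\xi|^{\beta-d}\,d\xi$; this is precisely what the elementary bound $|\F G(u)(\xi)-\F G(u')(\xi)|\le\min\{|u-u'|,2|\xi|^{-1}\}$ is tailored to achieve, by interpolating between its two arguments. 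By contrast step (a) is essentially routine once Theorem \ref{Thm m1} is in place, the only genuinely new ingredient being the off-diagonal limiting covariance, whose computation follows the same pattern as the variance.
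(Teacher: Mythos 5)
Your overall architecture (finite--dimensional convergence plus Kolmogorov--Chentsov tightness) matches the paper's, but you take genuinely different routes at both steps, and one of those routes glosses over a point the paper is careful about.

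For the finite-dimensional distributions, you invoke Cram\'er--Wold to reduce to a scalar linear combination $\Phi_R=\sum_j\lambda_j R^{\beta/2-d}F_R(t_j)$ and then rerun the scalar Malliavin--Stein argument of Theorem~\ref{Thm m1}. The paper instead applies the multivariate Stein bound (Proposition~\ref{Prop multivariate stein bound}) directly to the vector $(\mathfrak{F}_{n,R}(t_1),\dots,\mathfrak{F}_{n,R}(t_m))$, bounding $|\E[h(\cdot)]-\E[h(\mathbf G)]|$ in terms of the variances of $\langle D\mathfrak{F}_{n,R}(t_i),Q_{n,t_j,R}\rangle_{\mathcal{H}_T}$, which are already controlled by Lemma~\ref{Lem deri var estimate}. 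Both routes are valid and about equally long; yours trades one application of the multivariate Stein bound for $m$-fold bookkeeping of normalizations (you must handle the possible degeneracy $\E[\Phi_{n,R}^2]=0$ and convert the Wasserstein convergence of $\Phi_{n,R}/\sqrt{v_{n,R}}$ into the unnormalized statement via the scaling $d_{\mathrm W}(aX,aY)=|a|\,d_{\mathrm W}(X,Y)$ and the auxiliary fact $d_{\mathrm W}(\mathcal{N}(0,v_{n,R}),\mathcal{N}(0,v))\lesssim|\sqrt{v_{n,R}}-\sqrt v|$). The paper's version avoids all of that. Also note that the error estimate you quote, $\sup_{R}\|\Phi_R-\Phi_{n,R}\|_2\to 0$ as $n\to\infty$, is not literally what Lemma~\ref{Lem error limit} proves: as in the paper, one only gets a bound of the form $\lesssim\sqrt{\varepsilon}+o_n(1)$ for each fixed $\varepsilon$, and the conclusion is reached by a $\limsup_n$-then-$\varepsilon\to 0$ argument.

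For tightness, the substantive difference is that you want to write $F_R(t)-F_R(s)$ directly as a Dalang integral with kernel $\ind_{\mathbb{B}_R}*(G(t-r)-G(s-r))$. For $d\ge 4$ this convolution is a distribution (not a nice function, since $\ind_{\mathbb{B}_R}$ is not smooth and $G(u)$ has positive order), so showing it lies in $\mathcal{P}_{0,\sigma(U)}$ requires rerunning the argument of Proposition~\ref{Prop extended SI}/Corollary~\ref{Cor G(t-s,x-y) in P_0Z} with $\ind_{\mathbb{B}_R}*G(t-\cdot)$ in place of $G(t-\cdot)$. It can be done, but the paper sidesteps it entirely: it represents $F_{n,R}(t)$ (with the mollified kernel $G_n$, whose convolution with $\ind_{\mathbb{B}_R}$ is a genuine $C_{\mathrm c}^\infty$ function) as a Walsh integral, computes $\E[|F_{n,R}(t)-F_{n,R}(s)|^2]$ on the Fourier side using only Lemma~\ref{Lem property of FG}(i),(iii) and the direct Fourier inversion $\int|\F\ind_{\mathbb{B}_R}|^2\,d\mu_r^{\sigma(\widetilde{u_{n-1}})}=\int_{\mathbb{B}_R^2}|z-z'|^{-\beta}\E[\sigma(\widetilde{u_{n-1}}(r,z))\sigma(\widetilde{u_{n-1}}(r,z'))]\,dz\,dz'\lesssim R^{2d-\beta}$, and then lets $n\to\infty$ via Proposition~\ref{Prop u_n convergence to U}. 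In particular, the paper never needs to replace $\mu_r^{\sigma(\cdot)}$ by $\mu$: because $\ind_{\mathbb{B}_R}$ has compact support, the Fourier inversion is legitimate as it stands. Your plan to ``control the spectral quantity by $\int|\widehat{\ind_{\mathbb{B}_R}}(\xi)|^2|\F G(t-r)(\xi)-\F G(s-r)(\xi)|^2\,\mu(d\xi)\,dr$'' with $\mu$ the Riesz spectral measure does require a Lemma~\ref{Lem muZ mu}-type domination with a weight that is the Fourier transform of a nonnegative $L^1$ function, so you would have to first bound $|\F G(t-r)-\F G(s-r)|^2\lesssim|t-s|^2\langle\xi\rangle^{-2}$ and then use that $|\F\ind_{\mathbb{B}_R}|^2\langle\xi\rangle^{-2}=\F\bigl((\ind_{\mathbb{B}_R}*\ind_{\mathbb{B}_R}^\natural)*b_2\bigr)$ is of the right form; your interpolation $\min\{|u-u'|,2|\xi|^{-1}\}$ is therefore not needed. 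Finally, the requirement $p>2$ is superfluous: the paper works with $p=2$ and obtains $\E[|F_R(t)-F_R(s)|^2]\lesssim(t-s)^2R^{2d-\beta}$, so $|t-s|^{1+\epsilon}$ with $\epsilon=1$ already satisfies Kolmogorov--Chentsov.
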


\begin{Rem}
\begin{enumerate}[wide, labelindent=0pt]
    \item[(1)] Assumption $\sigma(1) \neq 0$ excludes the trivial case where $U(t,x) \equiv 1$ for every $(t,x) \in [0,T] \times \R^d$ and, consequently, $F_R(t) \equiv 0$.
    \item[(2)] The function $\gamma(x) = |x|^{-\beta}$ with $0< \beta < d$, called the Riesz kernel, satisfies \eqref{suff ergo}. The restriction $0 < \beta < 2$ comes from \eqref{Dalang's condition gamma}.
    \item[(3)] The convergence \eqref{Thm m1 conv} is also valid when $d_{\mathrm{W}}$ is replaced by the Kolmogorov distance $d_{\mathrm{Kol}}$ or the Fortet-Mourier distance $d_{\mathrm{FM}}$.
    Indeed, $d_{\mathrm{FM}}$ is dominated by $d_{\mathrm{W}}$ and it holds that
    \begin{equation*}
       d_{\mathrm{Kol}}(X,\mathcal{N}(0,1)) \leq 2 \sqrt{d_{\mathrm{W}}(X,\mathcal{N}(0,1))}
    \end{equation*}
    for any integrable real-valued random variables $X$ (\textit{cf.} \cite[Appendix C]{nourdin_peccati_2012}). 
    However, we do not know whether the convergence holds under the total variation distance $d_{\mathrm{TV}}$ as in the previous works \cite{MR4290504, MR4164864, MR4439987}.
    \item[(4)] Although we mainly consider the case $d\geq 4$, the approach in this paper can also be applied to all spatial dimensions $d \geq 1$, and Theorems \ref{Thm m1} and \ref{Thm m2} hold in these cases as well if the spatial correlation of the noise is replaced by $|x|^{-\beta}, \ \  0< \beta < \min\{2,d\}$. 
    Even so, the argument can be made simpler in the case $d\leq 3$ thanks to the nonnegativity of $G(t)$.
\end{enumerate}
\end{Rem}

\noindent
\textbf{The organization of the paper.}
The rest of the paper is organized as follows. 
Below we introduce notations and function spaces used throughout the paper.
In Section \ref{section Preliminaries}, we collect some preliminary results and recall the basics of stochastic integrals and the Malliavin calculus used in the paper.
In Section \ref{section Stochastic wave equations}, we revisit the high-dimensional stochastic wave equations and show, in a slightly different way than in \cite{MR2399293}, the existence of a unique solution.
The construction of an approximation sequence to the solution is carried out in Section \ref{section Approximation to the solution}.
The Malliavin differentiability of the sequence and the properties of its derivative are investigated in Section \ref{section Malliavin derivative of the approximation sequence}.
In Section \ref{section Limit of the covariance functions}, we determine the limits of the normalized covariance functions of slightly modified $F_R(t)$ and the spatial integral of the sequence.
Finally, Section \ref{section Proof of Theorem m1} and Section \ref{section Proof of Theorem m2} are devoted to the proofs of Theorem \ref{Thm m1} and Theorem \ref{Thm m2}, respectively.

\vskip\baselineskip
\noindent
\textbf{Notations and function spaces.}
For $x, y \in \R^d$, $|x|$ and $x \cdot y$ denote the usual Euclidean norm and inner product on $\R^d$, respectively. 
For $z \in \C$, $\overline{z}$ is its complex conjugate.
The notations $\langle x \rangle \coloneqq \sqrt{1+|x|^2}$, $a \land b \coloneqq \min\{a,b\}$, and $a \lor b \coloneqq \max\{a,b\}$ for $x \in \R^d$ and $a,b\in \R$ are also used.
When we write $a \lesssim b$, it means that $a \leq Cb$ for some constant $C > 0$.
Sometimes we write $a \lesssim_{Q} b$ for some quantity $Q$ to emphasize that the constant $C$ depends on $Q$.

Set $\mathbb{B}_r(x) \coloneqq \{y \in \R^d \mid |y-x| \leq r\}$ and $\mathbb{B}_r \coloneqq \mathbb{B}_r(0)$.
Let $\mathcal{B}([0,T])$ and $\mathcal{B}(\R^d)$ denote the usual Borel $\sigma$-algebra, and we write $\mathcal{B}_{\mathrm{b}}(\R^d)$ for the set of all bounded Borel sets of $\R^d$.

Let $\F$ be the Fourier transform operator on $\mathbb{R}^d$. 
The Fourier transform of an integrable function $f \colon \R^d \to \C$ is defined by 
\begin{equation*}
    \F f(\xi) = \int_{\mathbb{R}^d}e^{-2\pi \sqrt{-1} \xi \cdot x} f(x)dx.
\end{equation*}

We usually consider real-valued functions and real vector spaces in this paper, but complex-valued functions and complex vector spaces are sometimes needed due to the Fourier transform. In such cases, we use $\C$ as a subscript.
Let $C_{\mathrm{c}}^{\infty}(\mathbb{R}^d)$ and $\mathcal{S}(\mathbb{R}^d)$ be the real-valued smooth functions on $\R^d$ with compact support and the Schwartz space of real-valued rapidly decreasing functions on $\R^d$, respectively.
The Schwartz space of complex-valued rapidly decreasing functions on $\R^d$ is denoted by $\mathcal{S}_{\C}(\R^d)$, and the complex vector space of tempered distributions on $\R^d$ is denoted by $\mathcal{S}_{\C}'(\R^d)$.

Let $(\Omega, \mathscr{F}, P)$ be a complete probability space on which the Gaussian noise is defined, and let $\E$ be the expectation operator.
For $p \in [1,\infty)$ and a Hilbert space $H$, we write $L^p(\Omega)$ (resp. $L^p(\Omega ; H)$) for the usual Lebesgue space (resp. Bochner space) of $p$-integrable real-valued (resp. $H$-valued) random variables on $(\Omega, \mathscr{F}, P)$. 
The $L^p(\Omega)$-norm of a real-valued random variable $X$ is always denoted by $\lVert X \rVert_p \coloneqq \mathbb{E}[|X|^p]^{\frac{1}{p}}$.

\vskip\baselineskip
\noindent
\textbf{Acknowledgment.}
The author would like to thank his advisor, Professor Seiichiro Kusuoka, for his encouragement and several valuable comments on an earlier version of this work.
He thanks the anonymous referees for their valuable comments and suggestions, particularly for pointing out the abstract ergodicity result in \cite{MR4563698}, which significantly enhanced the manuscript.
This work was supported by the Japan Society for the Promotion of Science (JSPS), KAKENHI Grant Number JP22J21604.

\section{Preliminaries}
\label{section Preliminaries}
We assume $d \geq 1$ throughout Section \ref{section Preliminaries}.
Recall that the spatial correlation of the noise $\gamma$ is given by a nonnegative function on $\mathbb{R}^d$ such that $\gamma(x)dx$ is a nonnegative definite tempered measure on $\R^d$ and satisfies \eqref{Dalang's condition gamma}.
The nonnegative definite tempered measure means that 
\begin{equation*}
    \int_{\R^d}(\varphi \ast \varphi^{\natural})(x)\gamma(x)dx \geq 0 \quad \text{for every $\varphi \in \mathcal{S}_{\mathbb{C}}(\R^d)$}, 
\end{equation*}
and that there exists $k > 0$ such that 
\begin{equation}
\label{gamma tempered cond}
    \int_{\R^d}\langle x \rangle^{-k}\gamma(x)dx < \infty.
\end{equation}
Here $\ast$ denotes the convolution and $\varphi^{\natural}(x) \coloneqq \overline{\varphi(-x)}$.
These conditions imply that $\gamma(-x) = \gamma(x)$ and that $\gamma$ is locally integrable.
By the Bochner-Schwartz theorem (see \cite[Chapitre VII, Th\'{e}or\`{e}me XVIII]{Schwartz}), there exists a nonnegative tempered measure $\mu$, which is called the spectral measure of $\gamma$, such that $\gamma  = \mathcal{F}\mu$ in $\mathcal{S}_{\mathbb{C}}'(\mathbb{R}^d)$.
Note that since $\gamma(x) = \gamma(-x)$, $\mu$ is also symmetric.
When $d \geq 3$, the condition \eqref{Dalang's condition gamma} is known to equivalent to Dalang's condition:
\begin{equation}
\label{Dalang's condition}
    \int_{\R^d}\langle x \rangle^{-2}\mu(dx) < \infty.
\end{equation}
For details on this equivalence, see \textit{e.g.} \cite[Theorems 1.1 and 1.2]{KarczewskaZabczyk}.

\begin{Rem}
The Riesz kernel $\gamma(x) = |x|^{-\beta}, \ \  0< \beta < d$ is an example that satisfies the above conditions when $0 < \beta < 2 \land d$.
Its spectral measure is given by $\mu(dx) = c_{\beta}|x|^{\beta-d}dx$, where $c_{\beta}$ is a constant depending on $\beta$.
Note that this $\mu$ satisfies Dalang's condition \eqref{Dalang's condition} if and only if $0 < \beta < 2 \land d$. 
\end{Rem}

\subsection{Hilbert spaces associated with the noise}
Let $\mathcal{H}$ be the separable real Hilbert space obtained by completing the space $\mathcal{S}(\mathbb{R}^d)$ with respect to the norm $\lVert \cdot \rVert_{\mathcal{H}}$ induced by the real inner product
\begin{equation*}
    \langle \varphi,\psi \rangle_{\mathcal{H}} \coloneqq \int_{\mathbb{R}^{2d}}\varphi(x) \psi(y)\gamma(x-y)dxdy = \int_{\mathbb{R}^d}\mathcal{F}\varphi(\xi)\overline{\mathcal{F}\psi(\xi)}\mu(d\xi), \quad \varphi, \psi \in \mathcal{S}(\mathbb{R}^d).
\end{equation*}
Here two functions $\varphi, \psi \in \mathcal{S}(\mathbb{R}^d)$ are identified if $\lVert \varphi - \psi \rVert_{\mathcal{H}} = 0$.

For our analysis, it is convenient to identify elements of $\mathcal{H}$.
To do this, we introduce $\mathcal{I}$ as the set of functions $f \colon \R^d \to \R$ such that $f$ is $\mathcal{B}(\R^d)/\mathcal{B}(\R)$-measurable and satisfies 
\begin{equation*}
    \int_{\R^{2d}} |f(x)||f(y)|\gamma(x-y)dxdy < \infty.
\end{equation*}
Note that for any $f, g \in \mathcal{I}$, we have
\begin{align}
    &\int_{\R^{2d}}|f(x)||g(y)|\gamma(x-y)dxdy \nonumber\\
    &\leq \left(\int_{\R^{2d}}|f(x)||f(y)|\gamma(x-y)dxdy \right)^{\frac{1}{2}}\left(\int_{\R^{2d}}|g(x)||g(y)|\gamma(x-y)dxdy \right)^{\frac{1}{2}} < \infty. \label{I cs ineq}
\end{align}
Indeed, since
\begin{equation*}
    \int_{\R^{2d}}\varphi(x)\psi(y)\gamma(x-y)dxdy = \int_{\R^d} \mathcal{F}\varphi(\xi)\overline{\mathcal{F}\psi(\xi)} \mu(d\xi)
\end{equation*}
holds for any bounded measurable functions $\varphi, \psi$ having compact supports, approximating $f,g \in \mathcal{I}$ by such functions and applying the Cauchy-Schwarz inequality yield \eqref{I cs ineq}. 
It follows that $\mathcal{I}$ is a real vector space. 
If we define the bilinear form $\langle \cdot, \cdot \rangle_{\mathcal{I}} \colon \mathcal{I} \times \mathcal{I} \to \R$ and the associated norm $\lVert \cdot \rVert_{\mathcal{I}}$ as
\begin{equation*}
    \langle f,g \rangle_{\mathcal{I}} = \int_{\R^{2d}} f(x)g(y)\gamma(x-y)dxdy \quad \text{and} \quad \lVert f \rVert_{\mathcal{I}} = \sqrt{\langle f,f \rangle_{\mathcal{I}}},
\end{equation*}
then $(\mathcal{I}, \langle \cdot, \cdot \rangle_{\mathcal{I}})$ becomes a real pre-Hilbert space, provided that we identify $f,g \in \mathcal{I}$ such that $\lVert f-g \rVert_{\mathcal{I}} = 0$.

\begin{Lem}
\label{Lem I H}
There exists a linear isometry $\iota \colon \mathcal{I} \to \mathcal{H}$.
\end{Lem}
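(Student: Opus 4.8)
The plan is to construct $\iota$ on a dense subspace of $\mathcal{I}$ and extend by continuity. First I would observe that $\mathcal{S}(\R^d) \subseteq \mathcal{I}$: indeed, for $\varphi \in \mathcal{S}(\R^d)$ the integral $\int_{\R^{2d}}|\varphi(x)||\varphi(y)|\gamma(x-y)\,dx\,dy$ is finite because $\gamma$ is locally integrable and, thanks to the tempered-measure bound \eqref{gamma tempered cond}, grows at most polynomially, so it is controlled by the rapid decay of $\varphi$. On this subspace the two inner products agree, $\langle\varphi,\psi\rangle_{\mathcal{I}} = \langle\varphi,\psi\rangle_{\mathcal{H}}$, by the very definition of $\langle\cdot,\cdot\rangle_{\mathcal{H}}$. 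Hence the identity map $\mathcal{S}(\R^d)\hookrightarrow\mathcal{H}$ is an isometry for the $\mathcal{I}$-seminorm, and since $\mathcal{S}(\R^d)$ is by construction dense in $\mathcal{H}$, it suffices to produce, for an arbitrary $f\in\mathcal{I}$, a sequence $(\varphi_n)\subseteq\mathcal{S}(\R^d)$ (or at least $C_{\mathrm c}^\infty$, or bounded compactly supported functions, which are themselves $\mathcal{H}$-approximable) that is $\mathcal{I}$-Cauchy and converges to $f$ in $\lVert\cdot\rVert_{\mathcal{I}}$; then $\iota(f)$ is defined as the $\mathcal{H}$-limit of $(\varphi_n)$, and well-definedness and isometry follow from the isometry on $\mathcal{S}(\R^d)$ together with completeness of $\mathcal{H}$.

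The core step is therefore the approximation: given $f\in\mathcal{I}$, I would first truncate, replacing $f$ by $f_R := f\,\ind_{\mathbb{B}_R}\cdot\ind_{\{|f|\le R\}}$, and check $\lVert f - f_R\rVert_{\mathcal{I}}\to 0$ by dominated convergence, using that $|f(x)-f_R(x)||f(y)-f_R(y)|\gamma(x-y)$ is dominated by the integrable function $|f(x)||f(y)|\gamma(x-y)$ (the Cauchy–Schwarz bound \eqref{I cs ineq} guarantees integrability of the relevant cross terms). Having reduced to $g := f_R$, a bounded compactly supported measurable function, I would mollify: set $g_\varepsilon := g\ast\rho_\varepsilon$ with $\rho_\varepsilon$ a standard mollifier, so $g_\varepsilon\in C_{\mathrm c}^\infty(\R^d)\subseteq\mathcal{S}(\R^d)$, and show $\lVert g - g_\varepsilon\rVert_{\mathcal{I}}\to 0$. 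The cleanest way is to pass to the spectral side: for bounded compactly supported functions one has $\lVert h\rVert_{\mathcal{I}}^2 = \int_{\R^d}|\mathcal{F}h(\xi)|^2\,\mu(d\xi)$, and $\mathcal{F}g_\varepsilon = \mathcal{F}g\cdot\mathcal{F}\rho_\varepsilon$ with $\mathcal{F}\rho_\varepsilon(\xi)\to 1$ pointwise and $\lVert\mathcal{F}\rho_\varepsilon\rVert_\infty\le 1$; since $\int|\mathcal{F}g|^2\,d\mu = \lVert g\rVert_{\mathcal{I}}^2 < \infty$, dominated convergence gives $\int|\mathcal{F}g-\mathcal{F}g_\varepsilon|^2\,d\mu = \int|\mathcal{F}g|^2|1-\mathcal{F}\rho_\varepsilon|^2\,d\mu\to 0$. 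Any such approximating sequence is automatically $\mathcal{I}$-Cauchy, hence $\mathcal{H}$-Cauchy by the isometry, hence convergent in $\mathcal{H}$; this defines $\iota(f)$, and a standard diagonal/limit argument shows the value is independent of the chosen sequence and that $\lVert\iota(f)\rVert_{\mathcal{H}} = \lVert f\rVert_{\mathcal{I}}$. Linearity of $\iota$ is inherited from linearity of the maps on the approximating sequences.

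The main obstacle I anticipate is the justification that the spectral representation $\lVert h\rVert_{\mathcal{I}}^2 = \int|\mathcal{F}h|^2\,d\mu$ is valid for the (merely bounded, compactly supported, not smooth) truncations $g$, and more fundamentally that such $g$ lies in $\mathcal{I}$ at all and can itself be reached from $\mathcal{S}(\R^d)$ in $\mathcal{I}$-norm — the excerpt already invokes exactly this kind of approximation in justifying \eqref{I cs ineq} ("approximating $f,g\in\mathcal{I}$ by such functions and applying the Cauchy–Schwarz inequality"), so I would either cite that argument or spell out the mollification estimate directly on the physical side via $\langle g-g_\varepsilon, g-g_\varepsilon\rangle_{\mathcal{I}} = \int_{\R^{2d}}(g-g_\varepsilon)(x)(g-g_\varepsilon)(y)\gamma(x-y)\,dx\,dy$ and continuity of translation in $L^1$ against the locally integrable weight $\gamma$ restricted to the (compact) support. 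Everything else — denseness of $\mathcal{S}(\R^d)$ in $\mathcal{H}$, completeness of $\mathcal{H}$, the isometry on Schwartz functions — is immediate from the definitions given just above the lemma.
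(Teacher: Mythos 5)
Your proof is correct and follows the same route as the paper: define $\iota$ as the identity on $\mathcal{S}(\R^d)\subset\mathcal{I}$, note this is an isometry into $\mathcal{H}$, establish density of $\mathcal{S}(\R^d)$ in $(\mathcal{I},\lVert\cdot\rVert_{\mathcal{I}})$, and extend by completeness of $\mathcal{H}$. The only difference is one of exposition — the paper dispatches the density step as ``easily seen,'' whereas you supply the truncation-then-mollification argument (with the spectral identity on bounded compactly supported functions justifying the mollification estimate), which is precisely the content being taken for granted.
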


\begin{proof}
It is easily seen that $\mathcal{I}$ contains $\mathcal{S}(\R^d)$ as a dense subset.
Thus, we obtain the desired isometry by extending the linear operator $\iota_0 \colon \mathcal{I} \to \mathcal{H}$ with domain $\mathcal{S}(\R^d)$ given by $\iota_0(f) = f$.
\end{proof}

To simplify notations, we identify $f\in \mathcal{I}$ with $\iota f \in \mathcal{H}$ and treat $\mathcal{I}$ as a subspace of $\mathcal{H}$.
Thus, if $f,g \in \mathcal{I}$, then $f,g \in \mathcal{H}$ and we have
\begin{equation*}
    \langle f,g \rangle_{\mathcal{H}} = \int_{\R^{2d}}f(x)g(y)\gamma(x-y)dxdy.
\end{equation*}

Let $([0,T], \mathcal{L}([0,T]), dt)$ be a Lebesgue measure space, where $dt$ is the usual Lebesgue measure.
On this complete measure space, we next consider the space $\mathcal{H}_T \coloneqq L^2([0,T]; \mathcal{H})$.

\begin{Lem}
\label{Lem H_T}
Let $f \colon [0,T] \times \R^d \to \R$ be a $\mathcal{L}([0,T]) \times \mathcal{B}(\R^d)/\mathcal{B}(\R)$-measurable function such that
\begin{equation*}
    \int_0^T\int_{\R^{2d}}|f(t,x)f(t,y)|\gamma(x-y)dxdydt < \infty.
\end{equation*}
Then $f(t,\cdot) \in \mathcal{H}$ for almost every $t\in [0,T]$, and the almost-everywhere-defined function $f \colon [0,T] \to \mathcal{H}$ belongs to $\mathcal{H}_T$.
If $g$ satisfies the same conditions as $f$, then it holds that
\begin{equation*}
    \langle f, g \rangle_{\mathcal{H}_T} 
    = \int_0^T\int_{\R^{2d}}f(t,x)g(t,y)\gamma(x-y)dxdydt.
\end{equation*}
\end{Lem}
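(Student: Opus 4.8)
The plan is to reduce everything to the scalar-measure version of Lemma \ref{Lem I H} by exploiting the identification $\mathcal{I} \hookrightarrow \mathcal{H}$ together with the standard structure of the Bochner space $L^2([0,T];\mathcal{H})$. First I would observe that by Fubini–Tonelli applied to the (jointly measurable, nonnegative) integrand $|f(t,x)f(t,y)|\gamma(x-y)$, the hypothesis forces the inner double integral $\int_{\R^{2d}}|f(t,x)f(t,y)|\gamma(x-y)\,dx\,dy$ to be finite for $dt$-almost every $t$; for each such $t$ this says precisely that $f(t,\cdot)\in\mathcal{I}$, hence $f(t,\cdot)\in\mathcal{H}$ via the isometric embedding $\iota$, with $\lVert f(t,\cdot)\rVert_{\mathcal{H}}^2 = \int_{\R^{2d}}f(t,x)f(t,y)\gamma(x-y)\,dx\,dy$. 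Thus $f$ defines an almost-everywhere defined $\mathcal{H}$-valued function on $[0,T]$ whose norm $t\mapsto\lVert f(t,\cdot)\rVert_{\mathcal{H}}$ is (integrably) controlled, and $\int_0^T\lVert f(t,\cdot)\rVert_{\mathcal{H}}^2\,dt<\infty$ by hypothesis.

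The one genuine subtlety — and the step I expect to be the main obstacle — is checking that $t\mapsto f(t,\cdot)$ is \emph{strongly measurable} as a map into $\mathcal{H}$, since $\mathcal{H}_T = L^2([0,T];\mathcal{H})$ requires Bochner measurability, not merely a pointwise-defined map with integrable norm. Because $\mathcal{H}$ is separable, by the Pettis measurability theorem it suffices to show weak measurability, i.e.\ that $t\mapsto\langle f(t,\cdot),h\rangle_{\mathcal{H}}$ is measurable for $h$ ranging over a dense subset; taking $h\in\mathcal{S}(\R^d)$ (or even $h\in C_{\mathrm c}^\infty(\R^d)$), this pairing equals $\int_{\R^{2d}}f(t,x)h(y)\gamma(x-y)\,dx\,dy$, which is measurable in $t$ by Fubini applied to the $\mathcal{L}([0,T])\times\mathcal{B}(\R^{2d})$-measurable and (locally, by Cauchy–Schwarz \eqref{I cs ineq} against the fixed $h$) integrable integrand. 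A clean alternative, avoiding Pettis, is to approximate: mollify $f(t,\cdot)$ in space by convolution with an approximate identity $\rho_\varepsilon$ to get $f_\varepsilon(t,\cdot)\in\mathcal{S}(\R^d)$-valued simple-ish functions that are manifestly strongly measurable and converge to $f(t,\cdot)$ in $\mathcal{H}$ for a.e.\ $t$; I would likely present whichever is shorter. Either way one concludes $f\in\mathcal{H}_T$.

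Finally, for the inner product formula I would argue by polarization or directly. Given $f,g$ both satisfying the hypotheses, for a.e.\ $t$ both $f(t,\cdot),g(t,\cdot)\in\mathcal{I}\subset\mathcal{H}$, so the identification $\mathcal{I}\subset\mathcal{H}$ gives $\langle f(t,\cdot),g(t,\cdot)\rangle_{\mathcal{H}} = \int_{\R^{2d}}f(t,x)g(t,y)\gamma(x-y)\,dx\,dy$ for a.e.\ $t$; this uses \eqref{I cs ineq} to guarantee the right-hand side is a well-defined finite number. Integrating in $t$ and invoking the definition of the inner product on $\mathcal{H}_T = L^2([0,T];\mathcal{H})$, namely $\langle f,g\rangle_{\mathcal{H}_T} = \int_0^T\langle f(t,\cdot),g(t,\cdot)\rangle_{\mathcal{H}}\,dt$, yields
\begin{equation*}
    \langle f,g\rangle_{\mathcal{H}_T} = \int_0^T\int_{\R^{2d}}f(t,x)g(t,y)\gamma(x-y)\,dx\,dy\,dt,
\end{equation*}
where the interchange of $\int_0^T$ with the $\R^{2d}$-integral is legitimate by Fubini, since the absolute integrand is integrable over $[0,T]\times\R^{2d}$ by the hypothesis on $f$ and $g$ combined with \eqref{I cs ineq}. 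This completes the proof.
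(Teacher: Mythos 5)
Your approach matches the paper's: you reduce to Lemma \ref{Lem I H} via Fubini--Tonelli to get $f(t,\cdot)\in\mathcal{I}\subset\mathcal{H}$ for a.e.\ $t$, then invoke the Pettis measurability theorem with weak measurability tested against $\mathcal{S}(\R^d)$, and finally derive the inner-product identity by Fubini. The one detail the paper spells out that you gloss over is the modification step: $t\mapsto f(t,\cdot)$ is only defined a.e., so one must fix an everywhere-defined representative $\tilde f$ and then invoke the \emph{completeness} of the Lebesgue measure space $([0,T],\mathcal{L}([0,T]),dt)$ to upgrade ``$\langle\tilde f(t,\cdot),\varphi\rangle_{\mathcal{H}}$ agrees a.e.\ with a measurable function'' to ``is measurable''; without completeness this step would be invalid, which is why the paper insists on $\mathcal{L}([0,T])$ rather than $\mathcal{B}([0,T])$.
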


\begin{proof}
By Lemma \ref{Lem I H}, we have for almost every $t\in [0,T]$ that $f(t,\cdot) \in \mathcal{H}$ and 
\begin{equation*}
    \langle f(t,\cdot), \varphi \rangle_{\mathcal{H}} = \int_{\R^{2d}}f(t,x)\varphi(y)\gamma(x-y)dxdy, \quad \varphi \in \mathcal{S}(\R^d).
\end{equation*}
Let $\Tilde{f}\colon [0,T] \to \mathcal{H}$ be any modification of $f$, and we will show that $\Tilde{f} \in \mathcal{H}_T$. 
Fix $\varphi \in \mathcal{S}(\R^d)$.
It holds that for almost every $t$,
\begin{equation}
\label{lb1}
    \langle \Tilde{f}(t,\cdot), \varphi \rangle_{\mathcal{H}} = \int_{\R^{2d}}f(t,x)\varphi(y)\gamma(x-y)dxdy.
\end{equation}
Because the right-hand side of \eqref{lb1} is $\mathcal{L}([0,T])$-measurable, the function $\langle \Tilde{f}(t,\cdot), \varphi \rangle_{\mathcal{H}}$ is also $\mathcal{L}([0,T])$-measurable, thanks to the completeness of the measure space.
Taking into account that $\mathcal{S}(\R^d)$ is dense in $\mathcal{H}$, we conclude from the Pettis measurability theorem (\textit{cf.} \cite[Theorem 1.1.6]{MR3617205}) that $\Tilde{f}\colon [0,T] \to \mathcal{H}$ is strongly measurable, where $\mathcal{H}$ is endowed with the Borel $\sigma$-algebra.
Moreover, by Lemma \ref{Lem I H}, we have
\begin{equation*}
    \int_0^T \lVert \Tilde{f}(t) \rVert^2_{\mathcal{H}}dt = \int_0^T\int_{\R^{2d}} f(t,x)f(t,y)\gamma(x-y)dxdydt < \infty.
\end{equation*}
This gives $\tilde{f} \in \mathcal{H}_T$, which also implies $f \in \mathcal{H}_T$.  
The last claim follows from a similar argument.
\end{proof}

The following result is also known. See \textit{e.g.} \cite[Lemma 2.4]{DalangQuel} for the proof.

\begin{Lem}
\label{Lem dense subspace in H_T}
$C_{\mathrm{c}}^{\infty}([0,T] \times\R^d)$ is dense in $(\mathcal{H}_T, \lVert \cdot \rVert_{\mathcal{H}_T})$.
\end{Lem}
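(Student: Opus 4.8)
The plan is to build $C_{\mathrm{c}}^{\infty}([0,T]\times\R^d)$ out of tensor products of smooth functions in each variable, exploiting the Bochner structure $\mathcal{H}_T = L^2([0,T];\mathcal{H})$.

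The one genuinely nontrivial ingredient is an auxiliary density statement: $C_{\mathrm{c}}^{\infty}(\R^d)$ is dense in $\mathcal{H}$. Since $\mathcal{H}$ is by definition the completion of $\mathcal{S}(\R^d)$, it is enough to approximate each $\varphi\in\mathcal{S}(\R^d)$ in $\lVert\cdot\rVert_{\mathcal{H}}$ by compactly supported smooth functions. I would fix a cutoff $\chi\in C_{\mathrm{c}}^{\infty}(\R^d)$ with $\chi\equiv 1$ on $\mathbb{B}_1$, set $\chi_n(x)\coloneqq\chi(x/n)$, and note $\chi_n\varphi\in C_{\mathrm{c}}^{\infty}(\R^d)\subset\mathcal{S}(\R^d)\subset\mathcal{I}$. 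Using the isometric embedding $\mathcal{I}\hookrightarrow\mathcal{H}$ of Lemma \ref{Lem I H},
\[
  \lVert\chi_n\varphi-\varphi\rVert_{\mathcal{H}}^2 = \int_{\R^{2d}}(\chi_n\varphi-\varphi)(x)\,(\chi_n\varphi-\varphi)(y)\,\gamma(x-y)\,dx\,dy .
\]
The integrand tends to $0$ pointwise as $n\to\infty$, because $\chi_n(x)\to 1$ for every fixed $x$, and it is bounded in absolute value by $4\,\lvert\varphi(x)\rvert\,\lvert\varphi(y)\rvert\,\gamma(x-y)$, which is integrable since $\varphi\in\mathcal{I}$. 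Dominated convergence then gives $\lVert\chi_n\varphi-\varphi\rVert_{\mathcal{H}}\to 0$, proving the claim.

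With this in hand I would combine two standard density facts. First, simple $\mathcal{H}$-valued functions $\sum_i\ind_{A_i}(t)\,h_i$ with $A_i\in\mathcal{L}([0,T])$ and $h_i\in\mathcal{H}$ are dense in $\mathcal{H}_T=L^2([0,T];\mathcal{H})$; this is the defining property of the Bochner space, using that $[0,T]$ has finite measure. Second, $\mathcal{S}(\R^d)$ is dense in $\mathcal{H}$. Together these show that finite linear combinations of products $\ind_A(t)\,\varphi(x)$, with $A\in\mathcal{L}([0,T])$ and $\varphi\in\mathcal{S}(\R^d)$, are dense in $\mathcal{H}_T$, so it suffices to approximate a single such product $\ind_A\otimes\varphi$ by elements of $C_{\mathrm{c}}^{\infty}([0,T]\times\R^d)$. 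Given $\varepsilon>0$, I would pick $g$ smooth on $[0,T]$ (indeed one may take $g\in C^\infty((0,T))$ with compact support, since these are dense in $L^2([0,T])$) with $\lVert\ind_A-g\rVert_{L^2([0,T])}$ small, and, by the previous paragraph, $\psi\in C_{\mathrm{c}}^{\infty}(\R^d)$ with $\lVert\varphi-\psi\rVert_{\mathcal{H}}$ small. Then $g\otimes\psi\in C_{\mathrm{c}}^{\infty}([0,T]\times\R^d)$, its support being contained in the compact set $[0,T]\times\supp\psi$, and
\[
  \lVert\ind_A\otimes\varphi - g\otimes\psi\rVert_{\mathcal{H}_T} \leq \lVert\ind_A-g\rVert_{L^2([0,T])}\lVert\varphi\rVert_{\mathcal{H}} + \lVert g\rVert_{L^2([0,T])}\lVert\varphi-\psi\rVert_{\mathcal{H}},
\]
which is at most $\varepsilon$ for suitable choices. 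This completes the argument.

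The only step where real care is needed is the auxiliary density of $C_{\mathrm{c}}^{\infty}(\R^d)$ in $\mathcal{H}$ — equivalently, that truncating a Schwartz function costs nothing in the $\gamma$-weighted norm — and this is precisely where one uses that Schwartz functions lie in $\mathcal{I}$, so that $\lvert\varphi(x)\rvert\,\lvert\varphi(y)\rvert\,\gamma(x-y)$ is an integrable dominating function. The remaining steps are the routine "simple functions are dense, then tensorize and smooth each factor" pattern; the only measurability subtlety, namely that the relevant functions of $(t,x)$ actually define elements of $\mathcal{H}_T$, is already covered by Lemma \ref{Lem H_T}.
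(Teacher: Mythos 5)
Your argument is correct. The paper itself does not supply a proof of this lemma, citing \cite[Lemma 2.4]{DalangQuel} instead, so there is no ``paper's own proof'' to compare against; your job was effectively to reconstruct a proof ab initio, and you have done so soundly. The two-step structure --- first establishing density of $C_{\mathrm{c}}^{\infty}(\R^d)$ in $\mathcal{H}$ by cutting off a Schwartz function and invoking dominated convergence against the integrable majorant $4\lvert\varphi(x)\rvert\lvert\varphi(y)\rvert\gamma(x-y)$ (which is where $\mathcal{S}(\R^d)\subset\mathcal{I}$ and the isometry of Lemma \ref{Lem I H} are used), and then tensorizing with the standard density of simple and then smooth compactly supported functions in $L^2([0,T])$ --- is the natural route given the Bochner structure $\mathcal{H}_T = L^2([0,T];\mathcal{H})$, and every step goes through. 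Two small points worth keeping explicit if this were written out in full: (i) $(\chi_n-1)\varphi$ is again Schwartz (so its $\mathcal{H}$-norm is computed via $\mathcal{I}$), because the derivatives $\partial^\alpha\chi_n = n^{-\lvert\alpha\rvert}(\partial^\alpha\chi)(\cdot/n)$ are uniformly bounded; and (ii) $\lVert g\rVert_{L^2([0,T])}$ in your final estimate is controlled by $\lVert\ind_A\rVert_{L^2}+\lVert\ind_A-g\rVert_{L^2}\leq \sqrt{T}+1$, so the second term can indeed be made small. Neither affects the validity of the argument.
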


Recall that we are considering the complete probability space $(\Omega, \mathscr{F},P)$.
The following result will be needed in Section \ref{section Malliavin derivative of the approximation sequence}.

\begin{Prop}
\label{Prop L^2(Omega H_T)}
Let $p \in [2,\infty)$ and let $f\colon [0,T] \times \R^d \times \Omega \to \R$ be a $\mathcal{L}([0,T]) \times \mathcal{B}(\R^d) \times \mathscr{F}/\mathcal{B}(\R)$-measurable function such that
\begin{equation*}
    \E\left[ \int_0^T \int_{\R^{2d}}|f(t,x)f(t,y)|\gamma(x-y)dxdydt \right] < \infty.
\end{equation*}
Then $f(\cdot, \star, \omega) \in \mathcal{H}_T$ for $P$-a.s. $\omega \in \Omega$, and the almost-surely-defined function $f\colon \Omega \to \mathcal{H}_T$ belongs to $L^2(\Omega;\mathcal{H}_T)$.
If $g$ satisfies the same conditions as $f$, then it holds that
\begin{equation*}
    \langle f, g \rangle_{L^2(\Omega;\mathcal{H}_T)} 
    = \E\left[\int_0^T\int_{\R^{2d}}f(t,x)g(t,y)\gamma(x-y)dxdydt\right] .
\end{equation*}
If $f$ additionally satisfies 
\begin{equation}
\label{lc1}
    \E\left[ \left(\int_0^T \int_{\R^{2d}}f(t,x)f(t,y)\gamma(x-y)dxdydt \right)^{\frac{p}{2}} \right] < \infty,
\end{equation}
then we have $f \in L^p(\Omega;\mathcal{H}_T)$ and $\lVert f \rVert_{L^p(\Omega; \mathcal{H}_T)}^p$ equals the left-hand side of \eqref{lc1}.
\end{Prop}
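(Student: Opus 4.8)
The plan is to mimic the proofs of Lemmas \ref{Lem H_T} and \ref{Lem I H}, upgrading the measure space from $([0,T], \mathcal{L}([0,T]), dt)$ to the product $([0,T] \times \Omega, \mathcal{L}([0,T]) \otimes \mathscr{F}, dt \otimes P)$, and treating $\mathcal{H}_T = L^2([0,T];\mathcal{H})$ as the target Hilbert space in place of $\mathcal{H}$. First I would apply Lemma \ref{Lem H_T} fibrewise in $\omega$: for $P$-a.s. $\omega$, the hypothesis together with Fubini (the integrand is nonnegative, so the iterated integral over $[0,T]\times\R^{2d}$ is finite for a.e.\ $\omega$) shows $f(\cdot,\star,\omega)$ satisfies the hypothesis of Lemma \ref{Lem H_T}, hence $f(\cdot,\star,\omega)\in\mathcal{H}_T$ with $\lVert f(\cdot,\star,\omega)\rVert_{\mathcal{H}_T}^2 = \int_0^T\int_{\R^{2d}}f(t,x,\omega)f(t,y,\omega)\gamma(x-y)\,dxdydt$. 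This already gives the desired almost-surely-defined map $\Omega \to \mathcal{H}_T$.

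Next I would establish strong $\mathscr{F}/\mathcal{B}(\mathcal{H}_T)$-measurability of this map via the Pettis measurability theorem, exactly as in Lemma \ref{Lem H_T}. Since $\mathcal{H}_T$ is separable (it is $L^2$ of a $\sigma$-finite measure space with values in the separable Hilbert space $\mathcal{H}$), it suffices to test weak measurability against a countable total subset. A convenient choice is $\{\,t\mapsto \ind_A(t)\varphi : A \in \mathcal{L}([0,T]),\ \varphi\in\mathcal{S}(\R^d)\,\}$, or rather a countable dense subfamily thereof; for such an element, $\langle f(\cdot,\star,\omega), \ind_A\varphi\rangle_{\mathcal{H}_T} = \int_A\int_{\R^{2d}}f(t,x,\omega)\varphi(y)\gamma(x-y)\,dxdy\,dt$, which is $\mathscr{F}$-measurable in $\omega$ by the joint measurability hypothesis on $f$ and Fubini (after passing to a version, using completeness of $(\Omega,\mathscr{F},P)$ as in Lemma \ref{Lem H_T}). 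Having strong measurability and the fibrewise norm formula, integrating the latter over $\Omega$ gives $\E[\lVert f(\cdot,\star,\cdot)\rVert_{\mathcal{H}_T}^2] = \E[\int_0^T\int_{\R^{2d}}f(t,x)f(t,y)\gamma(x-y)\,dxdydt] < \infty$ by hypothesis, so $f\in L^2(\Omega;\mathcal{H}_T)$. The inner-product identity follows by polarization, or directly by running the same argument with $\langle f(\cdot,\star,\omega), g(\cdot,\star,\omega)\rangle_{\mathcal{H}_T}$ in place of the squared norm and invoking the corresponding identity in Lemma \ref{Lem H_T} together with the Cauchy--Schwarz bound \eqref{I cs ineq} to justify integrability. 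The final $L^p$ claim is immediate once the identification is in place: $\lVert f\rVert_{L^p(\Omega;\mathcal{H}_T)}^p = \E[\lVert f(\cdot,\star,\cdot)\rVert_{\mathcal{H}_T}^p]$ by definition of the Bochner space, and by the fibrewise norm formula $\lVert f(\cdot,\star,\omega)\rVert_{\mathcal{H}_T}^p = (\int_0^T\int_{\R^{2d}}f(t,x,\omega)f(t,y,\omega)\gamma(x-y)\,dxdydt)^{p/2}$, whose expectation is exactly the left-hand side of \eqref{lc1}; strong measurability was already shown, so membership in $L^p(\Omega;\mathcal{H}_T)$ follows from finiteness of \eqref{lc1}.

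The main obstacle, as in Lemma \ref{Lem H_T}, is the measurability bookkeeping rather than any analytic estimate: one must be careful that the fibrewise objects $f(\cdot,\star,\omega)$ form a genuinely $\mathscr{F}$-measurable family of elements of $\mathcal{H}_T$, not merely a family defined for a.e.\ $\omega$, and this is precisely where the completeness of $(\Omega,\mathscr{F},P)$ and the Pettis theorem do the work. One subtlety worth flagging explicitly: the map $\omega\mapsto f(\cdot,\star,\omega)$ is only defined up to a $P$-null set (on which $f(\cdot,\star,\omega)\notin\mathcal{H}_T$), so strictly speaking one fixes an arbitrary modification, checks it is strongly measurable, and notes the resulting $L^2(\Omega;\mathcal{H}_T)$-element is independent of the choice — the same device used for $\tilde f$ in the proof of Lemma \ref{Lem H_T}. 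Everything else is a routine transfer of that argument to the new setting.
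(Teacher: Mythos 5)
Your proposal is correct and follows essentially the same route as the paper: apply Lemma \ref{Lem H_T} fibrewise in $\omega$, invoke the Pettis measurability theorem against a total family in $\mathcal{H}_T$ (the paper points to Lemma \ref{Lem dense subspace in H_T} for this; your family $\ind_A\varphi$ with $\varphi\in\mathcal{S}(\R^d)$ is an equally valid dense choice), then integrate the fibrewise norm formula to obtain $L^2$ membership, the inner-product identity, and the $L^p$ claim. The ``fix a modification'' device you flag at the end is precisely the $\tilde f$ trick used in the proof of Lemma \ref{Lem H_T}, which the paper's one-line proof is implicitly reusing here.
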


\begin{proof}
A similar argument as in the proof of Lemma \ref{Lem H_T}, using Lemmas \ref{Lem H_T} and \ref{Lem dense subspace in H_T} and the Pettis measurability theorem, shows that the almost-surely-defined function $f\colon \Omega \to \mathcal{H}_T$ belongs to $L^2(\Omega; \mathcal{H}_T)$.
The rest of the proof is clear.
\end{proof}

\subsection{Stochastic integrals}
\label{subsection Stochastic integrals}
In this section, we introduce some basic notions of the stochastic integrals used in this paper. 
The reader is referred to \cite{MR1684157, DalangQuel} for more information.

Let $W = \{W(\varphi) \mid \varphi \in C_{\mathrm{c}}^{\infty}([0,T] \times \R^d)\}$ be a centered Gaussian process, with covariance function
\begin{equation*}
    \E[W(\varphi)W(\psi)] = \langle \varphi, \psi \rangle_{\mathcal{H}_T}, 
\end{equation*}
defined on the complete probability space $(\Omega, \mathscr{F},P)$.
We assume that $\mathscr{F}$ is generated by $W$.
Because $\varphi \mapsto W(\varphi)$ is a linear isometry from the dense subspace of $(\mathcal{H}_T, \lVert \cdot \rVert_{\mathcal{H}_T})$ to $L^2(\Omega, \mathscr{F},P)$ by Lemma \ref{Lem dense subspace in H_T}, $W(\varphi)$ can be defined for every $\varphi \in \mathcal{H}_T$ by extending the isometry.

Let $W_t(A) \coloneqq W(\ind_{[0,t]}\ind_A)$ for any $t \geq 0$ and $ A \in \mathcal{B}_{\mathrm{b}}(\R^d)$. 
Note that $\ind_{[0,t]}\ind_A \in \mathcal{H}_T$ since $\ind_A \in \mathcal{I} \subset \mathcal{H}$.
Let $\mathscr{F}_t^0$ denote $\sigma$-field generated by $\{W_s(A) \mid A \in \mathcal{B}_{\mathrm{b}}(\mathbb{R}^d), 0 \leq s \leq t\}$ and $P$-null sets, and define $\mathscr{F}_t \coloneqq \cap_{s > t} \mathscr{F}_s^0$ for $t \in [0,T)$ and $\mathscr{F}_T \coloneqq \mathscr{F}_T^0$.
Then the process $\{W_t(A), \mathscr{F}_t, t \in [0,T], A \in \mathcal{B}_{\mathrm{b}}(\mathbb{R}^d)\}$ is a worthy martingale measure (\textit{cf.} \cite{Walsh}).
The associated covariance measure $Q$ and dominating measure $K$ are given by 
\begin{equation*}
    Q((s,t] \times A \times B) = K((s,t] \times A \times B) = (t-s)\int_{A}\int_{B}\gamma(y-z)dydz, \quad 0\leq s<t \leq T,\ A,B\in\mathcal{B}_{\mathrm{b}}(\R^d).
\end{equation*}

Let $X = \{ X(t,x) \mid (t,x) \in [0,T] \times \mathbb{R}^d \}$ be a stochastic process on $(\Omega,\mathscr{F},P)$. 
We say that $X$ is $(\mathscr{F}_t)$-adapted if $X(t,x)$ is $\mathscr{F}_t$-measurable for every $(t,x) \in [0,T] \times \mathbb{R}^d$, and that $X$ is stochastically continuous (resp. $L^2(\Omega)$-continuous) if it is continuous in probability (resp. continuous in $L^2(\Omega)$) at any point $(t,x) \in [0,T] \times \mathbb{R}^d$.
We also say that $X$ is measurable if it is measurable with respect to $\mathcal{B}([0,T]) \times \mathcal{B}(\R^d) \times \mathscr{F}$.
A process $X$ is called elementary with respect to the filtration $(\mathscr{F}_t)$ if it has the form 
\begin{equation}
\label{elementary}
    X(t,x,\omega) = Y(\omega)\ind_{(a,b]}(t)\ind_{A}(x),
\end{equation}
where $Y$ is a bounded $\mathscr{F}_a$-measurable random variable, $0 \leq a < b \leq T$, and $A \in \mathcal{B}_{b}(\R^d)$.
A finite sum of elementary processes is called simple, and the set of simple processes is denoted by $\mathfrak{S}$.
The predictable $\sigma$-field $\mathcal{P}$ on $[0,T] \times \mathbb{R}^d \times \Omega$ is the $\sigma$-field generated by $\mathfrak{S}$.
We say that $X$, which is not necessarily simple, is predictable if it is $\mathcal{P}$-measurable.
According to \cite[Proposition B.1]{MR4017124}, a process $X$ has a predictable modification if $X$ is $(\mathscr{F}_t)$-adapted and stochastically continuous.

Let $\mathcal{P}_{+}$ denote the set of all predictable process $X$ such that
\begin{equation*}
    \norm{X}_{+} \coloneqq \E\left[\int_{0}^T\int_{\R^{2d}}|X(t,x)X(t,y)|\gamma(x-y)dxdydt\right]^{\frac{1}{2}} < \infty. 
\end{equation*}
Then $\norm{\cdot}_{+}$ defines the norm on $\mathcal{P}_{+}$ if we identify two functions $f,g \in \mathcal{P}_{+}$ such that $\norm{f-g}_{+} = 0$.
It is known (\textit{cf.} \cite[chapter 2]{Walsh}) that $(\mathcal{P}_{+}, \norm{\cdot}_{+})$ is complete and $\mathfrak{S}$ is dense in $\mathcal{P}_{+}$.
In \cite{Walsh}, Walsh defines the stochastic integral $X\cdot W = \{(X \cdot W)_t \mid t \in [0,T]\}$ for any $X \in \mathcal{P}_{+}$ by first defining the integral for simple processes and then making an approximation in $L^2(\Omega)$ using 
\begin{align*}
    \E[(X\cdot W)_T^2] = \E\left[\int_{0}^T\int_{\R^{2d}}X(t,x)X(t,y)\gamma(x-y)dxdydt\right] \leq \lVert X \rVert_{+}^2
\end{align*}
and the denseness of $\mathfrak{S}$.
We will call this integral $X \cdot W$ the Walsh integral of $X$ (with respect to the martingale measure $W$) and often use the notation
\begin{equation}
\label{Walsh integral notation}
    (X\cdot W)_t = \int_0^t\int_{\R^d}X(s,y)W(ds,dy).
\end{equation}
The Walsh integral $\{(X \cdot W)_t \mid t \in [0,T]\}$ is continuous square-integrable $(\mathscr{F}_t)$-martingale with quadratic variation
\begin{equation*}
    \int_0^t\int_{\R^{2d}}X(s,y)X(s,z)\gamma(y-z)dydzds, \quad 0\leq t \leq T.
\end{equation*}

Define for any $X,Y \in \mathfrak{S}$,
\begin{align*}
    \langle X,Y \rangle_0 = \E\left[\int_{0}^T\int_{\R^{2d}}X(t,x)Y(t,y)\gamma(x-y)dxdydt\right] \quad \text{and} \quad \lVert X \rVert_{0} = \sqrt{\langle X,X \rangle_0}.
\end{align*}
By identifying simple processes $X$ and $Y$ such that $\lVert X-Y\rVert_{0} = 0$, $(\mathfrak{S}, \langle \cdot, \cdot \rangle_0)$ becomes a pre-Hilbert space and we write its completion as $\mathcal{P}_0$.
Clearly, we have $\mathfrak{S} \subset \mathcal{P}_+ \subset \mathcal{P}_{0}$ since $\lVert X \rVert_{0} \leq \lVert X \rVert_+ < \infty$ for $X \in \mathfrak{S}$.
Observe that this time we have
\begin{align*}
    \E[(X\cdot W)_T^2] = \E\left[\int_{0}^T\int_{\R^{2d}}X(t,x)X(t,y)\gamma(x-y)dxdydt\right] = \lVert X \rVert_0^2
\end{align*}
for $X \in \mathfrak{S}$.
In \cite{MR1684157}, Dalang extends the Walsh integral and defines the stochastic integral $X \cdot W$ for any $X \in \mathcal{P}_0$ by extending this isometry. 
We will call this integral the Dalang integral of $X \in \mathcal{P}_0$ (with respect to the martingale measure $W$), and by abuse of notation, we also write it as \eqref{Walsh integral notation}.

\begin{Rem}
\begin{enumerate}[wide, labelindent=0pt] 
    \item[(1)] If $X \in \mathcal{P}_{+} \subset \mathcal{P}_0$, then the Dalang integral of $X$ agrees with its Walsh integral.
    \item[(2)] Because $\mathfrak{S} \subset L^2(\Omega;\mathcal{H}_T)$ and two norms, $\lVert \cdot \rVert_0$ and $\lVert \cdot \rVert_{L^2(\Omega;\mathcal{H}_T)}$, are the same on $\mathfrak{S}$, we can naturally identify $\mathcal{P}_0$ with the closed subspace of $L^2(\Omega;\mathcal{H}_T)$.
\end{enumerate}
\end{Rem}

We now consider more general martingale measures than $W$.
Let $Z = \{Z(t,x) \mid (t,x) \in [0,T]\times \mathbb{R}^d\}$ be a predictable process such that
\begin{equation}
\label{Z uniform L2 bound}
    \sup_{(t,x) \in [0,T] \times \R^d} \lVert Z(t,x) \rVert_2 < \infty.
\end{equation}
Then $W^Z_t$ defined by
\begin{equation*}
    W^Z_t(A) \coloneqq \int_0^t\int_{A} Z(s,y)W(ds,dy), \quad t \in [0,T], A \in \mathcal{B}_{\mathrm{b}}(\R^d)
\end{equation*}
is again the worthy martingale measure, and its covariance measure $Q_Z$ and dominating measure $K_Z$ are given by
\begin{align*}
    Q_Z((s,t] \times A \times B) &= \E\left[\int_s^t\int_{A}\int_{B} Z(r,y)Z(r,z)\gamma(y-z)dydzdr\right],\\ 
    K_Z((s,t] \times A \times B) &= \E\left[\int_s^t\int_{A}\int_{B} |Z(r,y)Z(r,z)|\gamma(y-z)dydzdr\right],
\end{align*}
for $0 \leq s < t\leq T$ and $A,B \in \mathcal{B}_{\mathrm{b}}(\R^d)$.
Similar to $\mathcal{P}_+$ and $\mathcal{P}_0$ for the martingale measure $W$, we can define classes $\mathcal{P}_{+,Z}$ and $\mathcal{P}_{0,Z}$ for $W^Z$ using the norms $\lVert \cdot \rVert_{+,Z}$ and $\lVert \cdot \rVert_{0,Z}$ defined by
\begin{align*}
    \lVert X \rVert_{+,Z} &= \E\left[\int_{0}^T\int_{\R^{2d}}|X(t,x)X(t,y)||Z(t,x)Z(t,y)|\gamma(x-y)dxdydt\right]^{\frac{1}{2}},\\
    \lVert X \rVert_{0,Z} &= \E\left[\int_{0}^T\int_{\R^{2d}}X(t,x)X(t,y)Z(t,x)Z(t,y)\gamma(x-y)dxdydt\right]^{\frac{1}{2}}.
\end{align*}
As before, we have $\mathcal{P}_{+,Z} \subset \mathcal{P}_{0,Z}$.
Walsh integrals and Dalang integrals with respect to $W^Z$ are then defined for any $X \in \mathcal{P}_{+,Z}$ and $X \in \mathcal{P}_{0,Z}$, respectively.
For these integrals, we will use the following notations:
\begin{equation*}
    (X \cdot W^Z)_t  
    = \int_0^t \int_{\R^d}X(s,y)W^Z(ds,dy).
\end{equation*}

\begin{Rem}
\phantomsection
\label{Rem 2.2}
\begin{enumerate}[wide, labelindent=0pt]
    \item [(1)] If $f(t,x)$ is a deterministic measurable function on $[0,T] \times \R^d$ such that $\lVert f \rVert_+ < \infty$, then we have $f \cdot Z \in \mathcal{P}_{+}$ and $f \in \mathcal{P}_{+,Z} \subset \mathcal{P}_{0,Z}$, and it holds that 
    \begin{equation*}
        \int_0^T\int_{\R^d}f(s,y)Z(s,y)W(ds,dy) = \int_0^T\int_{\R^d}f(s,y)W^{Z}(ds,dy).
    \end{equation*}
    Indeed, this equation clearly holds when $f(s,y) = \ind_{(a,b]}(s)\ind_A(y)$, where $A \in \mathcal{B}_{\mathrm{b}}(\R^d)$, and the general case can be shown by approximating $f$ by a deterministic simple process. 
    \item [(2)] Let $Z_1(t,x)$ and $Z_2(t,x)$ be predictable processes satisfying \eqref{Z uniform L2 bound}.
    It is easy to see that
    \begin{equation*}
        X_1 \cdot W^{Z_1} + X_2 \cdot W^{Z_1} = (X_1 + X_2) \cdot W^{Z_1} \quad \text{and} \quad Y \cdot W^{Z_1} + Y \cdot W^{Z_2} = Y \cdot W^{Z_1 + Z_2}
    \end{equation*}
    for any $X_1,X_2 \in \mathcal{P}_{0,Z_1}$ and $Y \in \mathcal{P}_{+,Z_1} \cap \mathcal{P}_{+,Z_2}$.
\end{enumerate}
\end{Rem}

Now we additionally assume that $Z$ has a spatially homogeneous covariance, that is, for every $t \in [0,T]$,
\begin{equation}
\label{Z homogeneous cov}
    \E[Z(t,x)Z(t,y)] = \E[Z(t,x-y)Z(t,0)] \quad \text{for any $x,y \in \R^d$}.
\end{equation}
For such $Z$, set 
\begin{equation}
\label{gamma^Z}
    \gamma^{Z}(t,x) = \gamma(x)\E[Z(t,x)Z(t,0)], \quad t \in [0,T], \ x \in \R^d.
\end{equation}
The following result is used in \cite{MR1684157} to show that certain tempered distributions can be identified with the element of $\mathcal{P}_{0,Z}$.

\begin{Lem}
\label{Lem fourier bochner schwartz}
Let $\{Z(t,x) \mid (t,x) \in [0,T]\times \mathbb{R}^d\}$ be a predictable process satisfying \eqref{Z uniform L2 bound} and \eqref{Z homogeneous cov}.
For every $t \in [0,T]$, $\gamma^Z(t, \cdot)$ is a nonnegative definite tempered distribution, and there is a nonnegative tempered measure $\mu_t^Z$ such that 
\begin{equation}
    \gamma^Z(t, \cdot) = \F \mu_t^Z \quad \text{in $\mathcal{S}_{\C}'(\R^d)$.} \label{mu_t^Z def}
\end{equation}
\end{Lem}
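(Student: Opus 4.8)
The plan is to verify directly that $\gamma^Z(t,\cdot)$ is a nonnegative definite tempered distribution and then invoke the Bochner--Schwartz theorem exactly as was done for $\gamma$ itself in the preliminaries. First I would check that $\gamma^Z(t,\cdot)$ is a well-defined tempered distribution: since $\gamma$ is locally integrable and is a tempered measure (so $\int \langle x\rangle^{-k}\gamma(x)\,dx<\infty$ for some $k$), and since $|\E[Z(t,x)Z(t,0)]| \le \sup_{(s,y)}\lVert Z(s,y)\rVert_2^2 < \infty$ by \eqref{Z uniform L2 bound} and Cauchy--Schwarz, the product $\gamma^Z(t,x) = \gamma(x)\E[Z(t,x)Z(t,0)]$ is dominated pointwise by a constant multiple of $\gamma(x)$. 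Hence $\gamma^Z(t,\cdot)$ defines an element of $\mathcal{S}'_{\C}(\R^d)$ through $\varphi \mapsto \int_{\R^d}\varphi(x)\gamma^Z(t,x)\,dx$; one should also note it is symmetric in $x$ because both $\gamma$ and $x\mapsto \E[Z(t,x)Z(t,0)]$ are (the latter by \eqref{Z homogeneous cov} with $y$ and $x$ swapped, plus stationarity of the noise structure, or simply $\E[Z(t,x)Z(t,0)]=\E[Z(t,0)Z(t,-x)]$ paired with $\gamma(-x)=\gamma(x)$).

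The core step is nonnegative definiteness: for every $\varphi \in \mathcal{S}_{\C}(\R^d)$ I must show $\int_{\R^d}(\varphi\ast\varphi^{\natural})(x)\,\gamma^Z(t,x)\,dx \ge 0$, where $\varphi^{\natural}(x)=\overline{\varphi(-x)}$. Expanding the convolution,
\begin{equation*}
    \int_{\R^d}(\varphi\ast\varphi^{\natural})(x)\gamma^Z(t,x)\,dx = \int_{\R^{2d}}\varphi(x)\overline{\varphi(y)}\,\gamma(x-y)\,\E[Z(t,x-y)Z(t,0)]\,dx\,dy.
\end{equation*}
Using \eqref{Z homogeneous cov} to rewrite $\gamma(x-y)\E[Z(t,x-y)Z(t,0)] = \gamma(x-y)\E[Z(t,x)Z(t,y)]$ and then Fubini (justified by the domination $|\gamma(x-y)\E[Z(t,x)Z(t,y)]| \le C\gamma(x-y)$ together with the integrability $\int_{\R^{2d}}|\varphi(x)\varphi(y)|\gamma(x-y)\,dx\,dy<\infty$ for Schwartz $\varphi$, which holds since $\gamma\in\mathcal{I}$-type estimates apply), one gets
\begin{equation*}
    \E\!\left[\int_{\R^{2d}}\varphi(x)\overline{\varphi(y)}\,Z(t,x)Z(t,y)\,\gamma(x-y)\,dx\,dy\right].
\end{equation*}
The inner double integral equals $\big\langle \varphi Z(t,\cdot), \varphi Z(t,\cdot)\big\rangle_{\mathcal{H}_{\C}}$ in the sense of the complexified $\mathcal{H}$-inner product associated with $\gamma$ — that is, it is of the form $\int_{\R^d}|\F(\varphi Z(t,\cdot))(\xi)|^2\,\mu(d\xi) \ge 0$ pointwise in $\omega$ (this is the standard fact that $\int_{\R^{2d}}\psi(x)\overline{\psi(y)}\gamma(x-y)\,dx\,dy\ge 0$ for integrable $\psi$, proved by the same approximation argument used right after the definition of $\mathcal I$ in the excerpt). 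Since the integrand is nonnegative $P$-a.s., its expectation is nonnegative, giving nonnegative definiteness of $\gamma^Z(t,\cdot)$.

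Finally, since $\gamma^Z(t,\cdot)$ is a nonnegative definite tempered distribution, the Bochner--Schwartz theorem (cited in the excerpt as \cite[Chapitre VII, Th\'eor\`eme XVIII]{Schwartz}) yields a nonnegative tempered measure $\mu_t^Z$ with $\gamma^Z(t,\cdot) = \F\mu_t^Z$ in $\mathcal{S}_{\C}'(\R^d)$, which is exactly \eqref{mu_t^Z def}. I expect the main obstacle to be the bookkeeping around Fubini and the approximation of $\varphi Z(t,\cdot)$ by nice functions so that the identity $\int_{\R^{2d}}\psi(x)\overline{\psi(y)}\gamma(x-y)\,dx\,dy = \int|\F\psi|^2\,d\mu$ can be applied with $\psi$ random rather than deterministic — but this is routine given the $\mathcal{I}$-machinery and the uniform $L^2$-bound \eqref{Z uniform L2 bound}; no genuinely new idea is needed beyond recognizing that multiplying the kernel $\gamma$ by the pointwise-positive-semidefinite function $(x,y)\mapsto\E[Z(t,x)Z(t,y)]$ preserves positive definiteness (a Schur-product-type phenomenon).
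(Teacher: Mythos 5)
Your proof is correct and is close in spirit to the paper's argument, but it follows a slightly different path at the one genuinely non-trivial step, so the comparison is worth spelling out. Both proofs reduce to showing $\int_{\R^{2d}}\varphi(x)\overline{\varphi(y)}\,\E[Z(t,x)Z(t,y)]\,\gamma(x-y)\,dx\,dy \ge 0$ and then appeal to Bochner--Schwartz, and both use Fubini to rewrite this as the expectation of a random quadratic form. Where you diverge is in how you justify that the random function $x\mapsto\varphi(x)Z(t,x,\omega)$ is amenable to the identity $\int\psi(x)\overline{\psi(y)}\gamma(x-y)\,dx\,dy = \int|\F\psi|^2\,d\mu \ge 0$. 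You argue directly that this holds for a.e.\ $\omega$, which is true — the Fubini bound $\E\!\left[\int_{\R^{2d}}|\varphi(x)Z(t,x)||\varphi(y)Z(t,y)|\gamma(x-y)\,dx\,dy\right]<\infty$ (using \eqref{Z uniform L2 bound} and \eqref{I cs ineq}) shows the inner integral is a.s.\ finite, hence $\varphi Z(t,\cdot,\omega)\in\mathcal{I}$ a.s., and the $\mathcal{I}$-to-$\mathcal{H}$ isometry of Lemma \ref{Lem I H} then gives a.s.\ nonnegativity. The paper instead truncates $Z$ to $Z_N(t,x,\omega) = Z(t,x,\omega)\ind_{\{|Z|\le N\}}(t,x,\omega)\ind_{\mathbb{B}_N}(x)$ so that $\varphi Z_N(t,\cdot,\omega)\in\mathcal{I}$ \emph{for every} $\omega$ (not merely a.s.), obtains $\E[\lVert\varphi Z_N(t,\cdot)\rVert_{\mathcal{H}}^2]\ge 0$ directly, and then removes the truncation by dominated convergence. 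The paper's route avoids any ``a.e.\ $\omega$'' bookkeeping, while yours is marginally more direct but requires explicitly noting the a.s.\ membership in $\mathcal{I}$ — a point you gesture at as ``bookkeeping around Fubini and approximation'' without fully writing out. Neither approach is wrong, and no step in your outline would fail, but if you want the argument airtight you should state the a.s.\ finiteness observation rather than only asserting that the approximation is routine.
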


\begin{proof}
We have $\langle \cdot \rangle^{-k}\gamma^Z(t,\cdot) \in L^1(\R^d) \subset \mathcal{S}_{\C}'(\R^d)$ for some $k > 0$ by \eqref{gamma tempered cond} and \eqref{Z uniform L2 bound}, and hence $\gamma^Z(t,\cdot) \in \mathcal{S}_{\C}'(\R^d)$.
Now we show the nonnegative definiteness.
Since $\gamma^Z(t,x) = \gamma^Z(t,-x)$, we only need to show that 
\begin{equation*}
    \int_{\R^{2d}}\varphi(x)\varphi(y)\gamma^Z(t,x-y)dxdy \geq 0
\end{equation*}
for any $\varphi \in \mathcal{S}(\mathbb{R}^d)$.
If we let $Z_N(t,x,\omega) = Z(t,x, \omega) \ind_{\{|Z|\leq N\}}(t,x,\omega)\mathbf{1}_{\mathbb{B}_N}(x)$, then $x \mapsto \varphi(x)Z_N(t,x,\omega)$ belongs to $\mathcal{I}$ and hence to $\mathcal{H}$.
Using the dominated convergence theorem and Fubini's theorem, we have
\begin{align*}
    \int_{\R^{2d}}\varphi(x)\varphi(y)\gamma^Z(t,x-y)dxdy
    &= \lim_{N \to \infty}\int_{\R^{2d}}\varphi(x)\varphi(y)\E[Z_N(t,x)Z_N(t,y)]\gamma(x-y)dxdy\\
    &= \lim_{N \to \infty}\E\left[\int_{\R^{2d}}\varphi(x)Z_N(t,x)\varphi(y)Z_N(t,y)\gamma(x-y)dxdy\right]\\
    &= \lim_{N \to \infty}\E[\lVert \varphi(\cdot)Z_N(t,\cdot) \rVert^2_{\mathcal{H}}] \geq 0.
\end{align*}
The last assertion of the lemma follows from the Bochner-Schwartz theorem.
\end{proof}

\subsection{Malliavin calculus and Malliavin-Stein bound}
\label{subsection Malliavin calculus and Malliavin-Stein method}
Here we recall the basic facts of the Malliavin calculus based on the Gaussian process $\{W(\varphi) \mid \varphi \in \mathcal{H}_T\}$ defined in Section \ref{subsection Stochastic integrals}.
The results of Malliavin-Stein's method, which are necessary to prove the main results, are also presented.
For more details, the reader is referred to \cite{nourdin_peccati_2012, MR2200233}.

Let $C_{\mathrm{p}}^{\infty}(\mathbb{R}^m)$ denote the set of all smooth functions $f\colon \R^m \to \R$ such that $f$ and all its derivatives are at most of the polynomial growth. 
We say that $F\colon \Omega \to \R$ is a smooth random variable if $F$ is of the form $F=f(W(\varphi_1),\ldots W(\varphi_m))$ with $m\geq 1$, $f \in C_{\mathrm{p}}^{\infty}(\mathbb{R}^m)$, and $\varphi_1, \ldots, \varphi_m \in \mathcal{H}_T$.
The set of all smooth random variables is denoted by $\mathscr{S}$.
For a smooth random variable $F$ of the form above, its Malliavin derivative is the $\mathcal{H}_T$-valued random variable defined by
\begin{equation*}
    DF = \sum_{i=1}^m\frac{\partial f}{\partial x_i}(W(\varphi_1),\ldots, W(\varphi_m))\varphi_i.
\end{equation*}
Let $p\in[1,\infty)$ and let $\mathbb{D}^{1,p}$ denote the closure of $\mathscr{S}$ in $L^p(\Omega)$ with respect to the norm 
\begin{equation*}
    \lVert F \rVert_{1,p} \coloneqq ( \mathbb{E}[|F|^p] + \mathbb{E}[\lVert DF \rVert_{\mathcal{H}_T}^p] )^{\frac{1}{p}}.
\end{equation*}
It is known that the space $\mathscr{S}$ is dense in $L^p(\Omega)$ and that the Malliavin derivative operator $D\colon L^p(\Omega) \to L^p(\Omega;\mathcal{H}_T)$ with domain $\mathscr{S}$ is closable.
We will abuse the notation and write the closure of $D$ as $D$ again.
The domain of the closure $D$ in $L^p(\Omega)$ is then given by the Banach space $\mathbb{D}^{1,p}$.
Also, the operator $D$ is known to satisfy the following chain rule: 
If $F \in \mathbb{D}^{1,p}$ and $\psi \colon \R \to \R$ is a continuously differentiable function with bounded derivative, then $\psi(F) \in \mathbb{D}^{1,p}$ and 
\begin{equation*}
    D(\psi(F)) = \psi'(F)DF.
\end{equation*}

The divergence operator $\delta$ is defined as the adjoint operator of $D\colon \mathbb{D}^{1,2} \to L^2(\Omega; \mathcal{H}_T)$, and let $\mathrm{Dom}(\delta)$ be the domain of $\delta$.
The operators $D$ and $\delta$ satisfy the duality relation: 
\begin{equation}
\label{duality relation}
    \mathbb{E}[\delta(u)F] = \mathbb{E}[\langle u, DF \rangle_{\mathcal{H}_T}] \qquad \text{for any $F \in \mathbb{D}^{1,2}$ and $u \in \mathrm{Dom}(\delta)$}.
\end{equation}
Note that $\delta$ is a closed operator and that this relation and $D1 = 0$ imply that $\mathbb{E}[\delta(u)] = 0$ for all $u \in \mathrm{Dom}(\delta)$.
The operator $\delta$ is known to coincide with the Dalang integral (and hence with the Walsh integral) stated in Section \ref{subsection Stochastic integrals}.

\begin{Lem}
\label{Lem Skorokhod Walsh}
For any $X \in \mathcal{P}_0$, we have $X \in \mathrm{Dom}(\delta) \subset L^2(\Omega;\mathcal{H}_T)$ and 
\begin{equation*}
    \delta(X) = \int_0^T\int_{\R^d}X(s,y)W(ds,dy).
\end{equation*}
\end{Lem}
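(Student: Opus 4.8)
The plan is to establish the identity first on the dense class $\mathfrak{S}$ of simple processes and then extend it to all of $\mathcal{P}_0$ by a soft closedness argument.

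\textbf{Reduction to elementary processes.} By linearity of both $\delta$ and the Dalang integral it suffices to treat a single elementary process $X(t,x,\omega) = Y(\omega)\ind_{(a,b]}(t)\ind_A(x)$ as in \eqref{elementary}, which I write as $X = Y\varphi$ with $\varphi \coloneqq \ind_{(a,b]}\ind_A \in \mathcal{H}_T$ a deterministic element. Since $\mathscr{F}$ is generated by $W$ and $Y$ is bounded and $\mathscr{F}_a$-measurable, one approximates $Y$ in $L^2(\Omega)$ by smooth random variables $Y_n = f_n\bigl(W(\psi_1^{(n)}),\dots,W(\psi_{k_n}^{(n)})\bigr)$ whose arguments $\psi_j^{(n)}$ are supported in $[0,a]\times\R^d$; then $DY_n$ is supported in $[0,a]\times\R^d$ in the time variable, hence $\langle DY_n,\varphi\rangle_{\mathcal{H}_T}=0$ because $\varphi$ lives on $(a,b]$. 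The product rule for the divergence gives $\delta(Y_n\varphi)=Y_n\delta(\varphi)-\langle DY_n,\varphi\rangle_{\mathcal{H}_T}=Y_nW(\varphi)$, and since $\delta$ is closed, letting $n\to\infty$ yields $Y\varphi\in\mathrm{Dom}(\delta)$ with $\delta(Y\varphi)=YW(\varphi)$. As $W(\varphi)=W_b(A)-W_a(A)$, the right-hand side is precisely the Walsh integral $(X\cdot W)_T$. Thus $\delta(X)=(X\cdot W)_T$ for every $X\in\mathfrak{S}$.

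\textbf{Passage to the limit.} Recall from Section \ref{subsection Stochastic integrals} that $\mathfrak{S}$ is dense in $(\mathcal{P}_0,\lVert\cdot\rVert_0)$, that $\lVert\cdot\rVert_0$ agrees with $\lVert\cdot\rVert_{L^2(\Omega;\mathcal{H}_T)}$ on $\mathfrak{S}$ so that $\mathcal{P}_0$ is identified with a closed subspace of $L^2(\Omega;\mathcal{H}_T)$, and that the Dalang integral is the isometric extension to $\mathcal{P}_0$ of the map $X\mapsto(X\cdot W)_T$ defined on $\mathfrak{S}$. Given $X\in\mathcal{P}_0$, choose $X_n\in\mathfrak{S}$ with $X_n\to X$ in $L^2(\Omega;\mathcal{H}_T)$. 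By the first step and the isometry, $\delta(X_n)=(X_n\cdot W)_T\to(X\cdot W)_T$ in $L^2(\Omega)$. Because $\delta$ is a closed operator from $L^2(\Omega;\mathcal{H}_T)$ to $L^2(\Omega)$, it follows that $X\in\mathrm{Dom}(\delta)$ and $\delta(X)=(X\cdot W)_T$, which is exactly the assertion.

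\textbf{Expected main difficulty.} Everything in the second step is routine once the first step is in place. The delicate point is making the elementary-process case rigorous: one must genuinely place $Y\varphi$ in $\mathrm{Dom}(\delta)$ rather than merely produce a formal expression for its divergence, and this rests on two facts I would isolate carefully — that the bounded $\mathscr{F}_a$-measurable coefficient $Y$ can be approximated by smooth functionals of $W$ restricted to $[0,a]\times\R^d$, and that the Malliavin derivative of such a functional is supported in $[0,a]$ in time, so that it is orthogonal in $\mathcal{H}_T$ to $\varphi=\ind_{(a,b]}\ind_A$, killing the cross term in the product rule.
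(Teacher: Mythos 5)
Your proposal follows essentially the same route as the paper: establish the identity on elementary processes, then pass to $\mathcal{P}_0$ by closedness of $\delta$. The only real difference is that the paper simply cites \cite[Lemma 1.3.2]{MR2200233} for the elementary step, whereas you unfold the content of that citation (approximate the coefficient $Y$ by smooth cylindrical functionals living on $[0,a]$, observe that the resulting Malliavin derivatives are orthogonal in $\mathcal{H}_T$ to $\ind_{(a,b]}\ind_A$, apply the product rule for $\delta$, and close).

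One technical point that your sketch quietly passes over, and which the cited reference handles, concerns the filtration. The paper's elementary process has $Y$ bounded and $\mathscr{F}_a$-measurable, where $\mathscr{F}_a = \cap_{s>a}\mathscr{F}_s^0$ is the \emph{right-continuous} augmentation. Your approximation step, however, requires $Y$ to be (up to null sets) measurable with respect to the $\sigma$-algebra generated by $W$ restricted to $[0,a]\times\R^d$, i.e.\ essentially $\mathscr{F}_a^0$ — otherwise you cannot choose the $\psi_j^{(n)}$ supported in $[0,a]$. These two $\sigma$-algebras need not coincide a priori. The standard fix is either to invoke continuity of the Gaussian filtration, or to first replace $X = Y\ind_{(a,b]}\ind_A$ by $X_\epsilon = Y\ind_{(a+\epsilon,b]}\ind_A$ (for which $Y$ is $\mathscr{F}_{a+\epsilon}^0$-measurable), run your argument, and let $\epsilon\downarrow 0$ using closedness once more. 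A second, smaller point: to get $Y_nW(\varphi)\to YW(\varphi)$ in $L^2(\Omega)$ you should truncate the $Y_n$ at the bound of $Y$ with a smooth cut-off so that the approximating sequence stays uniformly bounded; without this, $Y_n\to Y$ in $L^2$ alone does not give $L^2$-convergence of the products. Both are routine, but they are precisely the details that make the ``elementary case'' nontrivial, which is presumably why the paper chose to cite rather than reprove.
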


\begin{proof}
If $X$ is an elementary process of the form \eqref{elementary}, then we see from \cite[Lemma 1.3.2]{MR2200233} that $X \in \mathrm{Dom}(\delta)$ and
\begin{equation*}
    \delta(X) = YW(\ind_{(a,b]}\ind_{A}) = \int_0^T\int_{\R^d}X(s,y)W(ds,dy).
\end{equation*}
A simple approximation argument using the closedness of $\delta$ now completes the proof.
\end{proof}

Identifying $\mathcal{P}_0$ with the closed subspace of $L^2(\Omega;\mathcal{H}_T)$, we define $\pi_{\mathcal{P}_0}\colon  L^2(\Omega;\mathcal{H}_T) \to \mathcal{P}_0$ as an orthogonal projection.
The Clark-Ocone formula in the following proposition is of great importance in the Malliavin calculus and will be used in Section \ref{section Malliavin derivative of the approximation sequence} to evaluate some covariance functions.
See \cite[Theorem 6.6]{MR2399277} for the proof (see also \cite[Proposition 6.3]{MR4346664}).

\begin{Prop}
\label{Prop ClarkOcone}
If $F \in \mathbb{D}^{1,2}$ is $\mathscr{F}_T$-measurable, then we have 
\begin{equation*}
    F = \E[F] + (\pi_{\mathcal{P}_0}[DF])\cdot W, \quad \text{a.s.},
\end{equation*}
where the second term on the right-hand side is the Dalang integral of $\pi_{\mathcal{P}_0}[DF] \in \mathcal{P}_0$.
\end{Prop}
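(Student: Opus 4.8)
The plan is to derive the formula from the martingale representation property of $W$ on $(\Omega, \mathscr{F}_T, P)$, combined with the duality relation \eqref{duality relation} and the isometry of the Dalang integral on $\mathcal{P}_0$. As a first step I would establish the representation: every $\mathscr{F}_T$-measurable $G \in L^2(\Omega)$ can be written as $G = \E[G] + (v \cdot W)_T$ for a unique $v \in \mathcal{P}_0$. Uniqueness is immediate from the isometry $\lVert (v\cdot W)_T \rVert_2 = \lVert v \rVert_{L^2(\Omega;\mathcal{H}_T)}$ on $\mathcal{P}_0$ recorded in Section \ref{subsection Stochastic integrals}. For existence I would use that $\mathscr{F}_T$ is generated by $\{W(\varphi) \mid \varphi \in \mathcal{H}_T\}$, so that --- via a standard analyticity argument together with the density of $C_{\mathrm c}^{\infty}([0,T]\times\R^d)$ in $\mathcal{H}_T$ (Lemma \ref{Lem dense subspace in H_T}) --- the linear span of the Dol\'eans exponentials $\mathcal{E}(h) \coloneqq \exp\bigl(W(h) - \tfrac12\lVert h\rVert_{\mathcal{H}_T}^2\bigr)$, $h \in C_{\mathrm c}^{\infty}([0,T]\times\R^d)$, is dense in $L^2(\Omega,\mathscr{F}_T,P)$. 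For such an $h$, an It\^o-type computation for Walsh integrals --- legitimate because $W$ is white in time, with deterministic quadratic variation $t\mapsto \int_0^t \lVert h(r,\cdot)\rVert_{\mathcal{H}}^2\,dr$ --- should yield
\[
    \mathcal{E}(h) = 1 + \int_0^T\int_{\R^d} \mathcal{E}\bigl(h\ind_{[0,s]\times\R^d}\bigr)\, h(s,y)\, W(ds,dy),
\]
with the integrand lying in $\mathcal{P}_0$. Taking finite linear combinations and then $L^2(\Omega)$-limits of $G$ --- the isometry forcing the corresponding integrands to converge in $\mathcal{P}_0$ --- then gives the representation for general $G$.

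For the second step, given $F \in \mathbb{D}^{1,2}$ that is $\mathscr{F}_T$-measurable, the first step provides $v \in \mathcal{P}_0$ with $F = \E[F] + (v\cdot W)_T = \E[F] + \delta(v)$, where the last identity is Lemma \ref{Lem Skorokhod Walsh}. For an arbitrary $w \in \mathcal{P}_0 \subset \mathrm{Dom}(\delta)$, the duality relation \eqref{duality relation} and $\E[\delta(w)] = 0$ give
\[
    \E[\langle DF, w \rangle_{\mathcal{H}_T}] = \E[F\,\delta(w)] = \E[\delta(v)\,\delta(w)] = \langle v, w \rangle_{L^2(\Omega;\mathcal{H}_T)},
\]
the last equality being the polarized Dalang isometry on $\mathcal{P}_0$. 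On the other hand, since $\pi_{\mathcal{P}_0}$ is the orthogonal projection onto $\mathcal{P}_0$ and $w \in \mathcal{P}_0$, we have $\E[\langle DF, w\rangle_{\mathcal{H}_T}] = \langle DF, w\rangle_{L^2(\Omega;\mathcal{H}_T)} = \langle \pi_{\mathcal{P}_0}[DF], w\rangle_{L^2(\Omega;\mathcal{H}_T)}$. Hence $v - \pi_{\mathcal{P}_0}[DF]$ is orthogonal to all of $\mathcal{P}_0$, and being itself an element of $\mathcal{P}_0$ it vanishes; thus $v = \pi_{\mathcal{P}_0}[DF]$ and $F = \E[F] + \delta(\pi_{\mathcal{P}_0}[DF]) = \E[F] + (\pi_{\mathcal{P}_0}[DF])\cdot W$ almost surely, which is the claim.

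I expect the main obstacle to be the first step. One must verify that the candidate integrand $\mathcal{E}(h\ind_{[0,s]\times\R^d})\,h(s,y)$ genuinely belongs to $\mathcal{P}_0$: it is $(\mathscr{F}_t)$-adapted and $L^2(\Omega)$-continuous, hence admits a predictable modification, and $\lVert\cdot\rVert_0 < \infty$ follows from the compact support and smoothness of $h$ together with the uniform bound $\E[\mathcal{E}(h\ind_{[0,s]\times\R^d})^2] \leq e^{\lVert h\rVert_{\mathcal{H}_T}^2}$. One also has to make the density/limiting argument precise so that $L^2(\Omega)$-convergence of the functionals transfers to $\mathcal{P}_0$-convergence of the integrands (again using the Dalang isometry). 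An alternative would be to obtain the martingale representation directly from the Wiener chaos expansion $F = \sum_{n \geq 0} I_n(f_n)$ via the iterated-integral representation of each $I_n(f_n)$ as a Dalang integral of a predictable process, but in the present spatially correlated setting this requires tracking how $\pi_{\mathcal{P}_0}$ acts on the kernels $f_n$, so I would favour the route through martingale representation combined with the duality argument above. (Alternatively, one may simply invoke \cite[Theorem 6.6]{MR2399277}.)
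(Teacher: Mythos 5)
The paper does not prove this proposition itself; it invokes \cite[Theorem 6.6]{MR2399277} (and points to \cite[Proposition 6.3]{MR4346664}), with the subsequent Remark only reconciling that reference's space $L^2_{\mathbb{F}}(\Omega;\gamma(L^2(0,T;\mathscr{H}),\R))$ with $\mathcal{P}_0$. Your proposal instead gives a self-contained derivation in the Walsh--Dalang framework: first a predictable representation theorem for $L^2(\Omega,\mathscr{F}_T,P)$ via Dol\'eans exponentials, then identification of the integrand through the duality relation \eqref{duality relation} and the Dalang isometry. Both steps are sound. For the representation theorem, uniqueness of $v \in \mathcal{P}_0$ is the polarized Dalang isometry; for existence you should add one sentence noting that $\mathscr{F}_T$, generated by $\{W_s(A)\}$, coincides up to null sets with $\sigma\{W(\varphi) : \varphi \in \mathcal{H}_T\}$ because $\ind_{(a,b]}\ind_A$ is an $\mathcal{H}_T$-limit of elements of $C_{\mathrm{c}}^\infty([0,T]\times\R^d)$ by Lemma \ref{Lem dense subspace in H_T}, so that the $\mathcal{E}(h)$ with $h$ smooth and compactly supported are indeed total in $L^2(\Omega,\mathscr{F}_T,P)$. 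The It\^o-type SDE for $\mathcal{E}(h)$ is the right computation, and the integrand $\mathcal{E}(h\ind_{[0,s]\times\R^d})\,h(s,y)$ lies in $\mathcal{P}_+ \subset \mathcal{P}_0$ because $h$ is bounded with compact support and $\sup_{s\le T}\E[\mathcal{E}(h\ind_{[0,s]\times\R^d})^2] \le e^{\lVert h\rVert_{\mathcal{H}_T}^2}$, which disposes of the obstacle you flagged. The duality step is a clean orthogonality argument and needs no changes. So your route is correct; what it buys is a proof from scratch of the result the paper imports as a black box, at the price of having to establish the martingale representation theorem in this correlated-noise setting, which is exactly where the real work sits.
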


\begin{Rem}
The stochastic integration framework used in \cite{MR2399277} differs from that of this paper, but they are actually essentially the same (\textit{cf.} \cite[Section 2]{DalangQuel}).
The space $L^2_{\mathbb{F}}(\Omega; \gamma(L^2(0,T;\mathscr{H}), \R))$, appearing in \cite[Theorem 6.6]{MR2399277} when $p=2$ and $E = \R$, can be identified with $\mathcal{P}_0$ in the setting of this paper (\textit{cf.}  \cite[Collorary 2.2]{MR3078023}).
\end{Rem}

The following two results from Malliavin-Stein's method are powerful tools in proving the central limit theorems.
The first is needed for the proof of Theorem \ref{Thm m1}, and the second is for the proof of Theorem \ref{Thm m2}.
For the proof of these results, see \cite[Propositions 2.2 and 2.3]{MR4167203} and \cite[Theorem 8.2.1]{nualart_nualart_2018}.

\begin{Prop}
\label{Prop Wasserstein bound}
Let $F = \delta(v)$ for some $v \in \mathrm{Dom}(\delta)$. Suppose that $\mathbb{E}[F^2] = 1$ and $F \in \mathbb{D}^{1,2}$. Then we have
\begin{equation*}
     d_{W}(F, \mathcal{N}(0,1)) \leq \sqrt{\frac{2}{\pi}} \sqrt{\mathrm{Var} (\langle DF,v \rangle_{\mathcal{H}_T})}.
\end{equation*}
\end{Prop}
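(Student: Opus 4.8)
The plan is to combine the classical one-dimensional Stein bound for the Wasserstein distance with the Malliavin integration-by-parts formula \eqref{duality relation}. First I would recall Stein's lemma: for any integrable random variable $F$,
\begin{equation*}
    d_{\mathrm{W}}(F, \mathcal{N}(0,1)) = \sup_{h \in \mathscr{H}}\left| \E[h(F)] - \E[h(\mathcal{N}(0,1))] \right| \leq \sup_{g \in \mathscr{F}_{\mathrm{St}}}\left| \E[g'(F) - Fg(F)] \right|,
\end{equation*}
where $\mathscr{F}_{\mathrm{St}}$ is the class of solutions $g = g_h$ to the Stein equation $g'(x) - xg(x) = h(x) - \E[h(\mathcal{N}(0,1))]$ associated with $h \in \mathscr{H}$; such $g_h$ exist and satisfy $\lVert g_h' \rVert_\infty \leq \sqrt{2/\pi}$. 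This reduces the whole matter to bounding $\E[g'(F) - Fg(F)]$ uniformly over $g$ with $\lVert g' \rVert_\infty \le \sqrt{2/\pi}$.

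Next I would rewrite the term $\E[Fg(F)]$ using $F = \delta(v)$ and the duality relation \eqref{duality relation}. Since $g$ may be taken Lipschitz (hence $g(F) \in \mathbb{D}^{1,2}$ because $F \in \mathbb{D}^{1,2}$, via the chain rule after a standard smoothing/approximation of $g$), we get
\begin{equation*}
    \E[Fg(F)] = \E[\delta(v) g(F)] = \E[\langle v, D(g(F)) \rangle_{\mathcal{H}_T}] = \E[g'(F)\langle v, DF \rangle_{\mathcal{H}_T}].
\end{equation*}
Therefore $\E[g'(F) - Fg(F)] = \E\big[g'(F)\big(1 - \langle DF, v \rangle_{\mathcal{H}_T}\big)\big]$. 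Taking absolute values and using $\lVert g' \rVert_\infty \leq \sqrt{2/\pi}$ gives
\begin{equation*}
    d_{\mathrm{W}}(F, \mathcal{N}(0,1)) \leq \sqrt{\frac{2}{\pi}} \, \E\big[ \big| 1 - \langle DF, v \rangle_{\mathcal{H}_T} \big| \big] \leq \sqrt{\frac{2}{\pi}} \, \sqrt{\E\big[ (1 - \langle DF, v \rangle_{\mathcal{H}_T})^2 \big]}
\end{equation*}
by the Cauchy–Schwarz inequality. Finally I would identify $\E[\langle DF, v \rangle_{\mathcal{H}_T}]$ with $1$: again by \eqref{duality relation} with the test variable $F$ itself, $\E[\langle v, DF \rangle_{\mathcal{H}_T}] = \E[\delta(v) F] = \E[F^2] = 1$. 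Hence $1 - \langle DF, v \rangle_{\mathcal{H}_T}$ is a centered random variable and $\E[(1 - \langle DF, v \rangle_{\mathcal{H}_T})^2] = \mathrm{Var}(\langle DF, v \rangle_{\mathcal{H}_T})$, which yields the claimed bound.

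The only genuinely delicate point is the regularization step needed to apply the chain rule, since the Stein solutions $g_h$ are merely Lipschitz (in general not $C^1$). This is handled by approximating $g_h$ by $C^1$ functions with uniformly bounded derivatives (e.g. mollification), applying the identity above to each approximant, and passing to the limit using $F \in \mathbb{D}^{1,2}$ and dominated convergence; alternatively one can invoke the known fact that the estimate $\lVert g_h' \rVert_\infty \le \sqrt{2/\pi}$ together with absolute continuity of $g_h$ suffices for the integration-by-parts identity $\E[Fg_h(F)] = \E[g_h'(F)\langle v, DF\rangle_{\mathcal{H}_T}]$ to hold whenever $F \in \mathbb{D}^{1,2}$. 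Everything else is a direct chain of the duality relation and Cauchy–Schwarz, with no further obstacle.
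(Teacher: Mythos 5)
Your proof is correct and is essentially the standard Malliavin--Stein argument; the paper itself does not reprove this but cites \cite[Propositions 2.2 and 2.3]{MR4167203}, whose proof follows exactly the same route (Stein equation with the bound $\lVert g_h'\rVert_\infty\le\sqrt{2/\pi}$, duality relation to convert $\E[Fg(F)]$ into $\E[g'(F)\langle DF,v\rangle_{\mathcal{H}_T}]$, Cauchy--Schwarz, and the identification $\E[\langle DF,v\rangle_{\mathcal{H}_T}]=\E[F^2]=1$). The one delicate point you flag --- applying the chain rule to a merely Lipschitz Stein solution --- is real and correctly handled by the mollification/approximation remark.
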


\begin{Prop}
\label{Prop multivariate stein bound}
Fix $m \geq 2$, and let $F = (F_1, \ldots , F_m)$ be a random vector such that for every $i = 1,2,\ldots m$, $F_i = \delta(v_i)$ for some $v_i \in \mathrm{Dom}(\delta)$ and $F_i \in \mathbb{D}^{1,2}$. Let $Z$ be an m-dimensional centered Gaussian vector with covariance matrix $(C_{i,j})_{1 \leq i,j \leq m}$.
Then, for any twice continuously differentiable function $h\colon  \mathbb{R}^m \to \mathbb{R}$ with bounded second partial derivatives, we have
\begin{equation*}
    |\mathbb{E}[h(F)] - \mathbb{E}[h(Z)]| \leq \frac{m}{2}\max_{1 \leq k,l \leq m} \sup_{x \in \mathbb{R}^m} \left| \frac{\partial^2h}{\partial x_k \partial x_l}(x) \right|\sqrt{\sum_{i,j=1}^{m}\mathbb{E}[(C_{i,j}-\langle DF_i, v_j \rangle_{\mathcal{H}_T})^2]} .
\end{equation*}
\end{Prop}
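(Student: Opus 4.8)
The plan is to prove the inequality by the Gaussian interpolation (``smart path'') method, combining the elementary Gaussian integration-by-parts formula for the vector $Z$ with the Malliavin duality relation \eqref{duality relation} applied to the representations $F_i = \delta(v_i)$. Since $F$ and $Z$ may a priori be defined on the same space, I would first perform a standard enlargement of the probability space so that $Z = (Z_1,\dots,Z_m)$ is carried by an independent factor, and extend the Malliavin derivative $D$ so that it differentiates only in the directions of the isonormal process $W$; then $DZ_i = 0$, while each $F_i$ still satisfies $F_i = \delta(v_i)$ and $F_i \in \mathbb{D}^{1,2}$, and $Z_i, F_i \in L^2(\Omega)$.

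For $t \in [0,1]$ I would set $F(t) = \sqrt{1-t}\,F + \sqrt{t}\,Z$ and $\Psi(t) = \mathbb{E}[h(F(t))]$, so that $|\mathbb{E}[h(F)] - \mathbb{E}[h(Z)]| = |\Psi(0) - \Psi(1)| = \left|\int_0^1 \Psi'(t)\,dt\right|$. Because $\partial_i\partial_j h$ is bounded, each $\partial_i h$ is $C^1$ with bounded gradient and of at most linear growth, so together with the $L^2(\Omega)$-integrability of $F_i$ and $Z_i$ the function $\Psi$ is continuous on $[0,1]$, continuously differentiable on $(0,1)$, and
\begin{equation*}
\Psi'(t) = \sum_{i=1}^m \mathbb{E}\!\left[\partial_i h(F(t))\left(-\tfrac{1}{2\sqrt{1-t}}\,F_i + \tfrac{1}{2\sqrt{t}}\,Z_i\right)\right].
\end{equation*}

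For the $Z_i$-term I would use $\mathbb{E}[Z_i\,g(Z)] = \sum_{j=1}^m C_{i,j}\,\mathbb{E}[\partial_j g(Z)]$, valid conditionally on $W$ since $Z \perp W$, with $g(\cdot) = \partial_i h(\sqrt{1-t}\,F + \sqrt{t}\,\cdot)$; the factor $\sqrt{t}$ produced by the chain rule cancels the $1/\sqrt{t}$. For the $F_i$-term I would apply \eqref{duality relation} with $u = v_i$ and the $\mathbb{D}^{1,2}$-random variable $\partial_i h(F(t))$, whose Malliavin derivative equals $\sqrt{1-t}\sum_{j=1}^m \partial_i\partial_j h(F(t))\,DF_j$ by the (multivariate) chain rule, so the factor $\sqrt{1-t}$ cancels the $1/\sqrt{1-t}$. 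Collecting the two contributions and symmetrizing the double sum using $\partial_i\partial_j h = \partial_j\partial_i h$, $C_{i,j} = C_{j,i}$, and $\langle v_i, DF_j\rangle_{\mathcal{H}_T} = \langle DF_j, v_i\rangle_{\mathcal{H}_T}$, one arrives at
\begin{equation*}
\Psi'(t) = \frac{1}{2}\sum_{i,j=1}^m \mathbb{E}\!\left[\partial_i\partial_j h(F(t))\left(C_{i,j} - \langle DF_i, v_j\rangle_{\mathcal{H}_T}\right)\right].
\end{equation*}

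To conclude I would integrate over $t \in [0,1]$ and note that each random variable $C_{i,j} - \langle DF_i, v_j\rangle_{\mathcal{H}_T}$ is independent of $t$; then, bounding $|\partial_i\partial_j h|$ by $M \coloneqq \max_{1 \leq k,l \leq m}\sup_{x \in \mathbb{R}^m}\left|\frac{\partial^2 h}{\partial x_k\partial x_l}(x)\right|$, applying the triangle inequality, the Cauchy--Schwarz inequality in $L^2(\Omega)$, and finally the Cauchy--Schwarz inequality over the $m^2$ pairs $(i,j)$ (which produces the factor $m = \sqrt{m^2}$) yields exactly the asserted bound. The step I expect to demand the most care is the rigorous justification of differentiating $\Psi$ under the expectation and of the integration-by-parts identities uniformly in $t \in (0,1)$, in particular the behavior as $t \to 0^+$ and $t \to 1^-$, where the term-by-term expression for $\Psi'(t)$ involves the pointwise-unbounded factors $1/\sqrt{t}$ and $1/\sqrt{1-t}$ that are tamed only after the two cancellations above; the clean remedy is to run the computation on $[\varepsilon, 1-\varepsilon]$, observe that the resulting formula for $\Psi'$ extends continuously to all of $[0,1]$, and let $\varepsilon \downarrow 0$. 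The remaining manipulations are routine.
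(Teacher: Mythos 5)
Your argument is correct. The paper does not prove this proposition itself but cites \cite[Propositions 2.2 and 2.3]{MR4167203} and \cite[Theorem 8.2.1]{nualart_nualart_2018}, and the Gaussian-interpolation (smart-path) computation you give --- enlarging the space so $DZ_i = 0$, differentiating $\Psi(t) = \E[h(\sqrt{1-t}\,F + \sqrt{t}\,Z)]$, handling the $Z$-term via Gaussian integration by parts and the $F$-term via the duality relation applied to $F_i = \delta(v_i)$, symmetrizing in $(i,j)$, and closing with two applications of Cauchy--Schwarz --- is essentially the standard proof found in those references.
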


\subsection{Basic estimates on the fundamental solution}
The fundamental solution of the wave operator is well known (\textit{cf.} \cite[Chapter 4]{Mizohata}) and is given by 
\begin{align}
\label{fundamental solution}
    G(t)_x = 
    \begin{cases}
        \displaystyle \frac{1}{2}\ind_{\{|x| < t\}}, &\text{when $d =1$},\\
        \displaystyle (2\pi)^{-\frac{d+1}{2}}\pi\left(\frac{1}{t}\frac{d}{dt}\right)^{\frac{d-3}{2}}\frac{\sigma_t}{t}(dx),  &\text{when $d \geq 3$ and $d$ is odd},\\
        \displaystyle (2\pi)^{-\frac{d}{2}}\left(\frac{1}{t}\frac{\partial}{\partial t}\right)^{\frac{d-2}{2}}(t^2 - |x|^2)^{-\frac{1}{2}}\ind_{\{|x| < t\}}, &\text{when $d$ is even},
    \end{cases}
\end{align}
where $t>0$ and $\sigma_t(dx)$ is the surface measure on $\partial \mathbb{B}_t$. 
For convenience, we let $G(t) \equiv 0$ for $t \leq 0$.
Also, the Fourier transform of $G(t)$ is known to be
\begin{equation}
\label{Fourier transform of G}
    \mathcal{F}G(t)(\xi) = \frac{\sin(2\pi t |\xi|)}{2\pi |\xi|}, \quad t \geq 0.
\end{equation}
From this, the following lemma is easy to check, and we state without the proof.

\begin{Lem}
\label{Lem property of FG}
For every $t,s \in [0,T]$ and $\xi \in \mathbb{R}^d$, the Fourier transform $\mathcal{F}G(t)$ satisfies the following properties:
\begin{enumerate}
    \item [\normalfont(i)] $\displaystyle|\mathcal{F}G(t)(\xi)| \leq t$.
    \item [\normalfont(ii)] $\displaystyle|\mathcal{F}G(t)(\xi)|^2 \leq (1 + 2T^2)\langle \xi \rangle^{-2}$.
    \item [\normalfont(iii)] $\displaystyle|\mathcal{F}G(t)(\xi) - \mathcal{F}G(s)(\xi)| \leq 2\left| \mathcal{F}G\left( \frac{|t-s|}{2} \right) (\xi) \right| \leq |t-s|.$
\end{enumerate}
\end{Lem}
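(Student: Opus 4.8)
The statement is Lemma \ref{Lem property of FG}, which asserts three elementary bounds on $\mathcal{F}G(t)(\xi) = \frac{\sin(2\pi t|\xi|)}{2\pi|\xi|}$. The paper announces these are ``easy to check'' and omits the proof, so the task is just to record the computation. The plan is to treat each of the three items in turn, in each case reducing to an elementary inequality for the sine function: $|\sin u| \leq |u|$, $|\sin u| \leq 1$, and the difference estimate via a product-to-sum identity.

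For \textnormal{(i)}, I would write $|\mathcal{F}G(t)(\xi)| = \frac{|\sin(2\pi t|\xi|)|}{2\pi|\xi|} \leq \frac{2\pi t|\xi|}{2\pi|\xi|} = t$ using $|\sin u| \leq |u|$ with $u = 2\pi t|\xi|$; one should note the bound extends to $\xi = 0$ by the convention $\mathcal{F}G(t)(0) = t$ (the limit of the expression). For \textnormal{(ii)}, the idea is to interpolate between the two bounds $|\mathcal{F}G(t)(\xi)| \leq t \leq T$ (from (i), using $t \in [0,T]$) and $|\mathcal{F}G(t)(\xi)| \leq \frac{1}{2\pi|\xi|} \leq \frac{1}{|\xi|}$ (from $|\sin u| \leq 1$). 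Squaring, $|\mathcal{F}G(t)(\xi)|^2 \leq \min\{T^2, |\xi|^{-2}\}$. Then bound $\min\{T^2, |\xi|^{-2}\}$ by $(1+2T^2)\langle\xi\rangle^{-2}$: when $|\xi| \leq 1$, use $|\mathcal{F}G(t)(\xi)|^2 \leq T^2 \leq T^2\cdot\frac{1+|\xi|^2}{1} \cdot \frac{1}{\langle\xi\rangle^2}\cdot\frac{2}{1} $— more cleanly, $\langle\xi\rangle^2 = 1+|\xi|^2 \leq 2$ so $T^2 \leq 2T^2\langle\xi\rangle^{-2} \leq (1+2T^2)\langle\xi\rangle^{-2}$; when $|\xi| > 1$, use $|\mathcal{F}G(t)(\xi)|^2 \leq |\xi|^{-2}$ and $\langle\xi\rangle^2 = 1+|\xi|^2 \leq 2|\xi|^2$, giving $|\xi|^{-2} \leq 2\langle\xi\rangle^{-2} \leq (1+2T^2)\langle\xi\rangle^{-2}$.

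For \textnormal{(iii)}, the key identity is $\sin a - \sin b = 2\cos\!\big(\tfrac{a+b}{2}\big)\sin\!\big(\tfrac{a-b}{2}\big)$, applied with $a = 2\pi t|\xi|$, $b = 2\pi s|\xi|$, so that
\begin{equation*}
    \mathcal{F}G(t)(\xi) - \mathcal{F}G(s)(\xi) = \frac{2\cos\!\big(\pi(t+s)|\xi|\big)\sin\!\big(\pi(t-s)|\xi|\big)}{2\pi|\xi|}.
\end{equation*}
Taking absolute values and using $|\cos| \leq 1$ gives $|\mathcal{F}G(t)(\xi) - \mathcal{F}G(s)(\xi)| \leq \frac{|\sin(\pi(t-s)|\xi|)|}{\pi|\xi|} = \frac{|\sin(2\pi\cdot\frac{|t-s|}{2}\cdot|\xi|)|}{\pi|\xi|} = 2\big|\mathcal{F}G\big(\tfrac{|t-s|}{2}\big)(\xi)\big|$, using that $|\sin|$ is even. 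The final bound $2\big|\mathcal{F}G\big(\tfrac{|t-s|}{2}\big)(\xi)\big| \leq 2\cdot\tfrac{|t-s|}{2} = |t-s|$ is then just item (i) applied at time $\tfrac{|t-s|}{2}$.

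There is no real obstacle here — the only points requiring a word of care are handling $\xi = 0$ consistently with the stated convention $\mathcal{F}G(t)(0) = t$, and, in (ii), splitting on $|\xi| \leq 1$ versus $|\xi| > 1$ to absorb both regimes into the single weight $\langle\xi\rangle^{-2}$.
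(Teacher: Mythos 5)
Items (i) and (iii) are correct and are the expected elementary computations (the paper states the lemma without proof, so there is no alternative argument to compare against). In (ii), however, your case split at $|\xi| = 1$ has a genuine gap. In the regime $|\xi| > 1$ you use $|\mathcal{F}G(t)(\xi)|^2 \leq |\xi|^{-2}$ and then chain $|\xi|^{-2} \leq 2\langle\xi\rangle^{-2} \leq (1+2T^2)\langle\xi\rangle^{-2}$; the final inequality requires $2 \leq 1+2T^2$, i.e.\ $T \geq 1/\sqrt{2}$. But $T > 0$ is arbitrary in the paper, so for small $T$ and $\xi$ just outside the unit ball this step fails. The culprit is that you weakened $|\mathcal{F}G(t)(\xi)| \leq \frac{1}{2\pi|\xi|}$ to $\frac{1}{|\xi|}$, discarding the factor $4\pi^2$ that would have rescued the estimate: keeping it gives, for $|\xi| > 1$,
\begin{equation*}
    |\mathcal{F}G(t)(\xi)|^2 \leq \frac{1}{4\pi^2|\xi|^2} \leq \frac{2}{4\pi^2}\,\langle\xi\rangle^{-2} < \langle\xi\rangle^{-2} \leq (1+2T^2)\langle\xi\rangle^{-2}.
\end{equation*}
Simpler still, no case split is needed at all: multiplying the two bounds $|\mathcal{F}G(t)(\xi)|^2 \leq t^2 \leq T^2$ and $|\mathcal{F}G(t)(\xi)|^2|\xi|^2 = \frac{\sin^2(2\pi t|\xi|)}{4\pi^2} \leq \frac{1}{4\pi^2}$ into the identity
\begin{equation*}
    |\mathcal{F}G(t)(\xi)|^2\,\langle\xi\rangle^{2} = |\mathcal{F}G(t)(\xi)|^2 + |\mathcal{F}G(t)(\xi)|^2|\xi|^2 \leq T^2 + \frac{1}{4\pi^2} < 1 + 2T^2
\end{equation*}
gives (ii) directly for every $\xi$ and every $T>0$. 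Your treatment of $\xi = 0$ in (i) and the use of $\sin a - \sin b = 2\cos\frac{a+b}{2}\sin\frac{a-b}{2}$ in (iii) are both fine.
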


We next introduce the regularization of $G(t)$.
Fix a radial function $\Lambda \in C^{\infty}_0(\mathbb{R}^d)$ such that $\Lambda \geq 0$, $\supp \Lambda \subset \mathbb{B}_1$, and $\int_{\mathbb{R}^d}\Lambda(x)dx = 1$.
Let $(a_n)_{n=1}^{\infty}$ be a fixed monotone increasing positive sequence such that $\sum_{n = 1}^{\infty}\frac{1}{a_n} =1$. For $n \geq 1$, $t \in [0,T]$, and $x \in \R^d$, we define 
\begin{equation}
\label{Lambda G_n definition}
    \Lambda_n(x) = (a_n)^d\Lambda(a_n x) \quad \text{and} \quad G_n(t,x) = (G(t) \ast \Lambda_n)(x). 
\end{equation}
Note that the function $x \mapsto G_n(t,x)$ is nonnegative if $d \leq 3$, but it is generally not in the case $d \geq 4$.

\begin{Lem}
\label{Lem property of G_k}
For all $t \in [0,T]$, we have $G_n(t,\cdot) \in C_0^{\infty}(\R^d)$ and $\supp{G_n(t,\cdot)} \subset \mathbb{B}_{t+1/a_{n}}$. 
Moreover, $G_n(t,x)$ is uniformly continuous on $[0,T] \times \mathbb{R}^d$, and $\Theta(n,T) \coloneqq \sup_{t \in [0,T]}\norm{G_n(t)}_{\infty} < \infty$.
In particular, we have
\begin{equation*} 
    |G_n(t,x)| \leq \Theta(n,T)\ind_{\mathbb{B}_{t+\frac{1}{a_n}}}(x).
\end{equation*}
\end{Lem}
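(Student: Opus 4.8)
The plan is to establish the four assertions in sequence, each following from standard properties of convolution together with the known structure of $G(t)$. First I would handle the smoothness and support claims. Since $\Lambda_n \in C_0^\infty(\R^d)$ and $G(t)$ is a compactly supported distribution (with $\supp G(t) \subset \overline{\mathbb{B}_t}$ by the explicit formula \eqref{fundamental solution} and the fact that $G(t) \equiv 0$ for $t \leq 0$), the convolution $G_n(t,\cdot) = G(t) \ast \Lambda_n$ is a genuine $C^\infty$ function; the general fact that the convolution of a distribution with a test function is smooth gives $G_n(t,\cdot) \in C^\infty(\R^d)$, and the support inclusion $\supp(G(t) \ast \Lambda_n) \subset \supp G(t) + \supp \Lambda_n \subset \overline{\mathbb{B}_t} + \overline{\mathbb{B}_{1/a_n}} = \overline{\mathbb{B}_{t + 1/a_n}}$ handles both the compact support and the stated inclusion $\supp G_n(t,\cdot) \subset \mathbb{B}_{t + 1/a_n}$.

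Next I would address the uniform boundedness $\Theta(n,T) = \sup_{t \in [0,T]} \norm{G_n(t)}_\infty < \infty$ and the uniform continuity of $(t,x) \mapsto G_n(t,x)$ on $[0,T] \times \R^d$. For the boundedness, I would argue via the Fourier transform: since $G_n(t,\cdot)$ is an integrable $C_0^\infty$ function, Fourier inversion gives $G_n(t,x) = \int_{\R^d} e^{2\pi\sqrt{-1}\,\xi \cdot x} \F G(t)(\xi) \F \Lambda_n(\xi)\, d\xi$, provided the integrand is integrable. Using Lemma \ref{Lem property of FG}(ii), $|\F G(t)(\xi)| \leq \sqrt{1+2T^2}\,\langle\xi\rangle^{-1}$ uniformly in $t \in [0,T]$, and $\F\Lambda_n$ is a Schwartz function (so decays faster than any polynomial), hence $\xi \mapsto \langle\xi\rangle^{-1}|\F\Lambda_n(\xi)|$ is integrable with a bound depending only on $n$ and $T$. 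This yields
\begin{equation*}
    \norm{G_n(t)}_\infty \leq \sqrt{1+2T^2}\int_{\R^d}\langle\xi\rangle^{-1}|\F\Lambda_n(\xi)|\,d\xi =: \Theta(n,T) < \infty,
\end{equation*}
uniformly in $t \in [0,T]$. For joint uniform continuity, the same integral representation works: for $(t,x), (s,y) \in [0,T]\times\R^d$,
\begin{equation*}
    |G_n(t,x) - G_n(s,y)| \leq \int_{\R^d} |e^{2\pi\sqrt{-1}\xi\cdot x} - e^{2\pi\sqrt{-1}\xi\cdot y}|\,|\F G(t)(\xi)|\,|\F\Lambda_n(\xi)|\,d\xi + \int_{\R^d}|\F G(t)(\xi) - \F G(s)(\xi)|\,|\F\Lambda_n(\xi)|\,d\xi,
\end{equation*}
and I would bound the first integral using $|e^{2\pi\sqrt{-1}\xi\cdot x} - e^{2\pi\sqrt{-1}\xi\cdot y}| \leq 2\pi|\xi|\,|x-y| \wedge 2$ together with dominated convergence, and the second using Lemma \ref{Lem property of FG}(iii), which gives $|\F G(t)(\xi) - \F G(s)(\xi)| \leq |t-s|$, so the second integral is at most $|t-s|\,\norm{\F\Lambda_n}_{L^1}$. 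Both terms go to zero as $(s,y)\to(t,x)$, uniformly, so $G_n$ is uniformly continuous on $[0,T]\times\R^d$. Finally, combining the support inclusion with the uniform bound gives the pointwise estimate $|G_n(t,x)| \leq \Theta(n,T)\ind_{\mathbb{B}_{t+1/a_n}}(x)$.

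The main obstacle, and the only point requiring genuine care, is the justification of the Fourier inversion formula for $G_n(t,\cdot)$ — namely that one may write $G_n(t,x) = \int e^{2\pi\sqrt{-1}\xi\cdot x}\F G(t)(\xi)\F\Lambda_n(\xi)\,d\xi$ with the integral absolutely convergent. This rests on knowing that $\F(G(t)\ast\Lambda_n) = \F G(t)\cdot\F\Lambda_n$ as tempered distributions (valid since $\Lambda_n$ is Schwartz and $G(t)$ is a compactly supported distribution, so $\F G(t)$ is a smooth polynomially bounded function given by \eqref{Fourier transform of G}), that the product $\F G(t)\cdot\F\Lambda_n$ is integrable (from Lemma \ref{Lem property of FG}(ii) and the rapid decay of $\F\Lambda_n$), and then that $G_n(t,\cdot)$, being a continuous $L^1$ function whose Fourier transform is $L^1$, is recovered by Fourier inversion. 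Once this representation is in hand, everything else is a routine application of dominated convergence and the elementary bounds already recorded in Lemma \ref{Lem property of FG}.
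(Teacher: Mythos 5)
Your proof is correct, and it takes a genuinely different route from the paper's. The paper works entirely on the physical side: it plugs the explicit formula \eqref{fundamental solution} into the convolution and expands $(G(t)\ast\Lambda_n)(x)$ as a finite sum of products of powers of $t$ with surface integrals of derivatives of $\Lambda_n$, then reads off continuity and boundedness from that expression, treating $d$ odd and $d$ even in separate cases. You instead pass to the Fourier side: since $G_n(t,\cdot)\in\mathcal{S}(\R^d)$ (a fact both arguments establish first from $\supp G(t)\subset\mathbb{B}_t$), Fourier inversion is immediate, and the uniform bounds $|\F G(t)(\xi)|\lesssim_T\langle\xi\rangle^{-1}$ and $|\F G(t)(\xi)-\F G(s)(\xi)|\leq|t-s|$ from Lemma \ref{Lem property of FG} do all the work. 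Your route avoids the case split on the parity of $d$ and is arguably more uniform and closer in spirit to the rest of the paper, which repeatedly exploits precisely these Fourier-side bounds to compensate for the lack of nonnegativity of $G$ in high dimensions. One small remark: your paragraph on justifying Fourier inversion is more cautious than necessary — once $G_n(t,\cdot)\in\mathcal{S}(\R^d)$ is in hand (compactly supported $C^\infty$ function), inversion is automatic, so the integrability checks you outline, while correct, are not needed as a separate verification.
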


\begin{proof}
Since $G(t)$ is a rapidly decreasing distribution on $\R^d$ and $\supp{G(t)} \subset \mathbb{B}_t$, it follows that $G_n(t) \in \mathcal{S}(\mathbb{R}^d)$ and $\supp{G_n(t)} \subset \mathbb{B}_{t+1/a_{n}}$, and so $G_n(t,\cdot) \in C_0^{\infty}(\R^d)$.
The continuity and the boundedness are obvious when $d=1$. 
Let $d \geq 3$ and $d$ be odd.
From \eqref{fundamental solution}, we have
\begin{equation*}
    (G(t)\ast \Lambda_n)(x) = (2\pi)^{-\frac{d+1}{2}}\pi\left(\frac{1}{r}\frac{\partial}{\partial r}\right)^{\frac{d-3}{2}}\left( r^{d-2}\int_{\partial \mathbb{B}_1}\Lambda_n(x+ry)\sigma_1(dy) \right)\bigg|_{r=t}.
\end{equation*}
It follows that $(G(t)\ast \Lambda_n)(x)$ is a finite sum of products of a positive power of $t$ and the integral of the same form as above with $\Lambda_n(x+ry)$ replaced by its derivative (of some order less than or equal to $\frac{d-3}{2}$) at $r=t$. 
Consequently, it is easy to see that $G_n(t,x)$ is continuous on $[0,T] \times \R^d$.
From this and the fact that $\supp G_n \subset [0,T] \times \mathbb{B}_{T+1}$, the uniform continuity of $G_n$ follows.
Since $\Lambda_n \in C_0^{\infty}(\R^d)$, the integral of derivatives of $\Lambda_n$ with respect to the measure $\sigma_1(dy)$ is bounded uniformly in $t$, which implies that $\Theta(n,T) < \infty$.
The same proof also works when $d$ is even, and the proof is complete.
\end{proof}

Applying the Fourier transform, we have 
\begin{equation}
\label{FG_n}
    |\F G_n(t)(\xi)| = |\F \Lambda_n(\xi)| |\F G(t)(\xi)| \leq |\F G(t)(\xi)|,
\end{equation}
and it follows from Lemma \ref{Lem property of FG} and Dalang's condition \eqref{Dalang's condition} that
\begin{equation*}
    \int_{\R^d}|\F G_n(t)(\xi)|^2 \mu(d\xi)
    \leq (1 + 2T^2) \int_{\R^d}\langle \xi \rangle^{-2}\mu(d\xi) < \infty.
\end{equation*}
Moreover, it is easily seen that
\begin{align}
    &\sup_{(t,x) \in [0,T] \times \R^d}\int_{\R^{2d}}|G_{n}(t,x-y)G_{n}(t,x-z)|\gamma(y-z)dydz \nonumber \\
    &\leq \Theta(n,T)^2\int_{\mathbb{B}_{T+1}^2}\gamma(y-z)dydz < \infty. \label{G_nG_ngamma uniform estimate}
\end{align}

\subsection{Some technical tools}
\label{subsection Some technical tools}
Let ${H}_t$ and ${B_{\alpha}}$ denote the heat kernel and the Bessel kernel of order $\alpha > 0$, respectively:
\begin{align*}
    H_t(x) &= (4\pi t)^{-\frac{d}{2}}e^{-\frac{|x|^2}{4t}},\\
    B_{\alpha}(x) &= \frac{1}{(4\pi)^\frac{\alpha}{2}\Gamma(\frac{\alpha}{2})}\int_0^{\infty}e^{-\frac{\pi|x|^2}{t}} e^{-\frac{t}{4\pi}} t^{\frac{\alpha - d - 2}{2}} dt.
\end{align*}
The Fourier transform of $H_t$ is known to be $\mathcal{F}H_t(\xi) = e^{-4\pi^2t|\xi|^2}$. 
It is also known (see e.g. \cite[Proposition 2 of Chapter V]{singularintegrals}) that $B_{\alpha} \in L^1(\R^d)$ for any $\alpha >0$ and its Fourier transform is given by $\mathcal{F} B_{\alpha}(\xi) = (1 + 4\pi^2|\xi|^2)^{-\frac{\alpha}{2}}$.
For later use, let 
\begin{equation*}
    b_{\alpha}(x) = (2\pi)^dB_{\alpha}(2\pi x).
\end{equation*}
It follows that $b_{\alpha} \in L^1(\R^d)$ and $\F b_{\alpha}(\xi) = \abra{\xi}^{-\alpha}$.

The following lemma, which has already been used in \textit{e.g.} \cite{MR2024344}, is key in later sections.

\begin{Lem}
\label{Lem muZ mu}
Let $\{Z(t,x) \mid (t,x) \in [0,T]\times \mathbb{R}^d\}$ be a predictable process which satisfies \eqref{Z uniform L2 bound} and \eqref{Z homogeneous cov}. 
Let $\mu_t^Z$ be the nonnegative tempered measure satisfying \eqref{mu_t^Z def}, which exists by Lemma \ref{Lem fourier bochner schwartz}.
It holds that
\begin{equation}
    \int_{\R^d}\abra{\xi}^{-2}\mu_t^{Z}(d\xi) 
    \leq \sup_{\eta \in \R^d}\norm{Z(t, \eta)}_2^2\int_{\R^d}\abra{\xi}^{-2}\mu(d\xi). \label{inequality muZ mu}
\end{equation}
\end{Lem}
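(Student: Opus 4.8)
The plan is to prove the inequality by computing the left-hand side through a regularization argument that connects $\mu_t^Z$ back to the spectral measure $\mu$ of $\gamma$. The key observation is that $\langle \xi \rangle^{-2} = \mathcal{F}b_2(\xi)$, where $b_2 \in L^1(\mathbb{R}^d)$ is nonnegative, so the left-hand integral can be rewritten using the fact that $\gamma^Z(t,\cdot) = \mathcal{F}\mu_t^Z$. More precisely, since $b_2$ is a nonnegative integrable function with nonnegative Fourier transform, one expects
\begin{equation*}
    \int_{\mathbb{R}^d}\langle \xi \rangle^{-2}\mu_t^Z(d\xi) = \int_{\mathbb{R}^d}(b_2 \ast \gamma^Z(t,\cdot))(x)\,\delta_0(dx) = (b_2 \ast \gamma^Z(t,\cdot))(0) = \int_{\mathbb{R}^d}b_2(x)\gamma^Z(t,x)\,dx,
\end{equation*}
at least formally; the same identity with $\mu$ and $\gamma$ in place of $\mu_t^Z$ and $\gamma^Z(t,\cdot)$ gives $\int_{\mathbb{R}^d}\langle\xi\rangle^{-2}\mu(d\xi) = \int_{\mathbb{R}^d}b_2(x)\gamma(x)\,dx$. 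Once both sides are expressed as integrals against $b_2(x)\,dx$, the inequality \eqref{inequality muZ mu} follows immediately from
\begin{equation*}
    \gamma^Z(t,x) = \gamma(x)\mathbb{E}[Z(t,x)Z(t,0)] \leq \gamma(x)\,\lVert Z(t,x)\rVert_2\lVert Z(t,0)\rVert_2 \leq \gamma(x)\sup_{\eta\in\mathbb{R}^d}\lVert Z(t,\eta)\rVert_2^2,
\end{equation*}
using Cauchy-Schwarz and the nonnegativity of $\gamma$ and $b_2$.

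The steps I would carry out in order are as follows. First, introduce a mollification: replace $\langle \xi \rangle^{-2}$ by $\langle \xi \rangle^{-2}\mathcal{F}H_\varepsilon(\xi) = \langle\xi\rangle^{-2}e^{-4\pi^2\varepsilon|\xi|^2}$, which makes the integrand a genuine Schwartz function so that the pairing $\langle \gamma^Z(t,\cdot), \cdot\rangle$ with the tempered distribution is unambiguous. Second, use the definition $\gamma^Z(t,\cdot) = \mathcal{F}\mu_t^Z$ together with the fact that $b_2 \ast H_\varepsilon$ (or rather the relevant scaled version) is a Schwartz function whose Fourier transform is $\langle\xi\rangle^{-2}e^{-4\pi^2\varepsilon|\xi|^2}$; then Parseval's identity for tempered distributions gives
\begin{equation*}
    \int_{\mathbb{R}^d}\langle\xi\rangle^{-2}e^{-4\pi^2\varepsilon|\xi|^2}\mu_t^Z(d\xi) = \int_{\mathbb{R}^d}(b_2\ast H_\varepsilon)(x)\gamma^Z(t,x)\,dx,
\end{equation*}
after checking that $\gamma^Z(t,\cdot)$, being (after multiplication by $\langle x\rangle^{-k}$) integrable, can be paired against this Schwartz function by honest integration. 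Third, apply the pointwise bound on $\gamma^Z(t,x)$ above and nonnegativity of $b_2 \ast H_\varepsilon$ to get
\begin{equation*}
    \int_{\mathbb{R}^d}(b_2\ast H_\varepsilon)(x)\gamma^Z(t,x)\,dx \leq \sup_{\eta}\lVert Z(t,\eta)\rVert_2^2\int_{\mathbb{R}^d}(b_2\ast H_\varepsilon)(x)\gamma(x)\,dx = \sup_{\eta}\lVert Z(t,\eta)\rVert_2^2\int_{\mathbb{R}^d}\langle\xi\rangle^{-2}e^{-4\pi^2\varepsilon|\xi|^2}\mu(d\xi).
\end{equation*}
Fourth, let $\varepsilon \downarrow 0$ and invoke monotone convergence on both sides (the integrands increase as $\varepsilon$ decreases) to obtain \eqref{inequality muZ mu}; finiteness of the right-hand side is guaranteed by Dalang's condition \eqref{Dalang's condition}.

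I expect the main obstacle to be the rigorous justification of the Parseval/duality step: one must verify that the tempered distribution $\gamma^Z(t,\cdot)$, which is only known a priori to satisfy $\langle x\rangle^{-k}\gamma^Z(t,\cdot) \in L^1$, can legitimately be integrated against the test function $b_2 \ast H_\varepsilon$ as an ordinary Lebesgue integral, and that this integral equals the pairing $\langle \mathcal{F}\mu_t^Z, b_2\ast H_\varepsilon\rangle = \langle \mu_t^Z, \mathcal{F}(b_2\ast H_\varepsilon)\rangle$. This is where the regularization by $H_\varepsilon$ earns its keep, since it forces rapid decay; one also needs to recall that $\gamma^Z(t,\cdot)$ agrees with the locally integrable function $\gamma(x)\mathbb{E}[Z(t,x)Z(t,0)]$, so that "integration against it" makes sense, using the same truncation trick $Z_N$ as in the proof of Lemma \ref{Lem fourier bochner schwartz} if needed to handle measurability and the pointwise identity. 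A cleaner alternative, which avoids distributional subtleties, is to argue directly with approximate identities: writing $\mu_t^Z$ as the vague limit of $\gamma^Z(t,\cdot)\ast(\text{Gaussian})$ transformed, but the Parseval route is the most transparent and is essentially the computation already implicit in \eqref{mu_t^Z def} and the formula $\mathcal{F}b_2(\xi) = \langle\xi\rangle^{-2}$.
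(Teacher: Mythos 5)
Your proposal is correct and follows essentially the same route as the paper's own proof: both regularize $\langle\xi\rangle^{-2}$ by multiplying with $\mathcal{F}H_r(\xi)=e^{-4\pi^2 r|\xi|^2}$ so that the resulting function is Schwartz with $\mathcal{F}(H_r\ast b_2)$ as its inverse transform, pair it against $\mu_t^Z$ via the identity $\gamma^Z(t,\cdot)=\mathcal{F}\mu_t^Z$, apply Cauchy--Schwarz together with the nonnegativity of $H_r\ast b_2$ and $\gamma$, and let $r\downarrow 0$ by monotone convergence. The distributional-pairing concerns you flag are indeed the point that requires care, and they are handled exactly as you suggest: $\gamma^Z(t,\cdot)$ satisfies $\langle\cdot\rangle^{-k}\gamma^Z(t,\cdot)\in L^1(\R^d)$, so pairing the tempered distribution with the Schwartz function $H_r\ast b_2$ reduces to an honest Lebesgue integral.
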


\begin{proof}
By the monotone convergence theorem, 
\begin{equation}
     \int_{\R^d}\abra{\xi}^{-2}\mu_t^{Z}(d\xi) = \lim_{r \to 0} \int_{\R^d}e^{-4\pi^2 r|\xi|^2}\abra{\xi}^{-2}\mu_t^{Z}(d\xi). \label{a1}
\end{equation}
Since $e^{-4\pi^2 r|\cdot|^2}\abra{\cdot}^{-2} \in \mathcal{S}(\mathbb{R}^d)$ and $e^{-4\pi^2 r|\xi|^2}\abra{\xi}^{-2} = \F (H_r \ast b_{2})(\xi)$, we have
\begin{align}
    \int_{\R^d}e^{-4\pi^2 r|\xi|^2}\abra{\xi}^{-2}\mu_t^{Z}(d\xi)
    &= \int_{\R^d}(H_r\ast b_{2})(x)\gamma(x)\E[Z(t,x)Z(t,0)]dx \nonumber\\
    &\leq \sup_{\eta \in \R^d}\norm{Z(t, \eta)}_2^2\int_{\R^d}(H_r\ast b_{2})(x)\gamma(x)dx. \label{a2}
\end{align}
In the last inequality above, we do not need the absolute value sign in the integral since $H_r\ast b_{2}$ and $\gamma$ are nonnegative. 
Using the Fourier transform again, we have
\begin{equation}
    \int_{\R^d}(H_r\ast b_{2})(x)\gamma(x)dx = \int_{\R^d}e^{-4\pi^2 r|\xi|^2}\abra{\xi}^{-2}\mu(d\xi) \xrightarrow[]{r \to 0} \int_{\R^d}\abra{\xi}^{-2}\mu(d\xi). \label{a3}
\end{equation}
Now \eqref{inequality muZ mu} follows by combining \eqref{a1}, \eqref{a2}, and \eqref{a3}.
\end{proof}

\begin{Rem}\label{Rem why high dimensions difficult}
Lemma \ref{Lem muZ mu} is especially important for analyzing stochastic wave equations in four or higher dimensions in this paper. 
When $d \geq 3$, the corresponding fundamental solution $G(t)$ becomes a distribution (see \eqref{fundamental solution}), which makes it difficult to handle $G(t)$ directly.
A natural approach to address this issue is to approximate $G(t)$ by a sequence of functions.
In this approach, it is essential to obtain uniform estimates for quantities related to the approximation sequence.
Suppose that we want to obtain the estimate for
\begin{align*}
    \mathbf{U} 
    &\coloneqq \sup_{n \in \mathbb{N}} \left \lVert \int_0^T \int_{\R^d}G_n(t,x) W^{Z_n}(dt, dx) \right\rVert_2^2\\
    &= \sup_{n \in \mathbb{N}}\left|\int_0^T\int_{\R^{2d}}G_n(t,y)G_n(t,z)\E[Z_n(t,y)Z_n(t,z)]\gamma(y-z)dydz\right|.
\end{align*}
where, for each $n \in \mathbb{N}$, $\{Z_n(t,x) \mid (t,x) \in [0,T]\times \mathbb{R}^d\}$ is a predictable process satisfying
\begin{equation*}
    \sup_{n\in \mathbb{N}}\sup_{(\tau,\eta) \in [0,T] \times \R^d}\norm{Z_n(\tau, \eta)}_2 < \infty
\end{equation*}
and for every $t \in [0,T]$, 
\begin{equation*}
    \E[Z_n(t,x)Z_n(t,y)] = \E[Z_n(t,x-y)Z_n(t,0)] \qquad \text{for any $x,y \in \R^d$}.
\end{equation*}
A simple estimate (\textit{cf.} \eqref{G_nG_ngamma uniform estimate}) yields 
\begin{align}
    \mathbf{U}
    &\leq  \sup_{n \in \mathbb{N}}\sup_{(\tau,\eta) \in [0,T] \times \R^d}\norm{Z_n(\tau, \eta)}_2^2 \sup_{n \in \mathbb{N}}\int_0^T\int_{\R^{2d}}\left|G_n(t,y)G_n(t,z)\right|\gamma(y-z)dydzdt \label{abso remove}\\
    &\leq \sup_{n \in \mathbb{N}}\sup_{(\tau,\eta) \in [0,T] \times \R^d}\norm{Z_n(\tau, \eta)}_2^2\sup_{n\in \mathbb{N}} \Theta(n,T)^2 T \int_{\mathbb{B}_{T+1}^2}\gamma(y-z)dydz \\
    &= \infty
\end{align}
because $\sup_{n\in \mathbb{N}} \Theta(n,T) = \infty$.
If $G(t)$ is a nonnegative function or nonnegative distribution (\textit{i.e.} when $d \leq 3$), then $G_n$ becomes a nonnegative function, and we can remove the absolute value inside the integral \eqref{abso remove}.
Consequently, using (ii) of Lemma \ref{Lem property of FG}, we can derive that
\begin{align}
    \mathbf{U}
    &\leq  \sup_{n\in \mathbb{N}}\sup_{(\tau,\eta) \in [0,T] \times \R^d}\norm{Z_n(\tau, \eta)}_2^2 \sup_{n\in \mathbb{N}}\int_0^T\int_{\R^{2d}}G_n(t,y)G_n(t,z)\gamma(y-z)dydz dt\\
    &=  \sup_{n\in \mathbb{N}}\sup_{(\tau,\eta) \in [0,T] \times \R^d}\norm{Z_n(\tau, \eta)}_2^2 \sup_{n \in \mathbb{N}}\int_0^T\int_{\R^{d}}|\mathcal{F} G(t)(\xi)|^2|\mathcal{F} \Lambda_n(\xi)|^2\mu(d\xi)dt\\
    &\lesssim_T \sup_{n\in \mathbb{N}}\sup_{(\tau,\eta) \in [0,T] \times \R^d}\norm{Z_n(\tau, \eta)}_2^2 \int_{\R^{d}}\abra{\xi}^{-2}\mu(d\xi) < \infty.
\end{align}
In \cite{ebina2023central}, thanks to the fact that $G(t)$ is a nonnegative distribution (\textit{i.e.}, a measure) when $d = 3$, we can use this type of estimate to obtain uniform moment bounds and uniform convergence results for the approximate sequence $u_n$ (\textit{cf.} Section \ref{section Introduction}), which are crucial for proving the limits \eqref{int lim R} and \eqref{int lim n}. 

However, when $d \geq 4$, we cannot remove the absolute value in \eqref{abso remove} to derive this estimate because $G(t)$ becomes a distribution that is not necessarily nonnegative. 
As a result, simply adopting the argument of the three-dimensional case when $d \geq 4$ fails to establish various uniform estimates, causing most of the reasoning used in \cite{ebina2023central} to break down. 
(Note that as taking absolute value is a nonlinear operation, it is difficult to relate $(\mathcal{F}|G_n(t)|)(\xi)$ and $|\mathcal{F} G_n(t) (\xi)|$.)
Even so, if we first apply the Fourier transform and (ii) of Lemma \ref{Lem property of FG} to obtain
\begin{equation}
    \mathbf{U} 
    = \sup_{n \in \mathbb{N}} \int_0^T \int_{\R^{d}}|\mathcal{F} G(t)(\xi)|^2|\mathcal{F} \Lambda_n(\xi)|^2\mu^{Z_n}_t(d\xi)dt
    \lesssim_T \sup_{n \in \mathbb{N}} \int_0^T\int_{\R^d} \abra{\xi}^{-2}\mu_t^{Z_n}(d\xi)dt,
\end{equation}
then, by applying Lemma \ref{Lem muZ mu}, we can now deduce that
\begin{equation}
    \mathbf{U} 
    \lesssim_T \sup_{n \in \mathbb{N}} \sup_{(\tau,\eta) \in [0,T] \times \R^d}\norm{Z_n(\tau, \eta)}_2^2 \int_{\R^d} \abra{\xi}^{-2}\mu(d\xi) < \infty.
\end{equation}
As mentioned in Section \ref{section Introduction}, the crucial point here is that, by applying inequality \eqref{int FG bound} (or equivallently (ii) of Lemma \ref{Lem property of FG}), we can essentially replace the (not necessarily nonnegative) function $G_n$ with the nonnegative function $b_1$ on the Fourier side.
This replacement enables us to apply Lemma \ref{Lem muZ mu}, which is particularly useful for establishing uniform estimates for various quantities involving $G_n$.
Lemma \ref{Lem muZ mu} will mainly be used in Sections \ref{section Stochastic wave equations} and \ref{section Approximation to the solution} to establish uniform estimates and convergences results for the approximation sequence of solutions to \eqref{SPDE}, which will be essential for proving the main theorems.
\end{Rem}

For completeness, we record two useful tools that will be used in later sections.
The first one is an elementary inequality due to Peetre. 
\begin{Lem}[Peetre's inequality]
\label{Lem Peetre inequality}
For any $k \in \mathbb{R}$ and $x,y \in \mathbb{R}^d$, we have
\begin{equation*}
    \langle x + y \rangle^k \leq 2^{\frac{|k|}{2}}\langle x \rangle^k\langle y \rangle^{|k|}.
\end{equation*}
\end{Lem}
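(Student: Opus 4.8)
The statement to prove is Peetre's inequality: $\langle x+y\rangle^k \le 2^{|k|/2}\langle x\rangle^k \langle y\rangle^{|k|}$.

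My plan is to first establish the case $k \ge 0$ and then deduce the case $k < 0$ by a substitution. For $k \ge 0$, the key is the elementary pointwise bound
\begin{equation*}
    \langle x+y\rangle^2 = 1 + |x+y|^2 \le 1 + (|x|+|y|)^2 \le 1 + 2|x|^2 + 2|y|^2 \le 2(1+|x|^2)(1+|y|^2) = 2\langle x\rangle^2\langle y\rangle^2,
\end{equation*}
where the middle step uses $(|x|+|y|)^2 \le 2|x|^2 + 2|y|^2$ and the last step uses $1 + 2a + 2b \le 2(1+a)(1+b) = 2 + 2a + 2b + 2ab$ for $a,b \ge 0$. Raising both sides to the power $k/2 \ge 0$ gives $\langle x+y\rangle^k \le 2^{k/2}\langle x\rangle^k \langle y\rangle^k \le 2^{k/2}\langle x\rangle^k\langle y\rangle^{|k|}$, which is the claim since $|k| = k$.

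For $k < 0$, write $k = -\ell$ with $\ell > 0$. Apply the case just proved with the roles shifted: using $\langle x\rangle = \langle (x+y) + (-y)\rangle \le 2^{1/2}\langle x+y\rangle\langle y\rangle$ (the $\ell=1$ instance, or equivalently the bound above with $x$ replaced by $x+y$ and $y$ by $-y$, noting $\langle -y\rangle = \langle y\rangle$), we get $\langle x+y\rangle \ge 2^{-1/2}\langle x\rangle\langle y\rangle^{-1}$. Raising to the power $-\ell < 0$ reverses the inequality:
\begin{equation*}
    \langle x+y\rangle^{-\ell} \le 2^{\ell/2}\langle x\rangle^{-\ell}\langle y\rangle^{\ell} = 2^{|k|/2}\langle x\rangle^k\langle y\rangle^{|k|}.
\end{equation*}
This completes both cases.

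There is essentially no obstacle here; the only point requiring the slightest care is getting the two elementary scalar inequalities $(|x|+|y|)^2 \le 2|x|^2+2|y|^2$ and $1+2a+2b \le 2(1+a)(1+b)$ right, and making sure the direction of the inequality flips correctly when exponentiating by a negative number in the $k<0$ case. Everything else is bookkeeping with $\langle \cdot\rangle \ge 1$ and $\langle -y\rangle = \langle y\rangle$.
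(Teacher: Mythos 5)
Your proof is correct. The paper states Peetre's inequality as a known elementary fact without supplying a proof, so there is no argument in the text to compare against; your derivation --- proving $\langle x+y\rangle^2 \le 2\langle x\rangle^2\langle y\rangle^2$ directly and then handling $k<0$ by rewriting $x = (x+y)+(-y)$ and inverting --- is the standard route and is carried out correctly, including the sign bookkeeping when raising to a negative power.
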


The second one is a slight generalization of the dominated convergence theorem. 
See e.g. \cite[Theorem 2.8.8.]{MR2267655} for the proof.

\begin{Prop}
\label{Prop general DCT}
Let $f$ and $g$ be functions on $\R^d$.
Let $(f_n)_{n=1}^{\infty}, (g_n)_{n=1}^{\infty} \subset L^1(\mathbb{R}^d)$ be two sequences of functions such that for almost every $x \in \mathbb{R}^d$, 
\begin{gather*}
    |f_n(x)| \leq g_n(x), \quad
    \lim_{n \to \infty} f_n(x) = f(x), \quad \lim_{n \to \infty} g_n(x) = g(x).
\end{gather*}
Suppose that $g \in L^1(\mathbb{R}^d)$ and
\begin{equation*}
    \lim_{n \to \infty} \int_{\mathbb{R}^d} g_n(x) dx = \int_{\mathbb{R}^d} g(x) dx.
\end{equation*}
Then $f \in L^1(\mathbb{R}^d)$ and 
\begin{equation*}
    \lim_{n \to \infty} \int_{\mathbb{R}^d} f_n(x) dx = \int_{\mathbb{R}^d} f(x) dx.
\end{equation*}
\end{Prop}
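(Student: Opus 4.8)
The plan is to deduce the statement from two applications of Fatou's lemma, one to the nonnegative sequence $g_n + f_n$ and one to $g_n - f_n$; this is the classical route to Pratt's generalized dominated convergence theorem.

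First I would collect the elementary consequences of the hypotheses. From $|f_n(x)| \le g_n(x)$ for a.e.\ $x$ it follows that $g_n + f_n \ge 0$ and $g_n - f_n \ge 0$ a.e., and letting $n \to \infty$ in the pointwise inequality $|f_n(x)| \le g_n(x)$ yields $|f(x)| \le g(x)$ for a.e.\ $x$. Since $g \in L^1(\R^d)$, this already gives $f \in L^1(\R^d)$, so only the convergence of the integrals remains.

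Next, since $g_n(x) + f_n(x) \to g(x) + f(x)$ for a.e.\ $x$, Fatou's lemma applied to the nonnegative functions $g_n + f_n$ gives
\begin{align*}
    \int_{\R^d}(g+f)\,dx
    &\le \liminf_{n\to\infty}\int_{\R^d}(g_n+f_n)\,dx \\
    &= \int_{\R^d}g\,dx + \liminf_{n\to\infty}\int_{\R^d}f_n\,dx,
\end{align*}
where in the last step I used the hypothesis $\int_{\R^d}g_n\,dx \to \int_{\R^d}g\,dx$ together with the elementary fact that $\liminf_n(a_n+b_n) = a + \liminf_n b_n$ when $a_n \to a \in \R$. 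Because $\int_{\R^d}g\,dx$ is finite, I may subtract it from both sides to obtain $\int_{\R^d}f\,dx \le \liminf_{n}\int_{\R^d}f_n\,dx$. Running the same argument with $g_n - f_n$ in place of $g_n + f_n$ (its pointwise limit being $g - f$, which lies in $L^1(\R^d)$) gives $\int_{\R^d}(g-f)\,dx \le \int_{\R^d}g\,dx - \limsup_{n}\int_{\R^d}f_n\,dx$, that is, $\limsup_{n}\int_{\R^d}f_n\,dx \le \int_{\R^d}f\,dx$. Combining the two bounds yields $\lim_{n}\int_{\R^d}f_n\,dx = \int_{\R^d}f\,dx$, which is the claim.

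The argument is routine and I do not anticipate a serious obstacle; the only delicate point is the legitimacy of subtracting $\int_{\R^d}g\,dx$ when passing from the Fatou inequality for $g_n \pm f_n$ to the desired inequality for $f_n$. This is precisely where both the finiteness of $\int_{\R^d}g\,dx$ (from $g \in L^1$) and the convergence $\int_{\R^d}g_n\,dx \to \int_{\R^d}g\,dx$ are used, and the conclusion genuinely fails if either hypothesis is dropped.
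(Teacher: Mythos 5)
Your proof is correct and is the standard two-sided Fatou argument (applied to $g_n + f_n$ and $g_n - f_n$) for Pratt's generalized dominated convergence theorem. The paper itself does not supply a proof of this proposition; it simply cites \cite[Theorem 2.8.8]{MR2267655}, and the argument given there is essentially the same as yours, so there is nothing further to compare.
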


\section{Stochastic wave equations in dimensions \texorpdfstring{$d \geq 4$}{}}
\label{section Stochastic wave equations}
The stochastic wave equation is a fundamental stochastic partial differential equation and has been studied along with the stochastic heat equation for many years.
However, in contrast to the stochastic heat equation, the stochastic wave equation in four or higher dimensions has not been considered much due to technical issues, as mentioned in Introduction.
In \cite{MR1684157}, Dalang extends Walsh's theory of martingale measures and stochastic integrals to include some rapidly decreasing nonnegative tempered distributions as integrands.
This framework of stochastic integrals allows one to study a stochastic wave equation of spatial dimension $d \leq 3$ driven by a spatially homogeneous Gaussian noise.
Unfortunately, one cannot apply this framework for the case $d \geq 4$ since the fundamental solution $G(t)$ is not necessarily nonnegative. 
To study high-dimensional cases, the framework used in \cite{MR1684157} is further extended in \cite{MR2399293}. 
For different approaches, see \textit{e.g.} \cite{MR1961163, MR1930613}.

Let $Z = \{Z(t,x) \mid (t,x) \in [0,T] \times \R^d\}$ be a predictable process satisfying the conditions \eqref{Z uniform L2 bound} and \eqref{Z homogeneous cov}.
Set $g_t(x) = \E[Z(t,x)Z(t,0)]$.
If additionally $x \mapsto Z(t,x)$ is $L^2(\Omega)$-continuous at $x=0$, then we apply Bochner's theorem to see that there is a finite measure $\nu_t^{Z}$ for each $t$ such that
\begin{equation*}
    g_t(x) = \int_{\R^d}e^{-2\pi\sqrt{-1}x\cdot y}\nu_t^{Z}(dy).
\end{equation*}
An extension of stochastic integrals (with respect to the martingale measure $W^Z$) discussed in \cite[Section 3]{MR2399293} is based on the equation
\begin{equation}
\label{conus dalang equation}
    \gamma \cdot g_t = \mathcal{F}\mu \cdot \mathcal{F}\nu_t^{Z} = \mathcal{F}(\mu \ast \nu_t^{Z}).
\end{equation}
Note that the spectral measure $\mu$ of $\gamma$ is generally not finite, so the equation holds in $\mathcal{S}'_{\C}(\R^d)$.
This equation allows $\mu$ and $\nu_t^{Z}$ to be treated separately instead of $\mu_t^{Z}$, which is useful when dealing with tempered distributions that are not necessarily nonnegative.
However, some care is needed because the convolution of two tempered distributions is generally not well-defined, and the convolution theorem for the Fourier transform does not always hold for such convolutions.

In this section, assuming that $d \geq 4$ and $\sigma\colon \R \to \R$ is a Lipschitz function that is not necessarily continuously differentiable, we consider the stochastic wave equation \eqref{SPDE}.
The purpose of this section is to show that \eqref{SPDE} can be handled by using Lemma \ref{Lem muZ mu} and exploiting the nonnegativity of the Bessel kernel instead of using \eqref{conus dalang equation}.
Without using \eqref{conus dalang equation}, we will show that $G$ belongs to $\mathcal{P}_{0,Z}$ and that a unique solution to \eqref{SPDE} exists by following the argument in \cite{MR1684157}.

Recall that the spectral measure $\mu$ of $\gamma$ is assumed to satisfy Dalang's condition \eqref{Dalang's condition} and that $\mu_t^Z$ is the spectral measure of $\gamma^Z(t,\cdot)$ defined by \eqref{gamma^Z}.
Let $Z(t,x)$ be a predictable process satisfying \eqref{Z uniform L2 bound} and \eqref{Z homogeneous cov}.
Let $\overline{\mathcal{P}}^Z$ be the set of deterministic functions $[0,T] \ni t \mapsto S(t) \in \mathcal{S}'(\R^d)$ such that $\F S(t)$ is a function and 
\begin{equation*}
    \lVert S \rVert_{\overline{\mathcal{P}}^Z}^2 \coloneqq \int_0^T\int_{\R^d}|\F S(t)(\xi)|^2\mu_t^Z(d\xi)dt < \infty.
\end{equation*}
Note that $\overline{\mathcal{P}}^Z$ is a normed space if we identify $S_1, S_2 \in \overline{\mathcal{P}}^Z$ satisfying $\norm{S_1 - S_2}_{\overline{\mathcal{P}}^Z} = 0$.
Also, we define $\mathcal{E}_{0,d}$ to be the subset of $\mathcal{P}_{+}$ consisting of deterministic functions $f(t,x)$ such that $f(t,\cdot) \in \mathcal{S}(\mathbb{R}^d)$ for every $t$.
Clearly, we have $\mathcal{E}_{0,d} \subset \overline{\mathcal{P}}^Z$ and $\mathcal{E}_{0,d} \subset \mathcal{P}_{+,Z} \subset \mathcal{P}_{0,Z}$, and hence $\mathcal{E}_{0,d} \subset \overline{\mathcal{P}}^Z \cap \mathcal{P}_{0,Z}$. 
Since $\lVert f \rVert_{\overline{\mathcal{P}}^Z} = \lVert f \rVert_{0,Z}$ for all $f \in \mathcal{E}_{0,d}$, any $S \in \overline{\mathcal{P}}^Z$ that can be approximated by $(f_n) \subset \mathcal{E}_{0,d}$ in $\lVert \cdot \rVert_{\overline{\mathcal{P}}^Z}$ corresponds to an element of $\mathcal{P}_{0,Z}$ (\textit{cf.} \cite[Section 2]{MR1684157}).

\begin{Prop}
\label{Prop extended SI}
Let $\{Z(t,x) \mid (t,x) \in [0,T]\times \mathbb{R}^d\}$ be a predictable process which satisfies \eqref{Z uniform L2 bound} and \eqref{Z homogeneous cov}. 
Then $G$ belongs to $\mathcal{P}_{0,Z}$ and 
\begin{align}
    \E[(G\cdot W^{Z})_t^2] 
    &= \int_0^t\int_{\R^d}|\F G(s)(\xi)|^2\mu_s^{Z}(d\xi)ds \nonumber\\
    &\lesssim_T \int_0^t \sup_{\eta \in \R^d}\norm{Z(s,\eta)}^2_2 ds\int_{\R^d}\abra{\xi}^{-2}\mu(d\xi). \label{FG muZ inequality}
\end{align}
\end{Prop}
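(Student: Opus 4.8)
The plan is to realize $G$ as a limit in $\mathcal{P}_{0,Z}$ of elements of $\mathcal{E}_{0,d}$, using the regularizations $G_n$ together with a further heat-kernel smoothing, and then to pass to the limit in the resulting isometry identity. Since $\lVert\cdot\rVert_{\overline{\mathcal{P}}^Z}=\lVert\cdot\rVert_{0,Z}$ on $\mathcal{E}_{0,d}$ and any $S\in\overline{\mathcal{P}}^Z$ approximable by such functions corresponds to an element of $\mathcal{P}_{0,Z}$, it suffices to: (i) show $G\in\overline{\mathcal{P}}^Z$, i.e. that $\int_0^T\int_{\R^d}|\F G(t)(\xi)|^2\mu_t^Z(d\xi)\,dt<\infty$, and (ii) produce a sequence $(f_m)\subset\mathcal{E}_{0,d}$ with $\lVert f_m-G\rVert_{\overline{\mathcal{P}}^Z}\to 0$. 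Once that is done, the stated identity for $\E[(G\cdot W^Z)_t^2]$ follows from the isometry defining the Dalang integral with respect to $W^Z$ (together with Remark~\ref{Rem 2.2}, which lets us pass between integrating $G$ against $W^Z$ and $G\cdot Z$ against $W$), and the final bound is just Lemma~\ref{Lem property of FG}(ii) followed by Lemma~\ref{Lem muZ mu}, exactly as in Remark~\ref{Rem why high dimensions difficult}.

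For step (i): by Lemma~\ref{Lem property of FG}(ii), $|\F G(t)(\xi)|^2\leq(1+2T^2)\abra{\xi}^{-2}$ uniformly in $t\in[0,T]$, so
\begin{equation*}
    \int_0^T\int_{\R^d}|\F G(t)(\xi)|^2\mu_t^Z(d\xi)\,dt
    \leq (1+2T^2)\int_0^T\int_{\R^d}\abra{\xi}^{-2}\mu_t^Z(d\xi)\,dt,
\end{equation*}
and Lemma~\ref{Lem muZ mu} bounds the right-hand side by $(1+2T^2)\,T\sup_{(\tau,\eta)}\lVert Z(\tau,\eta)\rVert_2^2\int_{\R^d}\abra{\xi}^{-2}\mu(d\xi)$, which is finite by \eqref{Z uniform L2 bound} and Dalang's condition \eqref{Dalang's condition}. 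Along the way one should note that $\F G(t)$ is a genuine (bounded measurable) function by \eqref{Fourier transform of G}, so $G(t)\in\mathcal{S}'(\R^d)$ lies in the space where $\overline{\mathcal{P}}^Z$ is defined.

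For step (ii): take $f_m(t,x)=(G_m(t)\ast H_{1/m})(x)$, i.e. further convolve the regularization $G_m$ from \eqref{Lambda G_n definition} with the heat kernel. Each $f_m(t,\cdot)$ is Schwartz (being the convolution of a compactly supported smooth function with a Schwartz function), $f_m$ is deterministic, and $f_m\in\mathcal{P}_+$ by \eqref{G_nG_ngamma uniform estimate}-type bounds together with the boundedness of $H_{1/m}$; hence $f_m\in\mathcal{E}_{0,d}$. On the Fourier side $\F f_m(t)(\xi)=\F\Lambda_m(\xi)\,e^{-4\pi^2|\xi|^2/m}\,\F G(t)(\xi)$, so $|\F f_m(t)(\xi)|\leq|\F G(t)(\xi)|$ for all $m$ and $\F f_m(t)(\xi)\to\F G(t)(\xi)$ pointwise as $m\to\infty$ (since $\F\Lambda_m(\xi)\to1$ and $e^{-4\pi^2|\xi|^2/m}\to1$). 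Dominated convergence in the measure $\mu_t^Z(d\xi)\,dt$ — legitimate because the dominating function $|\F G(t)(\xi)|^2\leq(1+2T^2)\abra{\xi}^{-2}$ is integrable against $\mu_t^Z(d\xi)\,dt$ by step (i) — gives $\lVert f_m-G\rVert_{\overline{\mathcal{P}}^Z}^2=\int_0^T\int_{\R^d}|\F f_m(t)(\xi)-\F G(t)(\xi)|^2\mu_t^Z(d\xi)\,dt\to0$. Therefore $G\in\mathcal{P}_{0,Z}$, $f_m\cdot Z\to G\cdot Z$ in $\mathcal{P}_0$ and the Walsh/Dalang integrals converge in $L^2(\Omega)$, yielding the isometry identity for $\E[(G\cdot W^Z)_t^2]$ (restricting to $[0,t]$ throughout), and the displayed bound \eqref{FG muZ inequality} follows as above.

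The main obstacle I expect is a bookkeeping one rather than a conceptual one: carefully justifying that the measurability/integrability hypotheses behind $\mathcal{E}_{0,d}\subset\overline{\mathcal{P}}^Z\cap\mathcal{P}_{0,Z}$ and the identification "approximable in $\lVert\cdot\rVert_{\overline{\mathcal{P}}^Z}$ $\Rightarrow$ element of $\mathcal{P}_{0,Z}$" apply to $f_m=G_m\ast H_{1/m}$ — in particular that $t\mapsto\mu_t^Z$ is measurable enough for the double integral over $[0,T]\times\R^d$ to make sense and for the dominated convergence argument to be valid — and checking that the Dalang-integral isometry with respect to $W^Z$ genuinely gives the middle expression in \eqref{FG muZ inequality} on $[0,t]$ (as opposed to $[0,T]$), which is handled by replacing $G$ with $G\,\ind_{[0,t]}$ throughout. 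None of these steps is hard, but they must be stated in the right order.
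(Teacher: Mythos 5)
Your proof follows the same strategy as the paper: approximate $G$ in $\overline{\mathcal{P}}^Z$ by a sequence in $\mathcal{E}_{0,d}$, pass to the limit in $\lVert\cdot\rVert_{\overline{\mathcal{P}}^Z}$ via dominated convergence on the Fourier side using Lemma~\ref{Lem property of FG}(ii) and Lemma~\ref{Lem muZ mu}, and transfer the isometry. The only substantive deviation is your extra heat-kernel smoothing $f_m = G_m \ast H_{1/m}$, which is unnecessary: Lemma~\ref{Lem property of G_k} already gives $G_m(t,\cdot) \in C_0^{\infty}(\R^d) \subset \mathcal{S}(\R^d)$, so the paper simply takes $G_m$ itself as the approximating sequence and computes $\lVert G_m - G\rVert_{\overline{\mathcal{P}}^Z}^2 = \int_0^T\int_{\R^d}|\F\Lambda_m(\xi)-1|^2|\F G(t)(\xi)|^2\mu_t^Z(d\xi)dt \to 0$ by the same dominated convergence argument you use. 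Your extra smoothing also introduces a small snag that you glossed over: $f_m(t,\cdot)$ no longer has compact support, so the claim $f_m \in \mathcal{P}_+$ cannot rest on \eqref{G_nG_ngamma uniform estimate}-type bounds plus ``boundedness of $H_{1/m}$'' alone --- you would need the rapid decay of the Schwartz function $f_m(t,\cdot)$ against the at-most-polynomial growth of $\gamma$ from \eqref{gamma tempered cond}. Using $G_m$ directly, as the paper does, sidesteps that issue entirely. Apart from this, your step (i) and the derivation of the displayed bound match the paper's proof.
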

\begin{proof}
The proof is a slight modification of the proof of \cite[Theorem 2]{MR1684157}.
We consider $G_n(t,x)$ defined in \eqref{Lambda G_n definition}.
By Lemma \ref{Lem property of G_k}, $G_{n}(t,x)$ is predictable (as it is a measurable deterministic function) and satisfies 
\begin{align*}
    \norm{G_n}_{+,Z}^2
    &=\int_0^T\int_{\R^{2d}}|G_n(t,x)||G_n(t,y)|\gamma(x-y)\E[|Z(t,x)Z(t,y)|]dxdydt\\
    &\leq T \Theta(n,T)^2 \sup_{(\tau, \eta) \in [0,T] \times \R^d}\norm{Z(\tau,\eta)}^2_2 \int_{\mathbb{B}_{T+1}^2}\gamma(x-y)dxdy < \infty.
\end{align*}
Hence $G_n \in \mathcal{P}_{+,Z} \subset \mathcal{P}_{0,Z}$. Moreover, by Lemma \ref{Lem property of G_k}, we have $G_n \in \mathcal{E}_{0,d}$ for every $n$.

To prove $G \in \mathcal{P}_{0,Z}$, it is sufficient to show that $\norm{G_n - G}_{\overline{\mathcal{P}}^Z} \xrightarrow[]{n \to \infty} 0$.
Because $\lim_{n\to \infty}\F \Lambda_n(\xi) = 1$, $|\F \Lambda_n(\xi) - 1|^2 \leq 4$, and 
\begin{align*}
    \int_0^T\int_{\R^d}|\F G(t)(\xi)|^2\mu_t^Z(d\xi)dt
    &\lesssim_T \int_0^T\int_{\R^d}\abra{\xi}^{-2}\mu_t^Z(d\xi)dt \\
    &\lesssim_T\int_{\R^d}\abra{\xi}^{-2}\mu(d\xi) < \infty
\end{align*}
by Lemmas \ref{Lem property of FG} and \ref{Lem muZ mu}, we can apply the dominated convergence theorem to obtain the convergence
\begin{align*} 
    \norm{G_n -G}_{\overline{\mathcal{P}}^Z}^2
    &= \int_0^T\int_{\R^d}|\F \Lambda_n(\xi)-1|^2|\F G(t)(\xi)|^2\mu_t^Z(d\xi)dt \xrightarrow[]{n \to \infty} 0.
\end{align*}
Finally, \eqref{FG muZ inequality} follows by applying Lemmas \ref{Lem property of FG} and \ref{Lem muZ mu} again.
\end{proof}

For any $x \in \R^d$ and $T \in \mathcal{S}'_{\C}(\R^d)$, let $\kappa_x T$ and $T^c$ denote the tempered distributions defined by
\begin{equation*}
    \langle \kappa_x T, \varphi \rangle = \langle T, \varphi(\cdot + x) \rangle \quad \text{and} \quad
    \langle T^c, \varphi \rangle = \langle T, \varphi(-\cdot) \rangle  
\end{equation*} 
for all $\varphi \in \mathcal{S}_{\C}(\R^d)$.
Here $\langle T,\varphi \rangle$ denotes the dual pairing of $T \in \mathcal{S}'_{\C}(\R^d)$ and $\varphi \in \mathcal{S}_{\C}(\R^d)$.

\begin{Cor}
\label{Cor G(t-s,x-y) in P_0Z}
Let $\{Z(t,x) \mid (t,x) \in [0,T]\times \mathbb{R}^d\}$ be a predictable process which satisfies \eqref{Z uniform L2 bound} and \eqref{Z homogeneous cov}. 
Then, for any $ t \in [0,T]$, $s \mapsto (\kappa_xG(t-s))^c$ belongs to $\mathcal{P}_{0,Z}$ and
\begin{align}
    \E[((\kappa_xG(t-\cdot))^c \cdot W^{Z})_r^2] 
    &= \int_0^{t \land r}\int_{\R^d}|\F G(t-s)(\xi)|^2\mu_s^{Z}(d\xi)ds \nonumber \\
    &\lesssim_T \int_0^{t \land r} \sup_{\eta \in \R^d}\norm{Z(s,\eta)}^2_2 ds\int_{\R^d}\abra{\xi}^{-2}\mu(d\xi). \label{cor FG inequality}
\end{align}
\end{Cor}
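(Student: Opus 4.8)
The plan is to deduce Corollary \ref{Cor G(t-s,x-y) in P_0Z} from Proposition \ref{Prop extended SI} by a change of time variable together with the elementary observation that translation and reflection in space do not change the modulus of the Fourier transform. First I would fix $t \in [0,T]$ and $x \in \R^d$, and introduce the time-reversed, space-shifted kernel $S(s) \coloneqq (\kappa_x G(t-s))^c$ for $s \in [0,t]$, extended by $0$ for $s \in (t,T]$. The Fourier transform behaves nicely under these operations: $\F(\kappa_x T)(\xi) = e^{2\pi\sqrt{-1}x\cdot\xi}\F T(\xi)$ and $\F(T^c)(\xi) = \overline{\F T(\xi)}$ when $T$ is real (more precisely $\F(T^c)(\xi)=\F T(-\xi)$, which together with $\F G(s)$ being real and even gives the same modulus), so that $|\F S(s)(\xi)| = |\F G(t-s)(\xi)|$ for all $\xi$. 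In particular $\norm{S}_{\overline{\mathcal{P}}^Z}^2 = \int_0^t\int_{\R^d}|\F G(t-s)(\xi)|^2\mu_s^Z(d\xi)ds$, which by part (ii) of Lemma \ref{Lem property of FG} and Lemma \ref{Lem muZ mu} is bounded by $C_T\int_0^t\sup_{\eta}\norm{Z(s,\eta)}_2^2\,ds\int_{\R^d}\abra{\xi}^{-2}\mu(d\xi) < \infty$.

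Next I would show $S \in \mathcal{P}_{0,Z}$ by the same approximation scheme as in Proposition \ref{Prop extended SI}: set $S_n(s) \coloneqq (\kappa_x G_n(t-s))^c$, where $G_n$ is the mollification from \eqref{Lambda G_n definition}. Each $S_n$ is a deterministic function with $S_n(s,\cdot) \in \mathcal{S}(\R^d)$ (in fact in $C_0^\infty$), supported in a fixed ball, hence $S_n \in \mathcal{E}_{0,d} \subset \overline{\mathcal{P}}^Z \cap \mathcal{P}_{0,Z}$ with $\norm{S_n}_{\overline{\mathcal{P}}^Z} = \norm{S_n}_{0,Z}$. Since $|\F S_n(s)(\xi)| = |\F \Lambda_n(\xi)|\,|\F G(t-s)(\xi)|$, the difference satisfies
\begin{equation*}
    \norm{S_n - S}_{\overline{\mathcal{P}}^Z}^2 = \int_0^t\int_{\R^d}|\F\Lambda_n(\xi)-1|^2|\F G(t-s)(\xi)|^2\mu_s^Z(d\xi)ds,
\end{equation*}
and because $\F\Lambda_n(\xi) \to 1$ pointwise, $|\F\Lambda_n(\xi)-1|^2 \leq 4$, and the dominating function $|\F G(t-s)(\xi)|^2$ is integrable against $\mu_s^Z(d\xi)ds$ on $[0,t]\times\R^d$ (by the bound just established), the dominated convergence theorem gives $\norm{S_n - S}_{\overline{\mathcal{P}}^Z} \to 0$. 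As noted before Proposition \ref{Prop extended SI}, an element of $\overline{\mathcal{P}}^Z$ approximable by a sequence in $\mathcal{E}_{0,d}$ corresponds to an element of $\mathcal{P}_{0,Z}$; hence $S \in \mathcal{P}_{0,Z}$ and $\norm{S}_{0,Z} = \norm{S}_{\overline{\mathcal{P}}^Z}$.

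Finally, the identity for the second moment of the integral up to time $r$ follows from the fact that the Dalang integral of $S$ is a martingale whose quadratic variation at time $r$ is $\int_0^r\int_{\R^{2d}}S(s,y)S(s,z)\gamma(y-z)dydz\,ds$; passing to the Fourier side via the definition of $\mu_s^Z$ (and using that $S(s) \equiv 0$ for $s > t$, so the upper limit becomes $t\land r$) yields the first line of \eqref{cor FG inequality}, and the second line is immediate from Lemma \ref{Lem property of FG}(ii) and Lemma \ref{Lem muZ mu} exactly as in \eqref{FG muZ inequality}. I do not anticipate a genuine obstacle here; the only point requiring a little care is the bookkeeping of how reflection and translation act on $\F G(s)$ — i.e. checking $|\F S(s)(\xi)| = |\F G(t-s)(\xi)|$ rigorously at the level of tempered distributions rather than just formally — and making sure the time substitution $s \mapsto t-s$ is applied consistently so that the truncation at $t\land r$ comes out correctly.
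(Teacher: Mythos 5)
Your proof is correct and takes essentially the same approach as the paper's very brief proof, which simply records $\F((\kappa_x G(t-s))^c)(\xi) = e^{2\pi\sqrt{-1}\xi\cdot x}\F G(t-s)(\xi)$ and appeals to the approximation argument of Proposition \ref{Prop extended SI}. One minor bookkeeping slip: with the paper's convention $\langle\kappa_x T,\varphi\rangle = \langle T,\varphi(\cdot+x)\rangle$ one gets $\F(\kappa_x T)(\xi)=e^{-2\pi\sqrt{-1}\xi\cdot x}\F T(\xi)$ (note the minus sign), not $e^{+2\pi\sqrt{-1}\xi\cdot x}\F T(\xi)$; this is harmless here since only the modulus enters.
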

\begin{proof}
Note that $G(t-s)$ is a rapidly decreasing tempered distribution, and so is $(\kappa_xG(t-s))^c$.
The claim follows from 
\begin{equation*}
    \F ((\kappa_xG(t-s))^c)(\xi) 
    = e^{2\pi \sqrt{-1}\xi \cdot x}\F G(t-s)(\xi) 
\end{equation*}
and the same argument as in Proposition \ref{Prop extended SI}.
\end{proof}

For simplicity, we will use the notation
\begin{equation*}
    ((\kappa_xG(t-\cdot))^c \cdot W^{Z})_r = \int_0^r \int_{\R^d} G(t-s,x-y)W^Z(ds,dy). 
\end{equation*}
Following \cite{MR2399293, MR1684157}, we recall definitions of the property (S) and the solution.
For any set $A \subset \R^d$ and $z \in \R^d$, let $A + z = \{x + z \mid x \in A \}$.

\begin{Def}
We say that a process $\{Z(t,x) \mid (t,x) \in [0,T] \times \R^d\}$ satisfies the property (S) if, for any $z \in \R^d$ and every integers $i,j \geq 1$, all distributions of random vectors of the form 
\begin{gather*}
    (Z(t_1,x_1 + z), \ldots, Z(t_i,x_i + z),W_{t_{i+1}}(A_1 + z), \ldots, W_{t_{i+j}}(A_j + z)),\\
    \text{where $t_1,\ldots,t_{i+j} \in [0,T]$, $x_1, \ldots, x_i \in \R^d$, and $A_1, \ldots, A_j \in \mathcal{B}_{\mathrm{b}}(\R^d)$},
\end{gather*}
do not depend on $z$.
\end{Def}

\begin{Def}
\label{Def solution}
We say that a process $U = \{U(t,x) \mid (t,x) \in [0,T] \times \R^d\}$ is a solution to the equation \eqref{SPDE} if $U$ is predictable and satisfies the property (S) and 
\begin{equation}
\label{U uniform bound}
    \sup_{(t,x) \in [0,T] \times \R^d}\lVert U(t,x) \rVert_2 < \infty,
\end{equation}
and the following equation holds for any $(t,x) \in [0,T] \times \R^d$:
\begin{equation}
\label{solution integral equation}
    U(t,x) = 1 + \int_0^t\int_{\R^d}G(t-s,x-y)W^{\sigma(U)}(ds,dy), \quad \text{a.s.}.
\end{equation}
\end{Def}

\begin{Rem}
Note that $(\kappa_xG(t-s))^c \in \mathcal{P}_{0,\sigma(U)}$ and the stochastic integral in \eqref{solution integral equation} is well-defined by Corollary \ref{Cor G(t-s,x-y) in P_0Z}.
In Definition \ref{Def solution}, we impose the condition that $U(t,x)$ satisfies the property (S), which is stronger than the condition that it has a spatially homogeneous covariance, to ensure the uniqueness of the solution (see \cite[Theorem 4.8]{MR2399293}).
\end{Rem}

With Proposition \ref{Prop extended SI}, we can now show the existence and uniqueness of the solution.

\begin{Prop}
\label{Prop U exi uni conti}
The equation \eqref{SPDE} has a solution $\{U(t,x) \mid (t,x) \in [0,T] \times \R^d\}$ which is $L^2(\Omega)$-continuous.
Moreover, if $\{V(t,x) \mid (t,x) \in [0,T] \times \R^d\}$ is another solution, then $P(U(t,x) = V(t,x)) = 1$ for any $(t,x) \in [0,T] \times \R^d$. 
\end{Prop}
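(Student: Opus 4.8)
The plan is to carry out the standard Picard iteration, but at each step use Proposition~\ref{Prop extended SI} and Corollary~\ref{Cor G(t-s,x-y) in P_0Z} to make sense of the stochastic integrals and to control second moments. Define $U_0(t,x) \equiv 1$ and inductively set
\begin{equation*}
    U_{n+1}(t,x) = 1 + \int_0^t\int_{\R^d}G(t-s,x-y)W^{\sigma(U_n)}(ds,dy).
\end{equation*}
First I would check, by induction on $n$, that each $U_n$ is predictable, $L^2(\Omega)$-continuous, satisfies the uniform bound \eqref{U uniform bound}, and satisfies the property (S); predictability of $U_{n+1}$ follows because the Dalang integral of a deterministic integrand against $W^{\sigma(U_n)}$ is adapted and $L^2(\Omega)$-continuous (the latter from (iii) of Lemma~\ref{Lem property of FG} together with \eqref{cor FG inequality}), hence has a predictable modification by \cite[Proposition B.1]{MR4017124}. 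The uniform $L^2$-bound propagates because, since $\sigma$ is Lipschitz, $\sup_{\eta}\|\sigma(U_n(s,\eta))\|_2^2 \lesssim 1 + \sup_{\eta}\|U_n(s,\eta)\|_2^2$, and then \eqref{cor FG inequality} gives
\begin{equation*}
    \sup_{x}\|U_{n+1}(t,x)\|_2^2 \lesssim 1 + \int_0^t \Bigl(1 + \sup_{\eta}\|U_n(s,\eta)\|_2^2\Bigr)\,ds,
\end{equation*}
so a Gronwall-type argument bounds $\sup_n \sup_{(t,x)}\|U_n(t,x)\|_2$. The property (S) for $U_{n+1}$ is inherited from that of $U_n$ and the stationarity of the noise increments, exactly as in \cite{MR2399293, MR1684157}.

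Next I would establish convergence of the Picard scheme. Set $M_n(t) = \sup_{x}\|U_{n+1}(t,x) - U_n(t,x)\|_2^2$. Using the Lipschitz property of $\sigma$, Remark~\ref{Rem 2.2}(2) to write the difference of the two stochastic integrals as a single integral against $W^{\sigma(U_n)-\sigma(U_{n-1})}$, and then \eqref{cor FG inequality} applied with $Z = \sigma(U_n)-\sigma(U_{n-1})$ (which satisfies \eqref{Z uniform L2 bound} and, thanks to property (S), \eqref{Z homogeneous cov}), I get
\begin{equation*}
    M_n(t) \lesssim_T \int_0^t M_{n-1}(s)\,ds.
\end{equation*}
Iterating this gives $M_n(t) \leq C^n t^n/n!$ uniformly on $[0,T]$, so $(U_n(t,x))_n$ is Cauchy in $L^2(\Omega)$ uniformly in $(t,x)$; let $U(t,x)$ be the limit. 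Then $U$ inherits the uniform $L^2$-bound, property (S), and (via the $L^2(\Omega)$-continuity estimates above, which survive the uniform limit) $L^2(\Omega)$-continuity, hence admits a predictable modification, which we take as $U$. Passing to the limit in the defining equation — the right-hand side converges because $\|\int_0^t\int G(t-s,\cdot)W^{\sigma(U_n)}(ds,dy) - \int_0^t\int G(t-s,\cdot)W^{\sigma(U)}(ds,dy)\|_2^2 \lesssim_T \int_0^t \sup_\eta\|\sigma(U_n(s,\eta))-\sigma(U(s,\eta))\|_2^2\,ds \to 0$ by \eqref{cor FG inequality} and dominated convergence — shows $U$ satisfies \eqref{solution integral equation}.

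For uniqueness, if $V$ is another solution, put $\Delta(t) = \sup_{x}\|U(t,x)-V(t,x)\|_2^2$, which is finite by \eqref{U uniform bound}; the same computation as for $M_n$ — subtracting the two integral equations, using Remark~\ref{Rem 2.2}(2), the Lipschitz bound on $\sigma$, property (S) of both $U$ and $V$ so that $\sigma(U)-\sigma(V)$ has homogeneous covariance, and \eqref{cor FG inequality} — yields $\Delta(t) \lesssim_T \int_0^t \Delta(s)\,ds$, and Gronwall's lemma forces $\Delta \equiv 0$, hence $P(U(t,x)=V(t,x))=1$ for every $(t,x)$. The main obstacle, and the place where this paper's contribution is essential, is verifying at every step that the deterministic integrand $s \mapsto (\kappa_x G(t-s))^c$ genuinely lies in $\mathcal{P}_{0,Z}$ for the relevant homogeneous-covariance processes $Z$ (namely $\sigma(U_n)$, $\sigma(U_n)-\sigma(U_{n-1})$, $\sigma(U)$, $\sigma(U)-\sigma(V)$) and that the second-moment bound \eqref{cor FG inequality} holds with a constant independent of $Z$ apart from $\sup_{\eta}\|Z(s,\eta)\|_2$; this is exactly what Proposition~\ref{Prop extended SI} and Corollary~\ref{Cor G(t-s,x-y) in P_0Z} — resting on Lemma~\ref{Lem muZ mu} and the Bessel-kernel bound (ii) of Lemma~\ref{Lem property of FG} — provide, circumventing the non-negativity failure of $G(t)$ in dimensions $d \geq 4$. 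Everything else is a routine Picard/Gronwall argument, and I would only sketch the verification of property (S) since it is identical to \cite{MR2399293}.
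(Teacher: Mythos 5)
Your proposal follows essentially the same Picard iteration route as the paper: inductively construct $U_n$, verify predictability, the uniform $L^2$ bound, $L^2(\Omega)$-continuity and property (S), use Corollary~\ref{Cor G(t-s,x-y) in P_0Z} and Lemma~\ref{Lem muZ mu} at every step to make sense of the Dalang integrals against the distributional kernel $G$ without nonnegativity, run a Gronwall estimate for the differences, and pass to the limit. Your identification of the novel ingredient (the Bessel-kernel bound (ii) of Lemma~\ref{Lem property of FG} feeding into Lemma~\ref{Lem muZ mu} to control $\mu_t^Z$ uniformly) is exactly the paper's point, and the $C^n t^n/n!$ summability is the same thing the paper expresses as $\sum_n \sup_{(t,x)}\|U_{n+1}-U_n\|_2^2 <\infty$.

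One step deserves a caveat. In the uniqueness part you assert that ``property (S) of both $U$ and $V$'' implies that $\sigma(U)-\sigma(V)$ has spatially homogeneous covariance. As stated this does not follow: expanding $\E[(\sigma(U(t,x))-\sigma(V(t,x)))(\sigma(U(t,y))-\sigma(V(t,y)))]$ leaves cross-terms $\E[\sigma(U(t,x))\sigma(V(t,y))]$ whose shift-invariance requires the \emph{joint} law of $(U,V,W)$ to be translation-invariant, not merely the marginal laws of $(U,W)$ and $(V,W)$ separately. The paper is careful about precisely this kind of joint statement when comparing $U$ with $u_n$ (see Lemma~\ref{Lem V_n property S}, which establishes a joint property (S) for the pair), and for comparing two arbitrary solutions it defers to \cite[Theorem 4.8]{MR2399293}, where the joint argument is carried out. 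Your sketch should either establish this joint shift-invariance explicitly (along the lines of Lemma~\ref{Lem V_n property S}) or, as the paper does, cite the uniqueness theorem of \cite{MR2399293} rather than claim the homogeneity of $\sigma(U)-\sigma(V)$ as an automatic consequence of the individual properties.
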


\begin{proof}
The proof is only a slightly modified application of the arguments in \cite[Section 5]{MR1684157} and \cite[Section 4]{MR2399293}, so we only give a brief outline here.
To construct the solution, we consider the following iteration scheme: 
\begin{equation}
    U_0(t,x) = 1, \quad U_{n+1}(t,x) = 1 + \int_0^t\int_{\mathbb{R}^d}G(t-s,x-y)W^{\sigma(U_n)}(ds,dy). \label{U_(n+1)}
\end{equation}
By an induction argument, we can check that for each $n \geq 0$, $U_n$ is $L^2(\Omega)$-continuous and satisfies the property (S) and
\begin{equation}
    \sup_{(t,x) \in [0,T] \times \R^d}\lVert U_n(t,x) \rVert_2 < \infty \label{U_n bound}.
\end{equation}
Indeed, a simple computation using \eqref{cor FG inequality} shows that \eqref{U_n bound} holds, and the property (S) can be shown by the same argument as in the proof of \cite[Lemma 18]{MR1684157}.
See also Lemma \ref{Lem u_n property S} for a similar argument.
The $L^2(\Omega)$-continuity of $U_n$ can also be checked by using Lemma \ref{Lem property of FG} and \eqref{cor FG inequality}, following the proof of Lemma 19 in \cite{MR1825714}.
From there, we can take a predictable modification of $U_n$ which satisfies the same properties as $U_n$. Hence $U_{n+1}$ is well-defined. 
Moreover, a Gronwall-type argument and \eqref{cor FG inequality} yields (see Proposition \ref{Prop u_n uniform bound} for a similar argument) that
\begin{equation}
\label{U_n uniform bound}
    \sup_{n \geq 0}\sup_{(t,x) \in [0,T] \times \R^d}\lVert U_{n}(t,x) \rVert_2 < \infty.
\end{equation}

To prove that the sequence $(U_n(t,x))_{n=0}^{\infty}$ converges in $L^2(\Omega)$, we first have to check that the process $\{\sigma(U_{n+1}(t,x)) - \sigma(U_n(t,x)) \mid (t,x) \in [0,T] \times \R^d \}$ satisfies the property (S).
This can be done by the same argument in \cite[Lemma 4.5]{MR2399293} (see Lemma \ref{Lem V_n property S} for a similar argument).
Then, using the spectral measure $\mu_t^{\sigma(U_{n+1}) - \sigma(U_n)}$ of $\gamma^{\sigma(U_{n+1}) - \sigma(U_n)}(t,\cdot)$ and \eqref{cor FG inequality} and making a Gronwall-type argument, we deduce that 
\begin{equation*}
    \sum_{n = 0}^{\infty}\sup_{(t,x) \in [0,T]\times \R^d}\lVert U_{n+1}(t,x) - U_n(t,x) \rVert_2^2 < \infty,
\end{equation*}
which implies that $(U_n(t,x))_{n=0}^{\infty}$ is a uniformly Cauchy sequence in $L^2(\Omega)$ with respect to $(t,x) \in [0,T]\times \R^d$.
Therefore, there exists $U(t,x) \in L^2(\Omega)$ for each $(t,x)$ such that
\begin{equation}
\label{U_n U convergence}
    \lim_{n \to \infty} \sup_{(t,x) \in [0,T]\times \R^d}\lVert U_n(t,x) - U(t,x) \rVert_2 = 0.
\end{equation}
From \eqref{U_n uniform bound}, it is clear that $U$ satisfies \eqref{U uniform bound}.
Because $U_n$ is $L^2(\Omega)$-continuous and satisfies the property (S), the same holds for $U$. 
Thus $U$ admits a predictable modification, which we will again write as $U$.

To prove that $U(t,x)$ satisfies \eqref{solution integral equation}, it suffices to check that
\begin{equation}
\label{si conv}
    \int_0^t\int_{\R^d}G(t-s,x-y)W^{\sigma(U_n)}(ds,dy) \xrightarrow[n \to \infty]{} \int_0^t\int_{\R^d}G(t-s,x-y)W^{\sigma(U)}(ds,dy) \quad \text{in $L^2(\Omega)$.}
\end{equation}
We can show that the process $\{\sigma(U_n(t,x)) - \sigma(U(t,x)) \mid (t,x) \in [0,T]\times \R^d \}$ satisfies the property (S) and justify 
\begin{align*}
    &\int_0^t\int_{\R^d}G(t-s,x-y)W^{\sigma(U_n)}(ds,dy) - \int_0^t\int_{\R^d}G(t-s,x-y)W^{\sigma(U)}(ds,dy)\\
    &= \int_0^t\int_{\R^d}G(t-s,x-y)W^{\sigma(U_n) - \sigma(U)}(ds,dy).
\end{align*}
The convergence \eqref{si conv} then follows from \eqref{cor FG inequality} and \eqref{U_n U convergence}.
Finally, the uniqueness follows from the same argument as in \cite[Theorem 4.8]{MR2399293}. 
\end{proof}

\section{Approximation to the solution}
\label{section Approximation to the solution}
In this section, we continue to consider the equation \eqref{SPDE} with the assumption that $\sigma$ is a Lipschitz function but not necessarily continuously differentiable. 
In order to prove the central limit theorems, we construct another approximation sequence given (informally) by
\begin{align}
    u_0(t,x) &= 1, \nonumber \\ 
    u_{n+1}(t,x) &= 1 + \int_0^t\int_{\mathbb{R}^d}G_{n+1}(t-s,x-y)\sigma(u_n(s,y))W(ds,dy). \label{u_(n+1)}
\end{align}
Here the stochastic integral in \eqref{u_(n+1)} is defined as the Walsh integral of $G_{n+1}(t-\cdot,x-\star)\sigma(u_n(\cdot, \star)) \in \mathcal{P}_{+}$. 
We first prove that this sequence is well-defined and show some of its properties. 
Here and subsequently, we will often use the notation $G_{n}^{(t,x)}(s,y) \coloneqq G_{n}(t-s,x-y)$.

\begin{Prop}
\label{Prop u_n property}
For every integer $n \geq 0$, we have the following: 
\begin{enumerate}
    \item [\normalfont(i)] The process $\{u_n(t,x) \mid (t,x) \in [0,T]\times \R^d\}$ has a predictable modification.
    \item [\normalfont(ii)] For any $p \geq 1$, $\sup_{(t,x) \in [0,T]\times\mathbb{R}^d}\lVert u_n(t,x)\rVert_p < \infty$.
    \item [\normalfont(iii)] For any $p \geq 1$, $(t,x) \mapsto u_n(t,x)$ is $L^p(\Omega)$-continuous.
\end{enumerate}
\end{Prop}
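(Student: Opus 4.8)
The plan is to argue by induction on $n$, the base case $n=0$ being immediate since $u_0\equiv 1$ is a deterministic constant, hence predictable, bounded in every $L^p(\Omega)$, and trivially $L^p(\Omega)$-continuous. For the inductive step, I would assume (i)--(iii) for $u_n$; since $\sigma$ is Lipschitz, $\sigma(u_n)$ then has a predictable modification with $\sup_{(t,x)}\norm{\sigma(u_n(t,x))}_p < \infty$ for every $p$. The first task is to see that $u_{n+1}$ is well defined: by Lemma~\ref{Lem property of G_k} one has $\abs{G_{n+1}^{(t,x)}(s,y)} \le \Theta(n+1,T)\ind_{\mathbb{B}_{T+1}}(x-y)$, so a crude bound together with \eqref{I cs ineq} and \eqref{G_nG_ngamma uniform estimate} gives $\norm{G_{n+1}^{(t,x)}\sigma(u_n)}_{+} < \infty$; hence $G_{n+1}^{(t,x)}\sigma(u_n) \in \mathcal{P}_{+}$, the Walsh integral in \eqref{u_(n+1)} makes sense, and $r \mapsto (G_{n+1}^{(t,x)}\sigma(u_n)\cdot W)_r$ is a continuous $L^2(\Omega)$-martingale. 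It is worth noting that, unlike the replacement argument of Remark~\ref{Rem why high dimensions difficult}, every estimate here is for a \emph{fixed} $n$, so no uniformity in $n$ is needed and the possible failure of nonnegativity of $G_{n+1}$ is irrelevant; only the nonnegative definiteness of $\gamma$ will be used. (Also, since $G_{n+1}(t-s,\cdot)\equiv 0$ for $s \ge t$, one may always integrate up to $T$, which is convenient for (iii).)

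For (ii) with $p \ge 2$, I would apply the Burkholder--Davis--Gundy inequality to bound $\norm{u_{n+1}(t,x)-1}_p^p$ by a constant depending on $p$ times $\E[\,Q^{p/2}\,]$, where $Q \coloneqq \int_0^t\int_{\R^{2d}} G_{n+1}^{(t,x)}(s,y)G_{n+1}^{(t,x)}(s,z)\sigma(u_n(s,y))\sigma(u_n(s,z))\gamma(y-z)\,dydzds \ge 0$ is the quadratic variation. Because $G_{n+1}^{(t,x)}$ is supported in $[0,T]\times\mathbb{B}_{T+1}(x)$ with $\abs{G_{n+1}} \le \Theta(n+1,T)$, Jensen's inequality with respect to the finite reference measure $\ind_{[0,T]}(s)\ind_{\mathbb{B}_{T+1}(x)}(y)\ind_{\mathbb{B}_{T+1}(x)}(z)\gamma(y-z)\,dydzds$ reduces the bound to an integral of $\E[\,\abs{\sigma(u_n(s,y))}^{p/2}\abs{\sigma(u_n(s,z))}^{p/2}\,]$ against that measure; the Cauchy--Schwarz inequality and the inductive hypothesis then control this by $\sup_{(s,y)}\norm{\sigma(u_n(s,y))}_p^p < \infty$, giving $\sup_{(t,x)}\norm{u_{n+1}(t,x)}_p < \infty$. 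The range $1 \le p < 2$ follows from $\norm{\cdot}_p \le \norm{\cdot}_2$ on a probability space.

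For (iii), I would fix $(t,x)$ and write the increment $u_{n+1}(t,x)-u_{n+1}(t',x')$ as a single Walsh integral with integrand $\Delta(s,y)\sigma(u_n(s,y))$, where $\Delta(s,y)\coloneqq G_{n+1}(t-s,x-y)-G_{n+1}(t'-s,x'-y)$, using linearity and integration up to $T$. The same Burkholder--Davis--Gundy-and-Jensen reduction as in (ii) bounds $\norm{u_{n+1}(t,x)-u_{n+1}(t',x')}_p^p$ (for $p\ge 2$) by a constant times $\int_0^T\int_{\R^{2d}}\abs{\Delta(s,y)}^{p/2}\abs{\Delta(s,z)}^{p/2}\gamma(y-z)\,dydzds$. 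For $(t',x')$ in a bounded neighbourhood of $(t,x)$ this integrand is dominated by $(2\Theta(n+1,T))^p\ind_{\mathbb{B}_{T+2}(x)}(y)\ind_{\mathbb{B}_{T+2}(x)}(z)$, which is integrable against $\gamma(y-z)\,dydzds$ over $[0,T]\times\R^{2d}$ by the local integrability of $\gamma$, and it tends to $0$ pointwise as $(t',x')\to(t,x)$ by the continuity of $G_{n+1}$ from Lemma~\ref{Lem property of G_k}; dominated convergence then gives $L^p(\Omega)$-continuity for $p\ge 2$, and hence for all $p\ge 1$.

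Finally, (i) is obtained from (iii): $u_{n+1}$ is $(\mathscr{F}_t)$-adapted because $(G_{n+1}^{(t,x)}\sigma(u_n)\cdot W)_t$ is $\mathscr{F}_t$-measurable, and it is stochastically continuous by (iii) with $p=2$, so it admits a predictable modification by \cite[Proposition B.1]{MR4017124}. I expect the only mildly delicate point to be the treatment of the $p/2$-th moment of the quadratic variation when $p > 2$ in (ii) and (iii); this is entirely routine once the Burkholder--Davis--Gundy inequality, Jensen's inequality against the finite reference measure above, the Cauchy--Schwarz inequality, and dominated convergence are combined. Beyond that the argument is pure bookkeeping, and in particular nothing from Lemma~\ref{Lem muZ mu} or the Fourier-side analysis is required at this stage.
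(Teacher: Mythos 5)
Your proposal is correct and follows essentially the same route as the paper: induction on $n$, well-definedness via the $\mathcal{P}_+$ bound from Lemma~\ref{Lem property of G_k}, Burkholder--Davis--Gundy plus the inductive hypothesis for the uniform $L^p$-bound, dominated convergence for $L^p$-continuity, and adaptedness plus stochastic continuity for a predictable modification. The only cosmetic difference is that, to pass from $\E[Q^{p/2}]$ to the integral of the $p$-th moments of $\sigma(u_n)$, the paper applies Minkowski's integral inequality (so $\lVert Q\rVert_{p/2}\le \int|G_{n+1}G_{n+1}|\,\lVert\sigma\sigma\rVert_{p/2}\gamma$) whereas you use Jensen's inequality against the finite reference measure $\ind_{[0,T]}\ind_{\mathbb{B}_{T+1}(x)}\ind_{\mathbb{B}_{T+1}(x)}\gamma\,dydzds$; both are standard and interchangeable here.
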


\begin{proof}
We proceed by induction on $n$.
Clearly, $u_0 \equiv 1$ satisfies (i)-(iii). 
Assume by induction that $u_n$ satisfies (i) and (ii).
Since $(t,x) \mapsto G_{n+1}(t,x)$ is the deterministic measurable function by Lemma \ref{Lem property of G_k}, we can take a predictable modification $\widetilde{u_n}$ of $u_n$ so that the process $\{G_{n+1}^{(t,x)}(s,y)\sigma(\widetilde{u_n} (s,y)) \mid (s,y) \in [0,T] \times \R^d\}$ is predictable.
Then, (ii) for $u_n$ and the Lipschitz continuity of $\sigma$ imply that $G_{n+1}^{(t,x)}(\cdot,\star)\sigma(\widetilde{u_n}(\cdot,\star)) \in \mathcal{P}_+$, and consequently, $u_{n+1}$ defined by
\begin{equation*}
    u_{n+1}(t,x) = 1 + \int_0^t\int_{\mathbb{R}^d}G_{n+1}^{(t,x)}(s,y)\sigma(\widetilde{u_n}(s,y))W(ds,dy)
\end{equation*}
is well-defined for given $(t,x)$.

We now prove (ii) and (iii) for $u_{n+1}$ for every $p \geq 2$.
The claim for the case $1 \leq p < 2$ automatically follows from the case $p \geq 2$.
Applying the Burkholder-Davis-Gundy inequality and Minkowski's inequality, we have
\begin{align*}
    &\lVert u_{n+1}(t,x) \rVert_p^2\\
    &\leq 2 + 2C_p\left\lVert \int_0^t\int_{\mathbb{R}^{2d}} G_{n+1}^{(t,x)}(s,y)G_{n+1}^{(t,x)}(s,z)\sigma(\widetilde{u_n}(s,y))\sigma(\widetilde{u_n}(s,z))\gamma(y-z)dydzds\right\rVert_{\frac{p}{2}}\\
    &\leq 2 + 2C_p\sup_{(\tau, \eta) \in [0,T] \times \R^d}\lVert \sigma(u_n(\tau, \eta)) \rVert_p^2\int_0^t\int_{\R^{2d}}|G_{n+1}^{(t,x)}(s,y)||G_{n+1}^{(t,x)}(s,z)|\gamma(y-z)dydzds\\
    &\leq 2 + 2C_p\sup_{(\tau, \eta) \in [0,T] \times \R^d}\lVert \sigma(u_n(\tau, \eta)) \rVert_p^2 T \Theta(n + 1,T)^2\int_{\mathbb{B}_{T+1}^2}\gamma(y-z)dydz < \infty,
\end{align*}
where $C_p$ is a constant depending on $p$ and the last inequality follows from \eqref{G_nG_ngamma uniform estimate}.
Hence $u_{n+1}$ satisfies (ii).
By a similar computation, we have for any $(t,x), (s,y) \in [0,T] \times \R^d$ that
\begin{align*}
    &\lVert u_{n+1}(t,x) - u_{n+1}(s,y)\rVert_p^2\\
    &\leq C_p\sup_{(\tau, \eta) \in [0,T] \times \R^d}\lVert \sigma(u_n(\tau, \eta)) \rVert_p^2\int_0^T\int_{\R^{2d}}|G_{n+1}^{(t,x)}(r,z) - G_{n+1}^{(s,y)}(r,z)|\\
    &\qquad \qquad \qquad \qquad \qquad \qquad \qquad \qquad \quad\times|G_{n+1}^{(t,x)}(r,w) - G_{n+1}^{(s,y)}(r,w)|\gamma(z-w)dzdwdr.
\end{align*}
We are going to prove 
\begin{equation}
    \lim_{(s,y) \to (t,x)}\lVert u_{n+1}(t,x) - u_{n+1}(s,y)\rVert_p^2 = 0, \label{4.1.1}
\end{equation}
and it is harmless to assume that $|x - y| < 1$. 
By Lemma \ref{Lem property of G_k}, we see that for every $(r,z) \in [0,T] \times \R^d$,
\begin{gather*}
    \lim_{(s,y) \mapsto (t,x)}|G_{n+1}(t-r, x-z) - G_{n+1}(s-r, y-z)| = 0, \\
    |G_{n+1}(t-r, x-z) - G_{n+1}(s-r, y-z)| \leq 2 \Theta(n+1,T)\ind_{\mathbb{B}_{T+2}}(x-z).
\end{gather*}
From there, we can apply the dominated convergence theorem to obtain \eqref{4.1.1}, and hence $u_{n+1}$ satisfies (iii).

Finally, the adaptedness and $L^p(\Omega)$-continuity of $u_{n+1}$ imply that it has a predictable modification, and therefore $u_{n+1}$ satisfies all three statements. 
This completes the proof.
\end{proof}

We can also show that the process $u_n(t,x)$ has the property (S). 
In what follows, $\widetilde{u_n}$ always denotes a predictable modification of $u_n$.

\begin{Lem}
\label{Lem u_n property S}
For every $n \geq 0$, the process $\{u_{n}(t,x) \mid (t,x) \in [0,T] \times \R^d\}$ satisfies the property (S).
\end{Lem}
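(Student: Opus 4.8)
The plan is to proceed by induction on $n$, exactly paralleling the structure of the existence proof in \cite[Lemma 18]{MR1684157} and \cite[Lemma 4.5]{MR2399293}, but now working with the regularized kernels $G_{n+1}$ rather than $G$ itself. For $n=0$ the statement is trivial since $u_0 \equiv 1$ is deterministic and the required joint distributions reduce to those of the Gaussian noise increments $W_{t}(A+z)$, whose law is shift-invariant because the covariance $\langle \ind_{[0,t]}\ind_{A+z}, \ind_{[0,s]}\ind_{B+z}\rangle_{\mathcal{H}_T} = (t\wedge s)\int_{A}\int_{B}\gamma(y-z')\,dy\,dz'$ does not depend on $z$. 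So assume $u_n$ satisfies property (S), and fix a predictable modification $\widetilde{u_n}$ (which inherits property (S) since modifying on a null set does not change finite-dimensional distributions).

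The core step is to show that the shifted stochastic integral defining $u_{n+1}(t,x+z)$ can be rewritten, in law jointly with the shifted noise terms, as the unshifted integral evaluated with the noise replaced by a shifted copy. Concretely, first approximate $\widetilde{u_n}$ by simple processes so that the integral $\int_0^t\int_{\R^d} G_{n+1}(t-s, x-y)\sigma(\widetilde{u_n}(s,y))\,W(ds,dy)$ is an $L^2(\Omega)$-limit of Riemann-type sums $\sum_j \sigma(\widetilde{u_n}(s_j, y_j))\, G_{n+1}(t-s_j, x-y_j)\, \big(W_{s_{j+1}}(A_j) - W_{s_j}(A_j)\big)$ over a partition of $[0,t]\times\R^d$. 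Translating $x \mapsto x+z$ and performing the change of variables $y \mapsto y - z$ inside the integral, the translated integral becomes $\int_0^t\int_{\R^d} G_{n+1}(t-s,x-y)\sigma(\widetilde{u_n}(s,y+z))\, W^{(z)}(ds,dy)$, where $W^{(z)}$ is the translated martingale measure $W^{(z)}_t(A) := W_t(A+z)$. The key observations are: (a) $G_{n+1}(t-s, \cdot)$ is a fixed deterministic function, so translating its argument is harmless; (b) the family $\{\widetilde{u_n}(s, \cdot+z), W^{(z)}_\cdot(\cdot)\}$ has, by the induction hypothesis on $u_n$ together with property (S), the same joint law as $\{\widetilde{u_n}(s,\cdot), W_\cdot(\cdot)\}$; and (c) since $\sigma$ is a fixed (deterministic, Borel) Lipschitz map and the stochastic integral is a measurable functional of the pair (integrand, noise)—realized as an $L^2$-limit of the explicit sums above—equality in law of the pairs transfers to equality in law of the integrals, jointly with any further shifted noise terms $W_{t_{i+j}}(A_j+z)$. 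This gives that the full random vector in the definition of property (S), now with $u_{n+1}$ in place of $Z$, has $z$-independent law.

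I would then record the auxiliary fact that the translated martingale measure $W^{(z)}$ is itself a worthy martingale measure with the same covariance structure as $W$ (immediate from the shift-invariance of $Q$ and $K$ noted above), which is what licenses step (b)–(c); this is precisely the content of \cite[Lemma 18]{MR1684157} and only the regularized kernel changes. The main obstacle—though it is really a bookkeeping obstacle rather than a conceptual one—is making the measurability/continuity argument in (c) rigorous: one must check that the map sending a predictable integrand satisfying property (S) and a worthy martingale measure to the $L^2$-limit defining the Walsh integral is a genuine measurable functional, so that distributional identities pass through. This is handled by the standard simple-process approximation together with the isometry $\lVert \cdot\rVert_+$ controlling the $L^2(\Omega)$-error, exactly as in the cited references; since $G_{n+1}$ is bounded with compact support (Lemma \ref{Lem property of G_k}), all the relevant norms are finite, so no new estimates beyond \eqref{G_nG_ngamma uniform estimate} are needed. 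Finally, since a predictable modification agrees with $u_{n+1}$ for every $(t,x)$ almost surely, $\widetilde{u_{n+1}}$ also satisfies property (S), closing the induction.
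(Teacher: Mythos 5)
Your proposal is correct and follows essentially the same route as the paper's proof: induction on $n$, with the base case resolved via shift-invariance of the Gaussian noise covariance, and the inductive step carried out by translating to the shifted martingale measure $W^{(z)}$ and then passing through a simple-process approximation of the Walsh integral whose finite sums are measurable functionals of $\widetilde{u_n}(\cdot,\cdot+z)$ and increments of $W^{(z)}$, so that the induction hypothesis gives $z$-independence, preserved under $L^2$-limits. The paper is just more explicit in constructing the partitions of $[0,T]\times\mathbb{B}_{T+1}(x)$ and the uniform $L^2$-continuity estimates that control the approximation error, a point you correctly flag as the bookkeeping part.
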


\begin{proof}
The proof is by induction on $n$. 
Let us consider the distribution of the random vector of the form
\begin{equation*}
    (u_n(t_1,x_1 + z), \ldots, u_n(t_i,x_i + z),W_{t_{i+1}}(A_1 + z), \ldots, W_{t_{i+j}}(A_j + z)),
\end{equation*}
where $t_1,\ldots,t_{i+j} \in [0,T]$, $x_1, \ldots, x_i \in \R^d$, and $A_1, \ldots, A_j \in \mathcal{B}_{\mathrm{b}}(\R^d)$.
Since $u_0 \equiv 1$, we only need to show for the case $n = 0$ that the distribution of $(W_{t_{i+1}}(A_1 +z), \ldots, W_{t_{i+j}}(A_j +z))$ is independent of $z$. 
Because $W$ is the centered Gaussian process and the covariance functions
\begin{align*}
    \E[W_{t_{i+k}}(A_k +z)W_{t_{i+l}}(A_l + z)] 
    &= t_{i+k}\land t_{i+l}\int_{\R^{2d}}\ind_{A_k +z}(x)\ind_{A_l +z}(y)\gamma(x-y)dxdy\\
    &= t_{i+k}\land t_{i+l}\int_{\R^{2d}}\ind_{A_k}(x)\ind_{A_l}(y)\gamma(x-y)dxdy, \quad (1 \leq k,l \leq j),
\end{align*}
do not depend on $z$, the proof for $n=0$ is completed.

Now we assume by induction that $u_n$ satisfies the property (S).
Let
\begin{equation*}
    I_{n+1}(t,x) \coloneqq \int_0^t \int_{\R^d} G_{n+1}^{(t,x)}(s,y)\sigma(\widetilde{u_n}(s,y))W(ds,dy).
\end{equation*}
We only need to show for the case $n+1$ that the distribution of $(I_{n+1}(t_1,x_1 + z), \ldots, I_{n+1}(t_i,x_i + z), W_{t_{i+1}}(A_1 +z), \ldots, W_{t_{i+j}}(A_j +z))$ is independent of $z$.
Observe that 
\begin{align}
    I_{n+1}(t,x + z) 
    &= \int_0^t \int_{\R^d} G_{n+1}(t-s, x + z -y)\sigma(\widetilde{u_n}(s,y))W(ds,dy) \nonumber\\
    &= \int_0^t \int_{\R^d} G_{n+1}(t-s, x -y) \sigma(\widetilde{u_n}(s,y + z))W^{(z)}(ds,dy) \eqqcolon I_{n+1}^{(z)}(t,x) ,  \quad \text{a.s.}, \label{4.2.0}
\end{align}
where the martingale measure $W^{(z)}$ is defined as $W_t^{(z)}(A) = W_t(A + z)$ for every $A \in \mathcal{B}_{\mathrm{b}}(\R^d)$.
Indeed, this can be easily checked when the integrand is a simple process, and we then make an approximation argument to see that \eqref{4.2.0} is true.
Thus, we are reduced to showing that the distribution of $(I_{n+1}^{(z)}(t_1,x_1), \ldots, I_{n+1}^{(z)}(t_i,x_i),W_{t_{i+1}}(A_1 + z), \ldots, W_{t_{i+j}}(A_j + z))$ is independent of $z$.

Set
\begin{equation*}
    V_{n+1}^{(t,x) (z)}(s,y) \coloneqq G_{n+1}^{(t,x)}(s,y)\sigma(\widetilde{u_n}(s, y+z)).
\end{equation*}
By Lemma \ref{Lem property of G_k} and Proposition \ref{Prop u_n property}, we have $\supp V_{n+1}^{(t,x)(z)} \subset [0,T] \times \mathbb{B}_{T+1}(x)$, and the map $(s,y) \mapsto V_{n+1}^{(t,x)(z)}$ is uniformly $L^p(\Omega)$-continuous on the compact set $[0,T] \times \mathbb{B}_{T+1}(x)$ for any $p \geq 1$.
If we take $z_1, z_2 \in \R^d$, then there exists $\delta_{m} > 0$ such that for any $(s,y), (s',y') \in [0,T] \times \mathbb{B}_{T+1}(x)$ with $|(s,y) - (s',y')| < \delta_m$, 
\begin{equation*}
    \lVert V_{n+1}^{(t,x)(z_l)}(s,y) - V_{n+1}^{(t,x)(z_l)}(s',y')\rVert_2 < 2^{-m}, \quad l = 1,2.
\end{equation*}
For each $m$, let $0 = s_0^m < s_1^m < \ldots <s_{n_m}^m =T$ be a partition of $[0,T]$ such that 
\begin{equation*}
    \max_{0 \leq i \leq n_{m} -1}|s_i^{m} - s_{i+1}^{m}| < \frac{1}{2}\delta_m.
\end{equation*}
Also let $(U_j^m)_{1\leq j \leq k_m}$ be a partition of $\mathbb{B}_{T+1}(x)$ such that 
\begin{equation*}
    U_j^m \in \mathcal{B}(\R^d), \quad \sup \{|y-z| : y,z \in U_j^{m}\}< \frac{1}{2}\delta_m,
\end{equation*}
and $(U_j^{m})$ are mutually disjoint up to sets of Lebesgue measure zero. 
For each $j$, take $y_j^m \in U_j^{m}$ arbitrarily.

Now, for each $l =1,2$, we define
\begin{equation*}
    V_{n+1,m}^{(t,x)(z_l)}(s,y) = \sum_{i=0}^{n_m - 1}\sum_{j=1}^{k_m}V_{n+1}^{(t,x)(z_l)}(s_i^m,y_j^m)\ind_{(s_i^m, s_{i+1}^{m}]}(s)\ind_{U_{j}^m}(y).
\end{equation*}
It follows that 
\begin{equation}
\label{4.2.1}
    \sup_{(s,y) \in [0,T] \times \mathbb{B}_{T+1}(x)} \lVert V_{n+1}^{(t,x)(z_l)}(s,y) - V_{n+1,m}^{(t,x)(z_l)}(s,y)\rVert_2 < 2^{-m}.
\end{equation}
It is also clear that the process $V_{n+1,m}^{(t,x)(z_l)}(s,y)$ is predictable and the Walsh integral 
\begin{align}
    I_{n+1,m}^{(z_l)}(t,x) 
    &\coloneqq \int_0^t\int_{\R^d} V_{n+1,m}^{(t,x)(z_l)}(s,y) W^{(z_l)}(ds,dy) \nonumber\\
    &= \sum_{i=0}^{n_m - 1}\sum_{j=1}^{k_m}V_{n+1}^{(t,x)(z_l)}(s_i^m,y_j^m)\{W_{t \land s_{i+1}^{m}}^{(z_l)}({U_{j}^m}) - W_{t \land s_i^m}^{(z_l)}({U_{j}^m})\} \label{4.2.2}
\end{align}
is well-defined. 
From \eqref{4.2.2}, the distribution of 
\begin{equation*}
    (I_{n+1,m}^{(z_l)}(t_1,x_1), \ldots, I_{n+1,m}^{(z_l)}(t_i,x_i),W_{t_{i+1}}(A_1 + z_l), \ldots, W_{t_{i+j}}(A_j + z_l))
\end{equation*}
does not depend on $l \in \{1,2\}$ by the induction assumption.
Moreover, using \eqref{4.2.1}, we can check that 
\begin{equation}
\label{4.2.4}
    I_{n+1,m}^{(z_l)}(t,x) \xrightarrow[m \to \infty]{}  I_{n+1}^{(z_l)}(t,x) \quad \text{in $L^2(\Omega)$} 
\end{equation}
for each $(t,x) \in [0,T] \times \R^d$ and $l \in \{1,2\}$, and consequently,
\begin{align}
    &(I_{n+1,m}^{(z_l)}(t_1,x_1), \ldots, I_{n+1,m}^{(z_l)}(t_i,x_i),W_{t_{i+1}}(A_1+ z_l), \ldots, W_{t_{i+j}}(A_j + z_l)) \nonumber \\
    &\xrightarrow[m \to \infty]{} (I_{n+1}^{(z_l)}(t_1,x_1), \ldots, I_{n+1}^{(z_l)}(t_i,x_i),W_{t_{i+1}}(A_1 + z_l), \ldots, W_{t_{i+j}}(A_j + z_l)) \label{4.2.3} 
\end{align}
in $L^2(\Omega ; \R^{i + j})$ for each $l \in \{1,2\}$. 
Therefore, the distribution of \eqref{4.2.3} is also independent of $l\in \{1,2\}$.
Since $z_1,z_2$ can be taken arbitrarily, $u_{n+1}$ satisfies the property (S), and the proof is complete.
\end{proof}

Using the property (S) of $u_n(t,x)$, we see that the statement (ii) in Proposition \ref{Prop u_n property} holds uniformly in $n$ when $p = 2$.

\begin{Prop}
\label{Prop u_n uniform bound}
It holds that
\begin{equation*}
    \sup_{n \geq 0}\sup_{(t,x) \in [0,T]\times\mathbb{R}^d}\lVert u_n(t,x)\rVert_2 < \infty.
\end{equation*}
\end{Prop}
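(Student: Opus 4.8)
The plan is to run a Gronwall-type induction on the second moments, carrying out every estimate on the Fourier side so that it becomes uniform in $n$. Fix $n \ge 0$ and a predictable modification $\widetilde{u_n}$ of $u_n$. Since the stochastic integral in \eqref{u_(n+1)} has mean zero,
\[
    \lVert u_{n+1}(t,x)\rVert_2^2 = 1 + \E\left[\left(\int_0^t\int_{\R^d}G_{n+1}^{(t,x)}(s,y)\sigma(\widetilde{u_n}(s,y))\,W(ds,dy)\right)^2\right].
\]
Set $Z \coloneqq \sigma(\widetilde{u_n})$. By Proposition \ref{Prop u_n property}(ii) and the Lipschitz property of $\sigma$, $Z$ satisfies \eqref{Z uniform L2 bound}; by Lemma \ref{Lem u_n property S} the process $u_n$ has the property (S), which is inherited by $Z$ and in turn forces the spatially homogeneous covariance \eqref{Z homogeneous cov}. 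Hence Lemma \ref{Lem fourier bochner schwartz} furnishes the spectral measure $\mu_s^{Z}$ of $\gamma^Z(s,\cdot)$. Since $G_{n+1}^{(t,x)}(\cdot,\star)\ind_{[0,t]} \in \mathcal{E}_{0,d}$ by Lemma \ref{Lem property of G_k} and $G_{n+1}^{(t,x)}\sigma(\widetilde{u_n}) \in \mathcal{P}_+$, Remark \ref{Rem 2.2}(1) identifies the above $W$-integral with the $W^{Z}$-integral of $G_{n+1}^{(t,x)}$, and the Parseval computation used in the proof of Proposition \ref{Prop extended SI} (the translation by $x$ only multiplies $\F G_{n+1}(t-s)$ by a unimodular factor) yields
\[
    \lVert u_{n+1}(t,x)\rVert_2^2 = 1 + \int_0^t\int_{\R^d}|\F G_{n+1}(t-s)(\xi)|^2\,\mu_s^{Z}(d\xi)\,ds.
\]

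Next, by \eqref{FG_n} and Lemma \ref{Lem property of FG}(ii), $|\F G_{n+1}(t-s)(\xi)|^2 \le |\F G(t-s)(\xi)|^2 \le (1+2T^2)\langle\xi\rangle^{-2}$, and then Lemma \ref{Lem muZ mu}, applied to $Z = \sigma(\widetilde{u_n})$, gives
\[
    \lVert u_{n+1}(t,x)\rVert_2^2 \le 1 + (1+2T^2)\left(\int_{\R^d}\langle\xi\rangle^{-2}\mu(d\xi)\right)\int_0^t\sup_{\eta\in\R^d}\lVert\sigma(u_n(s,\eta))\rVert_2^2\,ds,
\]
where $\int_{\R^d}\langle\xi\rangle^{-2}\mu(d\xi) < \infty$ by Dalang's condition \eqref{Dalang's condition}. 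Writing $L \coloneqq \lVert\sigma\rVert_{\mathrm{Lip}}$ and using $|\sigma(y)|^2 \le 2\sigma(0)^2 + 2L^2y^2$, and setting $M_n(t) \coloneqq \sup_{x\in\R^d}\lVert u_n(t,x)\rVert_2^2$, we obtain constants $A \ge 1$ and $B \ge 0$ depending only on $T$, $\sigma$, and $\mu$ such that $M_0 \equiv 1$ and
\[
    M_{n+1}(t) \le A + B\int_0^t M_n(s)\,ds, \qquad t \in [0,T].
\]

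Finally, a Gronwall-type induction closes the argument: since $M_0 \equiv 1 \le A \le Ae^{Bt}$, the inductive hypothesis $M_n(t) \le Ae^{Bt}$ gives $M_{n+1}(t) \le A + B\int_0^t Ae^{Bs}\,ds = Ae^{Bt}$, whence $\sup_n\sup_{(t,x)\in[0,T]\times\R^d}\lVert u_n(t,x)\rVert_2 \le \sqrt{Ae^{BT}} < \infty$. The only delicate point is that all of the above must be uniform in $n$; this is exactly what the $n$-independent Fourier estimate \eqref{FG_n} combined with Lemma \ref{Lem muZ mu} delivers, circumventing the failure of the naive real-side bound \eqref{abso remove} for $d \ge 4$ discussed in Remark \ref{Rem why high dimensions difficult}. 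The remaining verifications — the inheritance of property (S) and of \eqref{Z homogeneous cov} by $\sigma(\widetilde{u_n})$, and the Parseval identity for $G_{n+1}^{(t,x)} \in \mathcal{E}_{0,d}$ — are routine and parallel arguments already given in Sections \ref{section Preliminaries} and \ref{section Stochastic wave equations}.
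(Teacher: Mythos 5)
Your proof is correct and follows essentially the same route as the paper's: verify hypotheses \eqref{Z uniform L2 bound}--\eqref{Z homogeneous cov} for $Z=\sigma(\widetilde{u_n})$ via Proposition \ref{Prop u_n property} and Lemma \ref{Lem u_n property S}, pass to the Fourier side via Lemma \ref{Lem fourier bochner schwartz}, bound $|\F G_{n+1}|^2$ uniformly in $n$ using \eqref{FG_n} and Lemma \ref{Lem property of FG}(ii), apply Lemma \ref{Lem muZ mu} to return to $\mu$, and close with a Gronwall-type induction. The only differences from the paper are cosmetic: you exploit the mean-zero property to get an exact identity rather than a factor-of-$2$ inequality, and you iterate on $M_n(t)=\sup_x\lVert u_n(t,x)\rVert_2^2$ instead of the running supremum $H_n(t)=\sup_{(r,z)\in[0,t]\times\R^d}\lVert u_n(r,z)\rVert_2^2$, both of which work equally well.
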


\begin{proof}
By Proposition \ref{Prop u_n property} and Lemma \ref{Lem u_n property S}, the process $Z(t,x) \coloneqq \sigma(\widetilde{u_n}(t,x))$ satisfies the conditions \eqref{Z uniform L2 bound} and \eqref{Z homogeneous cov}.
Hence, by Lemma \ref{Lem fourier bochner schwartz}, there is a nonnegative tempered measure $\mu_t^{\sigma(\widetilde{u_n})}$ for each $t$ such that 
\begin{equation*}
    \gamma^{\sigma(\widetilde{u_n})} = \F \mu_t^{\sigma(\widetilde{u_n})} \quad \text{in $\mathcal{S}_{\C}'(\R^d)$}. 
\end{equation*}
We then apply \eqref{FG_n} and Lemmas \ref{Lem property of FG} and \ref{Lem muZ mu} to obtain
\begin{align*}
    \lVert u_{n+1}(t,x) \rVert_2^2 
    &\leq 2+ 2\int_0^t\int_{\R^d}|\mathcal{F}G_{n+1}(t-s)(\xi)|^2\mu_s^{\sigma(\widetilde{u_n})}(d\xi)ds\\
    &\leq 2+ 2C_T \int_{\R^d} \langle \xi \rangle^{-2} \mu(d\xi) \int_0^t \sup_{(r,z) \in [0,s] \times \R^d}\lVert \sigma(u_n(r,z)) \rVert_2^2 ds\\
    &\leq 2+ 4C_T \sigma(0)^2 T \int_{\R^d}\langle \xi \rangle^{-2} \mu(d\xi) + 4C_T \sigma_{\mathrm{Lip}}^2 \int_{\R^d} \langle \xi \rangle^{-2} \mu(d\xi) \int_0^t \sup_{(r,z) \in [0,s] \times \R^d}\lVert u_n(r,z) \rVert_2^2 ds.
\end{align*}
Here $C_T = (1+2T^2)$.
Set $H_n(t) = \sup_{(r,z) \in [0,t] \times \R^d}\lVert u_n(r,z) \rVert_2^2$. 
It follows that 
\begin{equation}
\label{4.3.1}
    H_{n+1}(T) \leq C_1(T) + C_2(T, \sigma_{\mathrm{Lip}})\int_0^T H_n(s) ds,
\end{equation}
where 
\begin{equation*}
    C_1(T) \coloneqq 2+ 4C_T \sigma(0)^2 T \int_{\R^d}\langle \xi \rangle^{-2} \mu(d\xi) \quad \text{and} \quad C_2(T, \sigma_{\mathrm{Lip}}) \coloneqq 4C_T \sigma_{\mathrm{Lip}}^2 \int_{\R^d} \langle \xi \rangle^{-2} \mu(d\xi).
\end{equation*}
Since $C_1(T) \geq 1$, we can apply \eqref{4.3.1} repeatedly to obtain 
\begin{equation*}
    H_n(T) \leq C_1(T)e^{C_2(T, \sigma_{\mathrm{Lip}})T},
\end{equation*}
which is the desired conclusion.
\end{proof}

\begin{Rem}
When $d \leq 3$, we can show (\textit{cf.} \cite[Proposition 3.2]{ebina2023central}) that Proposition \ref{Prop u_n uniform bound} holds for any $p \geq 1$. 
However, in dimensions $d \geq 4$, it is unclear how to obtain the bound uniformly in $n$ for $p >2$ due to the lack of the nonnegativity of $G_n$. 
\end{Rem}

Now we check that the sequence $(u_n(t,x))_{n\geq 0}$ converges in $L^2(\Omega)$ to the solution $U(t,x)$ of \eqref{SPDE} for every $(t,x) \in [0,T] \times \mathbb{R}^d$. 
We first show the following lemma, which is slightly stronger than that the process $\sigma(U(t,x)) - \sigma(u_n(t,x))$ has the property (S). 
Recall that $W_t^{(z)}(A) \coloneqq W_t(z + A)$.

\begin{Lem}
\label{Lem V_n property S}
For any $z \in \R^d$ and every integers $n \geq 0$ and $i,j,k \geq 1$, all distributions of random vectors of the form
\begin{equation}
\label{4.4.1}
    (U(t_1,x_1 + z), \ldots,U(t_i,x_i + z), u_n(t_{i+1},x_{i+1} + z), \ldots, u_n(t_{i+j}, x_{i+j} + z), W_{t_{i+j+1}}^{(z)}(A_1), \ldots, W_{t_{i+j+k}}^{(z)}(A_k) ),
\end{equation}
where $t_1, \ldots, t_{i + j + k} \in [0,T]$, $x_1, \ldots, x_{i+j} \in \R^d$, and $A_1, \ldots, A_k \in \mathcal{B}_{\mathrm{b}}(\R^d)$, are independent of $z$.
In particular, the process $\{\sigma(U(t,x)) - \sigma(u_n(t,x)) \mid (t,x) \in [0,T] \times \R^d\}$ satisfies the property (S).
\end{Lem}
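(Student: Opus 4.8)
The last assertion of the lemma is immediate from the main claim: since $\sigma$ is a fixed deterministic function, the joint law of $(\sigma(U(t_l,x_l+z))-\sigma(u_n(t_l,x_l+z)))_{l}$ together with the coordinates $W_{s_r}(B_r+z)$ is a deterministic image of the law of a vector of the form \eqref{4.4.1} (take both the $U$-points and the $u_n$-points equal to the $(t_l,x_l)$, and note $W^{(z)}_{s_r}(B_r)=W_{s_r}(B_r+z)$), hence is $z$-independent as soon as the latter is. So it suffices to establish the $z$-invariance of the law of \eqref{4.4.1}.

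The plan is to pass through the Picard iterates $(U_m)_{m\ge 0}$ defined in \eqref{U_(n+1)}. Recall from the proof of Proposition \ref{Prop U exi uni conti} that each $U_m$ is $L^2(\Omega)$-continuous, satisfies the property (S) and the bound \eqref{U_n uniform bound}, and that $U_m(t,x)\to U(t,x)$ in $L^2(\Omega)$ by \eqref{U_n U convergence}. Fixing $n$, I would first prove by induction on $m$ that all distributions of random vectors of the form
\begin{equation*}
(U_m(t_1,x_1+z),\dots,U_m(t_i,x_i+z),u_n(t_{i+1},x_{i+1}+z),\dots,u_n(t_{i+j},x_{i+j}+z),W^{(z)}_{t_{i+j+1}}(A_1),\dots,W^{(z)}_{t_{i+j+k}}(A_k))
\end{equation*}
are independent of $z$; the claim for \eqref{4.4.1} then follows on letting $m\to\infty$, since $L^2(\Omega)$-convergence implies convergence in distribution and a weak limit of $z$-independent laws is again $z$-independent. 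The base case $m=0$ is nothing but Lemma \ref{Lem u_n property S}: since $U_0\equiv 1$ and $W^{(z)}_t(A)=W_t(A+z)$, the assertion reduces to the property (S) of $u_n$ jointly with the increments of $W^{(z)}$, which is exactly what the proof of that lemma yields.

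For the inductive step $m\to m+1$ I would reproduce the argument in the proof of Lemma \ref{Lem u_n property S}, itself modeled on \cite[Lemma 4.5]{MR2399293}. Working with predictable modifications and using the change of variables $y\mapsto y+z$ as in \eqref{4.2.0}, one has, for every mollification $G_\ell$ from \eqref{Lambda G_n definition} and almost surely,
\begin{equation*}
\int_0^t\int_{\R^d}G_\ell(t-s,x+z-y)\sigma(U_m(s,y))W(ds,dy)=\int_0^t\int_{\R^d}G_\ell(t-s,x-y)\sigma(U_m(s,y+z))W^{(z)}(ds,dy),
\end{equation*}
the identity being checked first for simple integrands and then by approximation. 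Letting $\ell\to\infty$, the left-hand side converges in $L^2(\Omega)$ to $U_{m+1}(t,x+z)-1$ by Proposition \ref{Prop extended SI} applied with $Z=\sigma(U_m)$, which satisfies \eqref{Z uniform L2 bound} and \eqref{Z homogeneous cov} thanks to \eqref{U_n uniform bound} and the property (S) of $U_m$. For each fixed $\ell$ the right-hand side has a bounded, compactly supported, continuous kernel and an $L^2(\Omega)$-continuous coefficient $(s,y)\mapsto\sigma(U_m(s,y+z))$, so it can be discretized exactly as in \eqref{4.2.1} and \eqref{4.2.2}: replacing the integrand by a simple process built from the values $G_\ell(t-s_a,x-y_b)\sigma(U_m(s_a,y_b+z))$ over a fine partition turns the integral into a finite linear combination of products of $\sigma(U_m(s_a,y_b+z))$ with increments of $W^{(z)}$, with an $L^2(\Omega)$-error that vanishes as the mesh shrinks. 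By the induction hypothesis, applied to the finite family of points coming from the partition together with the original $u_n$- and $W^{(z)}$-coordinates, the joint law of these discretized quantities is independent of $z$; a diagonal passage to the limit in the mesh and then in $\ell$, with all convergences taking place in $L^2(\Omega;\R^{i+j+k})$, then shows that the joint law of the displayed vector with $U_m$ replaced by $U_{m+1}$ is independent of $z$. This closes the induction, and the remaining assertion of the lemma follows as explained above.

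The step I expect to be the main obstacle is the bookkeeping in this inductive step: one must control a two-layer approximation of the distribution-valued kernel $G$ — first the mollification $G_\ell$, then the space-time discretization — while simultaneously tracking the joint law with all the remaining $u_n$- and $W^{(z)}$-coordinates, and one must check that each approximation converges in $L^2(\Omega)$ so that the conclusion descends to the level of distributions. The quantitative ingredients this requires, namely uniform $L^2$-bounds, the $L^2(\Omega)$-continuity of $U_m$, and the convergence $G_\ell\to G$ in the relevant $\mathcal{P}_{0,Z}$-norm, are all supplied by \eqref{U_n uniform bound}, \eqref{cor FG inequality}, and Proposition \ref{Prop extended SI}, and the purely Gaussian part is handled by the translation invariance of $\gamma$ exactly as in the base case.
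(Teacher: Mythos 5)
Your argument is correct, but it takes a genuinely different route from the paper's. The paper inducts on $n$, the index of the approximating sequence $u_n$: the base case $n=0$ reduces to the already-established property (S) of $U$ (from Proposition~\ref{Prop U exi uni conti}), and the inductive step $n\to n+1$ needs only a single space-time discretization of $u_{n+1}$, which is harmless because $u_{n+1}$ is a Walsh integral against the \emph{bounded, continuous} kernel $G_{n+1}$ — the approximation $I_{n+1,m}^{(z)}$ is then a finite, measurable function of $u_n(\cdot,\cdot+z)$ and $W^{(z)}$, to which the induction hypothesis applies directly. You instead freeze $n$ and run an auxiliary induction on $m$, the Picard iterate index of $U_m$ from \eqref{U_(n+1)}: your base case $m=0$ is Lemma~\ref{Lem u_n property S}, and your inductive step requires a two-layer approximation of $U_{m+1}$ — first mollifying the distribution-valued $G$ by $G_\ell$ (with $L^2$-convergence supplied by Proposition~\ref{Prop extended SI} and Corollary~\ref{Cor G(t-s,x-y) in P_0Z} applied to $Z=\sigma(U_m)$, whose hypotheses hold by \eqref{U_n uniform bound} and the property (S) of $U_m$ established inside Proposition~\ref{Prop U exi uni conti}), then discretizing — followed by a final passage $m\to\infty$ via \eqref{U_n U convergence}. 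The paper's route is more economical precisely because the mollified kernel is already baked into $u_{n+1}$, so the extra mollification layer never appears, and it simply reuses rather than re-derives the property (S) of $U$. Your route is longer and the bookkeeping heavier, but it is sound and noncircular (the proof of Proposition~\ref{Prop U exi uni conti} uses an analogue of this lemma for $\sigma(U_{m+1})-\sigma(U_m)$, not the present statement), and the terminal mechanism — convergence in $L^2(\Omega;\R^{i+j+k})$ transferring $z$-independence of the joint laws to the limit — is the same as the paper's. Your opening reduction of the ``in particular'' clause to the main claim by taking the $U$- and $u_n$-points to coincide is also correct and fills in a step the paper leaves implicit.
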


\begin{proof}
The proof is done by induction on $n$.
Because $u_0 \equiv 1$ and the process $U(t,x)$ satisfies the property (S), the statement holds for the case $n = 0$.
Assuming by induction that the statement holds for $n$, we show that it holds for $n+1$.
Let us consider the distribution of \eqref{4.4.1}, with $u_{n}$ replaced by $u_{n+1}$.
Fix $z_1, z_2 \in \R^d$.
By a similar argument in the proof of Lemma \ref{Lem u_n property S}, it suffices to show that the distribution of 
\begin{equation*}
    (U(t_1,x_1 + z_l), \ldots,U(t_i,x_i + z_l), I_{n+1}^{(z_l)}(t_{i+1},x_{i+1}), \ldots, I_{n+1}^{(z_l)}(t_{i+j}, x_{i+j}), W_{t_{i+j+1}}^{(z_l)}(A_1), \ldots, W_{t_{i+j+k}}^{(z_l)}(A_k) )
\end{equation*}
does not depend on $l \in \{1,2\}$, where $I_{n+1}^{(z)}(t,x)$ is defined in \eqref{4.2.0}.
Moreover, using \eqref{4.2.4}, we only need to show that the distribution of 
\begin{equation*}
    (U(t_1,x_1 + z_l), \ldots,U(t_i,x_i + z_l), I_{n+1,m}^{(z_l)}(t_{i+1},x_{i+1}), \ldots, I_{n+1,m}^{(z_l)}(t_{i+j}, x_{i+j}), W_{t_{i+j+1}}^{(z_l)}(A_1), \ldots, W_{t_{i+j+k}}^{(z_l)}(A_k))
\end{equation*}
is independent of $l \in \{1,2\}$ for every $m$. 
Because $I_{n+1,m}^{(z)}(t,x)$ defined in \eqref{4.2.2} is a measurable function of $u_n(\cdot, \cdot + z)$ and $W_{\cdot}(\cdot + z)$, we can apply the induction assumption to obtain the statement for the case $n+1$.
This completes the proof.
\end{proof}

\begin{Prop}
\label{Prop u_n convergence to U}
We have 
\begin{equation}
\label{4.6.0}
    \lim_{n \to \infty}\sup_{(t,x) \in [0,T] \times \mathbb{R}^d}\lVert U(t,x) - u_n(t,x) \rVert_2 = 0.
\end{equation}
\end{Prop}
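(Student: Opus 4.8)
The plan is to adapt the Gronwall-type scheme from the proof of Proposition~\ref{Prop U exi uni conti}, with the additional point that the mollified kernel $G_{n+1} = G \ast \Lambda_{n+1}$ must also be compared with $G$ itself. First I would rewrite both equations with the same kind of integrator: by Remark~\ref{Rem 2.2}(1) and Lemma~\ref{Lem property of G_k},
\begin{equation*}
    U(t,x) = 1 + \int_0^t\int_{\R^d}G(t-s,x-y)W^{\sigma(U)}(ds,dy), \qquad u_{n+1}(t,x) = 1 + \int_0^t\int_{\R^d}G_{n+1}(t-s,x-y)W^{\sigma(u_n)}(ds,dy).
\end{equation*}
Inserting the intermediate term $\int_0^t\int_{\R^d}G_{n+1}(t-s,x-y)W^{\sigma(U)}(ds,dy)$, I would decompose $U(t,x)-u_{n+1}(t,x) = A_{n+1}(t,x) + B_{n+1}(t,x)$ with
\begin{equation*}
    A_{n+1}(t,x) = \int_0^t\int_{\R^d}(G-G_{n+1})(t-s,x-y)W^{\sigma(U)}(ds,dy), \qquad B_{n+1}(t,x) = \int_0^t\int_{\R^d}G_{n+1}(t-s,x-y)(W^{\sigma(U)}-W^{\sigma(u_n)})(ds,dy).
\end{equation*}

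For $A_{n+1}$, using $\F G_{n+1}(t-s)(\xi) = \F\Lambda_{n+1}(\xi)\F G(t-s)(\xi)$, the Dalang isometry (as in Corollary~\ref{Cor G(t-s,x-y) in P_0Z}, $G_{n+1}$ being an even simpler integrand) together with (ii) of Lemma~\ref{Lem property of FG} give
\begin{equation*}
    \sup_{(t,x)\in[0,T]\times\R^d}\lVert A_{n+1}(t,x)\rVert_2^2 \leq (1+2T^2)\int_0^T\int_{\R^d}|1-\F\Lambda_{n+1}(\xi)|^2\langle\xi\rangle^{-2}\mu_s^{\sigma(U)}(d\xi)\,ds =: \varepsilon_{n+1}.
\end{equation*}
By Lemma~\ref{Lem muZ mu}, the uniform bound \eqref{U uniform bound} for the solution, and the Lipschitz property of $\sigma$, the measure $\langle\xi\rangle^{-2}\mu_s^{\sigma(U)}(d\xi)\,ds$ has finite total mass on $[0,T]\times\R^d$; since $|1-\F\Lambda_{n+1}|^2\leq 4$ and $\F\Lambda_{n+1}(\xi)\to 1$ pointwise, dominated convergence yields $\varepsilon_n\to 0$. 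I expect this to be the crux of the argument: it is exactly the place where the nonnegativity of $G_n$ (available only when $d\leq 3$) is missing, so instead of estimating $G-G_{n+1}$ directly one passes to the Fourier side and invokes the Bessel-kernel bound packaged in Lemma~\ref{Lem muZ mu} (compare Remark~\ref{Rem why high dimensions difficult}).

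For $B_{n+1}$, Lemma~\ref{Lem V_n property S} guarantees that $\sigma(U)-\sigma(u_n)$ satisfies the property (S), hence has a spatially homogeneous covariance, so Remark~\ref{Rem 2.2}(2) lets me write $B_{n+1}(t,x) = \int_0^t\int_{\R^d}G_{n+1}(t-s,x-y)W^{\sigma(U)-\sigma(u_n)}(ds,dy)$. Applying the isometry for $W^{\sigma(U)-\sigma(u_n)}$, the bound $|\F G_{n+1}|\leq|\F G|$ from \eqref{FG_n}, (ii) of Lemma~\ref{Lem property of FG}, Lemma~\ref{Lem muZ mu}, and finally the Lipschitz property of $\sigma$, I would obtain, with $C := (1+2T^2)\sigma_{\mathrm{Lip}}^2\int_{\R^d}\langle\xi\rangle^{-2}\mu(d\xi)$ and $g_n(t) := \sup_{(s,x)\in[0,t]\times\R^d}\lVert U(s,x)-u_n(s,x)\rVert_2^2$,
\begin{equation*}
    \sup_{(s,x)\in[0,t]\times\R^d}\lVert B_{n+1}(s,x)\rVert_2^2 \leq C\int_0^t g_n(s)\,ds.
\end{equation*}

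Combining the two bounds via $(a+b)^2\leq 2a^2+2b^2$ yields the recursion $g_{n+1}(t)\leq 2\varepsilon_{n+1}+2C\int_0^t g_n(s)\,ds$, while $\sup_n g_n(T)=:M<\infty$ by \eqref{U uniform bound} and Proposition~\ref{Prop u_n uniform bound}. To finish, given $\delta>0$ I would choose $N$ with $\varepsilon_n<\delta$ for $n\geq N$ and iterate the recursion starting from $g_N(T)\leq M$, obtaining $g_{N+k}(t)\leq 2\delta\sum_{j=0}^{k-1}\tfrac{(2Ct)^j}{j!}+M\tfrac{(2Ct)^k}{k!}$; letting $k\to\infty$ gives $\limsup_{n\to\infty}g_n(T)\leq 2\delta e^{2CT}$, and since $\delta$ is arbitrary, $g_n(T)\to 0$, which is \eqref{4.6.0}. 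Apart from the uniform-in-$n$ control of $A_{n+1}$ through Lemma~\ref{Lem muZ mu}, the property-(S) bookkeeping for the difference process and the Gronwall iteration are routine, essentially as in the proof of Proposition~\ref{Prop U exi uni conti}.
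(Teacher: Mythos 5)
Your proposal is correct and follows essentially the same route as the paper: the same decomposition into $A_{n+1}$ (error from mollifying $G$) and $B_{n+1}$ (error from the nonlinearity), the same Fourier-side control of $A_{n+1}$ through Lemma~\ref{Lem muZ mu} and dominated convergence, the same use of the property~(S) of $\sigma(U)-\sigma(\widetilde{u_n})$ to bound $B_{n+1}$, and the same Gronwall-type recursion. The only difference is that you spell out the iteration explicitly (choosing $N$ with $\varepsilon_n<\delta$ and iterating from $g_N$), whereas the paper leaves that final step to the reader.
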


\begin{proof}
First, observe that we have
\begin{align*}
    U(t,x) - u_{n+1}(t,x)
    &= \int_0^t\int_{\R^d}(G(t-s,x-y) - G_{n+1}(t-s, x-y))W^{\sigma(U)}(ds,dy)\\
    &\quad + \int_0^t\int_{\R^d}G_{n+1}(t-s,x-y)(\sigma(U(s,y)-\sigma(\widetilde{u_n}(s,y)))W(ds,dy),
\end{align*}
where the first stochastic integral is Dalang's integral of $G(t-s,x-y) - G_{n+1}(t-s,x-y) \in \mathcal{P}_{0, \sigma(U)}$ and the second stochastic integral is the Walsh integral of $G_{n+1}(t-s,x-y)(\sigma(U(s,y)) - \sigma(\widetilde{u_n}(s,y))) \in \mathcal{P}_{+}$.
Indeed, this follows from the fact (\textit{cf.} Remark \ref{Rem 2.2}) that 
\begin{equation*}
    \int_0^t\int_{\R^d}G_{n+1}(t-s,x-y)\sigma(U(s,y))W(ds,dy) = \int_0^t\int_{\R^d}G_{n+1}(t-s,x-y)W^{\sigma(U)}(ds,dy).
\end{equation*}
Using the isometry property of stochastic integrals and Lemma \ref{Lem property of FG}, we have 
\begin{align*}
    &\lVert U(t,x) - u_{n+1}(t,x) \rVert_2^2\\
    &\leq 2\left\lVert \int_0^t \int_{\R^d} (G(t-s, x-y) - G_{n+1}(t-s, x-y)) W^{\sigma(U)}(ds,dy)\right\rVert_2^2\\
    &\quad + 2\left\lVert \int_0^t \int_{\R^d} G_{n+1}(t-s, x-y) (\sigma(U(s,y)) - \sigma(\widetilde{u_n}(s,y))) W(ds,dy)\right\rVert_2^2\\
    &= 2\int_0^t \int_{\R^d}|\F G(t-s)(\xi)|^2|1 - \F \Lambda_{n+1}(\xi)|^2 \mu_s^{\sigma(U)}(d\xi) ds \\
    &\quad + 2\int_0^t \int_{\R^{2d}} G_{n+1}(t-s, x-y)G_{n+1}(t-s, x-z)\gamma(y-z) \\
    &\qquad \qquad \qquad \times\E[(\sigma(U(s,y)) - \sigma(\widetilde{u_n}(s,y)))(\sigma(U(s,z)) - \sigma(\widetilde{u_n}(s,z)))] dydzds\\
    &\leq 2(1+2T^2)\int_0^T \int_{\R^d}\langle \xi \rangle^{-2} |1 - \F \Lambda_{n+1}(\xi)|^2 \mu_s^{\sigma(U)}(d\xi) ds\\
    &\quad + 2\int_0^t \int_{\R^{2d}} G_{n+1}(t-s, x-y)G_{n+1}(t-s, x-z)\gamma(y-z) \\
    &\qquad \qquad \qquad \times\E[(\sigma(U(s,y)) - \sigma(\widetilde{u_n}(s,y)))(\sigma(U(s,z)) - \sigma(\widetilde{u_n}(s,z)))] dydzds\\
    &\eqqcolon 2(1+2T^2)\mathbf{I_1} + 2\mathbf{I_2}.
\end{align*}
Because 
\begin{equation*}
    \lim_{n \to \infty}|1 - \F \Lambda_{n}(\xi)|^2 = 0, \quad |1 - \F \Lambda_{n + 1}(\xi)|^2 \leq 4,
\end{equation*}
and 
\begin{equation*}
    \int_{\R^d}\langle \xi \rangle^{-2}\mu_s^{\sigma(U)}(d\xi)
    \leq \sup_{(r,z) \in [0,T] \times \R^d}\lVert \sigma(U(t,x)) \rVert_2^2 \int_{\R^d}\langle \xi \rangle^{-2}\mu(d\xi) < \infty
\end{equation*}
by Lemma \ref{Lem muZ mu}, the dominated convergence theorem yields that for any $s$, 
\begin{equation*}
    \lim_{n \to \infty} \int_{\R^d}\langle \xi \rangle^{-2}|1-\F \Lambda_{n+1}(\xi)|^2 \mu_s^{\sigma(U)}(d\xi) = 0.
\end{equation*}
It follows that $\lim_{n \to \infty}\mathbf{I_1} = 0$.
We next estimate the term $\mathbf{I_2}$.
By Proposition \ref{Prop u_n property} and Lemma \ref{Lem V_n property S}, the process $\sigma(U(t,x)) - \sigma(\widetilde{u_n}(t,x))$ satisfies the conditions \eqref{Z uniform L2 bound} and \eqref{Z homogeneous cov}. Hence, by Lemma \ref{Lem fourier bochner schwartz}, there is a nonnegative tempered measure $\mu_t^{\sigma(U) - \sigma(\widetilde{u_n})}$ such that 
\begin{equation*}
    \gamma^{\sigma(U) - \sigma(\widetilde{u_n})}(t,\cdot) = \F \mu_t^{\sigma(U) - \sigma(\widetilde{u_n})}(d\xi) \quad \text{in $\mathcal{S}_{\C}'(\R^d)$}.
\end{equation*}
Since $G_n(t) \in \mathcal{S}(\mathbb{R}^d)$ for each $t$, we apply Lemmas \ref{Lem property of FG} and \ref{Lem muZ mu} to obtain 
\begin{align*}
    \mathbf{I_2} 
    &= \int_0^t \int_{\R^d} |\F G_{n+1}(t-s)(\xi)|^2 \mu_s^{\sigma(U) - \sigma(\widetilde{u_n})}(d\xi) ds\\
    &\leq (1+2T^2) \int_0^t \int_{\R^d} \langle \xi \rangle^{-2}\mu_s^{\sigma(U) - \sigma(\widetilde{u_n})}(d\xi) ds\\
    &\leq (1+2T^2) \sigma_{\mathrm{Lip}}^2 \int_{\R^d}\langle \xi \rangle^{-2} \mu(d\xi)\int_0^t\sup_{(r,z) \in [0,s] \times \R^d}\lVert U(r,z) - \widetilde{u_n}(r,z) \rVert_2^2 ds.
\end{align*}
If we set $h_n(t) = \sup_{(r,z) \in [0,t] \times \R^d}\lVert U(r,z) - u_n(r,z) \rVert_2^2$, then we get
\begin{equation*}
    h_{n+1}(T) \leq C(n + 1, T) + C(T, \sigma_{\mathrm{Lip}})\int_0^T h_n(s) ds,
\end{equation*}
where
\begin{align*}
    C(n +1 ,T) &\coloneqq  2(1+2T^2)\int_0^T \int_{\R^d}\langle \xi \rangle^{-2} |1 - \F \Lambda_{n+1}(\xi)|^2 \mu_s^{\sigma(U)}(d\xi) ds,\\ 
    C(T, \sigma_{\mathrm{Lip}}) &\coloneqq (1+2T^2) \sigma_{\mathrm{Lip}}^2\int_{\R^d}\langle \xi \rangle^{-2} \mu(d\xi).
\end{align*}
We conclude from this inequality together with $\lim_{n \to \infty}C(n+1, T) = 0$ that $\lim_{n \to \infty} h_n(T) = 0$, which is the desired conclusion.
\end{proof}

\section{Malliavin derivative of the approximation sequence}
\label{section Malliavin derivative of the approximation sequence}
In this section, we require that $\sigma$ is a continuously differentiable Lipschitz function. 
We show in Sections \ref{subsection Differentiability of the approximation sequence} and \ref{subsection Version of the derivative} that $u_n(t,x)$, constructed in Section \ref{section Approximation to the solution}, is Malliavin differentiable, and that its derivative satisfies the pointwise moment bounds \eqref{int powise est}.
We then evaluate the covariances of $\sigma(u_n(t,x))$ using the Clark-Ocone formula in Section \ref{subsection Covariance estimates}.

\subsection{Differentiability of the approximation sequence}
\label{subsection Differentiability of the approximation sequence}

\begin{Prop}
\label{Prop u_n diff'bility}
Let $(e_k)_{k=1}^{\infty}$ be a complete orthonormal system of $\mathcal{H}_T$.
For every $n \geq 1$, we have the following:
\begin{enumerate}
    \item [\normalfont(i)] For any $(t,x) \in [0,T]\times \R^d$ and $p \in [1,\infty)$, we have $u_n(t,x) \in \mathbb{D}^{1,p}$. Moreover, for each $k \in \mathbb{N}$, the process $\langle Du_{n}(t,x), e_k \rangle_{\mathcal{H}_T}$ has a predictable modification, which will be written by $\widetilde{D^{e_k}u_{n}}(t,x)$.
    \item [\normalfont(ii)] The derivative $Du_n(t,x)$ is an $\mathscr{F}_t$-measurable $\mathcal{H}_T$-valued random variable and satisfies 
    \begin{align}
        Du_{n}(t,x) \nonumber
        &= G_{n}^{(t,x)}(\cdot,\star)\sigma(\widetilde{u_{n-1}}(\cdot,\star))\\
        &\quad + \sum_{k=1}^{\infty} \left( \int_0^t \int_{\R^d}G_{n}^{(t,x)}(s,y)\sigma'(\widetilde{u_{n-1}}(s,y)) \widetilde{D^{e_k}u_{n-1}}(s,y) W(ds,dy) \right) e_k, \quad \text{a.s.}. \label{Du_(n)(t,x)}
    \end{align}
    \item [\normalfont(iii)] For any $p \in [1,\infty)$, $\sup_{(t,x) \in [0,T] \times \R^d} \lVert Du_n(t,x) \rVert_{L^p(\Omega;\mathcal{H}_T)} < \infty$.
    \item [\normalfont(iv)] For any $p \in [1,\infty)$, $(t,x) \mapsto Du_n(t,x)$ is $L^p(\Omega;\mathcal{H}_T)$-continuous.
\end{enumerate}
\end{Prop}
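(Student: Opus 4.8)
The plan is to argue by induction on $n$. For the base case $n=1$, since $u_0\equiv1$ and $G_1(t-\cdot,x-\star)$ vanishes for $s>t$, we have $u_1(t,x)=1+\sigma(1)W(G_1^{(t,x)})$, which lies in the first Wiener chaos and has $G_1^{(t,x)}\in\mathcal{H}_T$ by Lemma \ref{Lem property of G_k}. Hence $u_1(t,x)\in\mathbb{D}^{1,p}$ for every $p$ with $Du_1(t,x)=\sigma(1)G_1^{(t,x)}=G_1^{(t,x)}\sigma(u_0)$, so \eqref{Du_(n)(t,x)} holds with the sum empty; (i)--(iv) are immediate because $(t,x)\mapsto G_1^{(t,x)}$ is continuous and measurable as an $\mathcal{H}_T$-valued map (Lemma \ref{Lem property of G_k}), and the components $\langle Du_1(t,x),e_k\rangle_{\mathcal{H}_T}$ are deterministic and measurable, hence predictable.

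For the inductive step, assume the statement holds for $n-1\geq1$. First I would apply the chain rule from Section \ref{subsection Malliavin calculus and Malliavin-Stein method} (valid since $\sigma\in C^1$ has bounded derivative, with $|\sigma'|\leq\sigma_{\mathrm{Lip}}$) to get $\sigma(u_{n-1}(s,y))\in\mathbb{D}^{1,p}$ and $D\sigma(u_{n-1}(s,y))=\sigma'(u_{n-1}(s,y))Du_{n-1}(s,y)$; by the induction hypotheses the $\mathcal{H}_T$-valued process $\sigma'(\widetilde{u_{n-1}})Du_{n-1}$ has a predictable modification, is bounded in $L^p(\Omega;\mathcal{H}_T)$ uniformly in $(s,y)$, and is $L^p(\Omega;\mathcal{H}_T)$-continuous. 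Combining this with Lemma \ref{Lem property of G_k}, Proposition \ref{Prop L^2(Omega H_T)}, the closability of $D$, and a space-time step-function approximation of $\sigma(\widetilde{u_{n-1}})$ (as in the proof of Lemma \ref{Lem u_n property S}), I would show that the $\mathcal{H}_T$-valued random variable $v^{t,x}(s,y)\coloneqq G_n^{(t,x)}(s,y)\sigma(\widetilde{u_{n-1}}(s,y))$ belongs to $\mathbb{D}^{1,p}(\mathcal{H}_T)\cap\mathcal{P}_+$ with $Dv^{t,x}(s,y)=G_n^{(t,x)}(s,y)\sigma'(\widetilde{u_{n-1}}(s,y))Du_{n-1}(s,y)$.

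Since $u_n(t,x)=1+\delta(v^{t,x})$ by Lemma \ref{Lem Skorokhod Walsh}, I would then invoke the commutation relation between $D$ and $\delta$ (see \textit{e.g.} \cite{MR2200233}), which yields $u_n(t,x)\in\mathbb{D}^{1,2}$ and $Du_n(t,x)=v^{t,x}+\delta_{(1)}(Dv^{t,x})$, where $\delta_{(1)}$ denotes the Dalang integral acting on the $(s,y)$-slot of $Dv^{t,x}$; expanding this term along $(e_k)$ and applying Lemma \ref{Lem Skorokhod Walsh} componentwise gives exactly \eqref{Du_(n)(t,x)}. The $\mathscr{F}_t$-measurability in (ii) then follows since every term of \eqref{Du_(n)(t,x)} is built from $\{u_{n-1}(s,y),W_s(A):s\leq t\}$, and (i) follows from (iii)--(iv) below together with \cite[Proposition B.1]{MR4017124}. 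For (iii), the Burkholder--Davis--Gundy and Minkowski inequalities (applied to the $\mathcal{H}_T$-valued Walsh integral, after summing the BDG bounds over $k$ via $\sum_k\widetilde{D^{e_k}u_{n-1}}(s,y)\widetilde{D^{e_k}u_{n-1}}(s,z)=\langle Du_{n-1}(s,y),Du_{n-1}(s,z)\rangle_{\mathcal{H}_T}$) give, for $p\geq2$,
\begin{equation*}
    \lVert Du_n(t,x)\rVert_{L^p(\Omega;\mathcal{H}_T)}^2 \lesssim \Theta(n,T)^2\Big(\int_{\mathbb{B}_{T+1}^2}\gamma(y-z)\,dydz\Big)\Big(\sup_{(s,y)}\lVert\sigma(u_{n-1}(s,y))\rVert_p^2+\sigma_{\mathrm{Lip}}^2\sup_{(s,y)}\lVert Du_{n-1}(s,y)\rVert_{L^p(\Omega;\mathcal{H}_T)}^2\Big),
\end{equation*}
which is finite by Proposition \ref{Prop u_n property} and the induction hypothesis; the case $1\leq p<2$ follows. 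For (iv), the same estimates applied to the differences $G_n^{(t,x)}-G_n^{(t',x')}$ reduce the claim to the uniform continuity of $G_n$ (Lemma \ref{Lem property of G_k}) via the dominated convergence theorem, with $2\Theta(n,T)\ind_{\mathbb{B}_{T+2}}$ serving as the majorant, exactly as in the proof of Proposition \ref{Prop u_n property}(iii).

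The step I expect to be the main obstacle is the rigorous justification of $Du_n(t,x)=v^{t,x}+\delta_{(1)}(Dv^{t,x})$: the textbook commutation result is stated for the Skorokhod integral and in the $L^2$ setting, so one must (a) verify that $v^{t,x}$ genuinely lies in $\mathbb{D}^{1,2}(\mathcal{H}_T)$ (and that $Dv^{t,x}$, viewed as an integrand in the $(s,y)$-variable, lies in the domain of the first-slot divergence) through the approximation above, and identify $\delta(v^{t,x})$ with the Walsh integral via Lemma \ref{Lem Skorokhod Walsh}; (b) upgrade the conclusion from $p=2$ to all $p\in[1,\infty)$ by carrying $L^p$-estimates through the approximation, where one crucially uses that $\Theta(n,T)<\infty$ for each fixed $n$ (which is precisely why the bounds in (iii)--(iv) are not uniform in $n$; cf. the remark following Proposition \ref{Prop u_n uniform bound}); and (c) handle the interchange of the sum over $k$ with the stochastic integral, which is cleanest if one works throughout with the genuinely $\mathcal{H}_T$-valued integral and expands along $(e_k)$ only at the very end.
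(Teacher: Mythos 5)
Your outline is correct in its inductive structure and in the identification of the derivative, but the key inductive step takes a genuinely different route than the paper. You invoke the abstract commutation relation $D\delta(v^{t,x}) = v^{t,x} + \delta_{(1)}(Dv^{t,x})$; the paper instead constructs explicit step-function approximations $V_{n+1,m}^{(t,x)}$ of $V_{n+1}^{(t,x)} = G_{n+1}^{(t,x)}\sigma(\widetilde{u_n})$ built on partitions of $[0,T]\times\mathbb{B}_{T+1}(x)$, applies the elementary Leibniz rule $D[FW(h)] = (DF)W(h) + Fh$ to the resulting finite Walsh sums to get $I_{n+1,m}(t,x) \in \mathbb{D}^{1,p}$, and then passes to the limit in $L^p(\Omega;\mathcal{H}_T)$ via the closability of $D$ (\cite[Lemma 1.5.3]{MR2200233}). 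The two routes are equivalent in substance---the paper's approximation is a hands-on proof of the commutation relation for predictable integrands---but you are right to flag (a)--(c) as the crux: the textbook commutation theorems are stated in $L^2$ and under abstract domain hypotheses, so verifying $v^{t,x}\in\mathbb{D}^{1,p}(\mathcal{H}_T)$, $Dv^{t,x}\in\mathrm{Dom}(\delta_{(1)})$, and the $L^p$ upgrade ends up requiring precisely the step-function approximation the paper builds in from the start. The paper also treats the sum over $k$ as a single $\mathcal{H}_T$-valued object $\Xi_{n+1}(t,x)$ and controls it via $\sum_k\langle\upsilon,e_k\rangle\langle\upsilon',e_k\rangle = \langle\upsilon,\upsilon'\rangle_{\mathcal{H}_T}$ and Minkowski (display \eqref{ine xi_(n+1,m) - xi_(n+1)}), which matches your suggestion to work throughout with the $\mathcal{H}_T$-valued integral and expand along $(e_k)$ only at the end; and your (iii)--(iv) estimates agree with the paper's up to relabeling. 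In short: correct approach and self-aware about its gaps, but the cost of the abstract commutation lemma is relocated rather than avoided, which is exactly why the paper proves the relation from scratch by approximation.
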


\begin{proof}
The proof is by induction on $n$.
We first show the claim when $n=1$.
By \eqref{u_(n+1)}, we see that
\begin{equation*}
    u_1(t,x) 
    = 1 + W(G_1^{(t,x)}(\cdot, \star)\sigma(1)) \in \mathbb{D}^{1,p}
\end{equation*}
for any $(t,x) \in [0,T] \times \R^d$ and $p \geq 1$ and 
\begin{equation*}
    Du_1(t,x) = G_1^{(t,x)}(\cdot, \star) \sigma(1).
\end{equation*}
Since $Du_1(t,x)$ is deterministic and $Du_0(t,x) = 0$, statements (i)-(iii) hold.
Moreover, using Lemma \ref{Lem property of G_k}, we can also check that (iv) holds by making a similar argument that we used to show \eqref{4.1.1}.

Now we assume by induction that all (i)-(iv) hold for $n$, and we will show it for $n+1$.
Fix $(t,x) \in [0,T] \times \R^d$ and $p \geq 2$ and let 
\begin{equation*}
    V_{n+1}^{(t,x)}(s,y) = G_{n+1}^{(t,x)}(s,y)\sigma(\widetilde{u_n}(s,y)).
\end{equation*}
The continuity of $G_{n+1}$ and the $L^p(\Omega)$-continuity of $u_n$ imply that $(s,y) \mapsto V_{n+1}^{(t,x)}(s,y)$ is $L^p(\Omega)$-continuous. 
Thus, $G_{n+1}^{(t,x)}$, $u_n$, $V_{n+1}^{(t,x)}$, and $Du_n$ are all uniformly continuous on the compact set $[0,T] \times \mathbb{B}_{T+1}(x)$, and there exists $\delta_{m} > 0$ such that for any $(s,y), (s',y') \in [0,T] \times \mathbb{B}_{T+1}(x)$ with $|(s,y) - (s',y')| < \delta_m$, 
\begin{align}
    |G_{n+1}^{(t,x)}(s,y) - G_{n+1}^{(t,x)}(s',y')| &< 2^{-m}, \label{G_(n+1) uni cty}\\
    \lVert u_n(s,y) - u_n(s',y') \rVert_p &< 2^{-m}, \label{u_n uni cty}\\
    \lVert V_{n+1}^{(t,x)}(s,y) - V_{n+1}^{(t,x)}(s',y') \rVert_p &< 2^{-m}, \quad \text{and} \label{V_(n+1) uni cty}\\
    \lVert Du_{n}(s,y) - Du_{n}(s',y') \rVert_{L^p(\Omega ; \mathcal{H}_T)} &< 2^{-m}. \label{Du_n uni cty}
\end{align}
Note that $\delta_m$ depends on $(t,x)$ and $p$, but we will not write this dependency explicitly for simplicity of notation.
For each $m$, let $0 = s_0^{m} < s_1^{m} < \ldots <s_{k_m}^{m} =T$ be a partition of $[0,T]$ such that 
\begin{equation*}
    \max_{0 \leq i \leq k_{m} -1}|s_i^{m} - s_{i+1}^{m}| < \frac{1}{2}\delta_{m}.
\end{equation*}
Also let $(U_j^m)_{0\leq j \leq l_m}$ be a partition of $\mathbb{B}_{T+1}(x)$ such that 
\begin{equation*}
    U_j^m \in \mathcal{B}(\R^d), \quad \sup \{|y -z| : y,z \in U_j^{m}\}< \frac{1}{2}\delta_{m},
\end{equation*}
and $(U_j^{m})_{0\leq j \leq l_m}$ are mutually disjoint up to sets of Lebesgue measure zero. 
For each $j$, take $y_j^m \in U_j^{m}$ arbitrarily.
Let us define
\begin{equation*}
    V_{n+1,m}^{(t,x)}(s,y) = \sum_{i=0}^{k_m - 1}\sum_{j=0}^{l_m}V_{n+1}^{(t,x)}(s_i^m,y_j^m)\ind_{(s_i^m, s_{i+1}^{m}]}(s)\ind_{U_{j}^m}(y).
\end{equation*}
Clearly, the process $V_{n+1,m}^{(t,x)}(s,y)$ is predictable, and we see from \eqref{V_(n+1) uni cty} that
\begin{equation}
    \sup_{(s,y) \in [0,T] \times \mathbb{B}_{T+1}(x)} \lVert V_{n+1}^{(t,x)}(s,y) - V_{n+1,m}^{(t,x)}(s,y) \rVert_{p} \leq 2^{-m}. \label{V-Vm}
\end{equation}
Moreover, its Walsh integral is given by
\begin{equation*}
    I_{n+1,m}(t,x) \coloneqq \int_0^T\int_{\R^d}V_{n+1,m}^{(t,x)}(s,y)W(ds,dy) = \sum_{i=0}^{k_m - 1}\sum_{j=0}^{l_m}V_{n+1}^{(t,x)}(s_i^m,y_j^m)W(\ind_{(s_i^m, s_{i+1}^{m}]}\ind_{U_{j}^m}),
\end{equation*}
and it is easily seen from \eqref{V-Vm} that 
\begin{equation}
    \int_0^T\int_{\R^d}V_{n+1,m}^{(t,x)}(s,y)W(ds,dy) \xrightarrow[m \to \infty]{} \int_0^T\int_{\R^d}V_{n+1}^{(t,x)}(s,y)W(ds,dy) \quad \text{in $L^p(\Omega)$}. \label{VmW conv}
\end{equation}

We first show that $u_{n+1}(t,x) \in \mathbb{D}^{1,p}$. 
From \eqref{VmW conv}, it suffices to show (\textit{cf.} \cite[Lemma 1.5.3]{MR2200233}) that $I_{n+1,m}(t,x) \in \mathbb{D}^{1,p}$ and $DI_{n+1,m}(t,x)$ converges in $L^p(\Omega;\mathcal{H}_T)$ as $m \to \infty$.
Because $\sigma$ is a continuously differentiable function with bounded derivative and $u_n(t,x) \in \mathbb{D}^{1,q}$ for any $(t,x)$ and $q \geq 1$, it follows from the chain rule that $V_{n+1}^{(t,x)}(s,y) \in \mathbb{D}^{1,q}$ for any $(s,y)$ and $q \geq 1$.
We then conclude from the Leibniz rule (see \textit{e.g.} \cite[Proposition 1.5.6]{MR2200233}) that $I_{n+1,m}(t,x) \in \mathbb{D}^{1,p}$, and we have 
\begin{equation}
    DI_{n+1,m}(t,x) = V_{n+1,m}^{(t,x)}(\cdot, \star) + \sum_{i=0}^{k_m - 1}\sum_{j=0}^{l_m} DV_{n+1}^{(t,x)}(s_i^m,y_j^m)W(\ind_{(s_i^m, s_{i+1}^{m}]}\ind_{U_{j}^m}). \label{DI_(n+1,m)}
\end{equation}
Using Proposition \ref{Prop L^2(Omega H_T)} and \eqref{V-Vm}, we see that 
\begin{equation}
    V_{n+1,m}^{(t,x)}(\cdot, \star) \xrightarrow[m \to \infty]{} V_{n+1}^{(t,x)}(\cdot, \star) \quad \text{in $L^p(\Omega;\mathcal{H}_T)$}.
\end{equation}
To show the convergence of the second term on the right-hand side of \eqref{DI_(n+1,m)}, we begin by evaluating the $L^p(\Omega;\mathcal{H}_T)$ norm of $DV_{n+1}^{(t,x)}(s,y) - DV_{n+1,m}^{(t,x)}(s,y)$. 
From the chain rule, we have
\begin{align*}
    &DV_{n+1}^{(t,x)}(s,y) - DV_{n+1,m}^{(t,x)}(s,y)\\
    &=\sum_{i=0}^{k_m - 1}\sum_{j=0}^{l_m}\left (G_{n+1}^{(t,x)}(s,y)\sigma'(\widetilde{u_n}(s,y))D\widetilde{u_n}(s,y) - G_{n+1}^{(t,x)}(s_i^m,y_j^m)\sigma'(\widetilde{u_n}(s_i^m,y_j^m))D\widetilde{u_n}(s_i^m,y_j^m)\right)\\
    &\qquad \qquad \qquad \times \ind_{(s_i^m, s_{i+1}^{m}]}(s)\ind_{U_{j}^m}(y),
\end{align*}
and it follows from \eqref{G_(n+1) uni cty} and \eqref{Du_n uni cty} that 
\begin{align}
    &\left \lVert DV_{n+1}^{(t,x)}(s,y) - DV_{n+1,m}^{(t,x)}(s,y) \right \rVert_{L^p(\Omega;\mathcal{H}_T)} \nonumber \\
    &\leq \sum_{i=0}^{k_m - 1}\sum_{j=0}^{l_m}\left|G_{n+1}^{(t,x)}(s,y)\right| \left \lVert \sigma'(u_n(s,y))(Du_n(s,y) - Du_n(s^m_i,y^m_j)) \right\rVert _{L^p(\Omega;\mathcal{H}_T)}\ind_{(s_i^m, s_{i+1}^{m}]}(s)\ind_{U_{j}^m}(y) \nonumber \\
    &\quad + \sum_{i=0}^{k_m - 1}\sum_{j=0}^{l_m}\left|G_{n+1}^{(t,x)}(s,y)\right|\lVert \sigma'(u_n(s,y)) - \sigma'(u_n(s^m_i,y^m_j)) \rVert_{2p}\lVert Du_n(s^m_i, y^m_j) \rVert_{L^{2p}(\Omega;\mathcal{H}_T)}\ind_{(s_i^m, s_{i+1}^{m}]}(s)\ind_{U_{j}^m}(y) \nonumber \\
    &\quad + \sum_{i=0}^{k_m - 1}\sum_{j=0}^{l_m}\left |G_{n+1}^{(t,x)}(s,y) - G_{n+1}^{(t,x)}(s^m_i,y^m_j)\right | \lVert \sigma'(u_n(s^m_i,y^m_j)) \rVert_{2p}\lVert Du_n(s^m_i, y^m_j) \rVert_{L^{2p}(\Omega;\mathcal{H}_T)}\ind_{(s_i^m, s_{i+1}^{m}]}(s)\ind_{U_{j}^m}(y) \nonumber \\
    &\leq \left(\Theta(n+1,T) + \sup_{(\tau, \zeta) \in [0,T] \times \R^d} \lVert Du_n(\tau, \zeta) \rVert_{L^{2p}(\Omega;\mathcal{H}_T)} \right)\sigma_{\mathrm{Lip}} 2^{-m} \sum_{i=0}^{k_m - 1}\sum_{j=0}^{l_m}\ind_{(s_i^m, s_{i+1}^{m}]}(s)\ind_{U_{j}^m}(y) \nonumber \\
    &\quad +  \Biggl \{\Theta(n+1,T)\sup_{(\tau, \zeta) \in [0,T] \times \R^d} \lVert Du_n(\tau, \zeta) \rVert_{L^{2p}(\Omega;\mathcal{H}_T)} \nonumber\\
    &\qquad \qquad \times \sum_{i=0}^{k_m - 1}\sum_{j=0}^{l_m}\lVert \sigma'(u_n(s,y)) - \sigma'(u_n(s^m_i,y^m_j)) \rVert_{2p}\ind_{(s_i^m, s_{i+1}^{m}]}(s)\ind_{U_{j}^m}(y) \Biggr \} \label{DV_(n+1) - DV_(n+1,m)}.
\end{align}
Now observe that almost surely,
\begin{align*}
    \sum_{i=0}^{k_m - 1}\sum_{j=0}^{l_m} DV_{n+1}^{(t,x)}(s_i^m,y_j^m)W(\ind_{(s_i^m, s_{i+1}^{m}]}\ind_{U_{j}^m})
    &= \sum_{k=1}^{\infty}\left(\int_0^t\int_{\R^d}\langle DV_{n+1,m}^{(t,x)}(s,y), e_k \rangle_{\mathcal{H}_T}W(ds,dy)\right) e_k \\
    &\eqqcolon \Xi_{n+1,m}(t,x). 
\end{align*}
We will show that the right-hand side above converges as $m \to \infty$ to 
\begin{equation}
\label{conv 2}
    \Xi_{n+1}(t,x) \coloneqq \sum_{k=1}^{\infty} \left( \int_0^t \int_{\R^d}\langle DV_{n+1}^{(t,x)}(s,y), e_k \rangle_{\mathcal{H}_T} W(ds,dy) \right) e_k,
\end{equation}
where we have chosen a predictable modification of the process $\langle DV_{n+1}^{(t,x)}(s,y), e_k \rangle_{\mathcal{H}_T}$ for each $k \in \mathbb{N}$.
Note that \eqref{conv 2} belongs to $L^p(\Omega;\mathcal{H}_T)$, as can be seen from the induction assumption and \eqref{G_nG_ngamma uniform estimate} using the Burkholder-Davis-Gundy inequality.
Let $\upsilon_{n+1,m}^{(t,x)}(s,y) = DV_{n+1}^{(t,x)}(s,y) - DV_{n+1,m}^{(t,x)}(s,y)$.
A standard computation shows that
\begin{align}
    &\E\left[\lVert \Xi_{n+1}(t,x) - \Xi_{n+1,m}(t,x) \rVert_{\mathcal{H}_T}^p\right]^{\frac{2}{p}} \nonumber \\
    &= \E\left[ \left(\sum_{k=1}^{\infty} \int_0^t\int_{\R^{2d}} \langle \upsilon_{n+1,m}^{(t,x)}(s,y) , e_k \rangle_{\mathcal{H}_T}\langle \upsilon_{n+1,m}^{(t,x)}(s,z), e_k \rangle_{\mathcal{H}_T}\gamma(y-z) dydzds\right )^{\frac{p}{2}} \right]^{\frac{2}{p}} \nonumber \\
    &= \E\left[\left( \int_0^t\int_{\R^{2d}}\langle \upsilon_{n+1,m}^{(t,x)}(s,y) ,\upsilon_{n+1,m}^{(t,x)}(s,z) \rangle_{\mathcal{H}_T} \gamma(y-z)dydzds \right)^{\frac{p}{2}} \right]^{\frac{2}{p}} \nonumber\\
    &\leq \int_0^t\int_{\R^{2d}}\lVert \upsilon_{n+1,m}^{(t,x)}(s,y)  \rVert_{L^p(\Omega;\mathcal{H}_T)}\lVert \upsilon_{n+1,m}^{(t,x)}(s,z) \rVert_{L^p(\Omega;\mathcal{H}_T)}\gamma(y-z)dydzds. \label{ine xi_(n+1,m) - xi_(n+1)}
\end{align}
We then apply \eqref{DV_(n+1) - DV_(n+1,m)} to obtain that 
\begin{align}
    &\E\left[\lVert \Xi_{n+1}(t,x) - \Xi_{n+1,m}(t,x) \rVert_{\mathcal{H}_T}^p\right]^{\frac{2}{p}} \nonumber \\
    &\lesssim_{n,T,\sigma_{\mathrm{Lip}}} (2^{-2m} + 2^{-m})\int_0^T\int_{{\mathbb{B}_{T+1}(x)}^2}\gamma(y-z)dydzds \nonumber \\
    &\quad + \int_0^T\int_{{\mathbb{B}_{T+1}(x)}^2}\lVert \sigma'(u_n(s,y)) - \sigma'(u_{n,m}(s,y)) \rVert_{2p}\lVert \sigma'(u_n(s,z)) - \sigma'(u_{n,m}(s,z))\rVert_{2p} \gamma(y-z)dydzds,  \label{conv xi_(n+1,m) - xi_(n+1)}
\end{align}
where 
\begin{equation*}
    u_{n,m}(s,y) \coloneqq \sum_{i=0}^{k_m - 1}\sum_{j=0}^{l_m} u_n(s_i^m, y_j^m)  \ind_{(s_i^m, s_{i+1}^{m}]}(s)\ind_{U_{j}^m}(y).
\end{equation*}
From \eqref{u_n uni cty}, we have
\begin{equation*}
    u_{n,m}(s,y) \xrightarrow[m \to \infty]{} u_n(s,y), \quad \text{a.s.}
\end{equation*}
for any $(s,y) \in [0,T] \times \mathbb{B}_{T+1}(x)$, and hence the dominated convergence theorem yields that 
\begin{equation*}
    \lim_{m \to \infty}\lVert \sigma'(u_{n,m}(s,y)) - \sigma'(u_{n}(s,y)) \rVert_q = 0
\end{equation*}
for any $(s,y) \in [0,T] \times \mathbb{B}_{T+1}(x)$ and $q \in [1,\infty)$ (as $\sigma'$ is a bounded continuous function).
Thus, using the dominated convergence theorem again, we see that the right-hand side of \eqref{conv xi_(n+1,m) - xi_(n+1)} converges to zero.
Therefore we have shown that $u_{n+1}(t,x) \in \mathbb{D}^{1,p}$ and 
\begin{equation*}
    Du_{n+1}(t,x) = V_{n+1}^{(t,x)}(\cdot, \star) + \Xi_{n+1}(t,x).
\end{equation*}
Since we can take $G_{n+1}^{(t,x)}(s,y)\sigma'(\widetilde{u_{n}}(s,y)) \widetilde{D^{e_k}u_{n}}(s,y)$ as a predictable modification of $\langle DV_{n+1}^{(t,x)}(s,y), e_k \rangle_{\mathcal{H}_T}$, we see that statement (ii) holds for $n+1$.

Next, we show statements (iii) and (iv) for $n+1$.
From Proposition \ref{Prop L^2(Omega H_T)} and \eqref{G_nG_ngamma uniform estimate}, it is easily seen that $\lVert V_{n+1}^{(t,x)} \rVert_{L^p(\Omega;\mathcal{H}_T)}^2$ is dominated by 
\begin{equation*}
    \sup_{(\tau, \zeta) \in [0,T] \times \R^d}\lVert \sigma(u_n(\tau,\zeta)) \rVert_p^2 \Theta(n+1,T)^2\int_{\mathbb{B}_{T+1}^2}\gamma(y-z)dydz,
\end{equation*}
which is finite by Proposition \ref{Prop u_n property}. 
Moreover, a similar computation as in \eqref{ine xi_(n+1,m) - xi_(n+1)} using \eqref{G_nG_ngamma uniform estimate} gives
\begin{equation*}
    \lVert \Xi_{n+1}(t,x) \rVert_{L^p(\Omega;\mathcal{H}_T)}^2 
    \leq \sigma_{\mathrm{Lip}}^2 \sup_{(\tau, \zeta) \in [0,T] \times \R^d}\lVert Du_n(\tau,\zeta) \rVert_{L^p(\Omega;\mathcal{H}_T)}^2 \Theta(n+1,T)^2\int_{\mathbb{B}_{T+1}^2}\gamma(y-z)dydz < \infty.
\end{equation*}
We thus obtain $\sup_{(t,x) \in [0,T] \times \R^d} \lVert Du_{n+1}(t,x) \rVert_{L^p(\Omega;\mathcal{H}_T)} < \infty$.
We now turn to the $L^p(\Omega;\mathcal{H}_T)$-continuity of $Du_{n+1}(t,x)$. 
Using Propositions \ref{Prop L^2(Omega H_T)} and \ref{Prop u_n property} again, we see that
\begin{align*}
    &\lVert V_{n+1}^{(t,x)} - V_{n+1}^{(s,y)} \rVert_{L^p(\Omega;\mathcal{H}_T)}^2\\
    &\leq \int_0^T\int_{\R^{2d}} \lVert V_{n+1}^{(t,x)}(r,z) - V_{n+1}^{(s,y)}(r,z) \rVert_p \lVert V_{n+1}^{(t,x)}(r,z') - V_{n+1}^{(s,y)}(r,z') \rVert_p \gamma(z-z')dzdz'dr\\
    &\lesssim_n \int_0^T\int_{\R^{2d}}|G_{n+1}^{(t,x)}(r,z) - G_{n+1}^{(s,y)}(r,z)||G_{n+1}^{(t,x)}(r,z') - G_{n+1}^{(s,y)}(r,z')|\gamma(z-z')dzdz'dr\\
    &\xrightarrow[(s,y) \to (t,x)]{} 0,
\end{align*}
where the last step follows from a similar argument that we used to show \eqref{4.1.1}. 
Also, for the term $\Xi_{n+1}(t,x)$, we have
\begin{align*}
    &\lVert \Xi_{n+1}(t,x) - \Xi_{n+1}(s,y) \rVert_{L^p(\Omega;\mathcal{H}_T)}^2\\
    &\leq \int_0^T\int_{\R^{2d}} \lVert DV_{n+1}^{(t,x)}(r,z) - DV_{n+1}^{(s,y)}(r,z) \rVert_p\lVert DV_{n+1}^{(t,x)}(r,z') - DV_{n+1}^{(s,y)}(r,z') \rVert_p\gamma(z-z')dzdz'dr\\
    &\lesssim_{n,\sigma_{\mathrm{Lip}}} \int_0^T\int_{\R^{2d}}|G_{n+1}^{(t,x)}(r,z) - G_{n+1}^{(s,y)}(r,z)||G_{n+1}^{(t,x)}(r,z') - G_{n+1}^{(s,y)}(r,z')|\gamma(z-z')dzdz'dr \xrightarrow[(s,y) \to (t,x)]{} 0.
\end{align*}
Therefore we conclude that $(t,x) \mapsto Du_{n+1}(t,x)$ is $L^p(\Omega;\mathcal{H}_T)$-continuous.

Finally, statements (ii) and (iv) for $n+1$ imply that we can take a predictable modification of the process $\langle Du_{n+1}(t,x), e_k \rangle_{\mathcal{H}_T}$ for each $k \in \mathbb{N}$.
The claim $Du_{n+1}(t,x) \in \mathbb{D}^{1,p}$ and statements (iii) and (iv) for $p \in [1,2)$ automatically follow from the case $p \in [2,\infty)$, and the proof is completed.
\end{proof}

\subsection{Moment bounds for the derivative}
\label{subsection Version of the derivative}
In order to obtain the pointwise moment estimate \eqref{int powise est}, it is first necessary to show that for each $(t,x)$ and $n$, $Du_n(t,x)$ has a version $D_{s,y}u_n(t,x)$ as a random function on $[0,T] \times \R^d$ since $\mathcal{H}_T$ generally contains some tempered distributions.
To do this, let us define successively the stochastic processes $(\{M_n(t,x,s,y) \mid (t,x,s,y) \in ([0,T]\times \R^d)^2\})_{n \geq 1}$ as follows: 
\begin{align}
    M_1(t,x,s,y) &= G_1^{(t,x)}(s,y)\sigma(1), \nonumber \\
    M_{n+1}(t,x,s,y) &= G_{n+1}^{(t,x)}(s,y)\sigma(\widetilde{u_n}(s,y)) 
     + \int_0^t\int_{\R^d}G_{n+1}^{(t,x)}(r,z)\sigma'(\widetilde{u_n}(r,z))\widetilde{M_n}(r,z,s,y)W(dr,dz), \label{M_(n+1)}
\end{align}
where $\widetilde{u_n}$ is a predictable modification of $u_n$ and $\widetilde{M_n}$ is a suitable modification of $M_n$ (see the proof of the next proposition).
Note that the stochastic integral in \eqref{M_(n+1)} is defined for each $(t,x,s,y)$ as the Walsh integral of $G_{n+1}^{(t,x)}(\cdot,\star)\sigma'(\widetilde{u_n}(\cdot,\star))\widetilde{M_n}(\cdot,\star,s,y) \in \mathcal{P}_+$.
We first show the properties of $M_n$ and then check that $M_n(t,x,\cdot,\star)$ is a version of $Du_n(t,x)$.

\begin{Prop}
\label{Prop M_(n+1) property}
Let $p \in [1,\infty)$.
For every integer $n \geq 1$, the following statements hold.
\begin{enumerate}
    \item [\normalfont(i)] The process $\{M_n(t,x,s,y) \mid (t,x,s,y) \in ([0,T]\times \R^d)^2\}$ has a measurable modification and $M_n(t,x,s,y)$ is  $\mathscr{F}_t$-measurable for every $(t,x,s,y) \in ([0,T]\times \R^d)^2$.
    \item [\normalfont(ii)] $M_n(t,x,s,y) = 0$ for any $s \geq t$.
    \item [\normalfont(iii)] $\sup_{(t,x,s,y) \in ([0,T]\times \R^d)^2}\lVert M_n(t,x,s,y)\rVert_p < \infty$.
    \item [\normalfont(iv)] $(t,x,s,y) \mapsto M_n(t,x,s,y)$ is $L^p(\Omega)$-continuous.
\end{enumerate}
\end{Prop}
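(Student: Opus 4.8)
The plan is to argue by induction on $n$, closely following the structure of the proofs of Proposition~\ref{Prop u_n property} and Proposition~\ref{Prop u_n diff'bility}. For the base case $n=1$, the process $M_1(t,x,s,y) = G_1^{(t,x)}(s,y)\sigma(1)$ is a deterministic measurable function, so (i)--(iv) follow at once from Lemma~\ref{Lem property of G_k}: measurability and $\mathscr{F}_t$-measurability are trivial, (ii) holds because $\supp G_1(t-s,\cdot)$ is empty when $s>t$ (and the point $s=t$ gives $G_1(0,\cdot)=0$), the uniform bound (iii) is $|\sigma(1)|\Theta(1,T)<\infty$, and the joint continuity (iv) follows from the uniform continuity of $G_1$ on $[0,T]\times\R^d$ exactly as in the argument for \eqref{4.1.1}.

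For the inductive step, assume $M_n$ satisfies (i)--(iv). First I would fix a measurable modification $\widetilde{M_n}$ of $M_n$ such that, together with the predictable modification $\widetilde{u_n}$ of $u_n$ (which exists by Proposition~\ref{Prop u_n property}), the integrand $G_{n+1}^{(t,x)}(\cdot,\star)\sigma'(\widetilde{u_n}(\cdot,\star))\widetilde{M_n}(\cdot,\star,s,y)$ is predictable in the variables $(\cdot,\star)$ for each fixed $(t,x,s,y)$; this is possible because the integrand is adapted (using $\mathscr{F}_r$-measurability of $M_n(r,z,s,y)$ for $r\le t$) and $L^p(\Omega)$-continuous in $(\cdot,\star)$, so one can invoke the predictable-modification result cited after \eqref{elementary} (\cite[Proposition B.1]{MR4017124}). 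The Lipschitz bound on $\sigma$, the uniform bound (iii) for $M_n$, the bound $|\sigma'|\le\sigma_{\mathrm{Lip}}$, and Lemma~\ref{Lem property of G_k} then show this integrand lies in $\mathcal{P}_+$, so the Walsh integral in \eqref{M_(n+1)} is well-defined pointwise, and $M_{n+1}(t,x,s,y)$ is $\mathscr{F}_t$-measurable since the stochastic integral up to time $t$ is. Statement (ii) for $M_{n+1}$ is immediate: when $s\ge t$ the first term vanishes by $\supp G_{n+1}(t-\cdot,x-\star)\subset [0,t]\times\mathbb{B}_{t+1/a_{n+1}}$ and the second term vanishes because its integrand is zero by the induction hypothesis (ii). For (iii), apply the Burkholder--Davis--Gundy inequality, Minkowski's inequality, the uniform moment bound (iii) for $M_n$, and the uniform spatial estimate \eqref{G_nG_ngamma uniform estimate} on $\int_{\mathbb{B}_{T+1}^2}\gamma(y-z)\,dy\,dz$, exactly as in the computation of $\lVert u_{n+1}(t,x)\rVert_p^2$ in the proof of Proposition~\ref{Prop u_n property}; for the case $1\le p<2$, use that it follows from the case $p\ge 2$. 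For (iv), estimate $\lVert M_{n+1}(t,x,s,y)-M_{n+1}(t',x',s',y')\rVert_p$ by splitting into the difference of the first (deterministic-kernel times $\sigma(\widetilde{u_n})$) terms and the difference of the stochastic integrals; the first difference is controlled by the $L^p$-continuity of $u_n$ and the uniform continuity of $G_{n+1}$, while for the second I would write the difference of integrals as a single integral of the difference of integrands, apply BDG and Minkowski, and bound using the induction hypotheses: the $L^p$-continuity of $M_n$ in its arguments, the continuity of $\sigma'$ composed with the $L^{2p}$-continuity of $u_n$ (with a Cauchy--Schwarz split as in \eqref{DV_(n+1) - DV_(n+1,m)}), and the uniform continuity of $G_{n+1}$, together with \eqref{G_nG_ngamma uniform estimate} and the dominated convergence theorem as in the argument for \eqref{4.1.1}.

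Finally, the measurable modification claim in (i) for $M_{n+1}$ follows from its joint $L^p(\Omega)$-continuity in $(t,x,s,y)$ together with its adaptedness in $t$, via a standard application of \cite[Proposition B.1]{MR4017124} (or a direct argument approximating by processes that are piecewise constant in $(t,x,s,y)$). The step I expect to be the main obstacle is the careful bookkeeping of the modifications: one must ensure that the chosen measurable modification $\widetilde{M_n}$ genuinely makes the integrand in \eqref{M_(n+1)} jointly predictable (as a process in $(r,z)$) while still being measurable in the frozen parameters $(t,x,s,y)$, so that the resulting $M_{n+1}$ is itself jointly measurable and the induction can proceed; the analytic estimates themselves are routine variants of those already carried out in Sections~\ref{section Approximation to the solution} and~\ref{subsection Differentiability of the approximation sequence}.
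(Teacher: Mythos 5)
Your proposal is essentially correct and follows the same inductive strategy as the paper: base case from Lemma \ref{Lem property of G_k}, per-$(s,y)$ predictable modifications $\widetilde{M_n}^{(s,y)}(\cdot,\star)$ of $(r,z)\mapsto M_n(r,z,s,y)$ to make the Walsh integral meaningful, BDG/Minkowski together with \eqref{G_nG_ngamma uniform estimate} for (iii), a two-term split of the difference of stochastic integrals with dominated convergence for (iv), and then (i) a posteriori from $L^p(\Omega)$-continuity. Two small remarks on your flagged concerns: the factor $\sigma'(\widetilde{u_n}(r,z))$ in the integrand depends only on the integration variables $(r,z)$, so in the continuity estimate for (iv) it stays fixed when you vary $(t,x,s,y)$ and no continuity of $\sigma'\circ u_n$ enters (the paper's $\mathbf{J_1}$, $\mathbf{J_2}$ decomposition has only two terms, not three); and the "bookkeeping of modifications" you expect to be the main obstacle is actually a non-issue here, because one never needs the family $\{\widetilde{M_n}^{(s,y)}\}$ to be jointly measurable in $(s,y)$---each $M_{n+1}(t,x,s,y)$ is defined pointwise, the $L^p(\Omega)$-continuity estimate only uses $\lVert\widetilde{M_n}^{(s',y')}(r,z)-\widetilde{M_n}^{(s,y)}(r,z)\rVert_p=\lVert M_n(r,z,s',y')-M_n(r,z,s,y)\rVert_p$ (modification-independent), and the jointly measurable version of $M_{n+1}$ is recovered afterward from \cite[Proposition B.1]{MR4017124}.
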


\begin{proof}
It is easily seen that $M_1(t,x,s,y)$ satisfies (i)-(iv).
We proceed by induction and assume that all statements hold for $M_n(t,x,s,y)$.
Then, for each $(s,y)$, we can take a predictable modification of the process $\{M_n(t, x, s,y) \mid (t,x) \in [0,T] \times \R^d\}$, which we will write as $\widetilde{M_n}^{(s,y)}(t,x)$.
The induction assumption together with Lemma \ref{Lem property of G_k} and \eqref{G_nG_ngamma uniform estimate} implies that $G_{n+1}^{(t,x)}(\cdot,\star)\sigma'(\widetilde{u_n}(\cdot,\star))\widetilde{M_n}^{(s,y)}(\cdot,\star) \in \mathcal{P}_+$ for each $(t,x,s,y)$ and that
\begin{align}
    &\int_0^t\int_{\R^{2d}} |G_{n+1}^{(t,x)}(r,z)G_{n+1}^{(t,x)}(r,z')|\gamma(z-z')\E[|\sigma'(\widetilde{u_n}(r,z))\sigma'(\widetilde{u_n}(r,z'))\widetilde{M_n}^{(s,y)}(r,z)\widetilde{M_n}^{(s,y)}(r,z')|]dzdz'dr \nonumber \\
    &\leq \sup_{(\tau,\zeta,\theta,\eta) \in ([0,T]\times \R^d)^2}\lVert M_n(\tau,\zeta,\theta,\eta)\rVert_2^2\sigma_{\mathrm{Lip}}^2\Theta(n+1, T)^2 T\int_{\mathbb{B}_{T+1}^2}\gamma(z-z')dzdz' < \infty. \label{5a}
\end{align}
It follows that $M_{n+1}(t,x,s,y)$ is well-defined and $\mathscr{F}_t$-measurable for every $(t,x,s,y)$. 
Moreover, it is clear from \eqref{M_(n+1)} that $M_{n+1}$ satisfies (ii). 
Using the Burkholder-Davis-Gundy inequality,  Proposition \ref{Prop u_n property}, and Lemma \ref{Lem property of G_k},  we also see that
\begin{align*}
    \lVert M_{n+1}(t,x,s,y) \rVert_p^2 
    &\leq 2 |G_{n+1}^{(t,x)}(s,y)|^2\lVert \sigma(\widetilde{u_n}(s,y)) \rVert_p^2\\
    &\quad + 2\left \lVert \int_0^t\int_{\R^d}G_{n+1}^{(t,x)}(r,z)\sigma'(\widetilde{u_n}(r,z))\widetilde{M_n}^{(s,y)}(r,z)W(dr,dz) \right\rVert_p^2\\
    &\leq 2\Theta(n+1, T)^2\sup_{(\tau, \zeta)\in [0,T]\times \R^d}\lVert \sigma(u_n(\tau,\zeta)) \rVert_p^2\\
    &\quad + 2 C_p\sup_{(\tau,\zeta,\theta,\eta) \in ([0,T]\times \R^d)^2}\lVert M_n(\tau,\zeta,\theta,\eta)\rVert_p^2\sigma_{\mathrm{Lip}}^2\Theta(n+1,T)^2 T\int_{\mathbb{B}_{T+1}^2}\gamma(z-z')dzdz',
\end{align*}
where $C_p$ is a constant depending on $p$.
Hence $M_{n+1}$ satisfies (iii).

We are now reduced to proving (iv) for $M_{n+1}$ since the $L^p(\Omega)$-continuity implies that $M_{n+1}$ has a measurable modification (\textit{cf.} \cite[Proposition B.1]{MR4017124}).
By Proposition \ref{Prop u_n property} and Lemma \ref{Lem property of G_k}, it is obvious that $(t,x,s,y) \mapsto G_{n+1}(t-s, x-y)\sigma(\widetilde{u_n}(s,y))$ is $L^p(\Omega)$-continuous.
If we let 
\begin{equation}
\label{N_(n+1)}
    N_{n+1}(t,x,s,y) = \int_0^t\int_{\R^d}G_{n+1}(t-r,x-z)\sigma'(\widetilde{u_n}(r,z))\widetilde{M_n}^{(s,y)}(r,z)W(dr,dz),
\end{equation}
then we have for any $(t',x',s',y'), (t,x,s,y) \in ([0,T]\times \R^d)^2$, 
\begin{align*}
    &\lVert N_{n+1}(t',x',s',y') - N_{n+1}(t,x,s,y) \rVert_p^2\\
    &\leq 2 \left\lVert \int_0^T\int_{\R^d} (G_{n+1}(t'-r,x'-z) - G_{n+1}(t-r,x-z))\sigma'(\widetilde{u_n}(r,z))\widetilde{M_n}^{(s',y')}(r,z)W(dr,dz) \right\rVert_p^2\\
    &\quad + 2  \left\lVert \int_0^T\int_{\R^d} G_{n+1}(t-r,x-z)\sigma'(\widetilde{u_n}(r,z))(\widetilde{M_n}^{(s',y')}(r,z) - \widetilde{M_n}^{(s,y)}(r,z)) W(dr,dz) \right\rVert_p^2\\
    &\eqqcolon \mathbf{J_1} + \mathbf{J_2}.
\end{align*}
The Burkholder-Davis-Gundy inequality yields that
\begin{align}
    \mathbf{J_1} 
    &\leq 2C_p\sigma_{\mathrm{Lip}}^2\sup_{(\tau,\zeta,\theta,\eta) \in ([0,T]\times \R^d)^2}\lVert M_n(\tau,\zeta,\theta,\eta)\rVert_p^2 \nonumber \\
    &\quad \times \int_0^T\int_{\R^{2d}}|G_{n+1}(t'-r,x'-z) - G_{n+1}(t-r,x-z)|\gamma(z-z') \nonumber \\
    &\qquad \qquad \times |G_{n+1}(t'-r,x'-z') - G_{n+1}(t-r,x-z')|dzdz'dr \nonumber \\
    &\xrightarrow[(t',x') \to (t,x)]{} 0, \label{5.2.1}
\end{align}
where the last step follows from the same argument we used in Proposition \ref{Prop u_n property} to show \eqref{4.1.1}.
Similarly, the term $\mathbf{J_2}$ can be estimated as 
\begin{align*}
    \mathbf{J_2} 
    &\leq 2C_p \sigma_{\mathrm{Lip}}^2\int_0^T\int_{\R^{2d}}|G_{n+1}(t-r,x-z)G_{n+1}(t-r,x-z')|\gamma(z-z')\\
    &\qquad \qquad \qquad \times \lVert \widetilde{M_n}^{(s',y')}(r,z) - \widetilde{M_n}^{(s,y)}(r,z) \rVert_p \lVert \widetilde{M_n}^{(s',y')}(r,z') - \widetilde{M_n}^{(s,y)}(r,z') \rVert_p dzdz'dr.
\end{align*}
We have
\begin{align*}
    &G_{n+1}(t-r,x-z)\lVert \widetilde{M_n}^{(s',y')}(r,z) - \widetilde{M_n}^{(s,y)}(r,z) \rVert_p\\
    &\leq 2 \sup_{(\tau,\zeta,\theta,\eta) \in ([0,T]\times \R^d)^2}\lVert M_n(\tau,\zeta,\theta,\eta)\rVert_p\Theta(n+1,T)\ind_{\mathbb{B}_{T+1}}(x-z),
\end{align*}
and the induction assumption for $M_n$ implies that for each $(r,z) \in [0,T] \times \R^d$,
\begin{equation*}
    \lim_{(s',y') \to (s,y)}\lVert \widetilde{M_n}^{(s',y')}(r,z) - \widetilde{M_n}^{(s,y)}(r,z) \rVert_p = 0.
\end{equation*}
Thus we can apply the dominated convergence theorem to obtain
\begin{equation}
\label{5.2.2}
    \lim_{(s',y') \to (s,y)} \mathbf{J_2} = 0. 
\end{equation}
Combining \eqref{5.2.1} and \eqref{5.2.2}, we obtain
\begin{equation*}
    \lim_{(t',x',s',y') \to (t,x,s,y)} \lVert N_{n+1}(t',x',s',y') - N_{n+1}(t,x,s,y) \rVert_p =0,
\end{equation*}
which implies the $L^p(\Omega)$-continuity of $(t,x,s,y) \mapsto M_{n+1}(t,x,s,y)$.
\end{proof}

To carry out our analysis, we need more precise bounds for the moments of $M_n$, which we show next.
For every integer $n \geq 1$, let $L_{1,n}$ be the process defined by
\begin{equation*}
    L_{1,n}(t,x,s,y) = G_{n}^{(t,x)}(s,y)\sigma(\widetilde{u_{n-1}}(s,y)).
\end{equation*}
We successively define $L_{k,n}$ for $2 \leq k \leq n$ by
\begin{equation*}
    L_{k,n}(t,x,s,y) = \int_s^t\int_{\R^d}G_n^{(t,x)}(r,z)\sigma'(\widetilde{u_{n-1}}(r,z))\widetilde{L_{k-1, n-1}}(r,z,s,y)W(dr,dz)
\end{equation*}
using an appropriate modification of $L_{k-1,n-1}$ for each $(s,y)$.
As with $M_n$ before, we can check that $L_{k,n}$ is well-defined for each $(t,x,s,y) \in ([0,T] \times \R^d)^2$ and integers $1 \leq k \leq n$.
It follows from \eqref{M_(n+1)} that
\begin{equation*}
    M_{n}(t,x,s,y) = \sum_{k=1}^{n} L_{k,n}(t,x,s,y).
\end{equation*}
Using this expression and the fact that $G_n$ has compact support on $[0,T] \times \R^d$, we can obtain estimates for the moments of $M_n$.
For simplicity, we will use the following notation:
\begin{align*}
    \widetilde{\Theta}(n,T) &= \max_{1 \leq l \leq n}\Theta(l,T), & J_{T} &= \int_{\mathbb{B}_{T+1}^2}\gamma(z-z')dzdz',\\
    \Sigma(n,p,T) &= \sup_{(t,x) \in [0,T] \times \R^d}\lVert \sigma(u_n(t,x)) \rVert_p, & \widetilde{\Sigma}(n,p,T) &= \max_{0 \leq l \leq n-1} \Sigma(l,p,T).
\end{align*}

\begin{Prop}
\label{Prop M_n moment bound}
Let $p \geq 2$.
For every $n \geq 1$ and $(t,x,s,y) \in ([0,T] \times \R^d)^2$, we have
\begin{align*}
    \lVert M_n(t,x,s,y) \rVert_{p} \leq \left( \sum_{k=1}^{\infty} \sqrt{\frac{(C_p\sigma_{\mathrm{Lip}}^2TJ_T)^{k-1}}{(k-1)!}} \right) \widetilde{\Sigma}(n,p,T)(\widetilde{\Theta}(n,T) \lor 1)^n \ind_{\mathbb{B}_{t-s+1}}(x-y),
\end{align*}
where $C_p$ is a $p$-dependent constant appearing in the Burkholder-Davis-Gundy inequality.
\end{Prop}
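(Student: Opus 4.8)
The plan is to use the decomposition $M_n(t,x,s,y) = \sum_{k=1}^{n} L_{k,n}(t,x,s,y)$ recorded just before the statement, so that by Minkowski's inequality it is enough to bound each term $\lVert L_{k,n}(t,x,s,y)\rVert_p$ by $\sqrt{(C_p\sigma_{\mathrm{Lip}}^2TJ_T)^{k-1}/(k-1)!}\,\widetilde{\Sigma}(n,p,T)(\widetilde{\Theta}(n,T)\lor 1)^n\,\ind_{\mathbb{B}_{t-s+1}}(x-y)$ and then sum over $1\le k\le n$, enlarging the resulting finite sum to the convergent series $\sum_{k=1}^{\infty}$. (The series converges by the ratio test, and recall that each $L_{k,n}$ is a well-defined Walsh integral, as noted in the text.) The argument for the individual bound splits into a support statement and a moment recursion.

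First I would prove by induction on $k$ that $L_{k,n}(t,x,s,y)=0$ a.s.\ whenever $|x-y|>(t-s)+c_{k,n}$, where $c_{k,n}\coloneqq\sum_{j=n-k+1}^{n}a_j^{-1}$. For $k=1$ this is immediate from $\supp G_n(\tau,\cdot)\subset\mathbb{B}_{\tau+1/a_n}$ (Lemma \ref{Lem property of G_k}). For $k\ge 2$, in the Walsh integral defining $L_{k,n}$ the integrand at a point $(r,z)$ vanishes a.s.\ unless both $|x-z|\le(t-r)+1/a_n$ (support of $G_n$) and $|z-y|\le(r-s)+c_{k-1,n-1}$ (induction hypothesis applied to $L_{k-1,n-1}$); if $|x-y|>(t-s)+1/a_n+c_{k-1,n-1}=(t-s)+c_{k,n}$, the triangle inequality forces the integrand to be the zero element of $\mathcal{P}_+$, so the integral vanishes a.s. Since $c_{k,n}\le\sum_{j=1}^{\infty}a_j^{-1}=1$ for every $k\le n$, this yields $\lVert L_{k,n}(t,x,s,y)\rVert_p=\lVert L_{k,n}(t,x,s,y)\rVert_p\,\ind_{\mathbb{B}_{t-s+1}}(x-y)$. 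This step is where the normalization $\sum_n a_n^{-1}=1$ is used, and it is exactly what keeps the radius of the indicator from growing with the iteration.

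Next I would set up a moment recursion. Put $\Phi_{k,n}(\rho)\coloneqq\sup\{\lVert L_{k,n}(t,x,s,y)\rVert_p^2 : 0\le s\le t\le T,\ t-s\le\rho,\ x,y\in\R^d\}$ for $\rho\in[0,T]$. Using $|G_n|\le\Theta(n,T)\le\widetilde{\Theta}(n,T)$ and Proposition \ref{Prop u_n property} (in the form of the bound $\widetilde{\Sigma}$) gives the base estimate $\Phi_{1,n}(\rho)\le\widetilde{\Theta}(n,T)^2\widetilde{\Sigma}(n,p,T)^2$. For $k\ge 2$, applying the Burkholder--Davis--Gundy inequality ($p\ge 2$) to the Walsh integral defining $L_{k,n}$, then Minkowski's integral inequality to move the $L^{p/2}$-norm inside, then $|\sigma'|\le\sigma_{\mathrm{Lip}}$, $|G_n|\le\widetilde{\Theta}(n,T)$ and $\lVert XY\rVert_{p/2}\le\lVert X\rVert_p\lVert Y\rVert_p$, and finally using the support statement for $L_{k-1,n-1}$ to restrict the spatial integral to a ball of radius $\le T+1$ on which $\int\gamma\le J_T$, one obtains
\begin{equation*}
    \Phi_{k,n}(\rho)\le C_p\sigma_{\mathrm{Lip}}^2J_T\,\widetilde{\Theta}(n,T)^2\int_0^{\rho}\Phi_{k-1,n-1}(u)\,du .
\end{equation*}
Iterating this $k-1$ times down to $\Phi_{1,n-k+1}$, using that $\widetilde{\Theta}(\cdot,T)$ and $\widetilde{\Sigma}(\cdot,p,T)$ are nondecreasing in their first argument, that $k\le n$, and that the $(k-1)$-fold iterated integral over $0\le u_{k-1}\le\cdots\le u_1\le\rho$ equals $\rho^{k-1}/(k-1)!$, gives
\begin{equation*}
    \Phi_{k,n}(T)\le\frac{(C_p\sigma_{\mathrm{Lip}}^2TJ_T)^{k-1}}{(k-1)!}\bigl(\widetilde{\Theta}(n,T)\lor 1\bigr)^{2n}\widetilde{\Sigma}(n,p,T)^2 .
\end{equation*}
Taking square roots, reinserting the indicator from the support statement, and summing over $1\le k\le n$ by Minkowski's inequality yields the claimed bound.

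The main obstacle is the two-layer bookkeeping: the recursion in $k$ simultaneously steps $n$ down, so one must ensure that every power of $\Theta$ and every occurrence of $\Sigma$ appearing along the iteration is dominated by the fixed $n$-indexed quantities $\widetilde{\Theta}(n,T)$ and $\widetilde{\Sigma}(n,p,T)$ — this is where monotonicity and $k\le n$ enter, and why the exponent in $\bigl(\widetilde{\Theta}(n,T)\lor 1\bigr)^n$ is a deliberate (harmless) over-estimate — together with the careful tracking of the support radii $c_{k,n}$ so that they remain $\le 1$. Everything else (BDG, the two Minkowski inequalities, the factorial coming from the iterated time integral) is routine.
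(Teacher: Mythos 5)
Your proposal is correct and follows essentially the same route as the paper: decompose $M_n=\sum_{k=1}^{n}L_{k,n}$, bound $\lVert L_{k,n}\rVert_p$ by a downward double induction in $(k,n)$ that produces the factorial from the iterated time integral and the support radius $1$ from the telescoping sum $\sum_j a_j^{-1}=1$, then sum. The only cosmetic difference is that you prove the support statement separately and then run a moment recursion via the sup-functional $\Phi_{k,n}(\rho)$, whereas the paper folds the support indicator directly into the single inductive estimate on $\lVert L_{k,n}\rVert_p$; both are equivalent.
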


\begin{proof}
We show by induction on $k$ that for every $n \geq k$ and $(t,x,s,y) \in ([0,T] \times \R^d)^2$, 
\begin{equation}
\label{M_n MB induc ineq}
    \lVert L_{k,n}(t,x,s,y) \rVert_p \leq \sqrt{\frac{(C_p\sigma_{\mathrm{Lip}}^2 (t-s) J_T)^{k-1}}{(k-1)!}}\widetilde{\Sigma}(n,p,T)\widetilde{\Theta}(n,T)^k \ind_{\mathbb{B}_{t-s + \sum_{i=n+1-k}^{n} \frac{1}{a_i}}}(x-y).
\end{equation}
When $k = 1$, it is easily seen that for any $n \geq k = 1$,
\begin{align*}
    \lVert L_{1,n}(t,x,s,y) \rVert_p 
    &= |G_n^{(t,x)}(s,y)|\lVert \sigma(\widetilde{u_{n-1}}(s,y)) \rVert_p\\
    &\leq \widetilde{\Sigma}(n,p,T)\widetilde{\Theta}(n,T)\ind_{\mathbb{B}_{t-s + \frac{1}{a_n}}}(x-y).
\end{align*}
Assuming by induction that \eqref{M_n MB induc ineq} holds for $k$, we will prove it for $k+1$.
For any $n \geq k+1$, the Burkholder-Davis-Gundy inequality, Minkowski's inequality, and \eqref{M_n MB induc ineq} yield that
\begin{align*}
    \lVert L_{k+1,n}(t,x,s,y) \rVert_p^2 
    &\leq C_p \sigma_{\mathrm{Lip}}^2 \int_s^t \int_{\R^{2d}}|G_n^{(t,x)}(r,z)G_n^{(t,x)}(r,z')|\gamma(z-z') \\
    &\qquad \qquad \qquad \times \lVert \widetilde{L_{k,n-1}}(r,z,s,y) \rVert_p \lVert \widetilde{L_{k,n-1}}(r,z',s,y) \rVert_p dzdz'dr\\
    &\leq C_p \sigma_{\mathrm{Lip}}^2 \frac{(C_p\sigma_{\mathrm{Lip}}^2 J_T)^{k-1}}{(k-1)!} \widetilde{\Sigma}(n-1,p,T)^2 \widetilde{\Theta}(n-1,T)^{2k}\\
    &\quad \times \int_s^t (r-s)^{k-1} \int_{\R^{2d}}|G_n^{(t,x)}(r,z)G_n^{(t,x)}(r,z')|\gamma(z-z')\\
    &\qquad \quad \times \ind_{\mathbb{B}_{r-s + \sum_{i=n-k}^{n-1} \frac{1}{a_i}}}(z-y)\ind_{\mathbb{B}_{r-s + \sum_{i=n-k}^{n-1} \frac{1}{a_i}}}(z'-y)dzdz'dr.
\end{align*}
Since 
\begin{align*}
    |G_n(t-r,x-z)|\ind_{\mathbb{B}_{r-s + \sum_{i=n-k}^{n-1} \frac{1}{a_i}}}(z-y) \leq \Theta(n,T) \ind_{\mathbb{B}_{t-s + \sum_{i=n-k}^{n} \frac{1}{a_i}}}(x-y)
\end{align*}
by Lemma \ref{Lem property of G_k}, we have 
\begin{align*}
    &\lVert L_{k+1,n}(t,x,s,y) \rVert_p^2 \\
    &\leq \frac{(C_p\sigma_{\mathrm{Lip}}^2 J_T)^{k}}{(k-1)!} \widetilde{\Sigma}(n-1,p,T)^2 \widetilde{\Theta}(n-1,T)^{2k} \Theta(n,T)^2 \int_s^t (r-s)^{k-1} dr \ind_{\mathbb{B}_{t-s + \sum_{i=n-k}^{n} \frac{1}{a_i}}}(x-y) \\
    &\leq \frac{(C_p\sigma_{\mathrm{Lip}}^2 (t-s) J_T)^{k}}{k!}\widetilde{\Sigma}(n,p,T)^2 \widetilde{\Theta}(n,T)^{2(k+1)}\ind_{\mathbb{B}_{t-s + \sum_{i=n-k}^{n} \frac{1}{a_i}}}(x-y),
\end{align*}
where in the last step we use the trivial estimate $\widetilde{\Sigma}(n-1,p,T) \leq \widetilde{\Sigma}(n,p,T)$.
This shows that \eqref{M_n MB induc ineq} holds for $k+1$, and consequently, \eqref{M_n MB induc ineq} holds for any $k$.
It follows that
\begin{align*}
    \lVert M_n(t,x,s,y) \rVert_p
    &\leq \sum_{k=1}^{n} \lVert L_{k,n}(t,x,s,y) \rVert_p \\
    &\leq \left(\sum_{k=1}^{\infty}\sqrt{\frac{(C_p\sigma_{\mathrm{Lip}}^2 T J_T)^{k-1}}{(k-1)!}}\right) (\widetilde{\Theta}(n,T) \lor 1)^n\widetilde{\Sigma}(n,p,T)\ind_{\mathbb{B}_{t-s + 1}}(x-y),
\end{align*}
and the proof is completed. 
\end{proof}

Let $\widetilde{M_n}(t,x,s,y)$ be a measurable modification of the process $M_n(t,x,s,y)$ that exists by Proposition \ref{Prop M_(n+1) property}. 
It holds that $\widetilde{M_n}(t,x,\cdot,\star) \in L^p(\Omega;\mathcal{H}_T)$ by Propositions \ref{Prop L^2(Omega H_T)} and \ref{Prop M_n moment bound}.
Now we will show that $\widetilde{M_n}(t,x,\cdot,\star)$ is a version of $Du_n(t,x)$.
Combining this and Proposition \ref{Prop M_n moment bound} gives the pointwise moment bounds for $Du_n(t,x)$.
We first provide the following lemma.

\begin{Lem}
\label{Lem Mn varphi modi}
Let $n \geq 1$, $p \in [1,\infty)$, and $\varphi \in C_{\mathrm{c}}^{\infty}([0,T] \times \R^d)$.
The process $\{\langle \widetilde{M_n}(t,x,\cdot, \star), \varphi \rangle_{\mathcal{H}_T} \mid (t,x) \in [0,T] \times \R^d\}$ is $(\mathscr{F}_t)$-adapted and is $L^p(\Omega)$-continuous.
In particular, it has a predictable modification.
\end{Lem}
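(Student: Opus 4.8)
The plan is to reduce the $\mathcal{H}_T$-pairing to an explicit Lebesgue integral and then verify adaptedness and $L^p(\Omega)$-continuity separately; the existence of a predictable modification will then follow from the criterion recalled after the definition of predictability. It suffices to treat $p\in[2,\infty)$, the case $p\in[1,2)$ following by H\"older's inequality on the probability space. For the reduction, I would first note that, by Propositions~\ref{Prop M_n moment bound} and \ref{Prop L^2(Omega H_T)}, for each fixed $(t,x)$ the random element $\widetilde{M_n}(t,x,\cdot,\star)$ lies in $L^p(\Omega;\mathcal{H}_T)$ and, for $P$-a.e.\ $\omega$, the function $\widetilde{M_n}(t,x,\cdot,\star,\omega)$ satisfies the integrability hypothesis of Lemma~\ref{Lem H_T}; since $\varphi\in C_{\mathrm c}^\infty([0,T]\times\R^d)$ trivially does too, that lemma yields
\[
  \langle \widetilde{M_n}(t,x,\cdot,\star),\varphi\rangle_{\mathcal{H}_T}
  = \int_0^T\!\!\int_{\R^{2d}} \widetilde{M_n}(t,x,r,z)\,\varphi(r,z')\,\gamma(z-z')\,dz\,dz'\,dr
  \qquad\text{$P$-a.s.}
\]
The pointwise moment bound $\lVert \widetilde{M_n}(t,x,r,z)\rVert_p \lesssim \ind_{\mathbb{B}_{t-r+1}}(x-z)$ of Proposition~\ref{Prop M_n moment bound}, together with the boundedness and compact support of $\varphi$ and the local integrability of $\gamma$, shows that the integrand is absolutely integrable for $dz\,dz'\,dr\otimes dP$, so Fubini's theorem is available in both of the following steps.

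\emph{Adaptedness.} Fix $(t,x)$ and denote the right-hand side above by $\Phi$, which is $\mathscr{F}$-measurable by Fubini. For each $(r,z)$ one has $\widetilde{M_n}(t,x,r,z)=M_n(t,x,r,z)$ a.s., and $M_n(t,x,r,z)$ is $\mathscr{F}_t$-measurable by Proposition~\ref{Prop M_(n+1) property}(i) (and vanishes for $r\ge t$ by (ii)); since $\mathscr{F}_t$ contains all $P$-null sets and $(\Omega,\mathscr{F},P)$ is complete, $\widetilde{M_n}(t,x,r,z)$ is itself $\mathscr{F}_t$-measurable. I would then conclude that $\Phi$ is $\mathscr{F}_t$-measurable by a conditional Fubini argument: computing $\E[\Phi\mid\mathscr{F}_t]$ and interchanging the conditional expectation with the integral (legitimate by the integrability noted above) gives $\E[\Phi\mid\mathscr{F}_t]=\Phi$ a.s., because $\E[\widetilde{M_n}(t,x,r,z)\mid\mathscr{F}_t]=\widetilde{M_n}(t,x,r,z)$ for every $(r,z)$; hence $\Phi$ coincides a.s.\ with an $\mathscr{F}_t$-measurable random variable and is therefore $\mathscr{F}_t$-measurable. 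This promotion of slicewise $\mathscr{F}_t$-measurability through the integral is the only point that requires a little care.

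\emph{$L^p(\Omega)$-continuity and conclusion.} Here I would first upgrade the pointwise $L^p(\Omega)$-continuity of $M_n$ to continuity of $(t,x)\mapsto\widetilde{M_n}(t,x,\cdot,\star)$ as a map into $L^p(\Omega;\mathcal{H}_T)$. By Minkowski's integral inequality (as in the estimate leading to \eqref{ine xi_(n+1,m) - xi_(n+1)}),
\[
  \lVert \widetilde{M_n}(t,x,\cdot,\star)-\widetilde{M_n}(t',x',\cdot,\star)\rVert_{L^p(\Omega;\mathcal{H}_T)}^2
  \le \int_0^T\!\!\int_{\R^{2d}} \rho(r,z)\,\rho(r,z')\,\gamma(z-z')\,dz\,dz'\,dr,
\]
where $\rho(r,z)=\lVert \widetilde{M_n}(t,x,r,z)-\widetilde{M_n}(t',x',r,z)\rVert_p$. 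For $(t',x')$ in a bounded neighbourhood of $(t,x)$, Proposition~\ref{Prop M_n moment bound} dominates $\rho(r,z)$ by a fixed constant times $\ind_{\mathbb{B}_{T+2}(x)}(z)$, which is $\gamma$-integrable against $\ind_{\mathbb{B}_{T+2}(x)}(z')$ over $[0,T]\times\R^{2d}$, while $\rho(r,z)\to0$ pointwise as $(t',x')\to(t,x)$ by Proposition~\ref{Prop M_(n+1) property}(iv); the dominated convergence theorem then yields continuity into $L^p(\Omega;\mathcal{H}_T)$. Composing with the bounded linear map $f\mapsto\langle f,\varphi\rangle_{\mathcal{H}_T}$ from $L^p(\Omega;\mathcal{H}_T)$ to $L^p(\Omega)$ (of norm at most $\lVert\varphi\rVert_{\mathcal{H}_T}$, by the Cauchy--Schwarz inequality in $\mathcal{H}_T$ applied $\omega$-wise) gives the $L^p(\Omega)$-continuity of $\{\langle \widetilde{M_n}(t,x,\cdot,\star),\varphi\rangle_{\mathcal{H}_T}\mid(t,x)\in[0,T]\times\R^d\}$. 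Being $(\mathscr{F}_t)$-adapted and $L^p(\Omega)$-continuous, hence stochastically continuous, this process admits a predictable modification by \cite[Proposition~B.1]{MR4017124}, which finishes the argument.
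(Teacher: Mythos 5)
Your proof is correct, but the adaptedness step takes a genuinely different route from the paper. The paper proves $\mathscr{F}_t$-measurability of the integral $\Phi$ in \eqref{M_n varphi} by the same explicit Riemann-sum device used repeatedly elsewhere: the uniform $L^p(\Omega)$-continuity of $(s,y)\mapsto M_n(t,x,s,y)$ on the compact set $[0,t]\times\mathbb{B}_{T+1}(x)$ lets one approximate the double integral by finite sums of the form $\sum_{i,j}M_n(t,x,s_i^m,y_j^m)c_{i,j}$, each of which is manifestly $\mathscr{F}_t$-measurable, and these converge to $\Phi$ in $L^p(\Omega)$; $\mathscr{F}_t$-measurability then follows by completeness. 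You instead argue that $\E[\Phi\mid\mathscr{F}_t]=\Phi$ via a conditional Fubini step, using that each slice $\widetilde{M_n}(t,x,r,z,\cdot)$ is already $\mathscr{F}_t$-measurable. This works, but it relies on the conditional Fubini theorem (the existence of a jointly measurable version of the conditional expectations and the validity of the interchange), which is a nontrivial result in its own right and which you invoke only under the heading ``legitimate by the integrability noted above''; this is precisely the point you flag as requiring care, and you would need to either cite or reproduce a conditional Fubini statement to make it airtight. The paper's route is more elementary and stays entirely within the toolkit already established in Sections 4--5. Your treatment of $L^p(\Omega)$-continuity --- first upgrading to continuity of $(t,x)\mapsto\widetilde{M_n}(t,x,\cdot,\star)$ in $L^p(\Omega;\mathcal{H}_T)$ and then composing with the bounded linear functional $\langle\,\cdot\,,\varphi\rangle_{\mathcal{H}_T}$ --- is a cleaner organization of essentially the same dominated-convergence computation the paper performs directly on the integral representation, and buys you a reusable statement about the $\mathcal{H}_T$-valued process.
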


\begin{proof}
Using Propositions \ref{Prop L^2(Omega H_T)} and \ref{Prop M_n moment bound} and (ii) of Proposition \ref{Prop M_(n+1) property}, we obtain for each $(t,x) \in [0,T] \times \R^d$, 
\begin{equation}
\label{M_n varphi}
    \langle \widetilde{M_n}(t,x,\cdot, \star), \varphi \rangle_{\mathcal{H}_T}
    = \int_0^t\int_{\R^{2d}}\widetilde{M_n}(t,x,s,y)\ind_{\mathbb{B}_{T+1}(x)}(y)\varphi(s,z)\gamma(y-z)dydzds \quad \text{a.s.}.
\end{equation}
Then a simple computation using the dominated convergence theorem yields the $L^p(\Omega)$-continuity.
Moreover, since $(s,y) \mapsto M_n(t,x,s,y)$ is uniformly continuous on the compact set $[0,t] \times \mathbb{B}_{T+1}(x)$, we can take the partitions $\{0=s_0^m < s_1^m< \cdots <s_{k_m}^m = t\}$ of $[0,t]$ and $(U_j^m)_{0 \leq j \leq l_m}$ of $\mathbb{B}_{T+1}(x)$ as before and can approximate \eqref{M_n varphi} in $L^p(\Omega)$ by the finite sum of $\mathscr{F}_t$-measurable random variables.
Consequently, \eqref{M_n varphi} is $\mathscr{F}_t$-measurable, and the proposition follows.
\end{proof}

\begin{Prop}
\label{Prop Du_n M_n version}
Let $n\geq 1$ and $p \in [1,\infty)$. 
For any $(t,x) \in [0,T] \times \R^d$, we have
\begin{equation}
\label{M_n Du_n}
    Du_n(t,x) = \widetilde{M_n}(t,x,\cdot,\star) \quad \text{in $L^p(\Omega;\mathcal{H}_T)$}.
\end{equation}
\end{Prop}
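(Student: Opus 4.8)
The plan is to establish \eqref{M_n Du_n} by induction on $n$, exploiting that the recursion \eqref{M_(n+1)} for $\widetilde{M_{n+1}}$ has exactly the same shape as the recursion \eqref{Du_(n)(t,x)} for $Du_{n+1}$; it suffices to treat $p\in[2,\infty)$, the case $p\in[1,2)$ being an immediate consequence. For $n=1$ there is nothing to do: $Du_1(t,x)=G_1^{(t,x)}(\cdot,\star)\sigma(1)$ is deterministic and $\widetilde{M_1}(t,x,\cdot,\star)$ is a modification of the same deterministic element of $\mathcal{H}_T$, so the two agree in $L^p(\Omega;\mathcal{H}_T)$. Assume now \eqref{M_n Du_n} holds for $n$ and every $p$; we prove it for $n+1$. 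Writing $V_{n+1}^{(t,x)}(s,y)=G_{n+1}^{(t,x)}(s,y)\sigma(\widetilde{u_n}(s,y))$, Proposition \ref{Prop u_n diff'bility}(ii) gives $Du_{n+1}(t,x)=V_{n+1}^{(t,x)}(\cdot,\star)+\Xi_{n+1}(t,x)$ with $\Xi_{n+1}(t,x)$ as in \eqref{conv 2}, while \eqref{M_(n+1)} together with \eqref{N_(n+1)} gives $\widetilde{M_{n+1}}(t,x,\cdot,\star)=V_{n+1}^{(t,x)}(\cdot,\star)+N_{n+1}(t,x,\cdot,\star)$. By Propositions \ref{Prop L^2(Omega H_T)}, \ref{Prop u_n diff'bility}, \ref{Prop M_(n+1) property} and \ref{Prop M_n moment bound}, all three of $V_{n+1}^{(t,x)}(\cdot,\star)$, $\Xi_{n+1}(t,x)$, $N_{n+1}(t,x,\cdot,\star)$ lie in $L^p(\Omega;\mathcal{H}_T)\subset L^2(\Omega;\mathcal{H}_T)$, so it remains to identify the two noise terms, i.e. to show $N_{n+1}(t,x,\cdot,\star)=\Xi_{n+1}(t,x)$. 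Since $(e_k)$ is a complete orthonormal system of $\mathcal{H}_T$, it is enough to verify $\langle N_{n+1}(t,x,\cdot,\star),e_k\rangle_{\mathcal{H}_T}=\langle\Xi_{n+1}(t,x),e_k\rangle_{\mathcal{H}_T}$ almost surely, for each $k$.

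The key step is a stochastic Fubini computation. Fix $k$ and approximate $e_k$ in $\mathcal{H}_T$ by functions $\varphi\in C_{\mathrm{c}}^{\infty}([0,T]\times\R^d)$, which is possible by Lemma \ref{Lem dense subspace in H_T}. Inserting the definition \eqref{N_(n+1)} of $N_{n+1}(t,x,s,y)$ into the formula for $\langle\cdot,\cdot\rangle_{\mathcal{H}_T}$ gives
\[
\langle N_{n+1}(t,x,\cdot,\star),\varphi\rangle_{\mathcal{H}_T}
=\int_0^T\int_{\R^{2d}}\left(\int_0^t\int_{\R^d}G_{n+1}^{(t,x)}(r,w)\sigma'(\widetilde{u_n}(r,w))\widetilde{M_n}(r,w,s,y)\,W(dr,dw)\right)\varphi(s,z)\gamma(y-z)\,dy\,dz\,ds .
\]
The pointwise moment bound of Proposition \ref{Prop M_n moment bound}, the compact support of $G_{n+1}$ from Lemma \ref{Lem property of G_k}, and the local integrability of $\gamma$ ensure that the integrability hypothesis of the stochastic Fubini theorem (\textit{cf.} \cite[Chapter 2]{Walsh}) is satisfied, so the deterministic $ds\,dy\,dz$-integral may be interchanged with the Walsh integral. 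After the interchange the inner Lebesgue integral is precisely $\langle\widetilde{M_n}(r,w,\cdot,\star),\varphi\rangle_{\mathcal{H}_T}$, which by Lemma \ref{Lem Mn varphi modi} admits a predictable modification and hence is an admissible Walsh integrand; this yields
\[
\langle N_{n+1}(t,x,\cdot,\star),\varphi\rangle_{\mathcal{H}_T}
=\int_0^t\int_{\R^d}G_{n+1}^{(t,x)}(r,w)\sigma'(\widetilde{u_n}(r,w))\,\langle\widetilde{M_n}(r,w,\cdot,\star),\varphi\rangle_{\mathcal{H}_T}\,W(dr,dw)\quad\text{a.s.}
\]
Letting $\varphi\to e_k$ in $\mathcal{H}_T$ and passing to the limit — on the left by the Cauchy--Schwarz inequality in $\mathcal{H}_T$, on the right by the Burkholder--Davis--Gundy inequality combined with \eqref{G_nG_ngamma uniform estimate} and Proposition \ref{Prop M_n moment bound} — gives
\[
\langle N_{n+1}(t,x,\cdot,\star),e_k\rangle_{\mathcal{H}_T}
=\int_0^t\int_{\R^d}G_{n+1}^{(t,x)}(r,w)\sigma'(\widetilde{u_n}(r,w))\,\langle\widetilde{M_n}(r,w,\cdot,\star),e_k\rangle_{\mathcal{H}_T}\,W(dr,dw)\quad\text{a.s.}
\]

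To conclude I would invoke the induction hypothesis: for every $(r,w)$ one has $\widetilde{M_n}(r,w,\cdot,\star)=Du_n(r,w)$ a.s. in $\mathcal{H}_T$, hence $\langle\widetilde{M_n}(r,w,\cdot,\star),e_k\rangle_{\mathcal{H}_T}=\langle Du_n(r,w),e_k\rangle_{\mathcal{H}_T}$ a.s. at every point $(r,w)$, so their predictable modifications coincide $dr\,dw\,dP$-almost everywhere and define the same Walsh integral. By the definition of $\widetilde{D^{e_k}u_n}$ in Proposition \ref{Prop u_n diff'bility}(i) and of $\Xi_{n+1}(t,x)$ in \eqref{conv 2}, the right-hand side above equals $\langle\Xi_{n+1}(t,x),e_k\rangle_{\mathcal{H}_T}$. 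Thus $\langle N_{n+1}(t,x,\cdot,\star),e_k\rangle_{\mathcal{H}_T}=\langle\Xi_{n+1}(t,x),e_k\rangle_{\mathcal{H}_T}$ a.s. for each $k$; intersecting these full-probability events over $k\in\mathbb{N}$ yields $N_{n+1}(t,x,\cdot,\star)=\Xi_{n+1}(t,x)$ a.s. in $\mathcal{H}_T$, hence in $L^p(\Omega;\mathcal{H}_T)$, and adding back $V_{n+1}^{(t,x)}(\cdot,\star)$ completes the induction. The main obstacle is the stochastic Fubini interchange: one must check that its hypotheses hold uniformly, which is exactly where the pointwise moment bound of Proposition \ref{Prop M_n moment bound} and the compact support of $G_{n+1}$ are essential, and one must be careful to assemble the countably many almost-sure identities (over $k$, and over rational points when matching predictable modifications) on a single full-probability event.
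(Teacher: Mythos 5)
Your proof is correct and reaches the paper's conclusion by a route that is structurally parallel but technically different at the key step. Both arguments reduce to showing $\langle \widetilde{M_{n+1}}(t,x,\cdot,\star), e_k \rangle_{\mathcal{H}_T} = \langle Du_{n+1}(t,x), e_k \rangle_{\mathcal{H}_T}$ a.s.\ for each $k$, split off the deterministic part $V_{n+1}^{(t,x)}$, and then match the stochastic parts $\widetilde{\mathcal{N}_{n+1}}$ and $\Xi_{n+1}$ via the recursion and the induction hypothesis. The crucial step in both cases is commuting the $\gamma$-pairing $\langle\cdot,\varphi\rangle_{\mathcal{H}_T}$ past the Walsh integral defining $N_{n+1}(t,x,s,y)$. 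The paper does this by hand: it discretizes in $(s,y)$ using the uniform $L^2$-continuity of $M_{n+1}$, $V_{n+1}^{(t,x)}$, and $M_n$ (equations \eqref{5.6.1}--\eqref{5.6.3}), rewrites the resulting finite sums through the recursion \eqref{M_(n+1)}, and passes to the $m\to\infty$ limit on both sides. You instead invoke a stochastic Fubini theorem from Walsh's notes directly, which is shorter and more conceptual but introduces an external dependence. What this costs you is a verification the paper's discretization sidesteps: to apply stochastic Fubini one needs the integrand $(r,w,s,y,\omega)\mapsto G_{n+1}^{(t,x)}(r,w)\sigma'(\widetilde{u_n}(r,w))\widetilde{M_n}(r,w,s,y)$ to be jointly $\mathcal{P}\times\mathcal{B}([0,T]\times\R^d)$-measurable, not merely measurable in all variables and predictable in $(r,w)$ for each fixed $(s,y)$; Proposition \ref{Prop M_(n+1) property} only asserts the latter two properties, so your argument silently needs (or should construct) a jointly predictable-measurable version of $M_n$. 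This is standard and can be patched, but it is exactly the technical nuisance the paper's step-function approximation is designed to avoid; the discretized process $\widetilde{M_{n+1,m}}$ is a finite sum of products, for which the commutation is trivial and measurability is automatic. A minor simplification available to you: since Lemma \ref{Lem dense subspace in H_T} lets one choose the CONS $(e_k)\subset C_{\mathrm{c}}^{\infty}([0,T]\times\R^d)$ at the outset (as the paper does), the final approximation step $\varphi\to e_k$ is unnecessary.
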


\begin{proof}
It suffices to show that \eqref{M_n Du_n} holds for $p=2$ because we know from Proposition \ref{Prop u_n diff'bility} that $Du_n(t,x) \in L^p(\Omega;\mathcal{H}_T)$ for any $p \in [1, \infty)$.
When $n =1$, it is clear from \eqref{Du_(n)(t,x)} and \eqref{M_(n+1)} that
\begin{equation*}
    Du_1(t,x) = G_1^{(t,x)}(\cdot,\star)\sigma(1) = M_1(t,x,\cdot,\star). 
\end{equation*}
Assuming by induction that \eqref{M_n Du_n} holds for $n$, we will now check that it holds for $n+1$.
To do this, it suffices to check that for any $(t,x) \in [0,T] \times \R^d$,
\begin{equation*}
    \langle Du_{n+1}(t,x), e_k \rangle_{\mathcal{H}_T} = \langle \widetilde{M_{n+1}}(t,x,\cdot,\star), e_k \rangle_{\mathcal{H}_T}, \quad \text{a.s.}
\end{equation*}
for every $k \geq 1$.
Here $(e_k)_{k=1}^{\infty}$ is a complete orthonormal system of $\mathcal{H}_T$, and we can assume by Lemma \ref{Lem dense subspace in H_T} that $(e_k)_{k=1}^{\infty} \subset C_{\mathrm{c}}^{\infty}([0,T] \times \R^d)$.
Below we use the notations $V_{n+1}^{(t,x)}$ and $V_{n+1,m}^{(t,x)}$ used in the proof of Proposition \ref{Prop u_n diff'bility}.

Because of the uniform $L^2(\Omega)$-continuity of $M_{n+1}(t,x,\cdot,\star)$ and $V_{n+1}^{(t,x)}$ on $[0,T] \times \mathbb{B}_{T+1}(x)$ and $M_n$ on $([0,T] \times \mathbb{B}_{T+1}(x))^2$, there exists $\delta_m > 0$ such that
\begin{align}
    \lVert M_{n+1}(t,x,s,y) - M_{n+1}(t,x,s',y') \rVert_2 &< 2^{-m}, \label{5.6.1}\\
    \lVert V_{n+1}^{(t,x)}(s,y) - V_{n+1}^{(t,x)}(s',y') \rVert_2 &< 2^{-m}, \quad \text{and} \label{5.6.2}\\
    \sup_{(r,z) \in [0,T] \times \mathbb{B}_{T+1}(x)}\lVert M_{n}(r,z,s,y) - M_{n}(r,z,s',y') \rVert_2 &< 2^{-m}, \label{5.6.3}
\end{align}
for any $(s,y),(s',y') \in [0,T] \times \mathbb{B}_{T+1}(x)$ with $|(s,y) - (s',y')| < 2^{-m}$.
We take the partitions $\{0=s_0^m < s_1^m< \cdots <s_{k_m}^m = t\}$ of $[0,T]$ and $(U_j^m)_{0 \leq j \leq l_m}$ of $\mathbb{B}_{T+1}(x)$ as in the proof of Proposition \ref{Prop u_n diff'bility} and define
\begin{equation*}
    \widetilde{M_{n+1,m}}(r,z,s,y) = \sum_{i=0}^{k_m-1}\sum_{j=0}^{l_m}\widetilde{M_{n+1}}(r,z,s_i^m,y_j^m)\ind_{(s_i^m,s_{i+1}^m]}(s)\ind_{U_j^m}(y),
\end{equation*}
where $y_j^m$ is an arbitrary point in $U_j^m$.
From \eqref{5.6.1}, it is straightforward to check that
\begin{equation}
\label{Mnm ek}
    \int_0^T\int_{\R^{2d}}\widetilde{M_{n+1,m}}(t,x,s,y)e_k(s,z)\gamma(y-z)dydzds \xrightarrow[m \to \infty]{} \langle \widetilde{M_{n+1}}(t,x,\cdot,\star), e_k \rangle_{\mathcal{H}_T} \quad \text{in $L^2(\Omega)$}.
\end{equation}
We next show that the left-hand side of \eqref{Mnm ek} converges in $L^2(\Omega)$ to 
\begin{equation}
    \langle V_{n+1}^{(t,x)}(\cdot,\star), e_k \rangle_{\mathcal{H}_T} + \int_0^t\int_{\R^d}G_{n+1}^{(t,x)}(r,z)\sigma'(\widetilde{u_n}(r,z))\widetilde{M^{e_k}_n}(r,z)W(dr,dz), \label{5.6.0}
\end{equation}
where $\widetilde{M^{e_k}_n}(r,z)$ is a predictable modification of $\langle \widetilde{M_n}(r,z,\cdot,\star), e_k \rangle_{\mathcal{H}_T}$ that exists by Lemma \ref{Lem Mn varphi modi}.
Since $\widetilde{M_{n+1}}$ is a measurable modification of $M_{n+1}$, the left-hand side of \eqref{Mnm ek} is equal to 
\begin{align}
    &\int_0^T\int_{\R^{2d}}V_{n+1,m}^{(t,x)}(s,y)e_k(s,z)\gamma(y-z)dydzds \nonumber\\
    &+ \int_0^t\int_{\R^d}G_{n+1}^{(t,x)}(r,z)\sigma'(\widetilde{u_n}(r,z))\widetilde{M^{e_k}_{n,m}}(r,z)W(dr,dz), \label{5.6.4}
\end{align}
where 
\begin{equation*}
    \widetilde{M^{e_k}_{n,m}}(r,z) \coloneqq \sum_{i=0}^{k_m-1}\sum_{j=0}^{l_m}\widetilde{M_n}^{(s_i^m,y_j^m)}(r,z)\int_0^T\int_{\R^{2d}}\ind_{(s_i^m,s_{i+1}^m]}(s)\ind_{U_j^m}(y)e_k(s,w)\gamma(y-w)dydwds.
\end{equation*}
From \eqref{5.6.2}, it is again straightforward to show that the first term in \eqref{5.6.4} converges in $L^2(\Omega)$ to $\langle V_{n+1}^{(t,x)}(\cdot,\star), e_k \rangle_{\mathcal{H}_T}$.
For the second term, we have
\begin{align*}
    &\left\lVert \int_0^t\int_{\R^d}G_{n+1}^{(t,x)}(r,z)\sigma'(\widetilde{u_n}(r,z))(\widetilde{M^{e_k}_{n,m}}(r,z) - \widetilde{M^{e_k}_{n}}(r,z))W(dr,dz) \right\rVert_2^2\\
    &\lesssim \int_0^t\int_{\mathbb{B}^2_{T+1}(x)}G_{n+1}^{(t,x)}(r,z)G_{n+1}^{(t,x)}(r,z')\gamma(z-z')\\
    &\qquad \qquad \qquad \times \lVert \widetilde{M^{e_k}_{n,m}}(r,z) - \widetilde{M^{e_k}_{n}}(r,z) \rVert_2\lVert \widetilde{M^{e_k}_{n,m}}(r, z') - \widetilde{M^{e_k}_{n}}(r, z') \rVert_2dzdz'dr\\
    &\lesssim \sup_{(r,z) \in [0,T] \times \mathbb{B}_{T+1}(x)}\lVert \widetilde{M^{e_k}_{n,m}}(r,z) - \widetilde{M^{e_k}_{n}}(r,z) \rVert_2^2.
\end{align*}
We then apply \eqref{5.6.3} to obtain that
\begin{align*}
    &\sup_{(r,z) \in [0,T] \times \mathbb{B}_{T+1}(x)}\lVert \widetilde{M^{e_k}_{n,m}}(r,z) - \widetilde{M^{e_k}_{n}}(r,z) \rVert_2\\
    &\leq \sup_{(r,z) \in [0,T] \times \mathbb{B}_{T+1}(x)}\sum_{i=0}^{k_m-1}\sum_{j=0}^{l_m}\int_{s_i^m}^{s_{i+1}^m}\int_{\R^{d}}\int_{U_j^m} \lVert \widetilde{M_n}^{(s_i^m,y_j^m)}(r,z) - \widetilde{M_n}(r,z,s,y)
    \lVert_2 |e_k(s,w)|\gamma(y-w)dydwds\\
    &\leq 2^{-m}\int_0^T\int_{\R^{2d}}\ind_{\mathbb{B}_{T+1}(x)}(y)|e_{k}(s,w)|\gamma(y-w)dydwds \xrightarrow[m\to \infty]{} 0,
\end{align*}
which shows that the second term in \eqref{5.6.4} converges in $L^2(\Omega)$ to the second term in \eqref{5.6.0}.

On the other hand, Proposition \ref{Prop u_n diff'bility} shows that
\begin{equation*}
    \langle Du_{n+1}(t,x), e_k \rangle_{\mathcal{H}_T} = \langle V_{n+1}^{(t,x)}(\cdot,\star), e_k \rangle_{\mathcal{H}_T} + \int_0^t\int_{\R^d}G_{n+1}^{(t,x)}(r,z)\sigma'(\widetilde{u_n}(r,z))\widetilde{D^{e_k}u_n}(r,z)W(dr,dz).
\end{equation*}
Since 
\begin{equation*}
    \widetilde{D^{e_k}u_n}(r,z) = \langle Du_{n}(r,z), e_k \rangle_{\mathcal{H}_T} = \langle \widetilde{M_n}(r,z,\cdot,\star), e_k \rangle_{\mathcal{H}_T} = \widetilde{M_n^{e_k}}(r,z) \quad \text{a.s.}
\end{equation*}
by induction assumption, \eqref{M_n Du_n} holds for $n+1$ and the proposition follows.
\end{proof}

\subsection{Covariance estimates}
\label{subsection Covariance estimates}
Finally, we estimate the covariance functions of $\sigma(u_n(t,x))$. 
Since $\sigma(u_n(t,x)) \in \mathbb{D}^{1,4}$ by Proposition \ref{Prop u_n diff'bility} and the chain rule, the Clark-Ocone formula (Proposition \ref{Prop ClarkOcone}) shows that
\begin{equation}
    \sigma(u_n(t,x)) = \E[\sigma(u_n(t,x))] + (\pi_{\mathcal{P}_0}[D\sigma(u_n(t,x))])\cdot W, \quad \text{a.s.} \label{su1}
\end{equation}
and 
\begin{align}
    &\sigma(u_n(t_1,x_1))\sigma(u_n(t_2,x_2)) \nonumber \\
    &= \E[\sigma(u_n(t_1,x_1))\sigma(u_n(t_2,x_2))]+ (\pi_{\mathcal{P}_0}[D(\sigma(u_n(t_1,x_1))\sigma(u_n(t_2,x_2)))])\cdot W, \quad \text{a.s.}. \label{su2}
\end{align}
Let us define, for any $(s,y) \in [0,T] \times \R^d$,  
\begin{align*}
    R^{(t,x)}_n(s,y) &= \E[\sigma'(u_n(t,x))M_n(t,x,s,y)|\mathscr{F}_s], \\
    R^{(t_1,t_2,x_1,x_2)}_n(s,y) &= \E[\sigma'(u_n(t_1,x_1))M_n(t_1,x_1,s,y)\sigma(u_n(t_2,x_2))|\mathscr{F}_s] \\
    &\quad + \E[\sigma(u_n(t_1,x_1))\sigma'(u_n(t_2,x_2))M_n(t_2,x_2,s,y)|\mathscr{F}_s].
\end{align*}
It is clear that the processes $\{R^{(t,x)}_n(s,y) \mid (s,y) \in [0,T] \times \R^d \}$ and $\{R^{(t_1,t_2,x_1,x_2)}_n(s,y) \mid (s,y) \in [0,T] \times \R^d \}$ are $(\mathscr{F}_s)$-adapted and $L^2(\Omega)$-continuous, and hence have predictable modifications $\widetilde{R_n}^{(t,x)}$ and $\widetilde{R_n}^{(t_1,t_2,x_1,x_2)}$. 
Moreover, by Proposition \ref{Prop M_n moment bound}, we have $\widetilde{R_n}^{(t,x)}, \widetilde{R_n}^{(t_1,t_2,x_1,x_2)} \in \mathcal{P}_+ \subset \mathcal{P}_0$, and so $\widetilde{R_n}^{(t,x)}, \widetilde{R_n}^{(t_1,t_2,x_1,x_2)} \in L^2(\Omega;\mathcal{H}_T)$.

\begin{Lem}
\label{Lem proj version}
For $i = 1,2$, let $t_i \in [0,T]$ and $x_i \in \R^d$. 
It holds that
\begin{align}
    \pi_{\mathcal{P}_0}[D\sigma(u_n(t_1,x_1))] &= \widetilde{R_n}^{(t_1,x_1)}, \label{5.7.a} \\
    \pi_{\mathcal{P}_0}[D(\sigma(u_n(t_1,x_1))\sigma(u_n(t_2,x_2)))] &= \widetilde{R_n}^{(t_1,t_2,x_1,x_2)}. \label{5.7.b}
\end{align}
in $L^2(\Omega;\mathcal{H}_T)$.
\end{Lem}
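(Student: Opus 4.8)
The plan is to verify \eqref{5.7.a} and \eqref{5.7.b} directly from the defining property of the orthogonal projection. Since it is recorded just before the lemma that $\widetilde{R_n}^{(t_1,x_1)}$ and $\widetilde{R_n}^{(t_1,t_2,x_1,x_2)}$ belong to $\mathcal{P}_+\subset\mathcal{P}_0\subset L^2(\Omega;\mathcal{H}_T)$, it suffices to show that the differences $D\sigma(u_n(t_1,x_1))-\widetilde{R_n}^{(t_1,x_1)}$ and $D(\sigma(u_n(t_1,x_1))\sigma(u_n(t_2,x_2)))-\widetilde{R_n}^{(t_1,t_2,x_1,x_2)}$ are orthogonal to $\mathcal{P}_0$ in $L^2(\Omega;\mathcal{H}_T)$. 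As $\mathfrak{S}$ is dense in $\mathcal{P}_0$, it is enough to test orthogonality against elementary processes $X(s,y,\omega)=F(\omega)\ind_{(a,b]}(s)\ind_A(y)$ of the form \eqref{elementary}, with $F$ bounded and $\mathscr{F}_a$-measurable.

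Next I would rewrite the two Malliavin derivatives in terms of $\widetilde{M_n}$. By the chain rule and Proposition \ref{Prop Du_n M_n version},
\begin{equation*}
    D\sigma(u_n(t,x)) = \sigma'(u_n(t,x))\,\widetilde{M_n}(t,x,\cdot,\star) \quad \text{in } L^2(\Omega;\mathcal{H}_T),
\end{equation*}
while the Leibniz rule (applicable since $\sigma(u_n(t_i,x_i))\in\mathbb{D}^{1,q}$ for all $q$ by Proposition \ref{Prop u_n diff'bility} and the chain rule) gives
\begin{equation*}
    D(\sigma(u_n(t_1,x_1))\sigma(u_n(t_2,x_2))) = \sigma(u_n(t_2,x_2))\sigma'(u_n(t_1,x_1))\widetilde{M_n}(t_1,x_1,\cdot,\star) + \sigma(u_n(t_1,x_1))\sigma'(u_n(t_2,x_2))\widetilde{M_n}(t_2,x_2,\cdot,\star)
\end{equation*}
in $L^2(\Omega;\mathcal{H}_T)$. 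Each of the processes appearing here satisfies the hypotheses of Proposition \ref{Prop L^2(Omega H_T)}: $\sigma'$ is bounded by $\sigma_{\mathrm{Lip}}$, the factors $\sigma(u_n(t_i,x_i))$ lie in every $L^q(\Omega)$ by Proposition \ref{Prop u_n property}, and $\widetilde{M_n}(t,x,\cdot,\star)$ is controlled by the pointwise bound of Proposition \ref{Prop M_n moment bound} together with $\int_{\mathbb{B}_{T+1}^2}\gamma(y-z)\,dy\,dz<\infty$.

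Then, for an elementary $X$ as above, Proposition \ref{Prop L^2(Omega H_T)} yields
\begin{equation*}
    \langle \sigma'(u_n(t,x))\widetilde{M_n}(t,x,\cdot,\star),\,X\rangle_{L^2(\Omega;\mathcal{H}_T)} = \E\left[\int_a^b\int_{\R^d}\int_A \sigma'(u_n(t,x))\widetilde{M_n}(t,x,r,y)\,F\,\gamma(y-z)\,dz\,dy\,dr\right].
\end{equation*}
Because $F$ is $\mathscr{F}_a$-measurable, hence $\mathscr{F}_r$-measurable for $r\in(a,b]$, I would condition on $\mathscr{F}_r$ inside the expectation and use that, for a.e.\ $(r,y)$, $\widetilde{M_n}(t,x,r,y)=M_n(t,x,r,y)$ a.s.\ and $\widetilde{R_n}^{(t,x)}(r,y)=\E[\sigma'(u_n(t,x))M_n(t,x,r,y)\mid\mathscr{F}_r]$ a.s.\ (these a.e.\ statements following from the pointwise a.s.\ modification identities via Fubini). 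This replaces $\sigma'(u_n(t,x))\widetilde{M_n}(t,x,r,y)$ by $\widetilde{R_n}^{(t,x)}(r,y)$ inside the integral, and hence identifies the inner product with $\langle \widetilde{R_n}^{(t,x)},X\rangle_{L^2(\Omega;\mathcal{H}_T)}$, proving \eqref{5.7.a}. Applying the identical computation to each of the two summands of $D(\sigma(u_n(t_1,x_1))\sigma(u_n(t_2,x_2)))$ and invoking the definition of $\widetilde{R_n}^{(t_1,t_2,x_1,x_2)}$ gives \eqref{5.7.b}.

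The only point requiring care is the bookkeeping around the interchange of conditional expectation with the time and space integrations, and around the several modification identities, which hold only for fixed arguments almost surely; Fubini's theorem together with the uniform moment bounds of Propositions \ref{Prop u_n property} and \ref{Prop M_n moment bound} legitimizes all of these manipulations, and everything else is routine.
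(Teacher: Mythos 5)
Your proposal is correct and follows essentially the same route as the paper: reduce to testing against simple processes, rewrite the Malliavin derivative via the chain/Leibniz rule and Proposition \ref{Prop Du_n M_n version} as $\sigma'(u_n)\widetilde{M_n}$ (and the analogous two-term expression), express the inner product as a concrete integral via Proposition \ref{Prop L^2(Omega H_T)}, and then condition on $\mathscr{F}_r$ to insert the defining conditional expectations of $\widetilde{R_n}^{(t,x)}$ and $\widetilde{R_n}^{(t_1,t_2,x_1,x_2)}$. The paper carries out the conditioning step in a single line and works with $X\in\mathfrak{S}$ rather than a single elementary process, but the argument is the same; your explicit mention of the Fubini bookkeeping for the pointwise modification identities is a reasonable addition, not a deviation.
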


\begin{proof}
Because 
\begin{align*}
    &\langle \pi_{\mathcal{P}_0}(D\sigma(u_n(t_1,x_1))), X \rangle_{L^2(\Omega;\mathcal{H}_T)} = \langle D\sigma(u_n(t_1,x_1)), \pi_{\mathcal{P}_0}X \rangle_{L^2(\Omega;\mathcal{H}_T)} \quad \text{and}\\
    &\langle \widetilde{R_n}^{(t_1,x_1)}, X \rangle_{L^2(\Omega;\mathcal{H}_T)}=\langle \pi_{\mathcal{P}_0}\widetilde{R_n}^{(t_1,x_1)}, X \rangle_{L^2(\Omega;\mathcal{H}_T)} = \langle \widetilde{R_n}^{(t_1,x_1)}, \pi_{\mathcal{P}_0}X \rangle_{L^2(\Omega;\mathcal{H}_T)}
\end{align*}
for any $X \in L^2(\Omega;\mathcal{H}_T)$, it suffices to show that
\begin{equation}
\label{5.7.1}
    \langle \pi_{\mathcal{P}_0}(D\sigma(u_n(t_1,x_1))), X \rangle_{L^2(\Omega;\mathcal{H}_T)} = \langle \widetilde{R_n}^{(t_1,x_1)}, X \rangle_{L^2(\Omega;\mathcal{H}_T)}
\end{equation}
for any $X \in \mathcal{P}_0$ for \eqref{5.7.a}. 
Moreover, since $\mathcal{P}_0$ is the closure of $\mathfrak{S}$ in $L^2(\Omega;\mathcal{H}_T)$, we only need to check \eqref{5.7.1} for any $X \in \mathfrak{S}$.
(Recall that $\mathfrak{S}$ is defined in Section \ref{subsection Stochastic integrals} as the set of simple processes.)
If $X \in \mathfrak{S}$, then we apply Proposition \ref{Prop Du_n M_n version} to obtain that
\begin{align*}
    &\langle \pi_{\mathcal{P}_0}(D\sigma(u_n(t_1,x_1))), X \rangle_{L^2(\Omega;\mathcal{H}_T)} 
    = \langle D\sigma(u_n(t_1,x_1)), X \rangle_{L^2(\Omega;\mathcal{H}_T)}\\
    &= \langle \sigma'(u_n(t_1,x_1))\widetilde{M_n}(t_1,x_1,\cdot,\star), X \rangle_{L^2(\Omega;\mathcal{H}_T)}\\
    &= \E\left[ \sigma'(u_n(t_1,x_1))\int_0^T\int_{\R^{2d}} \widetilde{M_n}(t_1,x_1,s,y) X(s,z)\gamma(y-z)dydzds\right]\\
    &= \int_0^T\int_{\R^{2d}} \E[\widetilde{R_n}^{(t_1,x_1)}(s,y)X(s,z)]\gamma(y-z)dydzds\\
    &= \langle \widetilde{R_n}^{(t_1,x_1)}, X \rangle_{L^2(\Omega;\mathcal{H}_T)},
\end{align*}
and \eqref{5.7.a} is proved.
Since 
\begin{align*}
    &D(\sigma(u_n(t_1,x_1))\sigma(u_n(t_2,x_2)))\\ 
    &= \sigma'(u_n(t_1,x_1))Du_n(t_1,x_1)\sigma(u_n(t_2,x_2)) + \sigma(u_n(t_1,x_1))\sigma'(u_n(t_2,x_2))Du_n(t_2,x_2)\\
    &= \sigma'(u_n(t_1,x_1))\widetilde{M_n}(t_1,x_1,\cdot,\star)\sigma(u_n(t_2,x_2)) + \sigma(u_n(t_1,x_1))\sigma'(u_n(t_2,x_2))\widetilde{M_n}(t_2,x_2,\cdot,\star)
\end{align*}
by the chain rule and Proposition \ref{Prop Du_n M_n version}, \eqref{5.7.b} follows from the same argument as above.
\end{proof}

Combining Proposition \ref{Prop M_n moment bound} and Lemma \ref{Lem proj version} yields the following estimates.

\begin{Prop}
\label{Prop cov estimates}
For integers $1 \leq i \leq 4$, let $t_i \in [0,T]$ and $x_i \in \R^d$.
It holds that
\begin{align}
\label{cov es 1}
    &|\mathrm{Cov}(\sigma(u_n(t_1,x_1)), \sigma(u_n(t_2,x_2)))| \nonumber\\
    &\lesssim_{n,T,\sigma_{\mathrm{Lip}}}\int^{t_1 \land t_2}_0\int_{\R^{2d}}\ind_{\mathbb{B}_{t_1 - s + 1}}(x_1 -y)\ind_{\mathbb{B}_{t_2 - s + 1}}(x_2 -z)\gamma(y-z)dydzds
\end{align}
and 
\begin{align}
\label{cov es 2}
    &|\mathrm{Cov}(\sigma(u_n(t_1,x_1))\sigma(u_n(t_2,x_2)), \sigma(u_n(t_3,x_3)))\sigma(u_n(t_4,x_4)))| \nonumber\\
    &\lesssim_{n,T,\sigma_{\mathrm{Lip}}}\sum_{k=1}^2 \sum_{l=3}^4 \int_0^{t_k \land t_l}\int_{\R^{2d}}\ind_{\mathbb{B}_{t_k - s + 1}}(x_k -y)\ind_{\mathbb{B}_{t_l - s + 1}}(x_l -z)\gamma(y-z)dydzds.
\end{align}
\end{Prop}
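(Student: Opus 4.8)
The plan is to read off the two covariances as inner products in $L^2(\Omega;\mathcal{H}_T)$ using the Clark--Ocone formula and Lemma \ref{Lem proj version}, and then insert the pointwise moment bound of Proposition \ref{Prop M_n moment bound}. First, by \eqref{su1} and \eqref{5.7.a} we have $\sigma(u_n(t_i,x_i)) - \E[\sigma(u_n(t_i,x_i))] = \widetilde{R_n}^{(t_i,x_i)}\cdot W$; since $X \mapsto X\cdot W$ is a linear isometry from $\mathcal{P}_0$, identified with a closed subspace of $L^2(\Omega;\mathcal{H}_T)$, into $L^2(\Omega)$, polarization gives
\[
    \mathrm{Cov}(\sigma(u_n(t_1,x_1)),\sigma(u_n(t_2,x_2))) = \langle \widetilde{R_n}^{(t_1,x_1)},\ \widetilde{R_n}^{(t_2,x_2)}\rangle_{L^2(\Omega;\mathcal{H}_T)}.
\]
In the same way, \eqref{su2} and \eqref{5.7.b} yield
\[
    \mathrm{Cov}(\sigma(u_n(t_1,x_1))\sigma(u_n(t_2,x_2)),\ \sigma(u_n(t_3,x_3))\sigma(u_n(t_4,x_4))) = \langle \widetilde{R_n}^{(t_1,t_2,x_1,x_2)},\ \widetilde{R_n}^{(t_3,t_4,x_3,x_4)}\rangle_{L^2(\Omega;\mathcal{H}_T)}.
\]

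Next, since all the processes $\widetilde{R_n}^{(\cdots)}$ lie in $\mathcal{P}_+$ (established just before Lemma \ref{Lem proj version}), Proposition \ref{Prop L^2(Omega H_T)} lets me expand each inner product as $\E[\int_0^T\int_{\R^{2d}} \widetilde{R_n}^{(\cdots)}(s,y)\,\widetilde{R_n}^{(\cdots)}(s,z)\,\gamma(y-z)\,dydzds]$. Taking absolute values inside, using Fubini, and the pointwise Cauchy--Schwarz bound $\E[|XY|]\le \lVert X\rVert_2\lVert Y\rVert_2$, both covariances are dominated by $\int_0^T\int_{\R^{2d}} \lVert\widetilde{R_n}^{(\cdots)}(s,y)\rVert_2\,\lVert\widetilde{R_n}^{(\cdots)}(s,z)\rVert_2\,\gamma(y-z)\,dydzds$.

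Then I would bound the $L^2(\Omega)$-norms of the $\widetilde{R_n}$-processes. Since conditional expectation is an $L^2$-contraction and $|\sigma'|\le\sigma_{\mathrm{Lip}}$, we get $\lVert\widetilde{R_n}^{(t,x)}(s,y)\rVert_2 \le \sigma_{\mathrm{Lip}}\lVert M_n(t,x,s,y)\rVert_2$ and, by Hölder's inequality with $\tfrac12=\tfrac14+\tfrac14$, $\lVert\widetilde{R_n}^{(t_1,t_2,x_1,x_2)}(s,y)\rVert_2 \le \sigma_{\mathrm{Lip}}\big(\lVert M_n(t_1,x_1,s,y)\rVert_4\lVert\sigma(u_n(t_2,x_2))\rVert_4 + \lVert M_n(t_2,x_2,s,y)\rVert_4\lVert\sigma(u_n(t_1,x_1))\rVert_4\big)$. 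Proposition \ref{Prop M_n moment bound} (applied with $p=2$, respectively $p=4$) together with statement (ii) of Proposition \ref{Prop M_(n+1) property} gives $\lVert M_n(t,x,s,y)\rVert_p \le C_{n,p,T}\,\ind_{\{s<t\}}\ind_{\mathbb{B}_{t-s+1}}(x-y)$, while statement (ii) of Proposition \ref{Prop u_n property} and the Lipschitz property of $\sigma$ give $\lVert\sigma(u_n(t,x))\rVert_4\le \Sigma(n,4,T)<\infty$. Substituting these into the previous display and using $\ind_{\{s<t_k\}}\ind_{\{s<t_l\}}=\ind_{\{s<t_k\wedge t_l\}}$ to turn the time integrals into $\int_0^{t_k\wedge t_l}$ gives \eqref{cov es 1} and \eqref{cov es 2}, with constants depending only on $n$, $T$, and $\sigma_{\mathrm{Lip}}$.

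I do not expect a genuine obstacle here: the substantive work has already been carried out in Proposition \ref{Prop M_n moment bound} and Lemma \ref{Lem proj version}. The only points requiring care are checking that Proposition \ref{Prop L^2(Omega H_T)} and the polarized Dalang isometry actually apply (which is immediate from $\widetilde{R_n}^{(\cdots)}\in\mathcal{P}_+\subset\mathcal{P}_0$), and that the $L^4$-moment bound for $M_n$ used in the quadratic estimate is available, which holds because Proposition \ref{Prop M_n moment bound} is valid for every $p\ge 2$.
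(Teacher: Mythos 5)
Your proposal is correct and follows essentially the same route as the paper: both identify the covariances with $\mathcal{H}_T$-inner products of the Clark--Ocone integrands $\widetilde{R_n}^{(\cdots)}$ via the Dalang isometry and Lemma \ref{Lem proj version}, bound $\lVert \widetilde{R_n}^{(\cdots)}(s,y)\rVert_2$ by Jensen for conditional expectations, $|\sigma'|\le \sigma_{\mathrm{Lip}}$, and H\"older with exponent $4$ for the product term, and then apply Proposition \ref{Prop M_n moment bound} to get the indicator-function bounds in the integrand. Your invocation of polarization for the Dalang isometry and of Proposition \ref{Prop L^2(Omega H_T)} to expand the inner product are precisely the steps the paper carries out implicitly.
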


\begin{proof}
From \eqref{su1} and \eqref{5.7.a}, we have
\begin{align*}
    &|\mathrm{Cov}(\sigma(u_n(t_1,x_1)), \sigma(u_n(t_2,x_2)))|\\
    &= \left|\int_0^T\int_{\R^{2d}}\E[\widetilde{R_n}^{(t_1,x_1)}(s,y)\widetilde{R_n}^{(t_2,x_2)}(s,z)]\gamma(y-z)dydzds\right|\\
    &\leq \sigma_{\mathrm{Lip}}^2\int_0^{t_1\land t_2}\int_{\R^{2d}}\lVert M_n(t_1,x_1,s,y) \rVert_2\lVert M_n(t_2,x_2,s,z) \rVert_2\gamma(y-z)dydzds,
\end{align*}
wherein the last inequality, we used Jensen's inequality for conditional expectations.
Then \eqref{cov es 1} follows by applying Proposition \ref{Prop M_n moment bound}.
Similarly, we have
\begin{align*}
    &|\mathrm{Cov}(\sigma(u_n(t_1,x_1))\sigma(u_n(t_2,x_2)), \sigma(u_n(t_3,x_3)))\sigma(u_n(t_4,x_4)))|\\
    &\leq \sigma_{\mathrm{Lip}}^2 \sup_{(\tau,\zeta) \in [0,T] \times \R^d}\lVert \sigma(u_n(\tau,\zeta)) \rVert_4^2 \\
    &\quad \times \sum_{k=1}^2 \sum_{l=3}^4 \int_0^{t_k \land t_l}\int_{\R^{2d}}\lVert M_n(t_k,x_k,s,y) \rVert_4\lVert M_n(t_l,x_l,s,z) \rVert_4\gamma(y-z)dydzds,
\end{align*}
and hence \eqref{cov es 2} follows from Propositions \ref{Prop u_n property} and \ref{Prop M_n moment bound}.
\end{proof}

\section{Limit of the covariance functions}
\label{section Limit of the covariance functions}
In the remainder of the paper, we assume that $\sigma(1) \neq 0$, and focus on the particular case $\gamma(x) = |x|^{-\beta}, \ \  0 < \beta <2$.
In this section, we identify the limit of the covariance functions of slightly modified $F_R(t)$ and $F_{n,R}(t)$ to prove the central limit theorems.

Recall that in Section \ref{section Introduction}, $F_R(t)$ and $F_{n,R}(t)$ are defined by
\begin{equation*}
    F_R(t) = \int_{\mathbb{B}_R} (U(t,x) - 1)dx \quad \text{and} \quad F_{n,R}(t) = \int_{\mathbb{B}_R}(u_n(t,x) - 1)dx.
\end{equation*}
To be precise, $F_{n,R}(t)$ has to be defined using $\widetilde{u_n}(t,x)$ instead of $u_n(t,x)$.
Let $\psi_R$ be a smooth radial function on $\R^d$ satisfying
\begin{equation*}
    \ind_{\mathbb{B}_R}(x) \leq \psi_R(x) \leq \ind_{\mathbb{B}_{R+1}}(x).
\end{equation*}
For the technical reason that $G$ is a distribution in our case, it is more convenient to consider
\begin{equation*}
    \mathfrak{F}_R(t) \coloneqq \int_{\R^d} (U(t,x) - 1)\psi_R(x)dx \quad \text{and} \quad \mathfrak{F}_{n,R}(t) \coloneqq \int_{\R^d}(\widetilde{u_n}(t,x) - 1)\psi_R(x)dx.
\end{equation*}
For $R >0$, define
\begin{equation*}
    \varphi_{t,R}(s,x) = (G(t-s) \ast \psi_R)(x) \quad \text{and} \quad \varphi_{n,t,R}(s,x) = (G_n(t-s) \ast \psi_R)(x).
\end{equation*}
Since $G(t)$ is a rapidly decreasing distribution with $\supp G(t) \subset \mathbb{B}_t$, we have $\varphi_{t,R}(s, \cdot), \varphi_{n,t,R}(s,\cdot) \in C_0^{\infty}(\mathbb{R}^d)$ and 
\begin{equation*}
    \supp \varphi_{t,R}(s, \cdot) \subset \mathbb{B}_{t-s + R+1}, \quad \supp \varphi_{n,t,R}(s, \cdot) \subset \mathbb{B}_{t - s + 1/a_{n} + R + 1}.
\end{equation*}
A simple computation using Lemma \ref{Lem property of G_k} shows that
\begin{equation}
\label{varphi bound}
    |\varphi_{n,t,R}(s, x)| 
    \lesssim_{n,T}\ind_{\mathbb{B}_{R+T+2}}(x)
\end{equation}
Note that the functions $\varphi_{t,R}(s, \cdot)$ and $\varphi_{n,t,R}(s, \cdot)$ are generally not nonnegative when $d \geq 4$.
We observe that $\mathfrak{F}_R(t)$ and $\mathfrak{F}_{n,R}(t)$ can be expressed in terms of Walsh integrals.

\begin{Prop}
\label{Prop F rep Walsh}
Let $p \in [1,\infty)$.
For any $t \in [0,T]$, we have 
\begin{align}
    \mathfrak{F}_{n,R}(t) &= \int_0^t \int_{\R^d}\varphi_{n,t,R}(s,y)\sigma(\widetilde{u_{n-1}}(s,y))W(ds,dy) \quad \text{in $L^p(\Omega)$}, \label{F_n,R}\\
    \mathfrak{F}_R(t) &= \int_0^t\int_{\R^d}\varphi_{t,R}(s,y)\sigma(U(s,y))W(ds,dy) \quad \text{in $L^2(\Omega)$}. \label{F_R}
\end{align}
\end{Prop}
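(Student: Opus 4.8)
Both $\mathfrak{F}_{n,R}(t)$ and $\mathfrak{F}_R(t)$ are well defined as Bochner integrals: since $\psi_R$ is supported in $\mathbb{B}_{R+1}$ and $x\mapsto u_n(t,x)$, $x\mapsto U(t,x)$ are $L^p(\Omega)$- resp.\ $L^2(\Omega)$-continuous and uniformly bounded (Propositions \ref{Prop u_n property} and \ref{Prop U exi uni conti}), the integrands are continuous, compactly supported, Banach-space valued functions. The plan is to prove \eqref{F_n,R} first, where every object is a genuine function, and then deduce \eqref{F_R} by regularizing $G$ with the $G_n$'s. For \eqref{F_n,R} I would start from $\widetilde{u_n}(t,x)-1=\int_0^t\int_{\R^d}G_n^{(t,x)}(s,y)\sigma(\widetilde{u_{n-1}}(s,y))W(ds,dy)$, which holds a.s.\ for each $(t,x)$ by \eqref{u_(n+1)}, multiply by $\psi_R(x)$, and interchange $\int_{\R^d}\psi_R(x)\,dx$ with the Walsh integral. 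This interchange is legitimate because $G_n$ is bounded by $\Theta(n,T)$ with $\supp G_n(t-s,\cdot-y)$ in a fixed ball (Lemma \ref{Lem property of G_k}), $\sup_{s,y}\norm{\sigma(\widetilde{u_{n-1}}(s,y))}_p<\infty$ (Proposition \ref{Prop u_n property}), and $\psi_R$ is compactly supported; one may invoke a stochastic Fubini theorem for Walsh integrals, or, self-containedly, approximate $\int_{\R^d}\psi_R(x)(\widetilde{u_n}(t,x)-1)\,dx$ by Riemann sums $\sum_j\psi_R(x_j)\abs{Q_j}(\widetilde{u_n}(t,x_j)-1)$ (which converge in $L^p(\Omega)$ by the $L^p(\Omega)$-continuity of $u_n$), pull the finite sums inside the Walsh integral by linearity, and pass to the limit using the uniform continuity of $G_n$ (with the Burkholder-Davis-Gundy inequality supplying $L^p$-convergence for $p>2$). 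Since $G_n(t-s,\cdot)$ is radial, hence even, the resulting kernel is $\int_{\R^d}\psi_R(x)G_n(t-s,x-y)\,dx=(G_n(t-s)\ast\psi_R)(y)=\varphi_{n,t,R}(s,y)$, and the membership $\varphi_{n,t,R}(\cdot,\star)\sigma(\widetilde{u_{n-1}}(\cdot,\star))\in\mathcal{P}_+$ needed to make sense of the right-hand side of \eqref{F_n,R} follows from \eqref{varphi bound} and \eqref{G_nG_ngamma uniform estimate}.

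For \eqref{F_R} I would introduce, for $n\ge1$, the field $\widehat{U}_n(t,x)\coloneqq 1+\int_0^t\int_{\R^d}G_n(t-s,x-y)\sigma(U(s,y))W(ds,dy)$, which by Remark \ref{Rem 2.2}(1) equals $1+\int_0^t\int_{\R^d}G_n(t-s,x-y)W^{\sigma(U)}(ds,dy)$; these are exactly the Walsh integrals whose limit defines the Dalang integral $U(t,x)-1$ in \eqref{solution integral equation}. Arguing as for the term $\mathbf{I_1}$ in the proof of Proposition \ref{Prop u_n convergence to U} — passing to Fourier variables, using $\F G_n(t-s)=\F\Lambda_n\cdot\F G(t-s)$ with $\F\Lambda_n\to1$ and $\abs{\F\Lambda_n}\le1$, Lemma \ref{Lem property of FG}(ii), Lemma \ref{Lem muZ mu}, and dominated convergence — one obtains $\sup_{(t,x)\in[0,T]\times\R^d}\norm{U(t,x)-\widehat{U}_n(t,x)}_2\to0$. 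Because $\psi_R$ is compactly supported, this gives $\int_{\R^d}\psi_R(x)(\widehat{U}_n(t,x)-1)\,dx\to\mathfrak{F}_R(t)$ in $L^2(\Omega)$.

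On the other hand, $\widehat{U}_n$ is built from the genuine function $G_n$, so the stochastic Fubini (or Riemann-sum) argument of the first paragraph applies verbatim and yields $\int_{\R^d}\psi_R(x)(\widehat{U}_n(t,x)-1)\,dx=\int_0^t\int_{\R^d}\varphi_{n,t,R}(s,y)\sigma(U(s,y))W(ds,dy)=\int_0^t\int_{\R^d}\varphi_{n,t,R}(s,y)W^{\sigma(U)}(ds,dy)$. It remains to let $n\to\infty$ on the right-hand side. Since $\F\varphi_{n,t,R}(s,\cdot)=\F\Lambda_n\cdot\F G(t-s)\cdot\F\psi_R\to\F G(t-s)\cdot\F\psi_R=\F\varphi_{t,R}(s,\cdot)$ pointwise, dominated by $\abs{\F G(t-s)(\xi)}^2\abs{\F\psi_R(\xi)}^2$, and this dominating function is integrable against $\mu_s^{\sigma(U)}(d\xi)\,ds$ on $[0,t]\times\R^d$ by Lemma \ref{Lem property of FG}(ii), $\F\psi_R\in\mathcal{S}(\R^d)$, and Lemma \ref{Lem muZ mu}, dominated convergence gives $\norm{\varphi_{n,t,R}-\varphi_{t,R}}_{\overline{\mathcal{P}}^{\sigma(U)}}\to0$. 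As $\varphi_{n,t,R}\in\mathcal{E}_{0,d}$ and $\varphi_{t,R}$ is a bounded, compactly supported deterministic function (using Lemma \ref{Lem property of FG}(i) and $\F\psi_R\in L^1$ for the bound), so that $\varphi_{t,R}\in\mathcal{P}_{+,\sigma(U)}$ and $S\mapsto\int_0^t\int_{\R^d}S\,dW^{\sigma(U)}$ is continuous in $\norm{\cdot}_{\overline{\mathcal{P}}^{\sigma(U)}}$ on this class (cf.\ the discussion preceding Proposition \ref{Prop extended SI} and Remark \ref{Rem 2.2}(1)), we conclude $\int_0^t\int_{\R^d}\varphi_{n,t,R}(s,y)W^{\sigma(U)}(ds,dy)\to\int_0^t\int_{\R^d}\varphi_{t,R}(s,y)\sigma(U(s,y))W(ds,dy)$ in $L^2(\Omega)$, and matching the two limits proves \eqref{F_R}. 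The main obstacle is the rigorous justification of the stochastic Fubini interchange when $G$ is only a distribution; this is precisely what the $G_n$-regularization circumvents, reducing the whole statement to the function-valued case together with a Fourier-side convergence controlled by Lemmas \ref{Lem property of FG} and \ref{Lem muZ mu}.
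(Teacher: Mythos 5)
Your proof of \eqref{F_n,R} is the paper's argument: the Riemann-sum discretization of the integral against $\psi_R(x)\,dx$, using the uniform $L^p(\Omega)$-continuity of $u_n$ and the continuity/boundedness of $G_n$ from Lemma \ref{Lem property of G_k} to pass to the limit in $m$, with the BDG inequality and Proposition \ref{Prop L^2(Omega H_T)} controlling the stochastic side. The identification $\int_{\R^d}\psi_R(x)G_n(t-s,x-y)\,dx=(G_n(t-s)\ast\psi_R)(y)$ via evenness of $G_n(t-s,\cdot)$, and the membership of $\varphi_{n,t,R}\sigma(\widetilde{u_{n-1}})$ in $\mathcal{P}_+$ from \eqref{varphi bound}, are exactly what the paper uses (implicitly or explicitly).

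For \eqref{F_R} you take a genuinely different route. The paper passes $n\to\infty$ directly in \eqref{F_n,R}: the left side goes to $\mathfrak{F}_R(t)$ by Proposition \ref{Prop u_n convergence to U} and compact support of $\psi_R$, while the right side requires controlling the simultaneous approximations $\varphi_{n,t,R}\to\varphi_{t,R}$ and $\sigma(\widetilde{u_{n-1}})\to\sigma(U)$, which one does by splitting the difference and running the Fourier-side estimate (Lemma \ref{Lem property of FG}(ii), Lemma \ref{Lem muZ mu}) on each half. You instead interpose the field $\widehat{U}_n(t,x)=1+\int G_n(t-s,x-y)W^{\sigma(U)}(ds,dy)$, which decouples the two approximations: (a) $\widehat{U}_n\to U$ uniformly in $L^2$ by the same Fourier argument as for $\mathbf{I_1}$ in the proof of Proposition \ref{Prop u_n convergence to U}; (b) the Riemann-sum argument applied to $\widehat{U}_n$ gives $\int\psi_R(\widehat{U}_n-1)\,dx=\int\varphi_{n,t,R}\,\sigma(U)\,dW$; (c) a single Fourier-side convergence $\|\varphi_{n,t,R}-\varphi_{t,R}\|_{\overline{\mathcal{P}}^{\sigma(U)}}\to0$ closes the argument. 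This is cleaner in that each $n\to\infty$ passage touches only one approximation. The only point you elide is that step (b) requires the uniform $L^2(\Omega)$-continuity of $x\mapsto\widehat{U}_n(t,x)$ on $\mathbb{B}_{R+1}$; the paper's Proposition \ref{Prop u_n property} gives this for $u_n$ but not for $\widehat{U}_n$, so it should be noted that the same computation as in \eqref{4.1.1} (with $\sigma(\widetilde{u_{n-1}})$ replaced by $\sigma(U)$ and \eqref{U uniform bound} in place of Proposition \ref{Prop u_n property}(ii)) supplies it. With that addendum, your route is correct and, if anything, makes the limit justification for \eqref{F_R} more transparent than the paper's one-line appeal to Proposition \ref{Prop u_n convergence to U}.
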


\begin{proof}
The uniform $L^p(\Omega)$-continuity of $x \mapsto \widetilde{u_n}(t,x)$ on $\mathbb{B}_{R+1}$ and the uniform continuity of $G_n$ imply that there is a sequence of partitions $\{(U_j^m)_{1\leq j \leq k_m} \mid m \in \mathbb{N} \}$ of $\mathbb{B}_{R+1}$ such that 
\begin{align}
    \mathfrak{u}^{R}_{n,m}(t,x) \coloneqq \sum_{j=1}^{k_m}\widetilde{u_n}(t,x_j^m)\ind_{U_j^m}(x) \xrightarrow[m \to \infty]{} \widetilde{u_n}(t,x) \quad \text{in $L^p(\Omega)$} \label{u conv} \\
    \sum_{j=1}^{k_m}\int_{U_j^m}G_{n}(t-s,x_j^m-y)\psi_R(z)dz \xrightarrow[m \to \infty]{} \varphi_{n,t,R}(s,y) \nonumber
\end{align}
for any $(s,y)$ and  $x \in \mathbb{B}_{R+1}$, where $x_j^m$ is an arbitrary point in $U_j^m$. 
Since $\widetilde{u_n}$ is a modification of $u_n$, we can apply the Burkholder-Davis-Gundy inequality and the dominated convergence theorem to obtain that
\begin{align*}
    \int_{\R^d}(\mathfrak{u}^{R}_{n,m}(t,x) -1)\psi_R(x)dx 
    &= \int_0^t \int_{\R^d} \left( \sum_{j=1}^{k_m}\int_{U_j^m}G_{n}(t-s,x_j^m-y)\psi_R(x)dx \right) \sigma(\widetilde{u_{n-1}}(s,y))W(ds,dy) \\
    &\xrightarrow[m \to \infty]{} \int_0^t \int_{\R^d}\varphi_{n,t,R}(s,y)\sigma(\widetilde{u_{n-1}}(s,y))W(ds,dy) \quad \text{in $L^p(\Omega)$}.
\end{align*}
On the other hand,  Minkowski's inequality and \eqref{u conv} yields that 
\begin{equation*}
    \int_{\R^d}(\mathfrak{u}^{R}_{n,m}(t,x) -1)\psi_R(x)dx \xrightarrow[m \to \infty]{} \mathfrak{F}_{n,R}(t) \quad \text{in $L^p(\Omega)$},
\end{equation*}
which shows \eqref{F_n,R}.
Finally, we can check \eqref{F_R} from \eqref{F_n,R} by using Proposition \ref{Prop u_n convergence to U} to show that $\mathfrak{F}_{n,R}(t) \xrightarrow[n \to \infty]{} \mathfrak{F}_{R}(t)$ in $L^2(\Omega)$ and 
\begin{equation*}
    \int_0^t \int_{\R^d}\varphi_{n,t,R}(s,y)\sigma(\widetilde{u_{n-1}}(s,y))W(ds,dy) \xrightarrow[n\to \infty]{}\int_0^t\int_{\R^d}\varphi_{t,R}(s,y)\sigma(U(s,y))W(ds,dy) \quad \text{in $L^2(\Omega)$}.
\end{equation*}
\end{proof}

\begin{Rem}
\label{Rem F rep divergence}
As we observed in Lemma \ref{Lem Skorokhod Walsh}, the divergence operator $\delta$ coincides with the Walsh integral, and therefore $\mathfrak{F}_R(t)$ and $\mathfrak{F}_{n,R}(t)$ can also be expressed by $\delta$.
That is, if we let
\begin{equation*}
    Q_{n,t,R}(s,y) = \varphi_{n,t,R}(s,y) \sigma(\widetilde{u_{n-1}}(s,y)) \quad \text{and} \quad Q_{t,R}(s,y) = \varphi_{t,R}(s,y) \sigma(U(s,y)),
\end{equation*}
then $Q_{n,t,R}, Q_{t,R} \in \mathcal{P}_+ \subset \mathrm{Dom(\delta)}$ and we have
\begin{equation*}
    \mathfrak{F}_{n,R}(t) = \delta(Q_{n,t,R}) \quad \text{and} \quad \mathfrak{F}_R(t) = \delta(Q_{t,R}).
\end{equation*}
\end{Rem}

To determine the limit of the normalized covariance functions of $\mathfrak{F}_R(t)$ and $\mathfrak{F}_{n,R}(t)$, we first provide some technical computations. 
Recall that $\tau_{\beta} = \int_{\mathbb{B}_1^2}|x-y|^{-\beta}dxdy$.

\begin{Lem}
\label{Lem 6.2}
Let $t_1, t_2 \in [0,T]$ and $n \geq 1$. 
If $s \in [0, t_1 \land t_2]$, then we have
\begin{align}
    &\lim_{R\to \infty}\frac{1}{R^{2d-\beta}}\int_{\R^{2d}}\varphi_{t_1,R}(s,y)\varphi_{t_2,R}(s,z)|y-z|^{-\beta}dydz = \tau_{\beta}(t_1-s)(t_2-s), \label{c1} \\
    &\lim_{R\to \infty}\frac{1}{R^{2d-\beta}}\int_{\R^{2d}}\varphi_{n,t_1,R}(s,y)\varphi_{n,t_2,R}(s,z)|y-z|^{-\beta}dydz = \tau_{\beta}(t_1-s)(t_2-s). \label{c2}
\end{align}
\end{Lem}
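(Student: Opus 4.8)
The plan is to prove \eqref{c2} first and then derive \eqref{c1} from it by a limiting argument in $n$, exploiting the convergence $\varphi_{n,t,R} \to \varphi_{t,R}$ that follows from $\F G_n(t) \to \F G(t)$ together with the uniform bounds already established. For \eqref{c2}, I would pass to the Fourier side. Writing the inner integral in terms of the spectral measure $\mu(d\xi) = c_\beta |\xi|^{\beta - d}d\xi$ of $|x|^{-\beta}$ gives
\begin{equation*}
    \int_{\R^{2d}}\varphi_{n,t_1,R}(s,y)\varphi_{n,t_2,R}(s,z)|y-z|^{-\beta}dydz = c_\beta \int_{\R^d} \F G_n(t_1-s)(\xi)\overline{\F G_n(t_2-s)(\xi)}\,|\F\psi_R(\xi)|^2 |\xi|^{\beta-d}d\xi,
\end{equation*}
using that $\F \varphi_{n,t,R}(s,\cdot)(\xi) = \F G_n(t-s)(\xi)\F\psi_R(\xi)$ and that $\psi_R$ is real and radial. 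Now substitute $\xi = \eta/R$. Since $\psi_R(x) = \psi_1(x/R)$ can be arranged (or at least $\F\psi_R(\xi) = R^d\F\psi_1(R\xi)$ up to the harmless $\mathbb{B}_R$ versus $\mathbb{B}_{R+1}$ discrepancy, which I would absorb by noting $R^{-1} \to 0$), the change of variables produces a factor $R^{2d} \cdot R^{-d} \cdot R^{\beta - d} \cdot R^{-\beta}\cdot R^{d}$; after dividing by $R^{2d-\beta}$ one is left with
\begin{equation*}
    \frac{c_\beta}{R^{2d-\beta}}\int_{\R^d}\F G_n(t_1-s)(\eta/R)\overline{\F G_n(t_2-s)(\eta/R)}\,|\F\psi_1(\eta)|^2|\eta|^{\beta-d}d\eta \cdot R^{2d-\beta}\cdot(\text{const}),
\end{equation*}
i.e. the $R$-powers cancel and the integrand converges pointwise. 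The key pointwise limit is $\F G_n(\tau)(\eta/R) = \F\Lambda_n(\eta/R)\cdot \frac{\sin(2\pi\tau|\eta|/R)}{2\pi|\eta|/R} \to \tau$ as $R\to\infty$, using $\F\Lambda_n(0)=1$ and $\sin u/u \to 1$. So the integrand tends to $c_\beta(t_1-s)(t_2-s)|\F\psi_1(\eta)|^2|\eta|^{\beta-d}$.

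To justify passing the limit inside the integral I would invoke the dominated convergence theorem via Peetre's inequality and Lemma \ref{Lem property of FG}(i): $|\F G_n(\tau)(\eta/R)| \le |\F G(\tau)(\eta/R)| \le \min\{\tau, \frac{1}{2\pi}R/|\eta|\} \le \tau \wedge (R/|\eta|)$, but more usefully $|\F G_n(\tau)(\eta/R)|\le \tau$ uniformly, so the integrand is dominated by $c_\beta T^2 |\F\psi_1(\eta)|^2|\eta|^{\beta-d}$, which is integrable near infinity since $\F\psi_1$ is Schwartz, and near the origin since $\beta - d > -d$ and $\F\psi_1$ is bounded. This gives \eqref{c2} with limit $c_\beta(t_1-s)(t_2-s)\int_{\R^d}|\F\psi_1(\eta)|^2|\eta|^{\beta-d}d\eta$; I would then identify this constant with $\tau_\beta(t_1-s)(t_2-s)$ by reversing the Fourier computation at $R=1$ with $\psi_1$ replaced by $\ind_{\mathbb{B}_1}$, noting $\int_{\R^d}|\F\ind_{\mathbb{B}_1}(\eta)|^2|\eta|^{\beta-d}d\eta \cdot c_\beta = \int_{\mathbb{B}_1^2}|y-z|^{-\beta}dydz = \tau_\beta$, and controlling the error from replacing $\ind_{\mathbb{B}_R}$ by $\psi_R$ (sandwiched between $\ind_{\mathbb{B}_R}$ and $\ind_{\mathbb{B}_{R+1}}$) by a direct estimate showing the difference is $o(R^{2d-\beta})$.

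The main obstacle I anticipate is handling the discrepancy between $\psi_R$ and the sharp indicator $\ind_{\mathbb{B}_R}$ cleanly: one cannot write $\psi_R(x) = \psi_1(x/R)$ exactly while keeping the sandwich $\ind_{\mathbb{B}_R} \le \psi_R \le \ind_{\mathbb{B}_{R+1}}$, so the scaling argument does not apply verbatim. I would resolve this either by replacing $\psi_R$ throughout the computation by $\ind_{\mathbb{B}_R}$ at the cost of an error term $\int_{\R^{2d}}(\varphi^{\ind}_{t_1,R} - \varphi^{\psi}_{t_1,R})(\cdots)$ and bounding it using the support and sup-norm bounds on $G_n$ together with $|\mathbb{B}_{R+1}\setminus\mathbb{B}_R| = O(R^{d-1})$, which is $o(R^{2d-\beta})$ after integrating against $|y-z|^{-\beta}$; or by working directly with $\psi_R$ and noting that $R^{-d}\F\psi_R(\xi/R)$ converges to $\F\ind_{\mathbb{B}_1}(\xi)$ in a suitable sense. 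For \eqref{c1}, once \eqref{c2} is established, I would write $\varphi_{t,R} = \varphi_{n,t,R} + (\varphi_{t,R}-\varphi_{n,t,R})$, expand the bilinear form, and use the Fourier representation plus Lemma \ref{Lem property of FG}(ii) and dominated convergence in $n$ to show the cross and remainder terms contribute $o(R^{2d-\beta})$ uniformly in $R$, so that the limit in \eqref{c1} equals $\lim_n \lim_R$ of the $\varphi_{n}$-version, which is $\tau_\beta(t_1-s)(t_2-s)$ independently of $n$.
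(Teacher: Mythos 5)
Your Fourier-side computation and scaling idea are exactly right, and you correctly identify the limit. However there are two points where your route diverges from, and is weaker than, the paper's.

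First, the paper proves \eqref{c1} \emph{directly}: since $\F G(t)$ is a bounded continuous function (by \eqref{Fourier transform of G}), there is no need to go through $G_n$ and then pass $n\to\infty$. You instead prove \eqref{c2} first and deduce \eqref{c1}, which forces you to control the error term $R^{\beta-2d}\int |\F\psi_R(\xi)|^2\bigl(1-(\F\Lambda_n(\xi))^2\bigr)\F G(t_1-s)(\xi)\overline{\F G(t_2-s)(\xi)}|\xi|^{\beta-d}d\xi$ either for each fixed $n$ as $R\to\infty$ or uniformly in $R$ as $n\to\infty$. Either way this requires the same dominated-convergence machinery as the main term, so nothing is saved; the paper's order is cleaner and \eqref{c2} then follows from \eqref{c1}'s argument by inserting the bounded factor $|\F\Lambda_n|^2\to 1$.

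Second, and more substantively: your dominated-convergence step as written does not close. You dominate the rescaled integrand by $c_\beta T^2|\F\psi_1(\eta)|^2|\eta|^{\beta-d}$, but this presumes $\psi_R(x)=\psi_1(x/R)$, which you yourself note is incompatible with the sandwich $\ind_{\mathbb{B}_R}\le\psi_R\le\ind_{\mathbb{B}_{R+1}}$. Once that is dropped, the natural dominating function is $g_R(\xi)=c_\beta T^2|R^{-d}\F\psi_R(R^{-1}\xi)|^2|\xi|^{\beta-d}$, which genuinely depends on $R$, and the ordinary dominated convergence theorem does not apply. This is precisely why the paper invokes the generalized dominated convergence theorem (Proposition \ref{Prop general DCT}): one shows $g_R\to g$ pointwise with $g=c_\beta T^2|\F\ind_{\mathbb{B}_1}|^2|\xi|^{\beta-d}$, and separately that $\int g_R\to\int g$ (by rewriting $\int g_R$ on the physical side as $T^2\int\psi_R(Ry)\psi_R(Rz)|y-z|^{-\beta}dydz$ and using $\ind_{\mathbb{B}_1}\le\psi_R(R\cdot)\le\ind_{\mathbb{B}_2}$). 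Your second proposed workaround, ``$R^{-d}\F\psi_R(\xi/R)$ converges to $\F\ind_{\mathbb{B}_1}(\xi)$ in a suitable sense,'' is indeed the right idea, but the ``suitable sense'' needed is exactly the hypotheses of the generalized DCT, and this is the step that must be made precise. Your first workaround (replace $\psi_R$ by $\ind_{\mathbb{B}_R}$ and estimate the shell error) does work for \eqref{c2} where $G_n$ has sup-norm bounds, but you would still face the same generalized-DCT issue when passing to \eqref{c1}.
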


\begin{proof}
Using the Fourier transform, we see that 
\begin{align*}
    &\frac{1}{R^{2d-\beta}}\int_{\R^{2d}}\varphi_{t_1,R}(s,y)\varphi_{t_2,R}(s,z)|y-z|^{-\beta}dydz\\
    &= \frac{c_\beta}{R^{2d-\beta}}\int_{\R^d}|\F\psi_R(\xi)|^2\F G(t_1-s)(\xi)\overline{\F G(t_2-s)(\xi)}|\xi|^{\beta-d}d\xi\\
    &= c_{\beta}\int_{\R^d}|R^{-d}\F\psi_R(R^{-1}\xi)|^2\F G(t_1-s)(R^{-1}\xi)\F G(t_2-s)(R^{-1}\xi)|\xi|^{\beta-d}d\xi,
\end{align*}
where the last equality follows from the change of variables $\xi \mapsto R^{-1}\xi$.
Set
\begin{align*}
    f_R(\xi) &= c_\beta |R^{-d}\F\psi_R(R^{-1}\xi)|^2\F G(t_1-s)(R^{-1}\xi)\F G(t_2-s)(R^{-1}\xi)|\xi|^{\beta-d}, \\
    g_R(\xi) &= c_\beta T^2|R^{-d}\F\psi_R(R^{-1}\xi)|^2|\xi|^{\beta-d}.
\end{align*}
Clearly, $|f_R(\xi)| \leq g_R(\xi)$. 
A simple computation shows that 
\begin{align*}
    R^{-d}\F\psi_R(R^{-1}\xi) 
    &= \F\ind_{B_1}(\xi) + \int_{\R^d\setminus B_1}e^{-2\pi\sqrt{-1}\xi \cdot x}\psi_R(Rx)dx
    \xrightarrow[R \to \infty]{} \F\ind_{B_1}(\xi),
\end{align*}
and this together with \eqref{Fourier transform of G} implies that
\begin{align*}
    \lim_{R\to \infty}f_R(\xi) = c_\beta |\F\ind_{B_1}(\xi)|^2(t_1-s)(t_2-s)|\xi|^{\beta-d}, \quad
    \lim_{R \to \infty}g_R(\xi) = c_\beta T^2|\F\ind_{B_1}(\xi)|^2|\xi|^{\beta-d}.
\end{align*}
Moreover, because $\psi_R(Rx) \leq \ind_{B_2}(x)$ and 
\begin{equation*}
    \psi_R(Rx) = \ind_{B_1}(x) + \psi_R(Rx)\ind_{B_{1+\frac{1}{R}}\setminus B_1}(x) \xrightarrow[]{R \to \infty} \ind_{B_1}(x)
\end{equation*}
for any $x \in \R^d$, the dominated convergence theorem yields that
\begin{align*}
    \lim_{R \to \infty} \int_{\R^d}g_R(\xi)d\xi 
    &= T^2\lim_{R \to \infty} \int_{\R^{2d}}\psi_R(Ry)\psi_R(Rz)|y-z|^{-\beta}dydz\\
    &= T^2\int_{B_1^2}|y-z|^{-\beta}dydz\\
    &= c_\beta T^2\int_{\R^{d}}|\F\ind_{B_1}(\xi)|^2|\xi|^{\beta-d}d\xi.
\end{align*}
Therefore, we can apply Proposition \ref{Prop general DCT} to obtain 
\begin{align*}
    \lim_{R \to \infty}\int_{\R^d}f_R(\xi)dx 
    &= (t_1-s)(t_2-s)c_\beta\int_{\R^d}|\F\ind_{B_1}(\xi)|^2|\xi|^{\beta-d}d\xi\\
    &= (t_1-s)(t_2-s)\int_{B_1^2}|y-z|^{-\beta}dydz,
\end{align*}
which proves \eqref{c1}. 
Because 
\begin{equation*}
    \F \varphi_{n,t,R}(s,\cdot)(\xi) = \F \psi_R(\xi) \F G_n(t-s)(\xi) \quad \text{and} \quad \lim_{R \to \infty}\mathcal{F}G_n(t)(R^{-1}\xi) = t, 
\end{equation*}
a similar argument above leads to \eqref{c2}.  
\end{proof}

Set $\gamma_0(x) = |x|^{-\beta} \ind_{\mathbb{B}_1}(x)$ for all $x \in \R^d$.
It is easily seen that $\gamma_0 \in L^1(\R^d)$ and 
\begin{equation}
\label{RK decomposition}
    |x|^{-\beta} \leq \gamma_0(x) + 2\langle x \rangle^{-\beta}.
\end{equation}
Recall that, for any $\alpha >0$, $B_{\alpha}$ denotes the Bessel kernel of order $\alpha >0$ and that we defined $b_{\alpha}(x) = (2\pi)^dB_{\alpha}(2\pi x)$ in Section \ref{subsection Some technical tools}.

\begin{Lem}
\label{Lem 6.3}
Let $t_1, t_2 \in [0,T]$, $R \geq 1$, and $n \geq 0$.
It holds that
\begin{align}
    &\int_0^{t_1 \land t_2} \int_{\R^{2d}}(\psi_R \ast b_1)(y)(\psi_R \ast b_1)(z)|y-z|^{-\beta}|\mathrm{Cov}(\sigma(u_n(s,y)), \sigma(u_n(s,z)))|dydzds \nonumber \\
    &\lesssim_{n, T,\beta, \sigma_{\mathrm{Lip}}} R^{2d-2\beta}.\label{6.3.1}
\end{align}
\end{Lem}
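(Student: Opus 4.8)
The plan is to first replace the covariance factor by an explicit, translation‑invariant, polynomially decaying kernel by invoking Proposition~\ref{Prop cov estimates}, and then to estimate the resulting purely deterministic double integral by a dyadic decomposition at scale $R$.

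\emph{Step 1 (reduction of the covariance).} For $n=0$ the left‑hand side of \eqref{6.3.1} vanishes, so assume $n\ge1$. By Proposition~\ref{Prop cov estimates} applied with $\gamma(x)=|x|^{-\beta}$ and $t_1=t_2=s$, $x_1=y$, $x_2=z$, together with the bound $\ind_{\mathbb{B}_{s-r+1}}\le\ind_{\mathbb{B}_{T+1}}$ and the change of variables $u=y-y'$, $v=z-z'$, one gets, for $0\le s\le t_1\land t_2$,
\begin{equation*}
|\mathrm{Cov}(\sigma(u_n(s,y)),\sigma(u_n(s,z)))|\lesssim_{n,T,\sigma_{\mathrm{Lip}}} T\,\Psi(y-z),\qquad \Psi(w):=\int_{\mathbb{B}_{T+1}^2}|w-(u-v)|^{-\beta}\,du\,dv .
\end{equation*}
Here $\Psi=|\cdot|^{-\beta}\ast\eta$ with $\eta$ bounded, nonnegative and supported in $\mathbb{B}_{2(T+1)}$, so $\Psi$ is bounded (as $\beta<d$ makes $|\cdot|^{-\beta}$ locally integrable); moreover for $|w|\ge 4(T+1)$ one has $|w-(u-v)|\ge|w|/2$ on the domain of integration, hence $\Psi(w)\lesssim_{T}|w|^{-\beta}$. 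Combining these two regimes, $\Psi(w)\lesssim_{T,\beta}\langle w\rangle^{-\beta}$ for all $w\in\R^d$.

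\emph{Step 2 (reduction to a deterministic integral and dyadic split).} After integrating the time variable (which costs at most a factor $T$), Step~1 reduces \eqref{6.3.1} to proving
\begin{equation*}
\mathbf{A}_R:=\int_{\R^{2d}}(\psi_R\ast b_1)(y)\,(\psi_R\ast b_1)(z)\,|y-z|^{-\beta}\langle y-z\rangle^{-\beta}\,dy\,dz\lesssim_{d,\beta}R^{2d-2\beta}\qquad(R\ge1).
\end{equation*}
It is crucial to keep both factors $|y-z|^{-\beta}$ and $\langle y-z\rangle^{-\beta}$: their product is locally integrable and decays like $|w|^{-2\beta}$ at infinity. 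We will only use that $f_R:=\psi_R\ast b_1\ge0$ (since $\psi_R\ge0$ and $b_1\ge0$), $\|f_R\|_\infty\le\|\psi_R\|_\infty\|b_1\|_{L^1}\le\|b_1\|_{L^1}$, and $\|f_R\|_{L^1}=\|\psi_R\|_{L^1}\|b_1\|_{L^1}\le|\mathbb{B}_{R+1}|\,\|b_1\|_{L^1}\lesssim_d R^d$. Fixing $z$ and splitting the inner $y$–integral at $|y-z|=R$: on $\{|y-z|\le R\}$ bound $f_R\le\|b_1\|_{L^1}$ and use $\int_{\mathbb{B}_R}|w|^{-\beta}\langle w\rangle^{-\beta}\,dw\lesssim_{d,\beta}R^{d-2\beta}$, which holds for $R\ge1$ precisely because $2\beta<d$ (indeed $\beta<2\le d/2$); on $\{|y-z|>R\}$ bound $|y-z|^{-\beta}\langle y-z\rangle^{-\beta}\le|y-z|^{-2\beta}\le R^{-2\beta}$ and use $\|f_R\|_{L^1}\lesssim_d R^d$, again obtaining $\lesssim_d R^{d-2\beta}$. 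Thus $\int_{\R^d}f_R(y)|y-z|^{-\beta}\langle y-z\rangle^{-\beta}\,dy\lesssim_{d,\beta}R^{d-2\beta}$ uniformly in $z$, and integrating against $f_R(z)$ and using $\|f_R\|_{L^1}\lesssim_d R^d$ gives $\mathbf{A}_R\lesssim_{d,\beta}R^{2d-2\beta}$. Tracking constants through Steps~1–2 yields the dependence $\lesssim_{n,T,\beta,\sigma_{\mathrm{Lip}}}$ claimed in \eqref{6.3.1}.

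The main obstacle is that the tail $|w|^{-\beta}\langle w\rangle^{-\beta}\sim|w|^{-2\beta}$ fails to be integrable on $\R^d$ in the regime $2\beta<d$ at issue, so neither Young's inequality against an $L^1$ kernel nor a Fourier‑side estimate is directly available; the argument instead exploits that $\psi_R$ localizes the mass of $f_R=\psi_R\ast b_1$ essentially to $\mathbb{B}_{R+1}$ (this localization survives convolution with the integrable kernel $b_1$), and it is the split at scale $R$ that both makes the integral finite and produces the sharp exponent $2d-2\beta$. A secondary point to be careful about is that discarding the factor $\langle y-z\rangle^{-\beta}$ would only give the weaker bound $R^{2d-\beta}$, so both pieces of the Riesz‑kernel decomposition \eqref{RK decomposition} (equivalently, both factors) must be retained.
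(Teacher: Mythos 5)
Your proof is correct, and it takes a recognizably different route from the paper's at the crucial estimate of the resulting deterministic integral. Both arguments start from Proposition~\ref{Prop cov estimates}, but you package the covariance bound once and for all as $|\mathrm{Cov}(\sigma(u_n(s,y)),\sigma(u_n(s,z)))|\lesssim_{n,T,\beta,\sigma_{\mathrm{Lip}}}\langle y-z\rangle^{-\beta}$ (your kernel $\Psi$), leaving the outer Riesz factor $|y-z|^{-\beta}$ untouched, and then control $\int f_R(y)f_R(z)|y-z|^{-\beta}\langle y-z\rangle^{-\beta}\,dy\,dz$ by a real--space split at scale $R$ using only $\norm{f_R}_\infty\lesssim 1$ and $\norm{f_R}_{L^1}\lesssim R^d$. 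The paper instead applies the decomposition $|x|^{-\beta}\le\gamma_0(x)+2\langle x\rangle^{-\beta}$ from \eqref{RK decomposition} twice (once to the outer kernel, once inside the covariance bound), disposes of the $\gamma_0$ pieces by Young's convolution inequality (giving the lower-order $R^d$), and obtains the dominant $R^{2d-2\beta}$ from the term $\mathbf{J_{22}}$ via the Fourier identity $\mathbf{J_{22}}=\int_{\R^d}\abs{\F\psi_R(\xi)}^2\langle\xi\rangle^{-2}b_{2\beta}(\xi)\,d\xi$ and the tabulated estimate for $\int_{\mathbb{B}_R^2}\langle y-z\rangle^{\alpha}\,dy\,dz$. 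Your dyadic split is more elementary and needs no Fourier step; what you give up is the paper's cleaner bookkeeping via \eqref{RK decomposition}, which makes the appearance of the two exponents $R^d$ and $R^{2d-2\beta}$ transparent. Both arguments rely in the same essential way on the compactness/localization of $\psi_R$ surviving convolution with the integrable kernel $b_1$ and on $2\beta<d$; neither would work with $\norm{f_R}_{L^1}$ or the near-diagonal singularity handled differently. A small presentational remark: you should note explicitly (as the paper does) that $R^d\lesssim R^{2d-2\beta}$ because $d\ge 4>2\beta$, to absorb the $n=0$ and constant-order contributions into the claimed exponent.
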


\begin{proof}
From \eqref{RK decomposition}, the left-hand side of \eqref{6.3.1} is dominated by 
\begin{align*}
    &\int_0^{t_1 \land t_2}\int_{\R^{2d}}(\psi_R \ast b_1)(y)(\psi_R \ast b_1)(z)\gamma_0(y-z)|\mathrm{Cov}(\sigma(u_n(s,y)), \sigma(u_n(s,z)))|dydzds\\ 
    &+2\int_0^{t_1 \land t_2}\int_{\R^{2d}}(\psi_R \ast b_1)(y)(\psi_R \ast b_1)(z)\langle y-z \rangle^{-\beta}|\mathrm{Cov}(\sigma(u_n(s,y)), \sigma(u_n(s,z)))|dydzds\\ 
    &\eqqcolon \mathbf{J_1} + 2\mathbf{J_2}.
\end{align*}
Because 
\begin{align*}
    |\mathrm{Cov}(\sigma(u_n(s,y)), \sigma(u_n(s,z)))| 
    &\leq 2 \sup_{n \geq 0}\sup_{(t,x) \in [0,T]\times \R^d}\lVert \sigma(u_n(t,x)) \rVert_2^2 < \infty,\\ 
    \lVert \psi_R \ast b_1 \rVert_{L^{\infty}(\R^d)} 
    &\leq \lVert \psi_R \rVert_{L^{\infty}(\R^d)}\lVert b_1 \rVert_{L^1(\R^d)} \leq \lVert b_1 \rVert_{L^1(\R^d)} < \infty,
\end{align*}
by Proposition \ref{Prop u_n uniform bound} and Young's convolution inequality, the term $\mathbf{J_1}$ is estimated by 
\begin{align}
    \mathbf{J_1}
    &\lesssim T \int_{\R^{2d}}(\psi_R \ast b_1)(z)\gamma_0(y-z)dydz\nonumber\\
    &\leq T \lVert b_1 \rVert_{L^1(\R^d)} \lVert \gamma_0 \rVert_{L^1(\R^d)}\lVert \psi_R \rVert_{L^1(\R^d)}\nonumber\\
    &\lesssim_T (R+1)^d. \label{6.3.2}
\end{align}
Next, we evaluate $\mathbf{J_2}$. 
Proposition \ref{Prop cov estimates} and \eqref{RK decomposition} lead to 
\begin{align}
    |\mathrm{Cov}(\sigma(u_n(s,y)), \sigma(u_n(s,z)))| 
    &\lesssim_{n,T, \sigma_{\mathrm{Lip}}} \int_{\R^{2d}}\ind_{\mathbb{B}_{T+1}}(y-w)\ind_{\mathbb{B}_{T+1}}(z-w')\gamma_0(w-w')dwdw' \nonumber \\
    &\quad + \int_{\R^{2d}}\ind_{\mathbb{B}_{T+1}}(y-w)\ind_{\mathbb{B}_{T+1}}(z-w')\langle w-w'\rangle^{-\beta} dwdw', \label{61}
\end{align}
and if we let
\begin{equation*}
    g(x) = \int_{\R^{2d}}\ind_{\mathbb{B}_{T+1}}(x-w)\ind_{\mathbb{B}_{T+1}}(w')\gamma_0(w-w')dwdw',
\end{equation*}
then $g \in L^1(\R^d)$ and $g(y-z)$ equals the first term on the right-hand side of \eqref{61}.
Moreover, Peetre's inequality (Lemma \ref{Lem Peetre inequality}) gives 
\begin{align*}
    &\int_{\R^{2d}}\ind_{\mathbb{B}_{T+1}}(y-w)\ind_{\mathbb{B}_{T+1}}(z-w')\langle w-w'\rangle^{-\beta} dwdw'\\
    &\lesssim_{\beta} \langle y-z \rangle^{-\beta}\int_{\R^{2d}}\ind_{\mathbb{B}_{T+1}}(y-w)\langle y-w\rangle^{-\beta} \ind_{\mathbb{B}_{T+1}}(z-w') \langle z-w'\rangle^{-\beta} dwdw'\\
    &\lesssim_{T,\beta} \langle y-z \rangle^{-\beta}.
\end{align*}
Thus we obtain
\begin{align*}
    \mathbf{J_2} 
    &\lesssim_{n, T,\beta, \sigma_{\mathrm{Lip}}} \int_{\R^{2d}}(\psi_R \ast b_1)(y)(\psi_R \ast b_1)(z)\langle y-z \rangle^{-\beta}g(y-z)dydz\\
    &\quad + \int_{\R^{2d}}(\psi_R \ast b_1)(y)(\psi_R  \ast b_1)(z)\langle y-z \rangle^{-2\beta}dydz\\
    &\eqqcolon \mathbf{J_{21}} + \mathbf{J_{22}}.
\end{align*}
By the same calculation as for $\mathbf{J_1}$, we get 
\begin{equation}
    \mathbf{J_{21}} \lesssim_T (R+1)^d. \label{6.3.3}
\end{equation}
Using the fact that $\psi_R \ast b_1 \in \mathcal{S}(\R^d)$ and $\langle x \rangle^{-2\beta} = \F b_{2\beta}(x)$, we have
\begin{align}
    \mathbf{J_{22}} 
    &= \int_{\R^{d}}|\F \psi_R(\xi)|^2\langle \xi \rangle^{-2} b_{2\beta}(\xi)d\xi \nonumber\\
    &\lesssim \int_{\mathbb{B}_{R+1}^2}\langle y-z \rangle^{-2\beta}dydz \nonumber\\
    &\lesssim (R + 1)^{2d-2\beta}. \label{6.3.4}
\end{align}
Here the last step follows from $2\beta < 4 \leq d$ and the simple estimate
\begin{align*}
    \int_{\mathbb{B}_R^2}\langle y-z \rangle^{\alpha}dydz 
    \lesssim 
    \begin{cases}
        R^{2d + \alpha} &(\alpha > -d)\\
        R^{d}\log R &(\alpha = -d)\\
        R^d &(\alpha < -d)
    \end{cases},
\end{align*}
which holds when $R\geq 2$.
Since we are assuming $R \geq 1$, the proposition now follows by combining \eqref{6.3.2}, \eqref{6.3.3}, and \eqref{6.3.4}.
\end{proof}

Now we can show the following result, which we will use in the proof of Theorems \ref{Thm m1} and \ref{Thm m2}.
Recall that we are assuming $\sigma(1) \neq 0$.

\begin{Prop}
 \label{Prop F_R covariance limit}
Let $t_1, t_2 \in [0,T]$ and $n \geq 1$. It holds that 
\begin{align}
    \lim_{R \to \infty} R^{\beta-2d}\E[\mathfrak{F}_{n,R}(t_1)\mathfrak{F}_{n,R}(t_2)] &= \tau_{\beta}\int_0^{t_1\land t_2}(t_1-s)(t_2-s)\E[\sigma(u_{n-1}(s,0))]^2ds, \label{6.4.1}\\
    \lim_{R \to \infty} R^{\beta-2d}\E[\mathfrak{F}_{R}(t_1)\mathfrak{F}_{R}(t_2)] &= \tau_{\beta}\int_0^{t_1\land t_2}(t_1-s)(t_2-s)\E[\sigma(U(s,0))]^2ds. \label{6.4.2}
\end{align}
In particular, when $t_1 \land t_2 > 0$,  the limit \eqref{6.4.2} is strictly positive, and the limit \eqref{6.4.1} is strictly positive for all sufficiently large $n$.
\end{Prop}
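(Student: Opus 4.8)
Here is how I would organize the proof.

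\medskip

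The plan is to establish \eqref{6.4.1} directly and then obtain \eqref{6.4.2} from it by an approximation argument in $n$. For \eqref{6.4.1} I would start from the Walsh integral representation in Proposition~\ref{Prop F rep Walsh}; since $\varphi_{n,t,R}(s,\cdot)$ vanishes for $s\geq t$, the isometry property of the Walsh integral together with Fubini's theorem gives
\[
    \E[\mathfrak{F}_{n,R}(t_1)\mathfrak{F}_{n,R}(t_2)]
    = \int_0^{t_1\land t_2}\!\int_{\R^{2d}}\!\varphi_{n,t_1,R}(s,y)\varphi_{n,t_2,R}(s,z)\,\E[\sigma(u_{n-1}(s,y))\sigma(u_{n-1}(s,z))]\,|y-z|^{-\beta}dydzds .
\]
Because $u_{n-1}$ has the property (S) (Lemma~\ref{Lem u_n property S}), the inner expectation depends only on $y-z$ and decomposes as $\E[\sigma(u_{n-1}(s,0))]^2+\mathrm{Cov}(\sigma(u_{n-1}(s,y)),\sigma(u_{n-1}(s,z)))$, splitting $R^{\beta-2d}\E[\mathfrak{F}_{n,R}(t_1)\mathfrak{F}_{n,R}(t_2)]$ into a \emph{mean part} and a \emph{covariance part}.

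\medskip

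For the mean part I would use Lemma~\ref{Lem 6.2}, which gives the pointwise-in-$s$ convergence of $R^{\beta-2d}\int_{\R^{2d}}\varphi_{n,t_1,R}(s,y)\varphi_{n,t_2,R}(s,z)|y-z|^{-\beta}dydz$ to $\tau_\beta(t_1-s)(t_2-s)$, while \eqref{varphi bound} bounds this quantity uniformly in $R\geq 1$ and $s$ by $C_{n,T}R^{\beta-2d}(R+T+2)^{2d-\beta}\lesssim_{n,T}1$; since $\E[\sigma(u_{n-1}(s,0))]^2$ is bounded (Proposition~\ref{Prop u_n property}), dominated convergence shows the mean part converges to $\tau_\beta\int_0^{t_1\land t_2}(t_1-s)(t_2-s)\E[\sigma(u_{n-1}(s,0))]^2\,ds$. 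The covariance part is where $d\geq 4$ bites, since $\varphi_{n,t_i,R}$ need not be nonnegative; to handle it I would note that $\ind_{\mathbb{B}_{T+1}}\leq C_T\,b_1$ pointwise (as $b_1>0$ is continuous, hence bounded below on $\mathbb{B}_{T+1}$), so $|G_n(t,\cdot)|\leq\Theta(n,T)\ind_{\mathbb{B}_{T+1}}\leq C_{n,T}\,b_1$, and convolving with $\psi_R\geq 0$ yields $|\varphi_{n,t_i,R}(s,\cdot)|\leq C_{n,T}(\psi_R\ast b_1)$. Inserting this into the covariance part and applying Lemma~\ref{Lem 6.3} bounds it by $C_{n,T,\beta,\sigma_{\mathrm{Lip}}}R^{\beta-2d}\cdot R^{2d-2\beta}=CR^{-\beta}\to 0$. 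This proves \eqref{6.4.1}.

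\medskip

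To obtain \eqref{6.4.2} I would pass to the limit in $n$: by a triangle inequality it suffices to combine \eqref{6.4.1}, the convergence $\E[\sigma(u_{n-1}(s,0))]\to\E[\sigma(U(s,0))]$ (from Proposition~\ref{Prop u_n convergence to U} and the Lipschitz continuity of $\sigma$, with dominated convergence in $s$), and the three uniform-in-$R$ estimates $\sup_{R\geq 1}R^{\beta-2d}\lVert\mathfrak{F}_R(t)\rVert_2^2<\infty$, $\sup_{n,R\geq 1}R^{\beta-2d}\lVert\mathfrak{F}_{n,R}(t)\rVert_2^2<\infty$, and $\lim_{n\to\infty}\sup_{R\geq 1}R^{\beta/2-d}\lVert\mathfrak{F}_R(t)-\mathfrak{F}_{n,R}(t)\rVert_2=0$. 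All three I would get from the Fourier-side identity $\lVert\mathfrak{F}_R(t)\rVert_2^2=\int_0^t\int_{\R^d}|\F G(t-s)(\xi)|^2|\F\psi_R(\xi)|^2\mu_s^{\sigma(U)}(d\xi)\,ds$ (and analogues for $\mathfrak{F}_{n,R}$ and for the two pieces of $\mathfrak{F}_R-\mathfrak{F}_{n,R}$), the bound $|\F G(t)(\xi)|\lesssim_T\langle\xi\rangle^{-1}$ of Lemma~\ref{Lem property of FG}, and a weighted version of Lemma~\ref{Lem muZ mu}: for $\kappa>0$ and any $Z$ as in that lemma,
\[
    \int_{\R^d}\langle\xi\rangle^{-\kappa}|\F\psi_R(\xi)|^2\,\mu_t^{Z}(d\xi)\;\leq\;\Bigl(\sup_{\eta\in\R^d}\lVert Z(t,\eta)\rVert_2^2\Bigr)\int_{\R^d}\langle\xi\rangle^{-\kappa}|\F\psi_R(\xi)|^2\,\mu(d\xi),
\]
whose proof is verbatim that of Lemma~\ref{Lem muZ mu} once one observes that $H_r\ast b_\kappa\ast\psi_R\ast\psi_R\geq 0$ because $\psi_R\geq 0$. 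This reduces every bound to $\int_{\R^d}\langle\xi\rangle^{-\kappa}|\F\psi_R(\xi)|^2\mu(d\xi)=c_\beta\int_{\R^d}\langle\xi\rangle^{-\kappa}|\F\psi_R(\xi)|^2|\xi|^{\beta-d}d\xi\lesssim_\beta R^{2d-\beta}$, obtained by the scaling $\xi\mapsto R^{-1}\xi$ and $\psi_R(R\cdot)\leq\ind_{\mathbb{B}_2}$ as in the proof of Lemma~\ref{Lem 6.2}; for the third estimate one writes $\mathfrak{F}_R(t)-\mathfrak{F}_{n,R}(t)$ as an integral with spatial Fourier symbol $(1-\F\Lambda_n(\xi))\F G(t-s)(\xi)\F\psi_R(\xi)$ plus one against $\varphi_{n,t,R}(\sigma(U)-\sigma(\widetilde{u_{n-1}}))$, handling the latter with Proposition~\ref{Prop u_n convergence to U} and the former by splitting $\{|\xi|\leq a_n^{1/2}\}$, where $|1-\F\Lambda_n(\xi)|=|\F\Lambda(\xi/a_n)-\F\Lambda(0)|\lesssim|\xi|/a_n$, from $\{|\xi|>a_n^{1/2}\}$, where one may trade a power $\langle\xi\rangle^{2\kappa_0}$ with $\kappa_0\in(0,1-\beta/2)$ since $\beta<2$. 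The standard double-limit argument then gives \eqref{6.4.2}.

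\medskip

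Finally, for the strict positivity: $\tau_\beta>0$ and $(t_1-s)(t_2-s)>0$ for $s\in[0,t_1\land t_2)$, while $U(0,0)=1$ gives $\E[\sigma(U(0,0))]=\sigma(1)\neq 0$; since $s\mapsto\E[\sigma(U(s,0))]$ is continuous (by the $L^2(\Omega)$-continuity of $U$ in Proposition~\ref{Prop U exi uni conti} and the continuity of $\sigma$), $\E[\sigma(U(s,0))]^2$ is bounded away from $0$ near $s=0$, so the integral in \eqref{6.4.2} is strictly positive. The same argument applies to \eqref{6.4.1} for every fixed $n\geq 1$ (using $u_{n-1}(0,0)=1$), and in any case the limit in \eqref{6.4.1} converges as $n\to\infty$ to the strictly positive limit in \eqref{6.4.2}, hence is strictly positive for all sufficiently large $n$. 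The step I expect to be the main obstacle is the covariance part: because $G$ — and $G_n$ when $d\geq 4$ — is a distribution with no sign, controlling the contribution of the covariances of $\sigma(u_{n-1})$ (resp.\ of $\sigma(U)$) uniformly in $R$ after the normalization $R^{\beta-2d}$ cannot be done by the elementary estimate available for $d\leq 3$; it forces the detour through the Fourier transform, the domination of $\F G$ by $\F b_1$, and the (weighted) Lemma~\ref{Lem muZ mu}, which is precisely the replacement device highlighted in Remark~\ref{Rem why high dimensions difficult}.
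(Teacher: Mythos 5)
Your proof is correct, and reaches the same result, but by a genuinely different route in two places. For the covariance part of \eqref{6.4.1}, the paper works entirely on the Fourier side: it writes the term as an integral against the spectral measure $\mu_s^{\sigma(\widetilde{u_{n-1}})-\eta_{n-1}}$, bounds $|\F\Lambda_n|\le 1$ and $\F G(t_1-s)\overline{\F G(t_2-s)}$ by $\langle\xi\rangle^{-2}$, and then converts back via $|\F\psi_R|^2\langle\cdot\rangle^{-2}=\F\bigl((\psi_R\ast b_1)\ast(\psi_R\ast b_1)\bigr)$ before invoking Lemma~\ref{Lem 6.3}. You instead dominate $|\varphi_{n,t_i,R}|\le C_{n,T}(\psi_R\ast b_1)$ in physical space via $\ind_{\mathbb{B}_{T+1}}\le b_1(T+1)^{-1}b_1$; this is valid, though your justification that ``$b_1>0$ is continuous'' is off --- $b_1$ blows up at the origin when $d\ge 2$ --- and what you actually need is that $b_1$ is radially decreasing and strictly positive, which is true. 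More substantially, you derive \eqref{6.4.2} from \eqref{6.4.1} by a double-limit argument built on uniform-in-$R$ second-moment estimates (your weighted version of Lemma~\ref{Lem muZ mu} is correct, for the reason you give: $\psi_R\ast\psi_R\ast b_\kappa\ast H_r\ge 0$), whereas the paper proves \eqref{6.4.2} directly, repeating the $\mathbf{K_1}+\mathbf{K_2}$ decomposition for $\mathfrak{F}_R$ and handling $\mathbf{K_2}$ by first approximating $\mathrm{Cov}(\sigma(U),\sigma(U))$ uniformly by $\mathrm{Cov}(\sigma(u_m),\sigma(u_m))$ before invoking Lemma~\ref{Lem 6.3} --- a step forced by the fact that Proposition~\ref{Prop cov estimates} is only available for the $u_n$'s and not for $U$ itself. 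Your route avoids ever touching the covariance of $\sigma(U)$, at the cost of building the uniform estimates (essentially the content of the paper's later Lemma~\ref{Lem error limit}, in unnormalized form) inside this proposition. Two minor remarks: the constraint $\kappa_0<1-\beta/2$ is not actually needed (any $\kappa_0\in(0,1)$ works, since $\int\langle\xi\rangle^{-\kappa}|\F\psi_R|^2\mu(d\xi)\le\int|\F\psi_R|^2\mu(d\xi)\lesssim R^{2d-\beta}$ already for $\kappa\ge 0$); and your strict-positivity argument in fact shows the limit in \eqref{6.4.1} is positive for \emph{every} $n\ge 1$ (since $u_{n-1}(0,0)=1$ and $u_{n-1}$ is $L^2(\Omega)$-continuous by Proposition~\ref{Prop u_n property}), which is slightly stronger than the paper's ``for all sufficiently large $n$'' --- both are correct, the paper's weaker form coming as a byproduct of the uniform convergence $\E[\sigma(u_n(\cdot,0))]^2\to\E[\sigma(U(\cdot,0))]^2$.
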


\begin{Rem}\label{Rem rep2}
In the proof of Proposition \ref{Prop F_R covariance limit}, we replace $G(t)$ with $b_1 \in L^1(\R^d)$ by applying the Fourier transform and \eqref{int FG bound}. 
Since $G(t)$ is not a nonnegative distribution in the case $d \geq 4$, it cannot be regarded as a measure on $\R^d$, and consequently, it seems difficult to estimate $L^1(\R^d)$ or $L^{\infty}(\R^d)$-norms of $\varphi_{n,t,R}$.
This replacement makes it possible to use Young's convolution inequality and allows further evaluation, as carried out in the proof of Lemma \ref{Lem 6.3}.
\end{Rem}

\noindent
\textit{Proof of Proposition \ref{Prop F_R covariance limit}.} 
We first show \eqref{6.4.1}.
By \eqref{F_n,R}, we have
\begin{align*}
    &R^{\beta-2d}\E[\mathfrak{F}_{n,R}(t_1)\mathfrak{F}_{n,R}(t_2)] \\
    &= R^{\beta-2d}\int_0^{t_1\land t_2}\int_{\R^{2d}}\varphi_{n,t_1,R}(s,y)\varphi_{n,t_2,R}(s,z)|y-z|^{-\beta}\E[\sigma(\widetilde{u_{n-1}}(s,y))\sigma(\widetilde{u_{n-1}}(s,z))]dydzds\\
    &= R^{\beta-2d}\int_0^{t_1\land t_2} \E[\sigma(u_{n-1}(s,0))]^2 \int_{\R^{2d}}\varphi_{n,t_1,R}(s,y)\varphi_{n,t_2,R}(s,z)|y-z|^{-\beta}dydzds\\
    &\quad + R^{\beta-2d}\int_0^{t_1\land t_2}\int_{\R^{2d}}\varphi_{n,t_1,R}(s,y)\varphi_{n,t_2,R}(s,z)|y-z|^{-\beta}\mathrm{Cov}(\sigma(\widetilde{u_{n-1}}(s,y)), \sigma(\widetilde{u_{n-1}}(s,z)))dydzds\\
    &\coloneqq \mathbf{K_{n,1}} + \mathbf{K_{n,2}},
\end{align*}
where the second equality follows since 
\begin{equation*}
    \E[\sigma(u_{n-1}(s,0))] = \E[\sigma(\widetilde{u_{n-1}}(s,y))]
\end{equation*}
for all $y \in \R^d$ by Lemma \ref{Lem u_n property S}.
Since 
\begin{equation*}
    \sup_{s \in [0,T]}\E[\sigma(u_{n-1}(s,0))]^2 < \infty
\end{equation*}
by Proposition \ref{Prop u_n property}, we deduce from Lemma \ref{Lem 6.2} and the dominated convergence theorem that 
\begin{equation*}
    \lim_{R \to \infty}\mathbf{K_{n,1}} = \tau_{\beta}\int_0^{t_1 \land t_2}(t_1 -s)(t_2 -s)\E[\sigma(u_{n-1}(s,0))]^2ds.
\end{equation*}
Now we evaluate the term $\mathbf{K_{n,2}}$.
Set $\eta_{n-1}(s) = \E[\sigma(u_{n-1}(s,0))]$.
By Proposition \ref{Prop u_n property} and Lemmas \ref{Lem fourier bochner schwartz} and \ref{Lem u_n property S}, there is a nonnegative tempered measure $\mu_s^{\sigma(\widetilde{u_{n-1}}) - \eta_{n-1}}$ such that
\begin{equation*}
    |\cdot|^{-\beta}\E[(\sigma(\widetilde{u_{n-1}}(s,\cdot)) - \eta_{n-1}(s))(\sigma(\widetilde{u_{n-1}}(s,0)) - \eta_{n-1}(s))] = \F \mu_s^{\sigma(\widetilde{u_{n-1}}) - \eta_{n-1}} \quad \text{in $\mathcal{S}_{\C}'(\R^d)$}.
\end{equation*}
Then, by Lemmas \ref{Lem property of FG} and \ref{Lem 6.3}, we use the Fourier transform to obtain
\begin{align*}
    |\mathbf{K_{n,2}}|
    &=R^{\beta-2d}\left|\int_0^{t_1\land t_2}\int_{\R^{d}}|\F \psi_R(\xi)|^2|\F\Lambda_n(\xi)|^2\F G(t_1-s)(\xi)\overline{\F G(t_2 -s)(\xi)}\mu_s^{\sigma(\widetilde{u_{n-1}}) - \eta_{n-1}}(d\xi)ds \right|\\
    &\lesssim_{T} R^{\beta-2d}\int_0^{t_1\land t_2}\int_{\R^{d}}|\F \psi_R(\xi)|^2\langle \xi \rangle^{-2}\mu_s^{\sigma(\widetilde{u_{n-1}}) - \eta_{n-1}}(d\xi)ds\\
    &= R^{\beta-2d}\int_0^{t_1\land t_2}\int_{\R^{2d}}(\psi_R \ast b_1)(y)(\psi_R \ast b_1)(z)|y-z|^{-\beta}\mathrm{Cov}(\sigma(u_{n-1}(s,y)), \sigma(u_{n-1}(s,z)))dydzds\\
    &\lesssim_{T,\beta}\Theta(n-1,T)^2R^{-\beta} \xrightarrow[R \to \infty]{} 0.
\end{align*}
Here the third line follows since $\F \psi_R(\cdot) \langle \cdot \rangle^{-1} \in \mathcal{S}_{\C}(\R^d)$ and so $\psi_R \ast b_1 \in \mathcal{S}(\mathbb{R}^d)$.

Next, we show \eqref{6.4.2}.
The same computation shows that
\begin{align*}
    &R^{\beta-2d}\E[\mathfrak{F}_{R}(t_1)\mathfrak{F}_{R}(t_2)]\\
    &= R^{\beta-2d} \int_0^{t_1\land t_2} \E[\sigma(U(s,0))]^2 \int_{\R^{2d}}\varphi_{t_1,R}(s,y)\varphi_{t_2,R}(s,z)|y-z|^{-\beta}dydzds\\
    &\quad + R^{\beta-2d}\int_0^{t_1\land t_2}\int_{\R^{2d}}\varphi_{t_1,R}(s,y)\varphi_{t_2,R}(s,z)|y-z|^{-\beta}\mathrm{Cov}(\sigma(U(s,y)), \sigma(U(s,z)))dydzds\\
    &\coloneqq \mathbf{K_1} + \mathbf{K_2}.
\end{align*}
Using Lemma \ref{Lem 6.2} and the dominated convergence theorem again, we have
\begin{equation*}
    \lim_{R \to \infty}\mathbf{K_1}
    = \tau_{\beta}\int_0^{t_1 \land t_2}(t_1 -s)(t_2 -s)\E[\sigma(U(s,0))]^2ds
\end{equation*}
and are reduced to proving $\lim_{R \to \infty}\mathbf{K}_2 = 0$.
Note that $U$ satisfies \eqref{U uniform bound} and the property (S).
The same argument as in the evaluation of $\mathbf{K_{n,2}}$ yields
\begin{equation*}
    |\mathbf{K_2}| 
    \lesssim_T R^{\beta-2d}\int_0^{t_1\land t_2}\int_{\R^{2d}}(\psi_R \ast b_1)(y)(\psi_R \ast b_1)(z)|y-z|^{-\beta}\mathrm{Cov}(\sigma(U(s,y)), \sigma(U(s,z)))dydzds.
\end{equation*}
By \eqref{U uniform bound} and Propositions \ref{Prop u_n uniform bound} and \ref{Prop u_n convergence to U}, it is easy to check that 
\begin{equation*}
    \lim_{m\to \infty}\sup_{t \in [0,T]} \sup_{(y,z) \in \mathbb{R}^{2d}} |\mathrm{Cov}(\sigma(U(t,y)),\sigma(U(t,z))) - \mathrm{Cov}(\sigma(u_m(t,y)),\sigma(u_m(t,z)))| = 0.
\end{equation*}
Thus, for any $\varepsilon > 0$, we can find $m({\varepsilon})$ large enough such that
\begin{align}
    |\mathbf{K_2}| 
    &\lesssim_T R^{\beta-2d}\int_0^{t_1\land t_2}\int_{\R^{2d}}(\psi_R \ast b_1)(y)(\psi_R \ast b_1)(z)|y-z|^{-\beta}\mathrm{Cov}(\sigma(u_{m(\varepsilon)}(s,y)), \sigma(u_{m(\varepsilon)}(s,z)))dydzds \nonumber\\
    &\quad +\varepsilon R^{\beta-2d}\int_0^{t_1\land t_2}\int_{\R^{2d}}(\psi_R \ast b_1)(y)(\psi_R \ast b_1)(z)|y-z|^{-\beta}dydz. \nonumber\\
    &\coloneqq \mathbf{K_{21}} + \varepsilon\mathbf{K_{22}}. \nonumber
\end{align}
It follows from Lemma \ref{Lem 6.3} that
\begin{equation}
\label{6.4.3}
    \mathbf{K_{21}} \lesssim_T \Theta(m(\varepsilon),T)^2R^{-\beta}\xrightarrow[R \to \infty]{} 0
\end{equation}
for each fixed $\varepsilon$.
For the term $\mathbf{K_{22}}$, we can apply the Fourier transform to obtain
\begin{align}
    \mathbf{K_{22}} 
    &\leq R^{\beta-2d}c_{\beta}\int_0^{t_1\land t_2}\int_{\R^{d}}|\F \psi_R(\xi)|^2|\xi|^{\beta - d}d\xi \nonumber\\
    &= \int_0^{t_1\land t_2}\int_{\R^{2d}}\psi_R(Ry)\psi_R(Rz)|y-z|^{-\beta}dydz \nonumber\\
    &\leq T\int_{\mathbb{B}_2}|y-z|^{-\beta}dydz. \label{6.4.4}
\end{align}
Combining \eqref{6.4.3} and \eqref{6.4.4}, we obtain $\lim_{R \to \infty}|\mathbf{K_2}| = 0$, and \eqref{6.4.2} is proved.

Finally, the $L^2(\Omega)$-continuity of $(t,x) \mapsto U(t,x)$ implies that there are $\delta > 0$ such that for any $s \in [0,\delta]$, 
\begin{equation*}
    \left|\E[\sigma(U(s, 0))]^2 - \E[\sigma(U(0,0))]^2\right| \leq \frac{|\sigma(1)|^2}{2}.
\end{equation*}
Since $\E[\sigma(U(0,0))]^2 = |\sigma(1)|^2$, it follows that for any $s \in [0,\delta]$, 
\begin{equation}
    \E[\sigma(U(s,0))]^2 \geq \frac{|\sigma(1)|^2}{2} > 0 \label{posi estima}
\end{equation}
Moreover, by \eqref{U uniform bound} and Propositions \ref{Prop u_n uniform bound} and \ref{Prop u_n convergence to U}, we have
\begin{align}
    \lim_{n \to \infty}\sup_{s \in [0,T]}|\E[\sigma(U(s,0))]^2 - \E[\sigma(u_n(s,0))]^2| = 0.
\end{align}
 Thus, using \eqref{posi estima}, we can derive 
\begin{equation}
    \E[\sigma(u_{n-1}(s,0))]^2 \geq \frac{|\sigma(1)|^2}{4} > 0, \qquad s \in [0, \delta],
\end{equation}
for all sufficiently large $n$.
This completes the proof.
\qed 

\vspace{2ex}
The same result also holds for $F_R(t)$. 
\begin{Cor}
\label{Cor nonfrak F_R limit}
Let $t_1, t_2 \in [0,T]$. It holds that
\begin{equation*}
    \lim_{R \to \infty} R^{\beta-2d}\E[F_{R}(t_1)F_{R}(t_2)] = \tau_{\beta}\int_0^{t_1\land t_2}(t_1-s)(t_2-s)\E[\sigma(U(s,0))]^2ds. 
\end{equation*}
In particular, when $t_1 \land t_2 > 0$, the right-hand side is strictly positive.
\end{Cor}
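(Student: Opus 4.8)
The plan is to deduce this corollary from Proposition \ref{Prop F_R covariance limit} by showing that replacing the sharp cutoff $\ind_{\mathbb{B}_R}$ in $F_R$ by the smooth cutoff $\psi_R$ in $\mathfrak{F}_R$ is negligible after the normalization $R^{\frac{\beta}{2}-d}$. Concretely, I would set $\rho_R \coloneqq \psi_R - \ind_{\mathbb{B}_R}$, which is a radial function with $0 \le \rho_R \le \ind_{\mathbb{B}_{R+1}\setminus\mathbb{B}_R}$, so that $F_R(t) - \mathfrak{F}_R(t) = -\int_{\R^d}(U(t,x)-1)\rho_R(x)dx$ and, crucially, $\lVert\rho_R\rVert_{L^1(\R^d)} \le |\mathbb{B}_{R+1}\setminus\mathbb{B}_R| \lesssim R^{d-1}$ — one power of $R$ smaller than the naive bound $R^d$ because the annulus is thin. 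The goal is then to prove $R^{\frac{\beta}{2}-d}\lVert F_R(t)-\mathfrak{F}_R(t)\rVert_2 \to 0$; the conclusion follows immediately from this together with \eqref{6.4.2}.

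First I would compute, using $\E[U(t,x)]=1$ for every $(t,x)$ (the stochastic integral in \eqref{solution integral equation} has mean zero) and Fubini's theorem, that $\lVert F_R(t)-\mathfrak{F}_R(t)\rVert_2^2 = \int_{\R^{2d}}\rho_R(x)\rho_R(y)\mathrm{Cov}(U(t,x),U(t,y))dxdy$. Expressing $\mathrm{Cov}(U(t,x),U(t,y))$ through the isometry of Corollary \ref{Cor G(t-s,x-y) in P_0Z} (polarized, with $Z=\sigma(U)$) and the identity $\F((\kappa_xG(t-s))^c)(\xi)=e^{2\pi\sqrt{-1}\xi\cdot x}\F G(t-s)(\xi)$ gives $\mathrm{Cov}(U(t,x),U(t,y)) = \int_0^t\int_{\R^d}e^{2\pi\sqrt{-1}\xi\cdot(x-y)}|\F G(t-s)(\xi)|^2\mu_s^{\sigma(U)}(d\xi)ds$, where $\mu_s^{\sigma(U)}$ is the spectral measure of $\gamma^{\sigma(U)}(s,\cdot)$. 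Interchanging integrals once more (justified by Lemma \ref{Lem property of FG}, Lemma \ref{Lem muZ mu}, Dalang's condition, and $\rho_R\in L^1(\R^d)$) then yields the clean formula $\lVert F_R(t)-\mathfrak{F}_R(t)\rVert_2^2 = \int_0^t\int_{\R^d}|\F\rho_R(\xi)|^2|\F G(t-s)(\xi)|^2\mu_s^{\sigma(U)}(d\xi)ds$.

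Next I would estimate the right-hand side. By (ii) of Lemma \ref{Lem property of FG}, $|\F G(t-s)(\xi)|^2 \le (1+2T^2)\langle\xi\rangle^{-2}$. Since $|\F\rho_R(\xi)|^2\langle\xi\rangle^{-2}=\F(\rho_R\ast\rho_R\ast b_2)(\xi)$ is the Fourier transform of a nonnegative integrable function, the argument in the proof of Lemma \ref{Lem muZ mu} (with the regularizing factor $e^{-4\pi^2 r|\xi|^2}$, then $r\to0$) applies verbatim with test function $|\F\rho_R|^2\langle\xi\rangle^{-2}$ in place of $\langle\xi\rangle^{-2}$, giving $\int_{\R^d}|\F\rho_R(\xi)|^2\langle\xi\rangle^{-2}\mu_s^{\sigma(U)}(d\xi) \le \sup_{\eta}\lVert\sigma(U(s,\eta))\rVert_2^2\int_{\R^d}|\F\rho_R(\xi)|^2\langle\xi\rangle^{-2}\mu(d\xi)$. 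With $\mu(d\xi)=c_\beta|\xi|^{\beta-d}d\xi$, the bound $|\F\rho_R(\xi)|^2\le\lVert\rho_R\rVert_{L^1}^2\lesssim R^{2d-2}$, and the fact that $\int_{\R^d}\langle\xi\rangle^{-2}|\xi|^{\beta-d}d\xi<\infty$ (the integrand is $\sim|\xi|^{\beta-d}$ near $0$ and $\sim|\xi|^{\beta-d-2}$ at infinity, both integrable precisely because $0<\beta<2$), together with $\sup_{(s,\eta)\in[0,T]\times\R^d}\lVert\sigma(U(s,\eta))\rVert_2<\infty$, I obtain $R^{\beta-2d}\lVert F_R(t)-\mathfrak{F}_R(t)\rVert_2^2 \lesssim_{T,\beta} R^{\beta-2}\to 0$.

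Finally, since $R^{\frac{\beta}{2}-d}\lVert\mathfrak{F}_R(t)\rVert_2$ is bounded by Proposition \ref{Prop F_R covariance limit} (take $t_1=t_2=t$), the estimate just obtained forces $R^{\frac{\beta}{2}-d}\lVert F_R(t)\rVert_2$ to be bounded as well; writing $F_R(t_i)=\mathfrak{F}_R(t_i)+(F_R(t_i)-\mathfrak{F}_R(t_i))$, expanding $\E[F_R(t_1)F_R(t_2)]$, and bounding the three cross terms by Cauchy--Schwarz shows $R^{\beta-2d}|\E[F_R(t_1)F_R(t_2)]-\E[\mathfrak{F}_R(t_1)\mathfrak{F}_R(t_2)]|\to 0$, so the limit follows from \eqref{6.4.2} and its strict positivity for $t_1\land t_2>0$ is inherited from Proposition \ref{Prop F_R covariance limit}. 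The main obstacle — and the only genuinely new point — is the third step: one must avoid the crude estimate $\lVert\rho_R\rVert_{L^1}\lesssim R^d$ (which would produce the useless factor $R^\beta$) and instead exploit the thinness of the annulus, and one must notice that the resulting weight $\langle\xi\rangle^{-2}|\xi|^{\beta-d}$ is globally integrable exactly under the standing hypothesis $\beta<2$; the Fourier-side bookkeeping (the two applications of Fubini and the $\mu_s^{\sigma(U)}$-to-$\mu$ comparison) is routine but should be done with the Gaussian regularization as in Lemma \ref{Lem muZ mu}.
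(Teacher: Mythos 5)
Your proof is correct, but it is considerably more elaborate than the paper's own argument, which simply applies Minkowski's inequality in $L^2(\Omega)$: since $F_R(t)-\mathfrak{F}_R(t) = -\int_{\R^d}(U(t,x)-1)\rho_R(x)\,dx$ with $\rho_R = \psi_R - \ind_{\mathbb{B}_R}$, one has directly $\lVert F_R(t)-\mathfrak{F}_R(t)\rVert_2 \le \int_{\R^d}\rho_R(x)\lVert U(t,x)-1\rVert_2\,dx \le \bigl(\sup_{(\tau,\eta)}\lVert U(\tau,\eta)\rVert_2 + 1\bigr)\,|\mathbb{B}_{R+1}\setminus\mathbb{B}_R| \lesssim R^{d-1}$, which yields $R^{\beta/2-d}\lVert F_R(t)-\mathfrak{F}_R(t)\rVert_2 \lesssim R^{\beta/2 - 1}\to 0$ without any Fourier-side machinery. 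Your route — polarizing the isometry of Corollary \ref{Cor G(t-s,x-y) in P_0Z}, invoking the $\mu_s^{\sigma(U)}$-to-$\mu$ comparison of Lemma \ref{Lem muZ mu} (correctly adapted, since $\rho_R\ast\rho_R\ast b_2$ is nonnegative and integrable by the radial symmetry of $\rho_R$), and then bounding $|\F\rho_R|\le\lVert\rho_R\rVert_{L^1}\lesssim R^{d-1}$ — lands on exactly the same power $R^{\beta/2-1}$, because you ultimately only used the $L^1$ size of $\rho_R$. So nothing is gained by passing through the spectral representation here; the crucial point in both arguments is the same, namely that the support of $\rho_R$ is a shell of width $O(1)$ and hence $\lVert\rho_R\rVert_{L^1}\lesssim R^{d-1}$, together with $\beta<2$. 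The final step of both proofs — using boundedness of $R^{\beta/2-d}\lVert\mathfrak{F}_R(t)\rVert_2$ from Proposition \ref{Prop F_R covariance limit} and a Cauchy--Schwarz estimate of the cross terms — is identical. Your remark that the naive estimate $\lVert\rho_R\rVert_{L^1}\lesssim R^d$ would be fatal is accurate; the paper avoids it in the same way, by the thin-annulus observation.
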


\begin{proof}
By Minkowski's inequality and \eqref{U uniform bound}, 
\begin{equation}
    R^{\frac{\beta}{2} - d}\lVert F_R(t) - \mathfrak{F}_R(t) \rVert_2 \leq \left(\sup_{(\tau,\eta) \in [0,T] \times \R^d}\lVert U(\tau, \eta) \rVert_2 + 1 \right)R^{\frac{\beta}{2} - d} |\mathbb{B}_{R+1}\setminus \mathbb{B}_R|. \label{l0}
\end{equation}
Because $0<\beta<2$ and $|\mathbb{B}_{R+1}\setminus \mathbb{B}_R|$ is a polynomial in $R$ of degree at most $d-1$, the right-hand side above converges to zero as $R \to \infty$.
This and Proposition \ref{Prop F_R covariance limit} show that there exists $R_t$ such that
\begin{equation}
    \sup_{R \geq R_t}R^{\frac{\beta}{2} - d}\lVert F_R(t) \rVert_2 \leq \sup_{R \geq R_t}R^{\frac{\beta}{2} - d}\left(\lVert F_R(t) - \mathfrak{F}_R(t) \rVert_2 + \lVert \mathfrak{F}_R(t) \rVert_2\right) < \infty. \label{l1}
\end{equation}
From \eqref{l0} and \eqref{l1}, it can be shown that
\begin{equation*}
    \lim_{R \to \infty} R^{\beta - 2d}|\E[F_R(t_1)F_R(t_2)] - \E[\mathfrak{F}_R(t_1)\mathfrak{F}_R(t_2)]| = 0,
\end{equation*}
and the proof is complete.
\end{proof}

\begin{Rem}
In this section, the condition $\sigma(1) \neq 0$ is used only to guarantee the positivity of the limits of the covariances.
The converse is also true: If $\sigma(1) = 0$, then the approximation scheme \eqref{u_(n+1)} yields that $u_n(t,x) \equiv 1$ and $U(t,x) \equiv 1$, and so $\mathfrak{F}_{n,R}(t) = \mathfrak{F}_R(t) = F_R(t) = 0$.
See \cite[Lemma 3.4]{MR4164864} for more details.
\end{Rem}

\section{Proof of Theorem \ref{Thm m1}}
\label{section Proof of Theorem m1}
The first part of the Theorem \ref{Thm m1} follows from Proposition \ref{Prop F_R covariance limit}.
Now, let us assume $t \in (0,T]$. 
Recall that $\sigma_{R}(t) = \sqrt{\mathrm{Var}(F_R(t))}$.
Let us write 
\begin{equation*}
     \mathfrak{v}_R(t) \coloneqq \sqrt{\mathrm{Var}(\mathfrak{F}_{R}(t))}\quad \text{and} \quad \mathfrak{v}_{n,R}(t) \coloneqq \sqrt{\mathrm{Var}(\mathfrak{F}_{n,R}(t))}.
\end{equation*}
Let $n$ be large enough so that the limit \eqref{6.4.1} is positive when $t_1 = t_2 = t$.
By Proposition \ref{Prop F_R covariance limit} and Corollary \ref{Cor nonfrak F_R limit}, we can find $\widetilde{R}_t \geq 1$ for every $t \in (0,T]$ such that $\mathfrak{v}_{n,R}(t) > 0$, $\mathfrak{v}_{R}(t) > 0$, and $\sigma_{R}(t) > 0$ for any $R \geq \widetilde{R}_t$ and that
\begin{equation}
\label{var finite order cond}
    \sup_{R \geq \widetilde{R}_t}\frac{R^{2d-\beta}}{\mathfrak{v}^2_{n,R}(t)} + \sup_{R \geq \widetilde{R}_t}\frac{R^{2d-\beta}}{\mathfrak{v}^2_{R}(t)} < \infty.
\end{equation}
Taking into account that the Wasserstein distance $d_{\mathrm{W}}$ can be dominated by the $L^2(\Omega)$ distance, we have 
\begin{equation}
\label{distance bound}
    d_{\mathrm{W}}\left( \frac{F_R(t)}{\sigma_{R}(t)},\mathcal{N}(0,1) \right)
    \leq \left \lVert \frac{F_{R}(t)}{\sigma_{R}(t)} - \frac{\mathfrak{F}_{R}(t)}{\mathfrak{v}_{R}(t)} \right\rVert_2 + \left \lVert  \frac{\mathfrak{F}_{R}(t)}{\mathfrak{v}_{R}(t)} - \frac{\mathfrak{F}_{n,R}(t)}{\mathfrak{v}_{n,R}(t)} \right\rVert_2 +
    d_{\mathrm{W}}\left( \frac{\mathfrak{F}_{n,R}(t)}{\mathfrak{v}_{n,R}(t)},\mathcal{N}(0,1) \right).
\end{equation}
To show the convergence \eqref{Thm m1 conv}, let us start with the following lemma.

\begin{Lem}
\label{Lem first term limit}
For any $t \in (0,T]$, we have
\begin{equation*}
    \lim_{R \to \infty}\left \lVert  \frac{F_{R}(t)}{\sigma_{R}(t)} - \frac{\mathfrak{F}_{R}(t)}{\mathfrak{v}_{R}(t)} \right\rVert_2 = 0.
\end{equation*}
\end{Lem}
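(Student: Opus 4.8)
The plan is to decompose the difference into a contribution from $F_R(t) - \mathfrak{F}_R(t)$ and a contribution from the mismatch of the two normalizing constants, and then to control each piece using the covariance asymptotics already established in Section \ref{section Limit of the covariance functions}.

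First I would record the elementary facts needed. Since the stochastic integral representation gives $\E[U(t,x)] = 1$, both $F_R(t)$ and $\mathfrak{F}_R(t)$ are centered, so $\sigma_R(t) = \lVert F_R(t)\rVert_2$ and $\mathfrak{v}_R(t) = \lVert \mathfrak{F}_R(t)\rVert_2$. Fix $t \in (0,T]$ and write $c_t \coloneqq \tau_{\beta}\int_0^t (t-s)^2\E[\sigma(U(s,0))]^2\,ds$, which is strictly positive by Proposition \ref{Prop F_R covariance limit}. Applying Corollary \ref{Cor nonfrak F_R limit} and Proposition \ref{Prop F_R covariance limit} with $t_1 = t_2 = t$ yields $R^{\beta-2d}\sigma_R^2(t) \to c_t$ and $R^{\beta-2d}\mathfrak{v}_R^2(t) \to c_t$, equivalently $R^{\frac{\beta}{2}-d}\sigma_R(t) \to \sqrt{c_t}$ and $R^{\frac{\beta}{2}-d}\mathfrak{v}_R(t) \to \sqrt{c_t}$; in particular $\sigma_R(t), \mathfrak{v}_R(t) > 0$ for all large $R$, so the quotients make sense. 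Moreover, inequality \eqref{l0} from the proof of Corollary \ref{Cor nonfrak F_R limit} shows $R^{\frac{\beta}{2}-d}\lVert F_R(t) - \mathfrak{F}_R(t)\rVert_2 \to 0$ as $R \to \infty$, because $|\mathbb{B}_{R+1}\setminus\mathbb{B}_R|$ is a polynomial in $R$ of degree at most $d-1$ while $\frac{\beta}{2}-d < 1-d$.

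Next I would use the algebraic identity $\frac{F_R(t)}{\sigma_R(t)} - \frac{\mathfrak{F}_R(t)}{\mathfrak{v}_R(t)} = \frac{F_R(t) - \mathfrak{F}_R(t)}{\sigma_R(t)} + \mathfrak{F}_R(t)\bigl(\tfrac{1}{\sigma_R(t)} - \tfrac{1}{\mathfrak{v}_R(t)}\bigr)$ and bound the $L^2(\Omega)$ norm of each summand. For the first term, $\lVert F_R(t) - \mathfrak{F}_R(t)\rVert_2/\sigma_R(t) = \bigl(R^{\frac{\beta}{2}-d}\lVert F_R(t) - \mathfrak{F}_R(t)\rVert_2\bigr)\big/\bigl(R^{\frac{\beta}{2}-d}\sigma_R(t)\bigr) \to 0/\sqrt{c_t} = 0$. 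For the second term, using $\lVert \mathfrak{F}_R(t)\rVert_2 = \mathfrak{v}_R(t)$ together with the reverse triangle inequality $|\mathfrak{v}_R(t) - \sigma_R(t)| = \bigl|\lVert \mathfrak{F}_R(t)\rVert_2 - \lVert F_R(t)\rVert_2\bigr| \le \lVert \mathfrak{F}_R(t) - F_R(t)\rVert_2$, I obtain $\bigl\lVert \mathfrak{F}_R(t)(\tfrac{1}{\sigma_R(t)} - \tfrac{1}{\mathfrak{v}_R(t)})\bigr\rVert_2 = |\mathfrak{v}_R(t) - \sigma_R(t)|/\sigma_R(t) \le \lVert \mathfrak{F}_R(t) - F_R(t)\rVert_2/\sigma_R(t)$, which tends to $0$ by the same normalization argument. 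Adding the two estimates gives the claim.

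There is no genuine obstacle in this lemma: it is a soft consequence of the already-proven convergence of the normalized second moments to the common positive limit $c_t$ and of the crude $o(R^{d-\beta/2})$ bound on $\lVert F_R(t) - \mathfrak{F}_R(t)\rVert_2$ extracted in \eqref{l0}. The only points that require a moment's care are checking that the denominators $\sigma_R(t)$, $\mathfrak{v}_R(t)$ are eventually nonzero (which is exactly the positivity of $c_t$, hence ultimately the hypothesis $\sigma(1) \neq 0$) and verifying the degree count in \eqref{l0} that forces $R^{\frac{\beta}{2}-d}|\mathbb{B}_{R+1}\setminus\mathbb{B}_R| \to 0$.
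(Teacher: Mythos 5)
Your proof is correct and follows essentially the same route as the paper: you split the difference into $\frac{F_R(t)-\mathfrak{F}_R(t)}{\sigma_R(t)}$ plus $\mathfrak{F}_R(t)\bigl(\frac{1}{\sigma_R(t)}-\frac{1}{\mathfrak{v}_R(t)}\bigr)$, use the reverse triangle inequality to reduce the second term to $\lVert F_R(t)-\mathfrak{F}_R(t)\rVert_2/\sigma_R(t)$, and finish with \eqref{l0} together with the covariance asymptotics from Proposition \ref{Prop F_R covariance limit} and Corollary \ref{Cor nonfrak F_R limit}. The only (inessential) difference is which normalizing constant ends up in the denominator of the final bound --- the paper arrives at $\frac{2}{\mathfrak{v}_R(t)}\lVert F_R(t)-\mathfrak{F}_R(t)\rVert_2$ and controls it via \eqref{var finite order cond}, whereas you arrive at $\frac{2}{\sigma_R(t)}\lVert F_R(t)-\mathfrak{F}_R(t)\rVert_2$ and use $R^{\beta/2-d}\sigma_R(t)\to\sqrt{c_t}>0$; these are interchangeable.
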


\begin{proof}
Noting that $\sigma_{R}(t) = \lVert F_{R}(t) \rVert_2$ and $\mathfrak{v}_{R}(t) = \lVert \mathfrak{F}_R(t) \rVert_2$ hold, we have for any $R \geq \widetilde{R}_t$ that 
\begin{align*}
    \left \lVert \frac{F_{R}(t)}{\sigma_{R}(t)} - \frac{\mathfrak{F}_{R}(t)}{\mathfrak{v}_{R}(t)} \right\rVert_2
    &\leq \frac{1}{\mathfrak{v}_{R}(t)}\lVert F_{R}(t) - \mathfrak{F}_{R}(t) \rVert_2 + \frac{|\sigma_{R}(t) - \mathfrak{v}_{R}(t)|}{\sigma_{R}(t)\mathfrak{v}_{R}(t)}\lVert F_{R}(t) \rVert_2 \nonumber \\
    &\leq \frac{2}{\mathfrak{v}_{R}(t)}\lVert F_{R}(t) - \mathfrak{F}_{R}(t) \rVert_2 \xrightarrow[R \to \infty]{} 0,
\end{align*}
where the last limit follows from \eqref{l0} and \eqref{var finite order cond}.
\end{proof}

Next, we show that for a sufficiently large $n$, the $L^2(\Omega)$ norm of the error in replacing $\mathfrak{F}_R(t)$ with $\mathfrak{F}_{n,R}(t)$ is negligible uniformly for large $R$.

\begin{Lem}
\label{Lem error limit}
Let $t \in (0,T]$ and $\widetilde{R}_t > 0$ be as above.
It holds that
\begin{equation*}
    \lim_{n \to \infty} \sup_{R \geq \widetilde{R}_t} \left \lVert  \frac{\mathfrak{F}_{R}(t)}{\mathfrak{v}_{R}(t)} - \frac{\mathfrak{F}_{n,R}(t)}{\mathfrak{v}_{n,R}(t)} \right\rVert_2  = 0.
\end{equation*}
\end{Lem}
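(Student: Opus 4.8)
We only sketch the argument. The plan is to reduce, by elementary algebra and the lower bound \eqref{var finite order cond}, to showing
\begin{equation*}
    \lim_{n\to\infty}\sup_{R\ge1}R^{\beta-2d}\lVert \mathfrak{F}_R(t)-\mathfrak{F}_{n,R}(t)\rVert_2^2 = 0 ,
\end{equation*}
and then to estimate the right-hand side in Fourier variables. Indeed, writing $a=\mathfrak{F}_R(t)$, $b=\mathfrak{F}_{n,R}(t)$ and using $\mathfrak{v}_R(t)=\lVert a\rVert_2$, $\mathfrak{v}_{n,R}(t)=\lVert b\rVert_2$, the identity $\tfrac{a}{\lVert a\rVert_2}-\tfrac{b}{\lVert b\rVert_2}=\tfrac{a-b}{\lVert a\rVert_2}+b\,\tfrac{\lVert b\rVert_2-\lVert a\rVert_2}{\lVert a\rVert_2\lVert b\rVert_2}$ together with $\bigl|\lVert b\rVert_2-\lVert a\rVert_2\bigr|\le\lVert a-b\rVert_2$ gives, for $R\ge\widetilde{R}_t$,
\begin{equation*}
    \left\lVert \frac{\mathfrak{F}_R(t)}{\mathfrak{v}_R(t)}-\frac{\mathfrak{F}_{n,R}(t)}{\mathfrak{v}_{n,R}(t)}\right\rVert_2 \le \frac{2}{\mathfrak{v}_R(t)}\lVert \mathfrak{F}_R(t)-\mathfrak{F}_{n,R}(t)\rVert_2 \lesssim_t R^{\frac{\beta}{2}-d}\lVert \mathfrak{F}_R(t)-\mathfrak{F}_{n,R}(t)\rVert_2 ,
\end{equation*}
where the last step uses \eqref{var finite order cond}. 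The naive bound $\lVert\mathfrak{F}_R(t)-\mathfrak{F}_{n,R}(t)\rVert_2\le\int_{\mathbb{B}_{R+1}}\lVert U(t,x)-\widetilde{u_n}(t,x)\rVert_2\,dx$ is only of order $R^d$ and hence useless after the normalization by $R^{\frac{\beta}{2}-d}$; recovering the missing factor $R^{\beta/2}$ is the heart of the matter, and it is what forces us to work on the Fourier side and to use that $\gamma(x)=|x|^{-\beta}$.

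By Remark \ref{Rem F rep divergence}, $\mathfrak{F}_R(t)-\mathfrak{F}_{n,R}(t)=\delta(Q_{t,R}-Q_{n,t,R})$ with
\begin{equation*}
    Q_{t,R}-Q_{n,t,R}=(\varphi_{t,R}-\varphi_{n,t,R})\sigma(U)+\varphi_{n,t,R}\bigl(\sigma(U)-\sigma(\widetilde{u_{n-1}})\bigr),
\end{equation*}
and both summands belong to $\mathcal{P}_{+}$, so $\lVert\mathfrak{F}_R(t)-\mathfrak{F}_{n,R}(t)\rVert_2^2\le 2\mathbf{A}_{1}+2\mathbf{A}_{2}$, where $\mathbf{A}_{1}$ (resp.\ $\mathbf{A}_{2}$) is the squared $L^2(\Omega)$ norm of the Dalang integral of the first (resp.\ second) summand. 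Using Remark \ref{Rem 2.2} to rewrite these as integrals against $W^{\sigma(U)}$ and $W^{\sigma(U)-\sigma(\widetilde{u_{n-1}})}$, invoking Lemma \ref{Lem fourier bochner schwartz} (whose hypotheses hold by the property (S) of $U$ and, for the second process, by Lemma \ref{Lem V_n property S}) to obtain the spectral measures $\mu_s^{\sigma(U)}$ and $\mu_s^{\sigma(U)-\sigma(\widetilde{u_{n-1}})}$, and applying the isometry together with $\F G_n=\F\Lambda_n\,\F G$, one gets
\begin{align*}
    \mathbf{A}_{1} &= \int_0^t\!\!\int_{\R^d}|\F\psi_R(\xi)|^2\,|\F G(t-s)(\xi)|^2\,|1-\F\Lambda_n(\xi)|^2\,\mu_s^{\sigma(U)}(d\xi)\,ds,\\
    \mathbf{A}_{2} &= \int_0^t\!\!\int_{\R^d}|\F\psi_R(\xi)|^2\,|\F\Lambda_n(\xi)|^2\,|\F G(t-s)(\xi)|^2\,\mu_s^{\sigma(U)-\sigma(\widetilde{u_{n-1}})}(d\xi)\,ds .
\end{align*}

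For $\mathbf{A}_{1}$ the key estimate is $|\F G(t-s)(\xi)|^2\,|1-\F\Lambda_n(\xi)|^2\lesssim_T a_n^{-1}\langle\xi\rangle^{-1}$, which follows from $|\F G(t-s)(\xi)|^2\le(1+2T^2)\langle\xi\rangle^{-2}$ (Lemma \ref{Lem property of FG}), the elementary bound $|1-\F\Lambda_n(\xi)|\le C\min(1,|\xi|/a_n)$ coming from the smoothness of $\F\Lambda$ with $\F\Lambda(0)=1$, and $\min(c,d)\le\sqrt{cd}$. Since $\F b_{1/2}(\xi)=\langle\xi\rangle^{-1/2}$ with $b_{1/2}$ a nonnegative $L^1$ function (Section \ref{subsection Some technical tools}), we have $|\F\psi_R(\xi)|^2\langle\xi\rangle^{-1}=|\F(\psi_R\ast b_{1/2})(\xi)|^2$ with $\psi_R\ast b_{1/2}\ge0$, and passing back to physical space,
\begin{equation*}
    \mathbf{A}_{1} \lesssim_T \frac{1}{a_n}\int_0^t\!\!\int_{\R^{2d}}(\psi_R\ast b_{1/2})(y)(\psi_R\ast b_{1/2})(z)\,|y-z|^{-\beta}\,\E[\sigma(U(s,y))\sigma(U(s,z))]\,dy\,dz\,ds .
\end{equation*}
Here the nonnegativity of $\psi_R\ast b_{1/2}$ is essential: it allows us to bound the signed correlation by $\sup_{(s,y)}\lVert\sigma(U(s,y))\rVert_2^2<\infty$, which is precisely the maneuver that circumvents the obstruction from $G$ not being a nonnegative distribution. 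One then checks, by writing out the convolutions, changing variables, and using $\lVert b_{1/2}\rVert_1=1$ together with the rapid decay of $b_{1/2}$, that $\int_{\R^{2d}}(\psi_R\ast b_{1/2})(y)(\psi_R\ast b_{1/2})(z)|y-z|^{-\beta}\,dy\,dz\lesssim_{\beta,d}R^{2d-\beta}$ for $R\ge1$; hence $\sup_{R\ge1}R^{\beta-2d}\mathbf{A}_{1}\lesssim_{T,\beta,d}a_n^{-1}\to0$ as $n\to\infty$, since $a_n\to\infty$. For $\mathbf{A}_{2}$ no decay in $n$ from $\Lambda_n$ is needed: using $|\F\Lambda_n|\le1$, $|\F G(t-s)(\xi)|^2\le(1+2T^2)\langle\xi\rangle^{-2}$ and $|\F\psi_R(\xi)|^2\langle\xi\rangle^{-2}=|\F(\psi_R\ast b_1)(\xi)|^2$ with $\psi_R\ast b_1\ge0$, the same reasoning, now bounding the correlation of $\sigma(U)-\sigma(\widetilde{u_{n-1}})$ by $\sigma_{\mathrm{Lip}}^2\sup_{(r,\zeta)}\lVert U(r,\zeta)-u_{n-1}(r,\zeta)\rVert_2^2$ via the Lipschitz property of $\sigma$ and the same $R^{2d-\beta}$ space bound with $b_1$ in place of $b_{1/2}$, yields $\sup_{R\ge1}R^{\beta-2d}\mathbf{A}_{2}\lesssim_{T,\beta,d}\sigma_{\mathrm{Lip}}^2\sup_{(r,\zeta)}\lVert U(r,\zeta)-u_{n-1}(r,\zeta)\rVert_2^2$, which tends to $0$ by Proposition \ref{Prop u_n convergence to U}. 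Combining these bounds with Step 1 completes the proof.
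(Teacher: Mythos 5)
Your proof is correct and follows the paper's strategy: the same reduction $\lVert \mathfrak{F}_R/\mathfrak{v}_R - \mathfrak{F}_{n,R}/\mathfrak{v}_{n,R}\rVert_2 \le 2\mathfrak{v}_R^{-1}\lVert\mathfrak{F}_R-\mathfrak{F}_{n,R}\rVert_2$, the same two-term decomposition of $Q_{t,R}-Q_{n,t,R}$, the same passage to Fourier variables via Lemmas \ref{Lem fourier bochner schwartz} and \ref{Lem V_n property S}, and the same use of $|\F G|^2\lesssim_T\langle\xi\rangle^{-2}$ together with the positivity of $\psi_R\ast b_\alpha$ to return to physical space.

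The one genuine departure is in the term containing $1-\F\Lambda_n$ (the paper's $\mathbf{L_2}$, your $\mathbf{A}_1$). The paper argues qualitatively: it keeps the bound $|\F\psi_R|^2\,|1-\F\Lambda_n|^2\langle\xi\rangle^{-2}$, splits the frequency domain into the compact set $K_\varepsilon=\{\langle\xi\rangle^{-2}\ge\varepsilon\}$ and its complement, uses that $\F\Lambda_n\to 1$ uniformly on $K_\varepsilon$ and that $\langle\xi\rangle^{-2}<\varepsilon$ off $K_\varepsilon$, and lets $\varepsilon\to0$ at the end. You argue quantitatively: by the smoothness of $\F\Lambda$ at $0$ you get $|1-\F\Lambda_n(\xi)|\lesssim\min(1,|\xi|/a_n)$, and the interpolation $\min(1,c^2)\le c$ trades half a power of $\langle\xi\rangle$ against a factor $a_n^{-1}$, leaving $\langle\xi\rangle^{-1}=\F b_{1/2}(\xi)$ which is still the Fourier transform of a nonnegative $L^1$ kernel, so the same mechanism applies. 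This yields the explicit rate $\sup_{R}R^{\beta-2d}\mathbf{A}_1\lesssim a_n^{-1}$, whereas the paper obtains only convergence. The trade-off is that you must observe that $b_{1/2}$ still serves the same purpose as $b_1$; the paper instead avoids fractional Bessel kernels in this lemma by keeping the full $\langle\xi\rangle^{-2}$. (For the other term you use $\psi_R\ast b_1$ where the paper can just use $\psi_R$ after bounding $|\F G|^2\le T^2$; both work.) Also note that your claimed space bound $\int_{\R^{2d}}(\psi_R\ast b_\alpha)(y)(\psi_R\ast b_\alpha)(z)|y-z|^{-\beta}\,dy\,dz\lesssim R^{2d-\beta}$ is most cleanly seen on the Fourier side — $|\F(\psi_R\ast b_\alpha)|^2=|\F\psi_R|^2\langle\xi\rangle^{-2\alpha}\le|\F\psi_R|^2$ gives it immediately — rather than by working with the decay of $b_\alpha$ in physical space as you sketch.
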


\begin{proof}
Let $R \geq \widetilde{R}_t$.
As in Lemma \ref{Lem first term limit}, we have
\begin{equation}
    \left \lVert \frac{\mathfrak{F}_{R}(t)}{\mathfrak{v}_{R}(t)} - \frac{\mathfrak{F}_{n,R}(t)}{\mathfrak{v}_{n,R}(t)} \right\rVert_2^2
    \leq \frac{4}{\mathfrak{v}^2_{R}(t)}\lVert \mathfrak{F}_{R}(t) - \mathfrak{F}_{n,R}(t) \rVert_2^2. \label{7.1.0}
\end{equation}
 From \eqref{F_n,R} and \eqref{F_R}, we have $\lVert \mathfrak{F}_{R}(t) - \mathfrak{F}_{n,R}(t) \rVert_2^2 \leq 2\mathbf{L_1} + 2 \mathbf{L_2}$, where
\begin{align*}
    \mathbf{L_1} 
    &= \int_0^t\int_{\R^{2d}}\varphi_{n,t,R}(s,y)\varphi_{n,t,R}(s,z)|y-z|^{-\beta}\\
    &\qquad \times \E[(\sigma(U(s,y)) - \sigma(\widetilde{u_{n-1}}(s,y)))(\sigma(U(s,z)) - \sigma(\widetilde{u_{n-1}}(s,z)))]dydzds,\\
    \mathbf{L_2}
    &= \int_0^t\int_{\R^{2d}}(\varphi_{t,R}(s,y) - \varphi_{n,t,R}(s,y))(\varphi_{t,R}(s,z) - \varphi_{n,t,R}(s,z))|y-z|^{-\beta}\\
    &\qquad \times \E[\sigma(U(s,y))\sigma(U(s,z))]dydzds.
\end{align*}
By Lemmas \ref{Lem fourier bochner schwartz} and \ref{Lem V_n property S}, we can apply the Fourier transform to obtain that
\begin{align*}
    \mathbf{L_1} 
    &= \int_0^t\int_{\R^d}|\F \psi_R(\xi)|^2|\F \Lambda_n(\xi)|^2|\F G(t-s)(\xi)|^2 \mu_s^{\sigma(U)-\sigma(\widetilde{u_{n-1}})}(d\xi)ds\\
    &\lesssim_T \int_0^t\int_{\R^d}|\F \psi_R(\xi)|^2 \mu_s^{\sigma(U)-\sigma(\widetilde{u_{n-1}})}(d\xi)ds\\
    &= \int_0^t\int_{\R^{2d}}\psi_R(y)\psi_R(z)\E[(\sigma(U(s,y)) - \sigma(\widetilde{u_{n-1}}(s,y)))(\sigma(U(s,z)) - \sigma(\widetilde{u_{n-1}}(s,z)))]|y-z|^{-\beta}dydzds\\
    &\leq \sigma_{\mathrm{Lip}}^2 \sup_{(\tau,\zeta)\in [0,T]\times \R^d} \lVert U(\tau,\zeta) - u_{n-1}(\tau,\zeta) \rVert_2^2 \int_{\mathbb{B}_{R+1}^2} |y-z|^{-\beta}dydz,
\end{align*}
and hence 
\begin{align}
    \sup_{R \geq \widetilde{R}_t}\left(\frac{1}{\mathfrak{v}^2_{R}(t)} \mathbf{L_1} \right)
    &\lesssim \sup_{(\tau,\zeta) \in [0,T]\times \R^d} \lVert U(\tau,\zeta) - u_{n-1}(\tau,\zeta) \rVert_2^2 \sup_{R \geq \widetilde{R}_t}\frac{R^{2d-\beta}}{\mathfrak{v}^2_{R}(t)} \nonumber \\
    &\xrightarrow[n \to \infty]{} 0 \label{7.1.1}
\end{align}
by Proposition \ref{Prop u_n convergence to U}. 
Similarly, $\mathbf{L_2}$ is evaluated as
\begin{align*}
    \mathbf{L_2} 
    &= \int_0^t\int_{\R^d}|\F\varphi_{t,R}(s)(\xi) - \F\varphi_{n,t,R}(s)(\xi)|^2\mu_s^{\sigma(U)}(d\xi)ds\\
    &\lesssim_T \int_0^t\int_{\R^d}|\F \psi_R(\xi)|^2|1-\F \Lambda_n(\xi)|^2\langle \xi \rangle^{-2} \mu_s^{\sigma(U)}(d\xi)ds.
\end{align*}
Let $0 < \varepsilon < 1$ and let $K_{\varepsilon} = \left\{ \xi \in \R^d \mid \langle \xi \rangle^{-2} \geq \varepsilon \right\}$. 
Then we see that
\begin{align*}
    \mathbf{L_2} 
    &\lesssim 4\varepsilon \int_0^t\int_{\R^d \setminus K_{\varepsilon}}|\F \psi_R(\xi)|^2\mu_s^{\sigma(U)}(d\xi)ds + \int_0^t\int_{K_{\varepsilon}} |\F \psi_R(\xi)|^2|1-\F \Lambda_n(\xi)|^2\mu_s^{\sigma(U)}(d\xi)ds\\
    &\leq \left(4\varepsilon + \sup_{x \in K_{\varepsilon}} |1-\F \Lambda_n(x)|^2 \right) \int_0^t \int_{\mathbb{B}_{R+1}}\E[\sigma(U(s,y))\sigma(U(s,z))]|y-z|^{-\beta}dydzds\\
    &\lesssim_T \left(4\varepsilon + \sup_{x \in K_{\varepsilon}} \left|1-\F\Lambda \left(\frac{x}{a_n}\right) \right|^2 \right) R^{2d-\beta},
\end{align*}
where we used \eqref{U uniform bound} in the last step.
Since $\lim_{y \to 0}\F\Lambda(y) = 1$ and $K_{\varepsilon}$ is compact, we have
\begin{equation*}
    \lim_{n \to \infty}\sup_{x \in K_{\varepsilon}} \left|1-\F\Lambda \left(\frac{x}{a_n}\right) \right|^2 = 0,
\end{equation*}
and therefore, 
\begin{align}
    \sup_{R \geq \widetilde{R}_t}\left(\frac{1}{\mathfrak{v}^2_{R}(t)} \mathbf{L_2} \right)
    &\lesssim \left(4\varepsilon + \sup_{x \in K_{\varepsilon}} \left|1-\F\Lambda \left(\frac{x}{a_n}\right) \right|^2 \right)\sup_{R \geq \widetilde{R}_t}\frac{R^{2d-\beta}}{\mathfrak{v}^2_{R}(t)} \nonumber \\
    &\xrightarrow[n \to \infty]{} 4\varepsilon \sup_{R \geq \widetilde{R}_t}\frac{R^{2d-\beta}}{\mathfrak{v}^2_{R}(t)}. \label{7.1.2}
\end{align}
The lemma now follows by combining \eqref{7.1.0}, \eqref{7.1.1}, and \eqref{7.1.2}. 
\end{proof}

Finally, we need to show that
\begin{equation*}
    \lim_{R \to \infty}d_{\mathrm{W}}\left( \frac{\mathfrak{F}_{n,R}(t)}{\mathfrak{v}_{n,R}(t)},\mathcal{N}(0,1) \right) = 0.
\end{equation*}
To do this, using Proposition \ref{Prop Wasserstein bound}, we need the following two lemmas.
Recall (\textit{cf.} Remark \ref{Rem F rep divergence}) that $Q_{n,t,R}(s,y) = \varphi_{n,t,R}(s,y)\sigma(\widetilde{u_{n-1}}(s,y))$. 
We define $\mathcal{M}_{1,t,R}(s,y) = Q_{1,t,R}(s,y)$ and
\begin{align}
    \mathcal{M}_{n+1,t,R}(s,y) = Q_{n+1,t,R}(s,y) + \int_0^t\int_{\R^d}\varphi_{n+1,t,R}(r,z)\sigma'(\widetilde{u_n}(r,z))\widetilde{M_n}^{(s,y)}(r,z)W(dr,dz).  \label{cal M}
\end{align}

\begin{Lem}
\label{Lem DF vers}
Let $n \geq 1$ and $p \in [1,\infty)$.
Let $(e_k)_{k=1}^{\infty}$ be a complete orthonormal system of $\mathcal{H}_T$.
For any $t \in [0,T]$, we have $\mathfrak{F}_{n,R}(t) \in \mathbb{D}^{1,p}$ and 
\begin{align}
    &D\mathfrak{F}_{n,R}(t) \nonumber\\
    &= Q_{n,t,R}(\cdot,\star) + \sum_{k=1}^{\infty} \left( \int_0^t \int_{\R^d}\varphi_{n,t,R}(s,y)\sigma'(\widetilde{u_{n-1}}(s,y)) \widetilde{D^{e_k}u_{n-1}}(s,y) W(ds,dy) \right) e_k, \quad \text{a.s.}. \label{DF_nR(t)} 
\end{align}
Moreover, the process $\mathcal{M}_{n,t,R}(s,y)$ has a measurable modification $\widetilde{\mathcal{M}_{n,t,R}}(s,y)$ and it holds that
\begin{equation}
\label{DF M}
    D\mathfrak{F}_{n,R}(t) = \widetilde{\mathcal{M}_{n,t,R}}(\cdot,\star) \quad \text{in $L^p(\Omega;\mathcal{H}_T)$}.
\end{equation}
\end{Lem}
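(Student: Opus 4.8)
The plan is to mimic the proof of Proposition \ref{Prop u_n diff'bility} (and its companion Proposition \ref{Prop Du_n M_n version}) almost verbatim, exploiting the fact that $\mathfrak{F}_{n,R}(t)$ is, by Proposition \ref{Prop F rep Walsh}, the Walsh integral of $Q_{n,t,R}(\cdot,\star)=\varphi_{n,t,R}(\cdot,\star)\sigma(\widetilde{u_{n-1}}(\cdot,\star))\in\mathcal{P}_+$, and that $\varphi_{n,t,R}$ plays exactly the role that $G_n^{(t,x)}$ played there, being a bounded (by \eqref{varphi bound}) deterministic function with support contained in the compact set $[0,T]\times\mathbb{B}_{R+T+2}$. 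The argument is by induction on $n$. For $n=1$ we have $\mathfrak{F}_{1,R}(t)=W(Q_{1,t,R}(\cdot,\star))$ with $Q_{1,t,R}$ deterministic, so $\mathfrak{F}_{1,R}(t)\in\mathbb{D}^{1,p}$ with $D\mathfrak{F}_{1,R}(t)=Q_{1,t,R}(\cdot,\star)=\mathcal{M}_{1,t,R}(\cdot,\star)$, and the $\widetilde{D^{e_k}u_0}$-terms vanish since $u_0\equiv 1$.

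For the inductive step, I would fix $(t,R)$ and $p\ge 2$, write $V(s,y)=\varphi_{n+1,t,R}(s,y)\sigma(\widetilde{u_n}(s,y))$, and take the piecewise-constant approximations $V_m$ of $V$ on the compact set $[0,T]\times\mathbb{B}_{R+T+2}$ exactly as in the proof of Proposition \ref{Prop u_n diff'bility}, using the uniform continuity of $\varphi_{n+1,t,R}$ (which follows from the uniform continuity of $G_{n+1}$, Lemma \ref{Lem property of G_k}) together with the $L^p(\Omega)$-continuity of $u_n$ and $Du_n$ from Proposition \ref{Prop u_n diff'bility}. Since $\sigma$ is continuously differentiable with bounded derivative, the chain rule gives $V(s,y)\in\mathbb{D}^{1,q}$ for all $q\ge 1$, hence $I_m\coloneqq\int V_m\,dW\in\mathbb{D}^{1,p}$ by the Leibniz rule \cite[Proposition 1.5.6]{MR2200233}, with
\begin{equation*}
    DI_m = V_m(\cdot,\star) + \sum_{i,j}DV(s_i^m,y_j^m)W(\ind_{(s_i^m,s_{i+1}^m]}\ind_{U_j^m}),
\end{equation*}
and $DV = \varphi_{n+1,t,R}(\cdot,\star)\sigma'(\widetilde{u_n}(\cdot,\star))D\widetilde{u_n}(\cdot,\star) = \varphi_{n+1,t,R}(\cdot,\star)\sigma'(\widetilde{u_n}(\cdot,\star))\widetilde{M_n}(\cdot,\star,\cdot,\star)$ by Proposition \ref{Prop Du_n M_n version}. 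Then $V_m(\cdot,\star)\to V(\cdot,\star)$ and $I_m\to\mathfrak{F}_{n+1,R}(t)$ in $L^p$ (the latter is \eqref{F_n,R} up to the partition bookkeeping), and a Burkholder–Davis–Gundy plus Minkowski estimate identical to \eqref{ine xi_(n+1,m) - xi_(n+1)}–\eqref{conv xi_(n+1,m) - xi_(n+1)}, using \eqref{G_nG_ngamma uniform estimate} (or rather its analogue with $\varphi$, which holds because $\varphi_{n+1,t,R}$ is bounded with compact support) together with the bounded-continuous function $\sigma'$ and dominated convergence, shows that $DI_m$ converges in $L^p(\Omega;\mathcal{H}_T)$ to
\begin{equation*}
    V(\cdot,\star) + \sum_{k=1}^\infty\Bigl(\int_0^t\int_{\R^d}\varphi_{n+1,t,R}(s,y)\sigma'(\widetilde{u_n}(s,y))\widetilde{D^{e_k}u_n}(s,y)\,W(ds,dy)\Bigr)e_k,
\end{equation*}
where we use $\widetilde{D^{e_k}u_n}(s,y)=\langle\widetilde{M_n}(s,y,\cdot,\star),e_k\rangle_{\mathcal{H}_T}$ a.s. from Proposition \ref{Prop Du_n M_n version}. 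By the closability of $D$ (\cite[Lemma 1.5.3]{MR2200233}) this proves $\mathfrak{F}_{n+1,R}(t)\in\mathbb{D}^{1,p}$ and establishes \eqref{DF_nR(t)}.

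It remains to show \eqref{DF M}, i.e. that $\widetilde{\mathcal{M}_{n,t,R}}(\cdot,\star)$ is a version of $D\mathfrak{F}_{n,R}(t)$. First one checks, just as for $M_n$ in Propositions \ref{Prop M_(n+1) property} and \ref{Prop M_n moment bound}, that $\mathcal{M}_{n,t,R}(s,y)$ is well-defined, $\mathscr{F}_t$-measurable, vanishes for $s\ge t$, is uniformly $L^p$-bounded and $L^p(\Omega)$-continuous in $(s,y)$ (hence admits a measurable modification by \cite[Proposition B.1]{MR4017124}), and satisfies a pointwise moment bound $\lVert\mathcal{M}_{n,t,R}(s,y)\rVert_p\lesssim_{n,T,R,\sigma_{\mathrm{Lip}}}\ind_{\mathbb{B}_{R+T+2}}(y)$; here the support control on $\varphi_{n+1,t,R}$ and \eqref{varphi bound} replace Lemma \ref{Lem property of G_k}, and Proposition \ref{Prop M_n moment bound} supplies the bound on $\widetilde{M_n}$ appearing inside the stochastic integral in \eqref{cal M}. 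Then the identity $\langle D\mathfrak{F}_{n,R}(t),e_k\rangle_{\mathcal{H}_T}=\langle\widetilde{\mathcal{M}_{n,t,R}}(\cdot,\star),e_k\rangle_{\mathcal{H}_T}$ a.s. for each $k$ follows by the same double-approximation argument as in the proof of Proposition \ref{Prop Du_n M_n version}: approximate both sides via piecewise-constant processes in $(s,y)$, match the resulting expressions term by term using the representation \eqref{DF_nR(t)} on the one hand and the definition \eqref{cal M} (with $\widetilde{M_n}$ replaced by its piecewise-constant approximation) on the other, and pass to the limit using the induction hypothesis $\widetilde{D^{e_k}u_n}=\widetilde{M_n^{e_k}}$ together with the convergence estimates above. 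I expect the main obstacle to be purely bookkeeping: carefully tracking the two layers of approximation (partitions of $[0,T]\times\mathbb{B}_{R+T+2}$ for the $(s,y)$ variable, and the orthonormal-basis expansion in $\mathcal{H}_T$) and verifying that every convergence is in the right $L^p(\Omega;\mathcal{H}_T)$ sense — but no new idea beyond what is already deployed in Sections \ref{section Malliavin derivative of the approximation sequence} is needed, since $\varphi_{n,t,R}$ shares all the properties of $G_n^{(t,x)}$ that those proofs exploit.
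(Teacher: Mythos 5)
Your proposal is correct, and the key observation that drives it — that $\varphi_{n,t,R}$ inherits from $G_n$ all the boundedness and compact-support properties used in Section \ref{section Malliavin derivative of the approximation sequence}, so the machinery there transfers verbatim — is exactly the reason the paper can keep its own proof short. The two arguments differ, however, in which variable gets discretized. You approximate the integrand $Q_{n,t,R}(s,y)=\varphi_{n,t,R}(s,y)\sigma(\widetilde{u_{n-1}}(s,y))$ of the Walsh integral by piecewise-constant processes in $(s,y)$ and then rerun the chain-rule/Leibniz-rule argument of Proposition \ref{Prop u_n diff'bility} from scratch with $\varphi_{n,t,R}$ in place of $G_n^{(t,x)}$. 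The paper instead approximates $\mathfrak{F}_{n,R}(t)=\int(\widetilde{u_n}(t,x)-1)\psi_R(x)\,dx$ by Riemann sums in $x$, i.e.\ by finite linear combinations $\sum_j(\widetilde{u_n}(t,x_j^m)-1)\int_{U_j^m}\psi_R(x)\,dx$; each such sum is trivially in $\mathbb{D}^{1,p}$ with derivative given immediately by Proposition \ref{Prop u_n diff'bility}(ii), so one passes to the limit $m\to\infty$ using Proposition \ref{Prop L^2(Omega H_T)} without re-deriving anything. The paper's route is thus shorter — it reuses the already-established formula for $Du_n(t,x)$ rather than reproving its analogue — but both reach the same limit, and your estimates (BDG plus Minkowski, dominated convergence via boundedness of $\sigma'$, the compact support of $\varphi_{n,t,R}$ standing in for Lemma \ref{Lem property of G_k}) all close. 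One small remark: you frame the first part as an induction on $n$, but your ``inductive step'' only invokes Propositions \ref{Prop u_n diff'bility} and \ref{Prop Du_n M_n version}, which are already proved for every $n$, and never the Lemma's own case-$n$ statement; the induction scaffold is therefore vacuous and can be dropped — the argument works directly for each fixed $n$. Your treatment of \eqref{DF M} via the measurable modification of $\mathcal{M}_{n,t,R}$ and the double-approximation matching against $\langle\cdot,e_k\rangle_{\mathcal{H}_T}$ coincides with the paper's.
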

\begin{proof}
The proof is similar to the argument for $u_n(t,x)$ in Section \ref{section Malliavin derivative of the approximation sequence}, so we give only a brief outline.
In the proof of Proposition \ref{Prop F rep Walsh}, we have shown that
\begin{equation*}
    \int_{\R^d}(\mathfrak{u}^{R}_{n,m}(t,x) -1)\psi_R(x)dx \xrightarrow[m \to \infty]{} \mathfrak{F}_{n,R}(t) \quad \text{in $L^p(\Omega)$},
\end{equation*}
where $\mathfrak{u}^{R}_{n,m}$ was defined in \eqref{u conv}.
By Proposition \ref{Prop u_n diff'bility}, it is clear that the left-hand side above belongs to $\mathbb{D}^{1,p}$ and that 
\begin{align*}
    &D\int_{\R^d}(\mathfrak{u}^{R}_{n,m}(t,x) -1)\psi_R(x)dx\\
    &= \sum_{j=1}^{k_m}G_{n}^{(t,x_j^m)}(\cdot,\star)\sigma(\widetilde{u_{n-1}}(\cdot,\star))\int_{U_j^m}\psi_{R}(x)dx\\
    &\quad + \sum_{k=1}^{\infty} \left( \int_0^t \int_{\R^d} \left(\sum_{j=1}^{k_m} G_{n}^{(t,x_j^m)}(s,y)\int_{U_j^m}\psi_{R}(x)dx\right)\sigma'(\widetilde{u_{n-1}}(s,y)) \widetilde{D^{e_k}u_{n-1}}(s,y) W(ds,dy) \right) e_k.
\end{align*}
It is then straightforward to check that the right-hand side above converges in $L^p(\Omega;\mathcal{H}_T)$ to \eqref{DF_nR(t)} using Proposition \ref{Prop L^2(Omega H_T)}.
Therefore, by \cite[Lemma 1.5.3]{MR2200233}, we have $\mathfrak{F}_{n,R}(t) \in \mathbb{D}^{1,p}$ and \eqref{DF_nR(t)} holds.

Let $\mathcal{N}_{n+1,t,R}(s,y)$ be the stochastic integral on the right-hand side of \eqref{cal M}.
Then, by induction on $n$ as in the proof of Proposition \ref{Prop M_(n+1) property}, we can show that
$\mathcal{N}_{n,t,R}(s,y)$ has a measurable modification $\widetilde{\mathcal{N}_{n,t,R}}(s,y)$.
This shows the existence of a measurable modification of $\mathcal{M}_{n,t,R}$ since the process $Q_{n,t,R}(s,y)$ is predictable and hence measurable.
Finally, using \eqref{M_n Du_n}, we can show \eqref{DF M} by the same argument as in the proof of Proposition \ref{Prop Du_n M_n version}.
\end{proof}

\begin{Lem}
\label{Lem deri var estimate}
Let $t_1,t_2 \in (0,T]$, $n \geq 1$, and $R > T+2$. It holds that 
\begin{equation*}
    \mathrm{Var}(\langle D\mathfrak{F}_{n,R}(t_1), Q_{n,t_2,R} \rangle_{\mathcal{H}_T} ) \lesssim_{n,T,\beta, \sigma_{\mathrm{Lip}}} R^{4d-3\beta}.
\end{equation*}
\end{Lem}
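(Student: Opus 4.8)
The strategy is to expand $\langle D\mathfrak F_{n,R}(t_1), Q_{n,t_2,R}\rangle_{\mathcal H_T}$ using the explicit form of the Malliavin derivative from Lemma~\ref{Lem DF vers}, namely $D\mathfrak F_{n,R}(t_1) = \widetilde{\mathcal M_{n,t_1,R}}(\cdot,\star)$ and $\mathcal M_{n,t_1,R} = Q_{n,t_1,R} + \mathcal N_{n,t_1,R}$, where $\mathcal N_{n,t_1,R}$ is the stochastic-integral remainder in \eqref{cal M}. Writing out the $\mathcal H_T$-inner product via Lemma~\ref{Lem H_T}, we get
\begin{equation*}
    \langle D\mathfrak F_{n,R}(t_1), Q_{n,t_2,R}\rangle_{\mathcal H_T} = \int_0^T\int_{\R^{2d}} \widetilde{\mathcal M_{n,t_1,R}}(s,y)\, Q_{n,t_2,R}(s,z)\,\gamma(y-z)\,dydz\,ds.
\end{equation*}
Since $\mathrm{Var}(X) = \mathrm{Var}(X - \E[X]) \le \E[(X-c)^2]$ for any constant $c$, and since $Q_{n,t_2,R}(s,z) = \varphi_{n,t_2,R}(s,z)\sigma(\widetilde{u_{n-1}}(s,z))$, I would split $\widetilde{\mathcal M_{n,t_1,R}} = Q_{n,t_1,R} + \mathcal N_{n,t_1,R}$ and estimate the variance of the resulting two double integrals separately, using $\mathrm{Var}(A+B) \le 2\mathrm{Var}(A) + 2\mathrm{Var}(B)$.

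\textbf{Key steps.} First, for the $Q_{n,t_1,R}\cdot Q_{n,t_2,R}$ term, the integrand is $\varphi_{n,t_1,R}(s,y)\varphi_{n,t_2,R}(s,z)\gamma(y-z)\sigma(\widetilde{u_{n-1}}(s,y))\sigma(\widetilde{u_{n-1}}(s,z))$; its variance is controlled by covariances of products $\sigma(u_{n-1}(s,y))\sigma(u_{n-1}(s,z))$, which by Proposition~\ref{Prop cov estimates} (the four-point estimate \eqref{cov es 2}) are bounded by sums of $\int_0^{s} \int_{\R^{2d}} \ind_{\mathbb B_{s-r+1}}(\cdot)\ind_{\mathbb B_{s-r+1}}(\cdot)\gamma(\cdot)$. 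Combined with the support/uniform bound \eqref{varphi bound} for $\varphi_{n,t_i,R}$, the whole integral is over $\mathbb B_{R+T+2}^2$, so the variance is dominated (up to $n,T,\beta,\sigma_{\mathrm{Lip}}$-constants) by a fourfold spatial integral of products of Riesz kernels over balls of radius $\sim R$. Using the Riesz-kernel decomposition \eqref{RK decomposition}, Peetre's inequality (Lemma~\ref{Lem Peetre inequality}), and the elementary ball-integral bound $\int_{\mathbb B_R^2}\langle y-z\rangle^{\alpha}dydz \lesssim R^{2d+\alpha}$ from the proof of Lemma~\ref{Lem 6.3}, each factor contributes one power $R^{d-\beta}$ and the convolution kernels $\gamma$ contribute a further $R^{d-\beta}$-type gain, yielding the order $R^{4d-3\beta}$. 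Second, for the $\mathcal N_{n,t_1,R}\cdot Q_{n,t_2,R}$ term, I would use $\E[|\mathcal N_{n,t_1,R}(s,y)|^2]$-type bounds obtained from the Burkholder--Davis--Gundy inequality applied to the stochastic integral in \eqref{cal M}, together with the pointwise moment bound for $\widetilde{M_n}$ from Proposition~\ref{Prop M_n moment bound} (which gives $\|\widetilde{M_n}^{(s,y)}(r,z)\|_p \lesssim_{n,T} \ind_{\mathbb B_{r-s+1}}(z-y)$) and again \eqref{varphi bound}; this reduces the second-moment estimate to the same kind of ball integrals of Riesz kernels and gives the same order.

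\textbf{Main obstacle.} The delicate point is bookkeeping the four spatial variables and extracting exactly three factors of $R^{d-\beta}$ (not four) — i.e., correctly tracking which $\gamma$'s are ``free'' Riesz kernels between the two far-apart clusters of variables versus which are ``localized'' by the indicator functions/compact supports coming from $\varphi_{n,t_i,R}$ and $\widetilde M_n$. One must show that the covariance structure (via \eqref{cov es 2}) ties together pairs of variables so that only three independent $R$-sized integrations remain, with the fourth integration ``absorbed'' by the localization. Handling the $\mathcal N_{n,t_1,R}$-contribution is slightly more involved because one first needs an $L^2$-bound on $\mathcal N_{n,t_1,R}(s,y)$ itself (not just its covariance with $Q_{n,t_2,R}$), which requires the BDG inequality and Proposition~\ref{Prop M_n moment bound}; but once that is in hand, the geometric estimates are identical to those in Lemma~\ref{Lem 6.3}, so I would mostly cite that lemma's computations rather than redo them.
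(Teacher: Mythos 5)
Your decomposition of $D\mathfrak{F}_{n,R}(t_1)$ into $Q_{n,t_1,R}$ plus the stochastic-integral remainder $\widetilde{\mathcal{N}_{n,t_1,R}}$ matches the paper, and your estimate of the resulting $Q_{n,t_1,R}\cdot Q_{n,t_2,R}$ contribution to the variance — via \eqref{cov es 2}, \eqref{varphi bound}, the decomposition \eqref{RK decomposition}, Peetre's inequality, and the ball-integral estimates from Lemma~\ref{Lem 6.3} — is essentially the paper's argument. But your plan for the $\widetilde{\mathcal{N}_{n,t_1,R}}\cdot Q_{n,t_2,R}$ contribution has a genuine gap: pointwise moment bounds of the form $\E[|\mathcal{N}_{n,t_1,R}(s,y)|^2]\lesssim_n 1$ are all that single-martingale BDG plus Proposition~\ref{Prop M_n moment bound} and \eqref{varphi bound} deliver, and these do not couple the two outer spatial variables $y$ and $z$ in the expansion of the variance. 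Writing this contribution as a fourfold integral over $y,y',z,z'$ with factors $|y-y'|^{-\beta}$ and $|z-z'|^{-\beta}$, if you bound $\E[\mathcal{N}_{n,t_1,R}(s,y)\mathcal{N}_{n,t_1,R}(s,z)\sigma(u_{n-1}(s,y'))\sigma(u_{n-1}(s,z'))]$ by a constant via Cauchy--Schwarz and the pointwise $L^2$ bound, the $y$ and $z$ integrations over $\mathbb{B}_{R+T+2}$ remain uncoupled and you land on $R^{4d-2\beta}$ — a factor $R^\beta$ worse than needed. With that weaker rate, the Malliavin--Stein bound in the proof of Theorem~\ref{Thm m1} gives only $\sqrt{R^{4d-2\beta}}/R^{2d-\beta}=R^{0}$ rather than $R^{-\beta/2}$, which does not tend to zero.

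The paper closes exactly this gap with a martingale argument your outline omits. One realizes $\mathcal{N}_{n,t_1,R}(s,y)$ as the terminal value $\mathfrak{M}_{n,t_1,R}^{(s,y)}(t_1)$ of a continuous $L^2$-martingale $\{\mathfrak{M}_{n,t_1,R}^{(s,y)}(\tau) \mid \tau \in [s,t_1]\}$ started at time $s$. Since $Q_{n,t_2,R}(s,y')$ is $\mathscr{F}_s$-measurable, $\E[\mathcal{N}_{n,t_1,R}(s,y)Q_{n,t_2,R}(s,y')]=0$, so the variance of this contribution equals its second moment; then conditioning on $\mathscr{F}_s$ and It\^{o}'s formula bound $\E[\mathfrak{M}_{n,t_1,R}^{(s,y)}(t_1)\mathfrak{M}_{n,t_1,R}^{(s,z)}(t_1)\mid\mathscr{F}_s]$ by the cross variation $\llbracket\mathfrak{M}_{n,t_1,R}^{(s,y)},\mathfrak{M}_{n,t_1,R}^{(s,z)}\rrbracket_{t_1}$. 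Via Proposition~\ref{Prop M_n moment bound}, the integrand of this cross variation contains paired indicators $\ind_{\mathbb{B}_{T+1}}(y-w)\ind_{\mathbb{B}_{T+1}}(z-w')$ linked by $|w-w'|^{-\beta}$, and this is precisely the $\langle y-z\rangle^{-\beta}$-type coupling that recovers the extra $R^{-\beta}$ and gives $R^{4d-3\beta}$. You would need to add this conditional-expectation/cross-variation step; the single-variable BDG bound on $\mathcal{N}_{n,t_1,R}$ alone is not enough.
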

\begin{proof}
We follow the argument in \cite[Section 3.2]{MR4290504}.
By Lemma \ref{Lem DF vers}, we can take $\widetilde{\mathcal{M}_{n,t,R}}(s,y)$ to be $Q_{n,t,R}(s,y) + \widetilde{\mathcal{N}_{n,t,R}}(s,y)$.
Then, by Lemma \ref{Lem DF vers} and Proposition \ref{Prop L^2(Omega H_T)}, we have almost surely
\begin{align*}
   \langle DF_{n,R}(t_1), Q_{n,t_2,R} \rangle_{\mathcal{H}_T}
   &= \langle Q_{n,t_1,R}, Q_{n,t_2,R} \rangle_{\mathcal{H}_T} + \langle \widetilde{\mathcal{N}_{n,t_1,R}}, Q_{n,t_2,R} \rangle_{\mathcal{H}_T}\\
   &= \int_0^T\int_{\R^{2d}}Q_{n,t_1,R}(s,y)Q_{n,t_2,R}(s,y')|y-y'|^{-\beta}dydy'ds\\
   &\quad +\int_0^T\int_{\R^{2d}}\widetilde{\mathcal{N}_{n,t_1,R}}(s,y)Q_{n,t_2,R}(s,y')|y-y'|^{-\beta}dydy'ds\\
   &\eqqcolon \mathbf{V_1} + \mathbf{V_2},
\end{align*}
and hence $\mathrm{Var}(\langle D\mathfrak{F}_{n,R}(t_1), Q_{n,t_2,R} \rangle_{\mathcal{H}_T}) \leq 2\mathrm{Var}(\mathbf{V_1}) + 2\mathrm{Var}(\mathbf{V_2})$.

First, we evaluate $\mathrm{Var}(\mathbf{V_1})$.
By Minkowski's inequality, $\mathrm{Var}(\mathbf{V_1})$ is dominated by 
\begin{equation*}
    \left(\int_0^T\left \{ \mathrm{Var}\left(\int_{\R^{2d}}Q_{n,t_1,R}(s,y)Q_{n,t_2,R}(s,y')|y-y'|^{-\beta}dydy' \right) \right \}^{\frac{1}{2}} ds\right)^2,
\end{equation*}
and the variance appearing in the integrand can be expressed as 
\begin{align*}
    &\int_{\R^{4d}}\varphi_{n,t_1,R}(s,y)\varphi_{n,t_2,R}(s,y')|y-y'|^{-\beta}\varphi_{n,t_1,R}(s,z)\varphi_{n,t_2,R}(s,z')|z-z'|^{-\beta}\\
    &\quad \times \mathrm{Cov}(\sigma(u_{n-1}(s,y))\sigma(u_{n-1}(s,y')), \sigma(u_{n-1}(s,z))\sigma(u_{n-1}(s,z')))dydy'dzdz'.
\end{align*}
We then apply \eqref{varphi bound} and Proposition \ref{Prop cov estimates} to obtain that
\begin{align*}
    &\mathrm{Var}\left(\int_{\R^{2d}}Q_{n,t_1,R}(s,y)Q_{n,t_2,R}(s,y')|y-y'|^{-\beta}dydy' \right)\\
    &\lesssim_{n,T,\sigma_{\mathrm{Lip}}} \int_{\R^{6d}}\ind_{\mathbb{B}_{R+T+2}}(y)\ind_{\mathbb{B}_{R+T+2}}(y')|y-y'|^{-\beta}\ind_{\mathbb{B}_{R+T+2}}(z)\ind_{\mathbb{B}_{R+T+2}}(z')|z-z'|^{-\beta} |w-w'|^{-\beta}\\
    &\qquad \qquad \qquad \times (\ind_{\mathbb{B}_{T+1}}(y-w) + \ind_{\mathbb{B}_{T+1}}(y'-w)) (\ind_{\mathbb{B}_{T+1}}(z-w') + \ind_{\mathbb{B}_{T+1}}(z'-w'))dydy'dzdz'dwdw'.
\end{align*}
By the symmetry, we only need to evaluate
\begin{align}
    &\int_{\R^{6d}}\ind_{\mathbb{B}_{R+T+2}}(y)\ind_{\mathbb{B}_{R+T+2}}(y')|y-y'|^{-\beta}\ind_{\mathbb{B}_{R+T+2}}(z)\ind_{\mathbb{B}_{R+T+2}}(z')|z-z'|^{-\beta} \nonumber\\
    &\qquad \times \ind_{\mathbb{B}_{T+1}}(y-w) \ind_{\mathbb{B}_{T+1}}(z-w')|w-w'|^{-\beta}dydy'dzdz'dwdw'. \label{varv1 bound}
\end{align}
From \eqref{RK decomposition}, this integral is dominated by
\begin{align*}
    &\int_{\R^{6d}}\ind_{\mathbb{B}_{R+T+2}}(y)\ind_{\mathbb{B}_{R+T+2}}(y')|y-y'|^{-\beta}\ind_{\mathbb{B}_{R+T+2}}(z)\ind_{\mathbb{B}_{R+T+2}}(z')|z-z'|^{-\beta} \\
    &\qquad \times \ind_{\mathbb{B}_{T+1}}(y-w) \ind_{\mathbb{B}_{T+1}}(z-w')\gamma_0(w-w')dydy'dzdz'dwdw'\\
    & + 2\int_{\R^{6d}}\ind_{\mathbb{B}_{R+T+2}}(y)\ind_{\mathbb{B}_{R+T+2}}(y')|y-y'|^{-\beta}\ind_{\mathbb{B}_{R+T+2}}(z)\ind_{\mathbb{B}_{R+T+2}}(z')|z-z'|^{-\beta} \\
    &\qquad \times \ind_{\mathbb{B}_{T+1}}(y-w) \ind_{\mathbb{B}_{T+1}}(z-w')\langle w-w'\rangle^{-\beta}dydy'dzdz'dwdw'\\
    &\eqqcolon \mathbf{V_{11}} + 2\mathbf{V_{12}}.
\end{align*}
A simple calculation using $R >T+2$ yields
\begin{align}
    \mathbf{V_{11}} 
    &\leq \int_{\R^{2d}}(\ind_{\mathbb{B}_{R+T+2}} \ast \ind_{\mathbb{B}_{T+1}})(w)(\ind_{\mathbb{B}_{R+T+2}} \ast \ind_{\mathbb{B}_{T+1}})(w')\gamma_0(w-w')dwdw' \nonumber\\
    &\quad \times \left(\int_{\mathbb{B}_{2(R+T+2)}}|y'|^{-\beta}dy' \right)\left(\int_{\mathbb{B}_{2(R+T+2)}}|z'|^{-\beta}dz' \right) \nonumber\\
    &\lesssim (R+T+2)^{2d-2\beta}\lVert \ind_{\mathbb{B}_{R+T+2}} \ast \ind_{\mathbb{B}_{T+1}} \rVert_{L^{\infty}(\R^{d})}\lVert \ind_{\mathbb{B}_{R+T+2}} \ast \ind_{\mathbb{B}_{T+1}} \rVert_{L^1(\R^d)}\lVert \gamma_0 \rVert_{L^1(\R^d)} \nonumber\\
    &\lesssim_{T,\beta}R^{3d-2\beta} \label{V11 est}
\end{align}
For $\mathbf{V_{12}}$, a change of variable and the Peetre's inequality (Lemma \ref{Lem Peetre inequality}) yield
\begin{equation*}
    \mathbf{V_{12}} 
    \lesssim_{\beta} \int_{\mathbb{B}_{R+T+2}^4}|y-y'|^{-\beta}|z-z'|^{-\beta}\langle y-z\rangle^{-\beta}dydy'dzdz' \int_{\mathbb{B}_{T+1}^2}\langle w-w'\rangle^{-\beta}dwdw'.
\end{equation*}
Moreover, making the change of variables $(y,y',z,z') \mapsto (Ry,Ry',Rz,Rz')$, we see that
\begin{align}
    \mathbf{V_{12}} 
    &\lesssim_{T,\beta} R^{4d-3\beta} \int_{\mathbb{B}_{1+\frac{T+2}{R}}^4}|y-y'|^{-\beta}|z-z'|^{-\beta}(R^{-2} + |y-z|^2)^{-\frac{\beta}{2}}dydy'dzdz' \nonumber\\
    &\lesssim R^{4d-3\beta} \int_{\mathbb{B}_{2}^4}|y-y'|^{-\beta}|z-z'|^{-\beta}|y-z|^{-\beta}dydy'dzdz'. \label{V12 est}
\end{align}
The second inequality follows since $R>T+2$, and it is easily seen that the last integral is finite.
Therefore combining \eqref{V11 est} and \eqref{V12 est} gives 
\begin{equation*}
    \mathrm{Var}(\mathbf{V_1}) \lesssim_{T,\beta} R^{4d-3\beta}.
\end{equation*}

Next, we evaluate $\mathrm{Var}(\mathbf{V_2})$.
By Minkowski's inequality again, we have
\begin{equation*}
    \mathrm{Var}(\mathbf{V_2})
    \leq \left(\int_0^T\left \{ \mathrm{Var}\left(\int_{\R^{2d}}\widetilde{\mathcal{N}_{n,t_1,R}}(s,y)Q_{n,t_2,R}(s,y')|y-y'|^{-\beta}dydy' \right) \right \}^{\frac{1}{2}} ds\right)^2,
\end{equation*}
where the variance appearing in the integrand is equal to 
\begin{align*}
    &\int_{\R^{4d}}\varphi_{n,t_2,R}(s,y')|y-y'|^{-\beta}\varphi_{n,t_2,R}(s,z')|z-z'|^{-\beta}\\
    &\quad \times \mathrm{Cov}(\mathcal{N}_{n,t_1,R}(s,y)\sigma(u_{n-1}(s,y')), \mathcal{N}_{n,t_1,R}(s,z)\sigma(u_{n-1}(s,z')))dydy'dzdz' \eqqcolon \mathbf{V_{21}}.
\end{align*}
Let 
\begin{equation*}
    \mathfrak{M}_{n,t,R}^{(s,y)}(\tau) = \int_s^\tau \int_{\R^d}\varphi_{n,t,R}(r,z)\sigma'(\widetilde{u_{n-1}}(r,z))\widetilde{M_{n-1}}^{(s,y)}(r,z)W(dr,dz).
\end{equation*}
Then $\{\mathfrak{M}_{n,t,R}^{(s,y)}(\tau) \mid \tau \in [s,t] \}$ is a mean zero continuous square-integrable $(\mathscr{F}_{\tau})$-martingale, and we have $\mathfrak{M}_{n,t_1,R}^{(s,y)}(t_1) = \mathcal{N}_{n,t_1,R}(s,y)$ a.s. for each $(s,y)$ since $\widetilde{M_{n-1}}^{(s,y)}(r,z) = 0$ a.s. for any $r \leq s$ by Proposition \ref{Prop M_(n+1) property}.
It follows that
\begin{equation*}
    \E[\mathcal{N}_{n,t_1,R}(s,y)\sigma(u_{n-1}(s,y'))] = \E[\E[\mathfrak{M}_{n,t_1,R}^{(s,y)}(t_1)|\mathscr{F}_s]\sigma(u_{n-1}(s,y'))] = 0
\end{equation*}
and 
\begin{align*}
    &\E[\mathcal{N}_{n,t_1,R}(s,y)\sigma(u_{n-1}(s,y'))\mathcal{N}_{n,t_1,R}(s,z)\sigma(u_{n-1}(s,z'))]\\
    &= \E[\E[\mathfrak{M}_{n,t_1,R}^{(s,y)}(t_1)\mathfrak{M}_{n,t_1,R}^{(s,z)}(t_1)|\mathscr{F}_s]\sigma(u_{n-1}(s,y'))\sigma(u_{n-1}(s,z'))]\\
    &\leq \sup_{(\tau,\zeta) \in [0,T] \times \R^d} \lVert \sigma(u_{n-1}(\tau,\zeta)) \rVert_4^2\lVert \llbracket \mathfrak{M}_{n,t_1,R}^{(s,y)},\mathfrak{M}_{n,t_1,R}^{(s,z)} \rrbracket_{t_1} \rVert_2.
\end{align*}
The last inequality follows from It\^{o}'s formula, and
$\llbracket \mathfrak{M}_{n,t_1,R}^{(s,y)},\mathfrak{M}_{n,t_1,R}^{(s,z)} \rrbracket_{t_1}$ denotes the cross variation that equals 
\begin{align*}
    &\int_s^{t_1}\int_{\R^{2d}}\varphi_{n,t_1,R}(r,w)\varphi_{n,t_1,R}(r,w')|w-w'|^{-\beta}\\
    &\quad \times\sigma'(\widetilde{u_{n-1}}(r,w))\widetilde{M_{n-1}}^{(s,y)}(r,w)\sigma'(\widetilde{u_{n-1}}(r,w'))\widetilde{M_{n-1}}^{(s,z)}(r,w')dwdw'dr.
\end{align*}
Thus, using again \eqref{varphi bound} and Proposition \ref{Prop M_n moment bound}, we obtain
\begin{align*}
    \mathbf{V_{21}} 
    &\lesssim_{n,T,\sigma_{\mathrm{Lip}}} \int_{\R^{6d}}\ind_{\mathbb{B}_{R+T+2}}(y')|y-y'|^{-\beta}\ind_{\mathbb{B}_{R+T+2}}(z')|z-z'|^{-\beta}
    \ind_{\mathbb{B}_{R+T+2}}(w)\ind_{\mathbb{B}_{R+T+2}}(w')\\
    &\qquad \qquad \qquad \times \ind_{\mathbb{B}_{T+1}}(y-w) \ind_{\mathbb{B}_{T+1}}(z-w')|w-w'|^{-\beta}dydy'dzdz'dwdw'.
\end{align*}
Although this right-hand side looks slightly different from the form of \eqref{varv1 bound}, a similar evaluation as before yields that $\mathbf{V_{21}} \lesssim_{n,T,\beta,\sigma_{\mathrm{Lip}}} R^{4d-3\beta}$, and therefore 
\begin{equation*}
    \mathrm{Var}(\mathbf{V_{2}}) \lesssim_{n,T,\beta,\sigma_{\mathrm{Lip}}} R^{4d-3\beta}.
\end{equation*}
This completes the proof.
\end{proof}

We now complete the proof of Theorem \ref{Thm m1}.

\begin{proof}[Proof of Theorem \ref{Thm m1}]
What is left is to show the convergence \eqref{Thm m1 conv}.
Thanks to Lemma \ref{Lem error limit}, we can take $n$ large enough so that
\begin{equation*}
    \sup_{R \geq \widetilde{R}_t} \left \lVert  \frac{\mathfrak{F}_{R}(t)}{\mathfrak{v}_{R}(t)} - \frac{\mathfrak{F}_{n,R}(t)}{\mathfrak{v}_{n,R}(t)} \right\rVert_2 < \varepsilon
\end{equation*}
for any given $\varepsilon \in (0,1)$.
Let us fix such $n$. 
From \eqref{distance bound} and Lemma \ref{Lem first term limit}, it remains to prove 
\begin{equation}
\label{m1left}
    \lim_{R \to \infty}d_{\mathrm{W}}\left( \frac{\mathfrak{F}_{n,R}(t)}{\mathfrak{v}_{n,R}(t)},\mathcal{N}(0,1) \right) = 0.
\end{equation}
In view of Remark \ref{Rem F rep divergence}, we can apply Proposition \ref{Prop Wasserstein bound}.
This, together with Proposition \ref{Prop F_R covariance limit} and Lemma \ref{Lem deri var estimate}, implies
\begin{align*}
    d_{\mathrm{W}}\left( \frac{\mathfrak{F}_{n,R}(t)}{\mathfrak{v}_{n,R}(t)},\mathcal{N}(0,1) \right) 
    &\leq \sqrt{\frac{2}{\pi}}\frac{\sqrt{\mathrm{Var}(\langle D\mathfrak{F}_{n,R}(t), Q_{n,t,R}\rangle_{\mathcal{H}_T})}}{\mathfrak{v}^2_{n,R}(t)}\\
    &\lesssim_{n,T,\beta,\sigma_{\mathrm{Lip}}} R^{-\frac{\beta}{2}} \xrightarrow[R \to \infty]{} 0,
\end{align*}
and the proof of Theorem \ref{Thm m1} is complete.
\end{proof}

\section{Proof of Theorem \ref{Thm m2}}
\label{section Proof of Theorem m2}
The proof of Theorem \ref{Thm m2} can be divided into two parts.
The first is to show the convergence of the finite-dimensional distributions, which is presented in Section \ref{subsection Convergence of finite-dimensional distributions}. The second is the tightness; we will prove it in Section \ref{Subsection Tightness}.

\subsection{Convergence of finite-dimensional distributions}
\label{subsection Convergence of finite-dimensional distributions}
Let $s,t \in [0,T]$ and define
\begin{equation*}
    \Phi(s,t) =\tau_\beta \int_0^{s\land t}(s-r)(t-r) \E[U(r,0)]^2dr, \quad \Phi_n(s,t) =\tau_\beta \int_0^{s\land t}(s-r)(t-r) \E[u_{n-1}(r,0)]^2dr.
\end{equation*}
By Proposition \ref{Prop u_n convergence to U} and the dominated convergence theorem, it is clear that
\begin{equation}
\label{Phi conv}
    \lim_{n \to \infty} \Phi_n(s,t) = \Phi(s,t).
\end{equation}

\begin{Prop}
\label{Prop conv of fdd}
Fix $m \geq 1$, and set $\mathbf{F}_R = (F_R(t_1), \ldots , F_R(t_m))$ for any $t_1, \ldots , t_m \in [0,T]$.
Let $\mathbf{G} = (G(t_1), \ldots , G(t_m))$ be a centered Gaussian random vector with covariance matrix $(\Phi(t_i,t_j))_{1 \leq i,j \leq m}$. 
Then, as $R \to \infty$, we have
\begin{equation*}
    R^{\frac{\beta}{2}-d}\mathbf{F}_R \xrightarrow[]{d} \mathbf{G},
\end{equation*}
where $\xrightarrow[]{d}$ denotes the convergence in distribution.
\end{Prop}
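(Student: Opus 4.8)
The plan is to mirror the two-step approximation used for Theorem~\ref{Thm m1}, replacing the univariate bound of Proposition~\ref{Prop Wasserstein bound} by the multivariate Stein bound of Proposition~\ref{Prop multivariate stein bound}. We may assume $m\ge 2$: for $m=1$ the claim reduces to Theorem~\ref{Thm m1} combined with the limit $R^{\beta-2d}\sigma_R^2(t_1)\to\Phi(t_1,t_1)$ from Corollary~\ref{Cor nonfrak F_R limit} when $t_1>0$ (via Slutsky's theorem), and is trivial when $t_1=0$ since $F_R(0)\equiv 0$.

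First I would reduce to the approximating vectors. By \eqref{l0} and $0<\beta<2$ one has $R^{\frac{\beta}{2}-d}(F_R(t_k)-\mathfrak{F}_R(t_k))\to 0$ in $L^2(\Omega)$ for each $k$, while the estimates $\mathbf{L_1}$ and $\mathbf{L_2}$ occurring in the proof of Lemma~\ref{Lem error limit} give, without using the lower bound on $\mathfrak v_R$,
\begin{equation*}
    \lim_{n\to\infty}\ \sup_{R\ge 1}\ R^{\beta-2d}\,\lVert \mathfrak{F}_R(t_k)-\mathfrak{F}_{n,R}(t_k)\rVert_2^2=0,\qquad k=1,\dots,m.
\end{equation*}
Hence it suffices to prove, for each fixed $n$, that $R^{\frac{\beta}{2}-d}\mathbf{F}_{n,R}:=R^{\frac{\beta}{2}-d}(\mathfrak{F}_{n,R}(t_1),\dots,\mathfrak{F}_{n,R}(t_m))$ converges in law as $R\to\infty$ to a centered Gaussian vector $\mathbf{G}_n$ with covariance matrix $(\Phi_n(t_i,t_j))_{i,j}$, and then to let $n\to\infty$: by \eqref{Phi conv}, $\mathbf{G}_n\xrightarrow{d}\mathbf{G}$, and a routine three-$\varepsilon$ argument over a convergence-determining class of test functions, using the displayed uniform-in-$R$ control, upgrades these two facts to $R^{\frac{\beta}{2}-d}\mathbf{F}_R\xrightarrow{d}\mathbf{G}$.

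For the inner assertion, fix $n$ and let $t_i>0$ (any coordinate with $t_i=0$ is identically zero and is simply discarded). By Remark~\ref{Rem F rep divergence}, $\mathfrak{F}_{n,R}(t_i)=\delta(Q_{n,t_i,R})$, and by Lemma~\ref{Lem DF vers}, $\mathfrak{F}_{n,R}(t_i)\in\mathbb{D}^{1,2}$; set $v_i:=R^{\frac{\beta}{2}-d}Q_{n,t_i,R}$ and $F_i:=\delta(v_i)=R^{\frac{\beta}{2}-d}\mathfrak{F}_{n,R}(t_i)$, and let $Z=Z_R$ be the centered Gaussian vector with covariance $C_{ij}^R:=\E[F_iF_j]=R^{\beta-2d}\E[\mathfrak{F}_{n,R}(t_i)\mathfrak{F}_{n,R}(t_j)]$. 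The duality relation \eqref{duality relation} gives $\E[\langle DF_i,v_j\rangle_{\mathcal{H}_T}]=\E[\delta(v_j)F_i]=C_{ij}^R$, whence
\begin{equation*}
    \E\big[(C_{ij}^R-\langle DF_i,v_j\rangle_{\mathcal{H}_T})^2\big]=\mathrm{Var}(\langle DF_i,v_j\rangle_{\mathcal{H}_T})=R^{2\beta-4d}\,\mathrm{Var}(\langle D\mathfrak{F}_{n,R}(t_i),Q_{n,t_j,R}\rangle_{\mathcal{H}_T})\lesssim_{n,T,\beta,\sigma_{\mathrm{Lip}}}R^{-\beta}
\end{equation*}
for $R>T+2$, the last bound being Lemma~\ref{Lem deri var estimate}. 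Feeding this into Proposition~\ref{Prop multivariate stein bound} yields $|\E[h(F)]-\E[h(Z_R)]|\lesssim_{m,h}R^{-\beta/2}\to 0$ for every $h\in C^2(\R^m)$ with bounded second derivatives. Finally, Proposition~\ref{Prop F_R covariance limit}, applied with the pair $t_i,t_j$, gives $C_{ij}^R\to\Phi_n(t_i,t_j)$ as $R\to\infty$, so $Z_R\xrightarrow{d}\mathbf{G}_n$; combining this with the previous display proves $R^{\frac{\beta}{2}-d}\mathbf{F}_{n,R}\xrightarrow{d}\mathbf{G}_n$.

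I do not expect a genuinely new difficulty: all the analytic input — the divergence representation and Malliavin differentiability of $\mathfrak{F}_{n,R}$ (Lemma~\ref{Lem DF vers}), the covariance limit (Proposition~\ref{Prop F_R covariance limit}), and above all the variance estimate of Lemma~\ref{Lem deri var estimate} — is already in place. The points that need care are bookkeeping ones: checking that the $C^2$ functions with bounded second derivatives (equivalently, the smooth bounded functions) form a convergence-determining class, so that the Stein bound really delivers weak convergence; and confirming that the estimates in the proof of Lemma~\ref{Lem error limit} are genuinely uniform in $R$. As for Theorem~\ref{Thm m1}, the structural reason the scheme closes when $d\ge 4$ is that passing to $\mathfrak{F}_{n,R}$ and invoking \eqref{int FG bound} to trade the distribution $G$ for the nonnegative kernel $b_1$ is exactly what makes Lemma~\ref{Lem deri var estimate} — and hence the argument above — available.
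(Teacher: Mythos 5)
Your proof is correct and follows essentially the same route as the paper: the three-term split $F_R\to\mathfrak F_R\to\mathfrak F_{n,R}$ combined with the uniform-in-$R$ estimates $\mathbf{L_1},\mathbf{L_2}$ from the proof of Lemma~\ref{Lem error limit}, the multivariate Stein bound of Proposition~\ref{Prop multivariate stein bound}, and the key inputs Proposition~\ref{Prop F_R covariance limit} and Lemma~\ref{Lem deri var estimate}. The only (harmless, arguably cleaner) difference is that you run the Stein comparison against the $R$-dependent Gaussian $Z_R$ with exact covariance $C^R_{ij}=\E[F_iF_j]$, so the Stein bound itself vanishes as $R\to\infty$ for fixed $n$, and then pass $Z_R\to\mathbf G_n\to\mathbf G$; the paper compares directly against $\mathbf G$, so its Stein bound decays only to $|\Phi(t_i,t_j)-\Phi_n(t_i,t_j)|$, which it then shrinks by taking $n$ large --- the same estimate, just differently bookkept (and your explicit handling of $m=1$ and of coordinates with $t_i=0$ is a little more careful than the paper's).
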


\begin{proof}
To prove the desired convergence, it suffices to show that
\begin{equation}
\label{8.1.0}
    |\E[h(R^{\frac{\beta}{2}-d}\mathbf{F}_R) - h(\mathbf{G})]| \xrightarrow[R \to \infty]{} 0
\end{equation}
for any $h \in C_{\mathrm{c}}^{\infty}(\mathbb{R}^m)$.
Let $\widetilde{\mathbf{F}}_{n,R} = (\mathfrak{F}_{n,R}(t_1), \ldots , \mathfrak{F}_{n,R}(t_m))$ and $\widetilde{\mathbf{F}}_{R} = (\mathfrak{F}_{R}(t_1), \ldots , \mathfrak{F}_{R}(t_m))$.
By simple estimates, we have
\begin{align}
    |\E[h(R^{\frac{\beta}{2}-d}\mathbf{F}_R)) - h(\mathbf{G})]|
    &\leq |\E[h(R^{\frac{\beta}{2}-d}\mathbf{F}_R)) - h(R^{\frac{\beta}{2}-d}\widetilde{\mathbf{F}}_{R})]| + |\E[h(R^{\frac{\beta}{2}-d}\widetilde{\mathbf{F}}_R)) - h(R^{\frac{\beta}{2}-d}\widetilde{\mathbf{F}}_{n,R})]| \nonumber \\
    &\quad + |\E[h(R^{\frac{\beta}{2}-d}\widetilde{\mathbf{F}}_{n,R})) - h(\mathbf{G})]| \nonumber \\
    &\lesssim_h R^{\frac{\beta}{2}-d} \E[|\mathbf{F}_R - \widetilde{\mathbf{F}}_{R}|] +\sup_{R > 0} \left( R^{\frac{\beta}{2}-d} \E[|\widetilde{\mathbf{F}}_R - \widetilde{\mathbf{F}}_{n,R}|]\right) \nonumber \\
    &\quad+ |\E[h(R^{\frac{\beta}{2}-d}\widetilde{\mathbf{F}}_{n,R}) - h(\mathbf{G})]|. \label{8.1.1}
\end{align}
From \eqref{l0}, we have
\begin{equation}
\label{fl0}
    R^{\frac{\beta}{2}-d}\E[|\mathbf{F}_R - \widetilde{\mathbf{F}}_{R}|] 
    \leq R^{\frac{\beta}{2}-d}\sum_{i=1}^{m}\lVert F_R(t_i) - \mathfrak{F}_R(t_i) \rVert_2 \xrightarrow[R \to \infty]{} 0.
\end{equation}

Fix $\varepsilon \in (0,1)$.
By the same argument as in the proof of Lemma \ref{Lem error limit}, the second term on the right-hand side of \eqref{8.1.1} can be evaluated as
\begin{align*}
    \sup_{R > 0} \left( R^{\frac{\beta}{2}-d} \E[|\widetilde{\mathbf{F}}_R - \widetilde{\mathbf{F}}_{n,R}|]\right)
    &\leq \sup_{R > 0} \left( R^{\frac{\beta}{2}-d} \sum_{i=1}^m\lVert \mathfrak{F}_R(t_i) - \mathfrak{F}_{n,R}(t_i) \rVert_2 \right)\\
    &\lesssim_m \left( \sup_{(\tau, \zeta) \in [0,T] \times \R^d} \lVert U(\tau, \zeta) - u_n(\tau,\zeta) \rVert_2 + 2\sqrt{\varepsilon} + \sup_{x \in K_{\varepsilon}}\left| 1 - \F\Lambda\left(\frac{x}{a_n} \right) \right|  \right)\\
    &\xrightarrow[n \to \infty]{} 2\sqrt{\varepsilon}.
\end{align*}
This together with \eqref{Phi conv} implies that we can take a sufficiently large $n$ such that
\begin{equation}
\label{8.1.2}
    \sup_{R > 0} \left( R^{\frac{\beta}{2}-d} \E[|\widetilde{\mathbf{F}}_R - \widetilde{\mathbf{F}}_{n,R}|]\right) \lesssim \sqrt{\varepsilon} \quad \text{and} \quad \max_{1 \leq i,j \leq m} |\Phi_n(t_i,t_j) - \Phi(t_i,t_j)| < \sqrt{\varepsilon}
\end{equation}
Hereafter, we fix such $n$ and evaluate the third term in \eqref{8.1.1}.
By Remark \ref{Rem F rep divergence}, we can apply Proposition \ref{Prop multivariate stein bound} to obtain
\begin{align*}
    &|\E[h(R^{\frac{\beta}{2}-d}\widetilde{\mathbf{F}}_{n,R}) - h(\mathbf{G})]| \\
    &\leq \frac{m}{2} \max_{1 \leq k,l \leq m} \sup_{x \in \mathbb{R}^m} \left| \frac{\partial^2h}{\partial x_k \partial x_l}(x) \right| \sqrt{\sum_{i,j =1}^m \E[|\Phi(t_i,t_j) - R^{\beta- 2d}\langle D\mathfrak{F}_{n,R}(t_i), Q_{n,t_j,R} \rangle_{\mathcal{H}_T}|^2]}.
\end{align*}
To check that the left-hand side above converges to zero as $R \to \infty$, we only need to show that 
\begin{equation*}
    \lim_{R \to \infty} \E[|\Phi(t_i,t_j) - R^{\beta- 2d}\langle D\mathfrak{F}_{n,R}(t_i), Q_{n,t_j,R} \rangle_{\mathcal{H}_T}|^2] = 0.
\end{equation*}
Because the duality relation \eqref{duality relation} and Proposition \ref{Prop F_R covariance limit} imply that 
\begin{equation*}
    R^{\beta -2d}\E[\langle D\mathfrak{F}_{n,R}(t_i), Q_{n,t_j,R} \rangle_{\mathcal{H}_T}] 
    = R^{\beta -2d} \E[\mathfrak{F}_{n,R}(t_i)\mathfrak{F}_{n,R}(t_j)] \xrightarrow[R \to \infty]{} \Phi_n(t_i,t_j),
\end{equation*}
we derive from Lemma \ref{Lem deri var estimate} that
\begin{align*}
    &\E[|\Phi(t_i,t_j) - R^{\beta- 2d}\langle D\mathfrak{F}_{n,R}(t_i), Q_{n,t_j,R} \rangle_{\mathcal{H}_T}|^2] \\
    &= \Phi(t_i,t_j)^2 - 2\Phi(t_i,t_j)(R^{\beta -2d}\E[\langle D\mathfrak{F}_{n,R}(t_i), Q_{n,t_j,R} \rangle_{\mathcal{H}_T}])\\
    &\quad + R^{2\beta-4d}(\mathrm{Var}(\langle D\mathfrak{F}_{n,R}(t_i), Q_{n,t_j,R} \rangle_{\mathcal{H}_T}) + \E[\langle D\mathfrak{F}_{n,R}(t_i), Q_{n,t_j,R} \rangle_{\mathcal{H}_T}]^2)\\
    &\xrightarrow[R \to \infty]{} |\Phi(t_i,t_j) - \Phi_n(t_i,t_j)|^2,
\end{align*}
and consequently we obtain 
\begin{equation*}
    \limsup_{R \to \infty}|\E[h(R^{\frac{\beta}{2}-d}\widetilde{\mathbf{F}}_{n,R}) - h(\mathbf{G})]| \lesssim_{h,m} \sqrt{\varepsilon}.
\end{equation*}
This, together with \eqref{8.1.1}, \eqref{fl0}, and \eqref{8.1.2}, shows \eqref{8.1.0}, and the proof is completed.
\end{proof}

\subsection{Tightness}
\label{Subsection Tightness}
Tightness in $C([0,T])$ of the process $\{ R^{-(d-\beta/2)}F_R(t) \mid t \in [0,T] \}$ can be obtained by applying the criterion of Kolmogorov-Chentsov (see \textit{e.g.}  \cite[Problem 4.11 of Chapter 2]{KaratzasShreve}).
Since $F_R(0) = 0$ for any $R>0$, it suffices to show the following regularity estimate.

\begin{Prop}
\label{Prop tightness}
For any $0 \leq s < t \leq T$ and $R \geq 1$, we have 
\begin{equation*}
    \E\left[\left|\frac{1}{R^{d-\beta/2}}F_R(t) - \frac{1}{R^{d-\beta/2}}F_R(s)\right|^2\right] \leq C_{d,T,\beta}(t-s)^{2},
\end{equation*}
where $C_{d,T,\beta}$ is a constant depending on $d,T,\beta$. 
\end{Prop}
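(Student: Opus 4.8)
The plan is to reduce the increment estimate for $F_R$ to the same type of increment estimate for $\mathfrak{F}_R$, and then for $\mathfrak{F}_R$ to pass to the Fourier side using the representation \eqref{F_R} from Proposition \ref{Prop F rep Walsh}. First I would write, using Minkowski's inequality and \eqref{U uniform bound},
\begin{equation*}
    \frac{1}{R^{d-\beta/2}}\lVert F_R(t) - F_R(s) - (\mathfrak{F}_R(t) - \mathfrak{F}_R(s)) \rVert_2 \lesssim_T R^{-(d-\beta/2)}|\mathbb{B}_{R+1}\setminus \mathbb{B}_R|,
\end{equation*}
and since $|\mathbb{B}_{R+1}\setminus \mathbb{B}_R|$ is a polynomial in $R$ of degree at most $d-1$ while $d - \beta/2 > d-1$, this is bounded by $C_{d,\beta}$ (uniformly in $R\geq 1$) — in fact it is $O(t-s)$ only in the trivial sense, so I would instead note that $F_R(t)-F_R(s) - (\mathfrak F_R(t) - \mathfrak F_R(s)) = \int_{\mathbb B_{R+1}\setminus \mathbb B_R}(\text{stuff})$ and bound its $L^2$ norm by $\lesssim_T (t-s) R^{-(d-\beta/2)}|\mathbb{B}_{R+1}\setminus\mathbb B_R|$ using $L^2(\Omega)$-continuity estimates for $U$ of the form $\lVert U(t,x)-U(s,x)\rVert_2 \lesssim_T |t-s|$, which follow from the isometry and (iii) of Lemma \ref{Lem property of FG}. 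This reduces everything to proving the estimate for $\mathfrak F_R$.

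For $\mathfrak{F}_R$, by \eqref{F_R} and the isometry for the Walsh integral,
\begin{align*}
    \lVert \mathfrak{F}_R(t) - \mathfrak{F}_R(s) \rVert_2^2
    &= \Bigl\lVert \int_0^t\int_{\R^d}(\varphi_{t,R}(r,y) - \varphi_{s,R}(r,y))\sigma(U(r,y))W(dr,dy)\Bigr\rVert_2^2\\
    &= \int_0^t\int_{\R^d}|\F\psi_R(\xi)|^2|\F G(t-r)(\xi) - \F G(s-r)(\xi)|^2\mu_r^{\sigma(U)}(d\xi)dr,
\end{align*}
where $\mu_r^{\sigma(U)}$ is the spectral measure of $\gamma^{\sigma(U)}(r,\cdot)$ supplied by Lemma \ref{Lem fourier bochner schwartz} (note $\F G(t-r)(\xi) = 0$ for $r > t$, so the integral is really over $r\in[0,t]$, and $\F \varphi_{s,R}(r,\cdot)(\xi) = \F\psi_R(\xi)\F G(s-r)(\xi)$). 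Then I would apply (iii) of Lemma \ref{Lem property of FG} to get $|\F G(t-r)(\xi) - \F G(s-r)(\xi)| \leq |t-s|$, so the above is bounded by
\begin{equation*}
    (t-s)^2\int_0^T\int_{\R^d}|\F\psi_R(\xi)|^2\mu_r^{\sigma(U)}(d\xi)dr = (t-s)^2\int_0^T\int_{\mathbb{B}_{R+1}^2}\psi_R\ast b_0\ \cdots
\end{equation*}
— more cleanly, $\int_{\R^d}|\F\psi_R(\xi)|^2\mu_r^{\sigma(U)}(d\xi) = \int_{\R^{2d}}\psi_R(y)\psi_R(z)|y-z|^{-\beta}\E[\sigma(U(r,y))\sigma(U(r,z))]dydz$, which by \eqref{U uniform bound} is $\lesssim_T \int_{\mathbb{B}_{R+1}^2}|y-z|^{-\beta}dydz \lesssim R^{2d-\beta}$ (using $\beta < 2 < d$ and the scaling $\int_{\mathbb{B}_{R+1}^2}|y-z|^{-\beta}dydz = (R+1)^{2d-\beta}\tau_\beta$ essentially, or the elementary estimate already recorded in the proof of Lemma \ref{Lem 6.3}). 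Dividing by $R^{2d-\beta}$ and using $R\geq 1$ then gives the claimed bound with a constant $C_{d,T,\beta}$.

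\textbf{Main obstacle.} The only genuinely delicate point is justifying the Fourier-side computation of $\lVert \mathfrak F_R(t) - \mathfrak F_R(s)\rVert_2^2$: one must check that $(\kappa\cdot G(t-\cdot))^c$-type integrands are handled correctly when $G$ is a distribution, i.e. that the isometry formula holds with $\mu_r^{\sigma(U)}$ in place of integrating $\gamma$ against a product of genuine functions. This is exactly the content of Proposition \ref{Prop extended SI} and Corollary \ref{Cor G(t-s,x-y) in P_0Z} together with the identity $\gamma^{\sigma(U)}(r,\cdot) = \F\mu_r^{\sigma(U)}$ from Lemma \ref{Lem fourier bochner schwartz}, so once those are invoked the rest is routine; the nonnegativity issues that plague $d\geq 4$ do not intervene here because we only need the \emph{isometry} (an $L^2$ identity), not a sign. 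The bound $|\F G(t-r)(\xi)-\F G(s-r)(\xi)|\leq |t-s|$ from Lemma \ref{Lem property of FG}(iii) is what produces the quadratic factor $(t-s)^2$, and the spatial integral $\int_{\mathbb B_{R+1}^2}|y-z|^{-\beta}dydz$ is finite precisely because $\beta<d$, giving the $R^{2d-\beta}$ growth that cancels the normalization.
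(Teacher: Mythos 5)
Your Fourier-side computation for $\mathfrak{F}_R$ is correct and is essentially the same as the paper's calculation; the paper simply performs it on $F_{n,R}$ (using the representation \eqref{FnR rep walsh} with $\ind_{\mathbb{B}_R}$ in place of $\psi_R$, established by noting that the proof of Proposition \ref{Prop F rep Walsh} goes through for $\ind_{\mathbb{B}_R}$ as well) and passes to the limit $n\to\infty$, which avoids ever introducing $\mathfrak{F}_R$.

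The genuine gap is in your reduction from $F_R$ to $\mathfrak{F}_R$. You bound the annulus correction $F_R(t)-F_R(s)-(\mathfrak{F}_R(t)-\mathfrak{F}_R(s))$ by invoking a pointwise estimate $\sup_x\lVert U(t,x)-U(s,x)\rVert_2 \lesssim_T |t-s|$, claiming it follows from the isometry and (iii) of Lemma \ref{Lem property of FG}. It does not: the isometry gives $\lVert U(t,x)-U(s,x)\rVert_2^2 = \int_0^T\int_{\R^d}|\F G(t-r)(\xi)-\F G(s-r)(\xi)|^2\,\mu_r^{\sigma(U)}(d\xi)\,dr$, and applying $|\F G(t-r)(\xi)-\F G(s-r)(\xi)|\leq|t-s|$ wholesale leaves you with $(t-s)^2\int\mu_r^{\sigma(U)}(d\xi)$, which diverges (the spectral measure is only a tempered measure, not finite). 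Interpolating against the decay from (ii), i.e.\ $|\F G(t-r)(\xi)-\F G(s-r)(\xi)|^2\lesssim_T |t-s|^{2\theta}\langle\xi\rangle^{-2(1-\theta)}$, requires $\int\langle\xi\rangle^{-2(1-\theta)}\mu(d\xi)<\infty$, which for $\mu(d\xi)=c_\beta|\xi|^{\beta-d}d\xi$ forces $\theta<1-\beta/2$. So the best pointwise time-modulus obtainable this way is $|t-s|^{1-\beta/2-\varepsilon}$, strictly worse than Lipschitz for all $\beta\in(0,2)$ and far from it for $\beta$ near $2$. The spatial averaging is essential: it is the extra factor $|\F\ind_{\mathbb{B}_R}(\xi)|^2$ (or $|\F\psi_R(\xi)|^2$) that makes $\int\mu_r^{\sigma(\cdot)}$ converge and lets you keep the full $(t-s)^2$.

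The fix is easy and keeps your structure: bound $\lVert \Delta_R(t)-\Delta_R(s)\rVert_2$ (with $\Delta_R=F_R-\mathfrak{F}_R$) by the \emph{same} Fourier argument you used for $\mathfrak{F}_R$, replacing $\psi_R$ by $\psi_R-\ind_{\mathbb{B}_R}$ (approximated via $G_n$, as in the paper). You then get $\lVert\Delta_R(t)-\Delta_R(s)\rVert_2^2\lesssim_T (t-s)^2\int_{\mathbb{B}_{R+1}^2}|y-z|^{-\beta}dydz\lesssim (t-s)^2 R^{2d-\beta}$, which is exactly what you need — no pointwise Lipschitz estimate for $U$ is involved. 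Alternatively, avoid $\mathfrak{F}_R$ altogether and carry out the computation for $F_{n,R}$ directly, which is what the paper does.
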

\begin{proof}
Recall that 
\begin{equation*}
    F_{n,R}(t) = \int_{\mathbb{B}_R}(\widetilde{u_n}(t,x) - 1)dx.
\end{equation*}
We can show that for any $p \in [1,\infty)$,
\begin{equation}
\label{FnR rep walsh}
    {F}_{n,R}(t) = \int_0^t \int_{\R^d}(G_n(t-s) \ast \ind_{\mathbb{B}_R})(y)\sigma(\widetilde{u_{n-1}}(s,y))W(ds,dy) \quad \text{in $L^p(\Omega)$}
\end{equation}
by replacing $\psi_R$ with $\ind_{\mathbb{B}_R}$ in the proof of Proposition \ref{Prop F rep Walsh} and making the same argument.
Moreover, Proposition \ref{Prop u_n convergence to U} implies that $F_{n,R}(t) \xrightarrow[n \to \infty]{} F_{R}(t)$ in $L^2(\Omega)$ for each $R$ and $t$, and hence 
\begin{equation*}
    \mathbb{E}[|F_{R}(t)-F_{R}(s)|^{2}] = \lim_{n \to \infty}\E[|F_{n,R}(t)-F_{n,R}(s)|^{2}].
\end{equation*}
From \eqref{FnR rep walsh} and the isometry property of stochastic integral, we have
\begin{align*}
    &\mathbb{E}[|F_{n,R}(t)-F_{n,R}(s)|^{2}]\\
    &= \mathbb{E}\left[\left(\int_{0}^{t} \int_{\mathbb{R}^{d}}\left((G_n(t-r) \ast \ind_{\mathbb{B}_R})(z)-(G_n(s-r) \ast \ind_{\mathbb{B}_R})(z)\right) \sigma(\widetilde{u_{n-1}}(r,z)) W(dr,dz)\right)^{2}\right]\\
    &= \int_{0}^{t} \int_{\mathbb{R}^{d}}|\mathcal{F}G(t-r)(\xi) - \mathcal{F}G(s-r)(\xi)|^2|\F \Lambda_n(\xi)|^2 |\F \ind_{\mathbb{B}_R}(\xi)|^2 \mu_r^{\sigma(\widetilde{u_{n-1}})}(d\xi) dr\\
    &\leq \int_s^t\int_{\mathbb{R}^d}|\mathcal{F}G(t-r)(\xi)|^2|\F \ind_{\mathbb{B}_R}(\xi)|^2\mu_{r}^{\sigma(\widetilde{u_{n-1}})}(d\xi)dr \\
    &\quad + \int_0^s\int_{\mathbb{R}^d}|\mathcal{F}G(t-r)(\xi) - \mathcal{F}G(s-r)(\xi)|^2 |\F \ind_{\mathbb{B}_R}(\xi)|^2\mu_{r}^{\sigma(\widetilde{u_{n-1}})}(d\xi)dr\\
    &\leq \int_s^t(t-r)^2\int_{\R^d}|\F \ind_{\mathbb{B}_R}(\xi)|^2\mu_r^{\sigma(\widetilde{u_{n-1}})}(d\xi)dr + (t-s)^2\int_0^s\int_{\mathbb{R}^d}|\F \ind_{\mathbb{B}_R}(\xi)|^2\mu_{r}^{\sigma(\widetilde{u_{n-1}})}(d\xi)dr.
\end{align*}
Here the second equality follows since $(G_n(t) \ast \ind_{\mathbb{B}_R}) \in C_{\mathrm{c}}^{\infty}(\mathbb{R}^d)$, and the last inequality follows from Lemma \ref{Lem property of FG}.
A standard computation using Proposition \ref{Prop u_n uniform bound} shows that
\begin{align*}
    \int_{\R^d}|\F\ind_{\mathbb{B}_R}(\xi)|^2\mu_r^{\sigma(\widetilde{u_{n-1}})}(d\xi)
    &= \int_{\R^{2d}}\ind_{\mathbb{B}_R}(z)\ind_{\mathbb{B}_R}(z')|z-z'|^{-\beta}\E[\sigma(\widetilde{u_{n-1}}(r,z))\sigma(\widetilde{u_{n-1}}(r,z'))]dzdz'\\
    &\leq \sup_{n \geq 1}\sup_{(\tau,\eta)\in[0,T]\times \R^d}\norm{\sigma(u_{n-1}(\tau,\eta))}_2^2\int_{\mathbb{B}_{R+1}^2}|z-z'|^{-\beta}dzdz'\\
    &\lesssim R^{2d -\beta}.
\end{align*}
Therefore, we obtain
\begin{equation*}
    \E[|F_R(t) - F_R(s)|^2]  
    \lesssim \left(\int_s^t(t-r)^2dr + (t-s)^2\int_0^sdr\right)R^{2d-\beta}
    \lesssim_T (t-s)^2R^{2d-\beta},
\end{equation*}
which completes the proof.
\end{proof}

As noted at the beginning of this section, the combination of Propositions \ref{Prop conv of fdd} and \ref{Prop tightness} completes the proof of Theorem \ref{Thm m2}.

\bibliographystyle{plain}
\bibliography{CLTsSWE}

\end{document}